\pgfplotsset{compat=1.17}
\newcommand{\CB}[1]{{\color{blue}#1}}
\newcommand{\CR}[1]{{\color{red}#1}}
\newcommand{\CG}[1]{{\color{green}#1}}
\newcommand{\CBR}[1]{{\color{brown}#1}}
\newcommand{\CV}[1]{{\color{violet}#1}}
\newcommand{\CT}[1]{{\color{teal}#1}}
\pgfplotsset{compat=1.16}
\numberwithin{equation}{section} 
\numberwithin{figure}{section} 
\numberwithin{table}{section} 
\theoremstyle{plain} 
\newtheorem{theorem}{Theorem}[section]
\newtheorem{lemma}[theorem]{Lemma}
\newtheorem{corollary}[theorem]{Corollary}
\newtheorem*{theorem*}{Theorem}
\theoremstyle{remark}
\newcommand{\dx}{\, dx}
\newcommand{\bb}[1]{\mathbb{#1}}
\author{Sarah May Instanes} 
\address{Department of Mathematical Sciences, Norwegian University of Science and Technology (NTNU), NO-7491 Trondheim, Norway} 
\email{sarah.m.instanes@ntnu.no}
\date{\today}
\title{An optimization problem and point-evaluation in Paley--Wiener spaces}
\begin{document} 
	\begin{abstract}
		We study the constant $\mathscr{C}_p$ defined as the smallest constant $C$ such that $|f(0)|^p \leq C\|f\|_p^p$ holds for every function $f$ in the Paley--Wiener space $PW^p$. Brevig, Chirre, Ortega-Cerdà, and Seip have recently shown that $\mathscr{C}_p<p/2$ for all $p>2$. We improve this bound for $2<p \leq 5$ by solving an optimization problem. 
	\end{abstract}
	
	\subjclass[2020]{Primary 30D15. Secondary 42A05.}
	\keywords{Paley--Wiener spaces, extremal problems, optimization problems, coefficient estimates}
	\maketitle
	\section{introduction}\label{sec:intro}
	Let $PW^p$ denote the Paley--Wiener space consisting of all functions $f$ in $L^p(\bb{R})$ where $f$ extends to an entire function of exponential type at most $\pi$. In this paper we study the constant $\mathscr{C}_p$ defined as the smallest possible constant $C$ such that 
	\[|f(0)|^p \leq C\|f\|_p^p\]
	holds for every function $f$ in  $PW^p$. The only known value of $\mathscr{C}_p$ is $\mathscr{C}_2=1$.
	
	The case $p=1$ has applications in number theory and is well studied. The best numerical results in this case are due to Hörmander and Bernhardsson \cite{Hormander} who found that 
	\[0.5409288219 \leq \mathscr{C}_1 \leq 0.5409288220.\] Bondarenko, Ortega-Cerdà, Radchenko, and Seip \cite{Seip} have recently given new results on the zeroes of the corresponding extremal function. For general $p$, the problem of determining or estimating $\mathscr{C}_p$ and solving the related extremal problem
	\begin{equation}\label{eq:extprob}
		\frac{1}{\mathscr{C}_p} = \inf_{f \in PW^p} \{\|f\|_p^p : f(0)=1 \}.
	\end{equation}
	is stated by Lubinsky in \cite[Problem 4, Problem 5]{Lubinsky2}. The constant $\mathscr{C}_p$ also appears in a problem concerning Christoffel functions studied by Levin and Lubinsky in \cite{Lubinsky}. Using the power trick and the fact $\mathscr{C}_2=1$ it is easy to show that $\mathscr{C}_p \leq \lceil p/2 \rceil$ for $p >2$, see for instance \cite{Gorbachev, Ibragimov60, Korevaar49}. This was the best known upper bound on $\mathscr{C}_p$ for $p>2$ until a recent in-depth study of the extremal problem \eqref{eq:extprob} by Brevig, Chirre, Ortega-Cerdà, and Seip \cite{Brevig} led to the improved bound $\mathscr{C}_p < p/2$. 
	
	We will solve an optimization problem which gives a further improvement on the upper bound for $\mathscr{C}_p$ in the range $2 \leq p \leq 5$. Given a sequence of strictly increasing positive numbers $\tau=(\tau_{n})_{n \geq 0}$ and $0<p<\infty$ we define
	\[E_p(\tau) = \int_{0}^{\infty}\left(\max\left(0, K_p(\tau; x) \right)\right)^2\, dx,\]
	where 
	\[K_p(\tau; x)= \sum_{n=0}^{\infty} \chi_{(\tau_n,\tau_{n+1})}(x)\frac{\sin{\frac{p}{2}\pi (x-n)}}{\pi x},\]
	and $\chi_{(a,b)}$ denotes the characteristic function on the interval $(a,b)$. 
	The optimization problem we consider is then given as
	\begin{equation}\label{eq:optprob}
		\mathscr{E}_p(\delta_1, \delta_2)= \sup_{\tau \in T(\delta_1, \delta_2)} E_p(\tau),
	\end{equation}
	where $T(\delta_1, \delta_2)$ is a set of sequences subject to two separation conditions $\delta_1, \delta_2 >0$, namely
	\[T(\delta_1, \delta_2) = \{\tau=(\tau_n)_{n \geq 0}: \tau_0=0 \text{, } \tau_1 \geq \delta_1 \text{ and } \tau_{n+1}-\tau_n \geq \delta_2 \text{ for all } n \geq 1\}.\]
	
	The problems \eqref{eq:extprob} and \eqref{eq:optprob} are closely connected. For $1 \leq p <\infty$ solutions to the extremal problem \eqref{eq:extprob} exist and are unique, and thus necessarily even (see e.g. \cite[Section 3]{Brevig}). We will refer to this solution as $\varphi_p$, and denote by $(t_n)_{n \geq 1}$ the positive zeroes of $\varphi_p$ in increasing order. In \cite[Section 5]{Brevig} the separation of the zeroes of $\varphi_p$ is studied, and it is shown that for $2 \leq p \leq 4$, we have $t_1 \geq 2/\pi$ and $t_{n+1} \geq t_n + 2/3$ for all $n \geq 1$. For $p>4$ it is shown that $t_1 \geq 1/2$ and $t_{n+1} \geq t_n + 3/5$ for all $n \geq 1$. The upper bound on $\mathscr{C}_p$ for $2<p\leq 4$ obtained in \cite{Brevig} is a consequence of the following result, which connects the separation of the zeroes of $\varphi_p$ to the separation conditions $\delta_1$ and $\delta_2$ in \eqref{eq:optprob}.
	\begin{theorem}[Brevig--Chirre--Ortega-Cerdà--Seip]\label{thm:BCOS}
		\hfill
		\begin{itemize}
			\item[(a)] If $2 \leq p \leq 4$, then $\mathscr{C}_p \leq 2 \mathscr{E}_p(2/\pi, 2/3)$.
			\item[(b)] If $4<p<\infty$, then $\mathscr{C}_p \leq 2 \mathscr{E}_p(1/2, 3/5)$. 
		\end{itemize}
	\end{theorem}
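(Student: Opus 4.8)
The plan is to reduce both parts to a single Cauchy--Schwarz estimate for the extremal function $\varphi_p$, after using the square-root trick $|\varphi_p|^p=\bigl(|\varphi_p|^{p/2}\bigr)^2$ to turn its $L^p$-norm into an $L^2$-norm. By \eqref{eq:extprob} and the existence and uniqueness of the extremal, $\mathscr{C}_p^{-1}=\|\varphi_p\|_p^p$ with $\varphi_p(0)=1$; setting $u:=|\varphi_p|^{p/2}$ one has $u\in L^2(\bb{R})$, $u\ge0$, $u(0)=1$, and $\|u\|_{L^2(\bb{R})}^2=\int_{\bb{R}}|\varphi_p|^p=\mathscr{C}_p^{-1}$, so it suffices to bound $\|u\|_2$ from below. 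I would record the structural input from \cite[Sections 3 and 5]{Brevig}: $\varphi_p$ is even, its positive zeros $(t_n)_{n\ge1}$ are simple with $\operatorname{sgn}\varphi_p=(-1)^n$ on $(t_n,t_{n+1})$, and, setting $t_0:=0$, the sequence $\tau:=(t_n)_{n\ge0}$ lies in $T(2/\pi,2/3)$ when $2\le p\le4$ and in $T(1/2,3/5)$ when $p>4$.

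The test function is $w$, the even extension to $\bb{R}$ of $x\mapsto\max\bigl(0,K_p(\tau;x)\bigr)$, so that $w\in L^2(\bb{R})$ with $\|w\|_{L^2(\bb{R})}^2=2E_p(\tau)$. Cauchy--Schwarz then gives
\[
\mathscr{C}_p^{-1}=\|u\|_2^2\ \ge\ \frac{\bigl|\langle u,w\rangle_{L^2(\bb{R})}\bigr|^2}{\|w\|_2^2}=\frac{\bigl|\langle u,w\rangle_{L^2(\bb{R})}\bigr|^2}{2E_p(\tau)}.
\]
Hence, once $\langle u,w\rangle_{L^2(\bb{R})}\ge1$ is established, one gets $\mathscr{C}_p\le2E_p(\tau)\le2\mathscr{E}_p(\delta_1,\delta_2)$ with $(\delta_1,\delta_2)$ as above, i.e.\ exactly the asserted bound in each range.

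The hard part is the inequality $\langle u,w\rangle\ge1$. Since $u\ge0$, $\max(0,t)\ge t$ for all real $t$, and $u,w$ are even, it reduces to
\[
2\sum_{n\ge0}\int_{t_n}^{t_{n+1}}|\varphi_p(x)|^{p/2}\,\frac{\sin\frac{p}{2}\pi(x-n)}{\pi x}\,dx\ \ge\ 1,
\]
and I would aim to show that the left-hand side equals $\varphi_p(0)^{p/2}=1$ up to a nonnegative remainder, the remainder being absorbed in the step where the positive part was discarded. Heuristically, $\frac{\sin\frac p2\pi x}{\pi x}$ is the reproducing kernel at the origin of $PW^2$ of exponential type $\frac p2\pi$, and the shift by the zero-counting index $n$ supplies exactly the phase correction needed to match $\varphi_p$ (which has type $\pi$, not $\frac p2\pi$) on each interval $(t_n,t_{n+1})$, so that summing the interval contributions reconstructs the value at the origin. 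To turn this into a proof I would feed in the Euler--Lagrange relation for $\varphi_p$ --- equivalently, that $\widehat{|\varphi_p|^{p-1}\operatorname{sgn}\varphi_p}$ is constant and equal to $\mathscr{C}_p^{-1}$ on $(-\tfrac12,\tfrac12)$ --- together with the fact that $\varphi_p$ extends to an entire function of exponential type $\pi$, and evaluate the interval integrals by a contour/residue computation. Essentially all of the difficulty lives here; the rest is bookkeeping. As a consistency check, for $p=2$ one has $\varphi_2(x)=\frac{\sin\pi x}{\pi x}$, $t_n=n$, $u=w$ on $(0,\infty)$, and the inequality becomes the identity $\int_{\bb{R}}\frac{\sin^2\pi x}{(\pi x)^2}\,dx=1$; thus the scheme is sharp at $p=2$.

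Finally, combining the Cauchy--Schwarz step with the separation estimates of \cite[Section 5]{Brevig} recalled above yields $\mathscr{C}_p\le2\mathscr{E}_p(2/\pi,2/3)$ for $2\le p\le4$ and $\mathscr{C}_p\le2\mathscr{E}_p(1/2,3/5)$ for $p>4$, which are precisely parts (a) and (b).
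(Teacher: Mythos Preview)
This theorem is not proved in the present paper; it is quoted from \cite{Brevig}, and the paper only uses it as a black box (combining it with Theorems~\ref{thm:new:Ep2to4} and~\ref{thm:new:E_p4to5} to obtain Corollaries~\ref{cor:C_p_2_4} and~\ref{cor:C_p_4_5}). So there is no proof here to compare against.

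That said, your outline is exactly the architecture of the argument in \cite{Brevig}: write $|\varphi_p|^p=(|\varphi_p|^{p/2})^2$, apply Cauchy--Schwarz in $L^2(\mathbb{R})$ against the even extension of $\max(0,K_p(\tau;\cdot))$, and feed in the zero-separation estimates $t_1\ge\delta_1$, $t_{n+1}-t_n\ge\delta_2$ to pass from $E_p(\tau)$ to $\mathscr{E}_p(\delta_1,\delta_2)$. You also correctly isolate the one nontrivial step, namely $\langle u,w\rangle\ge 1$, and your heuristic (reproducing kernel of $PW^2$ at type $\tfrac{p}{2}\pi$, with the phase shift by $n$ compensating for the sign changes of $\varphi_p$) is the right intuition. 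In \cite{Brevig} this step is made rigorous by analytically continuing $\varphi_p^{p/2}$ from a neighbourhood of the origin into the upper half-plane (where $\varphi_p$ has no zeros), so that on $(t_n,t_{n+1})$ its boundary value equals $e^{-i n p\pi/2}|\varphi_p|^{p/2}$; a contour/Plancherel argument for functions of exponential type $\tfrac{p}{2}\pi$ then yields the exact identity
\[
2\sum_{n\ge0}\int_{t_n}^{t_{n+1}}|\varphi_p(x)|^{p/2}\,\frac{\sin\frac{p}{2}\pi(x-n)}{\pi x}\,dx=1,
\]
so that after replacing $K_p$ by its positive part one gets $\langle u,w\rangle\ge1$. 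The Euler--Lagrange relation you mention is not actually needed for this particular identity; what matters is that $\varphi_p$ is entire of exponential type exactly $\pi$ with only real (simple) zeros, which is what makes the analytic continuation and the type-$\tfrac{p}{2}\pi$ bound work.
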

	
	If one were to show that $t_1 \geq \delta_1$ and $t_{n+1} - t_n \geq \delta_2$ for all $n \geq 1$ one could replace Theorem \ref{thm:BCOS} with $\mathscr{C}_p \leq 2 \mathscr{E}_p(\delta_1, \delta_2)$. This could theoretically give a better bound for $\mathscr{C}_p$. 
	However, we know that $\delta_2 \leq 1$, as the extremal functions solving \eqref{eq:extprob} necessarily have type exactly $\pi$. Moreover, as mentioned in \cite[Section 9]{Brevig}, it seems reasonable to expect $\delta_1 \leq 1$ for $p \geq 2$. It follows that the best bound one can hope to attain using the method from \cite{Brevig} is $\mathscr{C}_p \leq 2 \mathscr{E}_p(1,1)$. This method uses only the separation conditions $\delta_1$ and $\delta_2$. However, if we had more information on (some of) the zeroes of the extremal function, we could easily adapt the method to take this into account by reducing the set of sequences over which we take the supremum in \eqref{eq:optprob}. In particular, we note that for $2 \leq p \leq 4$ if we assume that $n-1/2+1/p \leq t_n \leq n$ (as suggested in \cite[Section 9]{Brevig}), we still get the same upper bound for $\mathscr{C}_p$. To the contrary, for $4<p<5$ assuming $n-1/2+1/p \leq t_n \leq n$ leads to the better upper bound $\mathscr{C}_p \leq 2 E_p((n)_{n \geq 0})$. This illustrates that the method developed in \cite{Brevig} is better suited for $2 \leq p \leq 4$ than for $4<p<5$. 
	
	Motivated by Theorem \ref{thm:BCOS} we study the problem of determining $\mathscr{E}_p(\delta_1, \delta_2)$ under certain related separation conditions $\delta_1$ and $\delta_2$. We obtain the following results.
	
	\begin{theorem}\label{thm:new:Ep2to4}
		Fix $2 \leq p \leq 4$ and let $\min(2/\pi, 2/p) \leq \delta_1 \leq 1$ and $2/3 \leq \delta_2 \leq 1$. Then \[\mathscr{E}_p(\delta_1, \delta_2)=\sum_{n=0}^{\infty} \int_{n}^{n+2/p} \frac{\sin^2\frac{p}{2}\pi(x-n)}{\pi^2 x^2} \dx.\]
		For $2 \leq p <4$ the supremum is attained if and only if $\tau$ in $T(\delta_1, \delta_2)$ is a sequence such that $\tau_n$ is in the interval $[n-1+2/p,n]$ for all $n \geq 1$. If $p=4$, then the supremum is attained for all sequences $\tau$ in $T(\delta_1, \delta_2)$. 
	\end{theorem}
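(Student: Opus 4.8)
The plan is to reduce the computation of $\mathscr{E}_p(\delta_1,\delta_2)$ to a pointwise, interval-by-interval analysis of $K_p(\tau;x)$ and then show that the optimal choice of $\tau$ is forced. Observe first that
\[
E_p(\tau)=\sum_{n=0}^{\infty}\int_{\tau_n}^{\tau_{n+1}}\Big(\max\Big(0,\frac{\sin\frac{p}{2}\pi(x-n)}{\pi x}\Big)\Big)^2\dx,
\]
so the contribution of the $n$-th block is the integral of the positive part of $g_n(x):=\dfrac{\sin\frac{p}{2}\pi(x-n)}{\pi x}$ over $(\tau_n,\tau_{n+1})$. Since $g_n\ge 0$ precisely on the intervals where $\frac{p}{2}\pi(x-n)\in[0,\pi]\cup[2\pi,3\pi]\cup\cdots$, i.e.\ on $[n+\frac{2k}{p},n+\frac{2k+1}{p}]$ for $k\ge 0$ (using $x>0$ so the sign of $g_n$ is that of $\sin\frac{p}{2}\pi(x-n)$), the first idea is: to maximize the $n$-th block integral we want $(\tau_n,\tau_{n+1})$ to cover the first positive hump $[n,n+\frac{2}{p}]$ of $g_n$ and no more (since $|g_n|$ is decreasing in $x$ on successive humps, the first hump is the largest, and the humps are separated by negative dips which contribute nothing). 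This suggests the candidate value
\[
\mathscr{E}_p(\delta_1,\delta_2)\ \le\ \sum_{n=0}^{\infty}\int_{n}^{n+2/p}\frac{\sin^2\frac{p}{2}\pi(x-n)}{\pi^2 x^2}\dx,
\]
and one checks the reverse inequality by exhibiting a sequence in $T(\delta_1,\delta_2)$ that realizes it.

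For the upper bound I would argue block by block, but the blocks are coupled through the constraint $\tau_n\le \tau_{n+1}$ and the separation conditions, so the key step is a \emph{telescoping/rearrangement} argument. Fix $\tau\in T(\delta_1,\delta_2)$. For each $n$ define the ``ideal'' window $I_n=[n,n+2/p]$. Since $2\le p\le 4$ we have $2/p\in[1/2,1]$, so consecutive ideal windows $I_n$ and $I_{n+1}$ are disjoint (as $n+2/p\le n+1$) and in fact separated by a gap of length $1-2/p\ge 0$. The positive part of $g_n$ on $(\tau_n,\tau_{n+1})$ is at most its integral over $(\tau_n,\tau_{n+1})\cap I_n$ \emph{plus} the integral over the later humps $(\tau_n,\tau_{n+1})\cap\bigcup_{k\ge 1}[n+\frac{2k}{p},n+\frac{2k+1}{p}]$; one must show the later humps can be absorbed. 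The clean way: since $\sin^2$ is the same on every hump and $1/x^2$ is decreasing, the integral of $(g_n)_+^2$ over the $k$-th positive hump of $g_n$ is at most the integral of $(g_{n+k})_+^2$ over the $0$-th (i.e.\ first) positive hump of $g_{n+k}$ when the $k$-th hump of $g_n$ lies to the right of $n+k$ — which it does, because $n+\frac{2k}{p}\ge n+k$ iff $2k\ge pk$, false for $p>2$. So this naive shift fails; instead I would bound more crudely, comparing the $k$-th hump of $g_n$, located in $[n+\frac{2k}{p},n+\frac{2k+1}{p}]$, against a sub-window of $I_m$ for a suitable $m$ not already used, and use a combinatorial counting (each ideal window $I_m$ is ``charged'' at most once) together with monotonicity of $1/x^2$ to conclude the total is at most $\sum_n\int_{I_n}g_n^2$. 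This charging argument, making sure no ideal window is double-counted while respecting the ordering $\tau_n<\tau_{n+1}$ and the separations $\delta_1,\delta_2\le 1$, is the main obstacle; the hypotheses $\delta_1\ge\min(2/\pi,2/p)$ and $\delta_2\ge 2/3$ are presumably exactly what make the bookkeeping go through (e.g.\ ensuring $\tau_1\ge\delta_1$ does not force the $0$-th block to miss part of $I_0$, and $\delta_2\ge 2/3$ controls how much of later humps a single block can see).

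Next I would establish that the bound is attained and characterize the extremizers. The natural candidate is any $\tau$ with $\tau_n\in I_n^{\,\mathrm{left-ish}}$; more precisely, if $\tau_n\in[n-1+2/p,\,n]$ for all $n\ge1$ (and $\tau_0=0$), then on each block $(\tau_n,\tau_{n+1})\supseteq[n,n+2/p]=I_n$ while $(\tau_n,\tau_{n+1})$ meets no other positive hump of $g_n$ (the next positive hump of $g_n$ starts at $n+4/p\ge n+1\ge\tau_{n+1}$), and it meets no positive hump of any $g_m$, $m\ne n$, either — so $K_p(\tau;x)=g_n(x)$ on $(\tau_n,\tau_{n+1})$ and the positive part picks up exactly $\int_{I_n}g_n^2$. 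One must also verify such $\tau$ actually lies in $T(\delta_1,\delta_2)$: $\tau_1\ge\delta_1$ follows from $\tau_1\ge 2/p\ge\delta_1$ (using $\delta_1\le 1$ and... here one needs $2/p\ge\min(2/\pi,2/p)$, trivially true, but also $\tau_1$ could be as small as $2/p$, so one needs $2/p\ge\delta_1$ — hence the lower bound on $\delta_1$), and $\tau_{n+1}-\tau_n\ge(n+1-1+2/p)-n=2/p\ge 2/3\cdot\frac{3/p}{1}$... one checks $2/p\ge 2/3$ fails for $p<3$, so instead the separation follows because $\tau_{n+1}\ge n+1-1+2/p$ while $\tau_n\le n$, giving gap $\ge 2/p$; for $p>3$ this is $<2/3$, so again the freedom in choosing $\tau_n$ within the interval is what rescues $\delta_2\ge2/3$ — I would pick $\tau_n=n-1+2/p$ at the left end when needed. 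For the strict statement ($2\le p<4$), uniqueness up to the stated interval comes from the fact that any extremizer must have each block cover all of $I_n$ and avoid all other humps' positive parts, which forces $\tau_n\le n$ and $\tau_{n+1}\ge n+2/p$, i.e.\ $\tau_n\in[n-1+2/p,n]$; when $p=4$ we have $2/p=1/2$ and the constraint becomes vacuous because the humps tile more loosely, so every $\tau\in T(\delta_1,\delta_2)$ works. I expect the characterization of equality to require the most care, in particular ruling out configurations where a block straddles two humps and still achieves the sum via cancellation with the charging bound being tight.
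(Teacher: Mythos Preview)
Your proposal is a plan rather than a proof, and the central step---the ``charging argument'' matching each positive hump of $g_n$ encountered by $(\tau_n,\tau_{n+1})$ to an unused ideal window $I_m$---is never specified. You correctly observe that the naive shift fails (the $k$-th hump of $g_n$ sits at $n+4k/p<n+k$ when $p>2$, so it lies \emph{left} of $I_{n+k}$ and you cannot use monotonicity of $1/x^2$ in that direction), but you do not supply a replacement. The paper's mechanism is concrete: for $2\le p\le 3$ it groups the intervals into diagonal \emph{rays} $I_{k,j}=(k+j\varrho,\,k+j\varrho+2/p)$ at level $k+2j$, where $\varrho=2-4/p$, shows $E_p(\tau)=\sum_k A_p(\tau;k)$, and then bounds each ray contribution by a single substitution: translating all terms back to $[k,k+2/p]$ and using $\tau_{n+1}-\tau_n\ge\varrho$ to see that the translated indicator functions are pairwise disjoint, so their sum is at most $\chi_{[k,k+2/p]}$. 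This is exactly the bookkeeping you gesture at, but the non-overlap after translation is the idea you are missing, and it only works because $\delta_2\ge 2-4/p$---which is $\le 2/3$ precisely when $p\le 3$.

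For $3<p\le 4$ the separation $\delta_2=2/3$ is \emph{strictly smaller} than $2-4/p$, so the ray argument no longer closes: the translated indicators can overlap. The paper devotes an entire section (and a technical appendix) to this range, passing to auxiliary sequence classes $T_1\supset T_2$ and proving via explicit integral estimates that any deviation from $\lambda_n=n-1+2/p$ can be pushed one step further out without decreasing $E_p$. Your sketch gives no hint of this difficulty. Finally, your discussion of the extremizers is tangled: the specific sequence $\lambda_n=n-1+2/p$ has $\lambda_{n+1}-\lambda_n=1\ge\delta_2$ trivially, so there is no issue with membership in $T(\delta_1,\delta_2)$; and the $p=4$ case is trivial for a reason you did not identify, namely $\sin(2\pi(x-n))=\sin(2\pi x)$ is independent of $n$, so $K_4(\tau;x)$ does not depend on $\tau$ at all.
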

	
	\begin{figure}
		\centering
		\begin{tikzpicture}[scale=1.5]
			\begin{axis}
				[axis equal image,
				axis lines=middle,
				axis line style=thin,
				xmin=2-2*0.05,
				xmax=5+4*0.05,
				xtick={2,3,4,5},
				xticklabels={$\scriptstyle 2$,$\scriptstyle 3$,$\scriptstyle 4$,$\scriptstyle 5$},
				ymin=0.458,
				ymax=0.505,
				ytick={0.46,0.47,0.48,0.49,0.5},
				yticklabels={$\scriptstyle 0.46$,$\scriptstyle 0.47$, $\scriptstyle 0.48$,$\scriptstyle 0.49$, $\scriptstyle 0.5$},
				every axis x label/.style={ at={(ticklabel* cs:0.1)}, anchor=west,},
				every axis y label/.style={ at={(ticklabel* cs:0.1)}, anchor=south,},
				axis line style={->}, x post scale=0.5,
				axis line style={->}, y post scale =30.0]
				\addplot[thick, color=blue]
				coordinates{
					( 2 , 0.5 )
					( 2.01 , 0.4996163 )
					( 2.02 , 0.4992372 )
					( 2.03 , 0.4988627 )
					( 2.04 , 0.4984926 )
					( 2.05 , 0.498127 )
					( 2.06 , 0.4977657 )
					( 2.07 , 0.4974087 )
					( 2.08 , 0.4970559 )
					( 2.09 , 0.4967072 )
					( 2.1 , 0.4963626 )
					( 2.11 , 0.496022 )
					( 2.12 , 0.4956853 )
					( 2.13 , 0.4953525 )
					( 2.14 , 0.4950236 )
					( 2.15 , 0.4946984 )
					( 2.16 , 0.4943769 )
					( 2.17 , 0.494059 )
					( 2.18 , 0.4937448 )
					( 2.19 , 0.493434 )
					( 2.2 , 0.4931268 )
					( 2.21 , 0.492823 )
					( 2.22 , 0.4925225 )
					( 2.23 , 0.4922254 )
					( 2.24 , 0.4919316 )
					( 2.25 , 0.491641 )
					( 2.26 , 0.4913536 )
					( 2.27 , 0.4910693 )
					( 2.28 , 0.4907881 )
					( 2.29 , 0.4905099 )
					( 2.3 , 0.4902347 )
					( 2.31 , 0.4899625 )
					( 2.32 , 0.4896932 )
					( 2.33 , 0.4894267 )
					( 2.34 , 0.4891631 )
					( 2.35 , 0.4889023 )
					( 2.36 , 0.4886442 )
					( 2.37 , 0.4883888 )
					( 2.38 , 0.4881361 )
					( 2.39 , 0.487886 )
					( 2.4 , 0.4876385 )
					( 2.41 , 0.4873936 )
					( 2.42 , 0.4871512 )
					( 2.43 , 0.4869113 )
					( 2.44 , 0.4866738 )
					( 2.45 , 0.4864387 )
					( 2.46 , 0.4862061 )
					( 2.47 , 0.4859757 )
					( 2.48 , 0.4857478 )
					( 2.49 , 0.485522 )
					( 2.5 , 0.4852986 )
					( 2.51 , 0.4850774 )
					( 2.52 , 0.4848583 )
					( 2.53 , 0.4846415 )
					( 2.54 , 0.4844268 )
					( 2.55 , 0.4842142 )
					( 2.56 , 0.4840036 )
					( 2.57 , 0.4837952 )
					( 2.58 , 0.4835887 )
					( 2.59 , 0.4833843 )
					( 2.6 , 0.4831818 )
					( 2.61 , 0.4829813 )
					( 2.62 , 0.4827827 )
					( 2.63 , 0.482586 )
					( 2.64 , 0.4823912 )
					( 2.65 , 0.4821982 )
					( 2.66 , 0.482007 )
					( 2.67 , 0.4818177 )
					( 2.68 , 0.4816301 )
					( 2.69 , 0.4814443 )
					( 2.7 , 0.4812603 )
					( 2.71 , 0.4810779 )
					( 2.72 , 0.4808972 )
					( 2.73 , 0.4807183 )
					( 2.74 , 0.4805409 )
					( 2.75 , 0.4803652 )
					( 2.76 , 0.4801911 )
					( 2.77 , 0.4800186 )
					( 2.78 , 0.4798476 )
					( 2.79 , 0.4796782 )
					( 2.8 , 0.4795104 )
					( 2.81 , 0.479344 )
					( 2.82 , 0.4791792 )
					( 2.83 , 0.4790158 )
					( 2.84 , 0.4788539 )
					( 2.85 , 0.4786934 )
					( 2.86 , 0.4785343 )
					( 2.87 , 0.4783767 )
					( 2.88 , 0.4782204 )
					( 2.89 , 0.4780655 )
					( 2.9 , 0.477912 )
					( 2.91 , 0.4777598 )
					( 2.92 , 0.477609 )
					( 2.93 , 0.4774594 )
					( 2.94 , 0.4773112 )
					( 2.95 , 0.4771642 )
					( 2.96 , 0.4770185 )
					( 2.97 , 0.476874 )
					( 2.98 , 0.4767308 )
					( 2.99 , 0.4765888 )
					( 3.0 , 0.476448 )
					( 3.01 , 0.4763084 )
					( 3.02 , 0.47617 )
					( 3.03 , 0.4760327 )
					( 3.04 , 0.4758966 )
					( 3.05 , 0.4757617 )
					( 3.06 , 0.4756278 )
					( 3.07 , 0.4754951 )
					( 3.08 , 0.4753635 )
					( 3.09 , 0.475233 )
					( 3.1 , 0.4751035 )
					( 3.11 , 0.4749751 )
					( 3.12 , 0.4748478 )
					( 3.13 , 0.4747215 )
					( 3.14 , 0.4745962 )
					( 3.15 , 0.474472 )
					( 3.16 , 0.4743488 )
					( 3.17 , 0.4742265 )
					( 3.18 , 0.4741053 )
					( 3.19 , 0.473985 )
					( 3.2 , 0.4738657 )
					( 3.21 , 0.4737474 )
					( 3.22 , 0.47363 )
					( 3.23 , 0.4735135 )
					( 3.24 , 0.4733979 )
					( 3.25 , 0.4732833 )
					( 3.26 , 0.4731696 )
					( 3.27 , 0.4730567 )
					( 3.28 , 0.4729448 )
					( 3.29 , 0.4728337 )
					( 3.3 , 0.4727235 )
					( 3.31 , 0.4726142 )
					( 3.32 , 0.4725057 )
					( 3.33 , 0.472398 )
					( 3.34 , 0.4722912 )
					( 3.35 , 0.4721852 )
					( 3.36 , 0.47208 )
					( 3.37 , 0.4719756 )
					( 3.38 , 0.471872 )
					( 3.39 , 0.4717692 )
					( 3.4 , 0.4716672 )
					( 3.41 , 0.471566 )
					( 3.42 , 0.4714655 )
					( 3.43 , 0.4713658 )
					( 3.44 , 0.4712668 )
					( 3.45 , 0.4711686 )
					( 3.46 , 0.4710711 )
					( 3.47 , 0.4709744 )
					( 3.48 , 0.4708783 )
					( 3.49 , 0.470783 )
					( 3.5 , 0.4706884 )
					( 3.51 , 0.4705944 )
					( 3.52 , 0.4705012 )
					( 3.53 , 0.4704087 )
					( 3.54 , 0.4703168 )
					( 3.55 , 0.4702256 )
					( 3.56 , 0.4701351 )
					( 3.57 , 0.4700452 )
					( 3.58 , 0.469956 )
					( 3.59 , 0.4698674 )
					( 3.6 , 0.4697794 )
					( 3.61 , 0.4696921 )
					( 3.62 , 0.4696055 )
					( 3.63 , 0.4695194 )
					( 3.64 , 0.469434 )
					( 3.65 , 0.4693491 )
					( 3.66 , 0.4692649 )
					( 3.67 , 0.4691813 )
					( 3.68 , 0.4690983 )
					( 3.69 , 0.4690158 )
					( 3.7 , 0.4689339 )
					( 3.71 , 0.4688526 )
					( 3.72 , 0.4687719 )
					( 3.73 , 0.4686918 )
					( 3.74 , 0.4686122 )
					( 3.75 , 0.4685331 )
					( 3.76 , 0.4684546 )
					( 3.77 , 0.4683767 )
					( 3.78 , 0.4682992 )
					( 3.79 , 0.4682224 )
					( 3.8 , 0.468146 )
					( 3.81 , 0.4680702 )
					( 3.82 , 0.4679949 )
					( 3.83 , 0.4679201 )
					( 3.84 , 0.4678458 )
					( 3.85 , 0.467772 )
					( 3.86 , 0.4676987 )
					( 3.87 , 0.4676259 )
					( 3.88 , 0.4675536 )
					( 3.89 , 0.4674818 )
					( 3.9 , 0.4674105 )
					( 3.91 , 0.4673396 )
					( 3.92 , 0.4672693 )
					( 3.93 , 0.4671994 )
					( 3.94 , 0.4671299 )
					( 3.95 , 0.467061 )
					( 3.96 , 0.4669924 )
					( 3.97 , 0.4669244 )
					( 3.98 , 0.4668567 )
					( 3.99 , 0.4667896 )
					( 4 , 0.4667228 )
					( 4.01 , 0.4668326 )
					( 4.02 , 0.4669417 )
					( 4.03 , 0.4670502 )
					( 4.04 , 0.4671579 )
					( 4.05 , 0.467265 )
					( 4.06 , 0.4673714 )
					( 4.07 , 0.4674771 )
					( 4.08 , 0.467582 )
					( 4.09 , 0.4676862 )
					( 4.1 , 0.4677897 )
					( 4.11 , 0.4678924 )
					( 4.12 , 0.4679943 )
					( 4.13 , 0.4680954 )
					( 4.14 , 0.4681958 )
					( 4.15 , 0.4682953 )
					( 4.16 , 0.468394 )
					( 4.17 , 0.4684919 )
					( 4.18 , 0.4685889 )
					( 4.19 , 0.4686851 )
					( 4.2 , 0.4687805 )
					( 4.21 , 0.4688749 )
					( 4.22 , 0.4689685 )
					( 4.23 , 0.4690612 )
					( 4.24 , 0.469153 )
					( 4.25 , 0.4692439 )
					( 4.26 , 0.4693338 )
					( 4.27 , 0.4694228 )
					( 4.28 , 0.4695109 )
					( 4.29 , 0.4695981 )
					( 4.3 , 0.4696843 )
					( 4.31 , 0.4697695 )
					( 4.32 , 0.4698538 )
					( 4.33 , 0.4699371 )
					( 4.34 , 0.4700194 )
					( 4.35 , 0.4701008 )
					( 4.36 , 0.4701811 )
					( 4.37 , 0.4702604 )
					( 4.38 , 0.4703388 )
					( 4.39 , 0.4704161 )
					( 4.4 , 0.4704924 )
					( 4.41 , 0.4705676 )
					( 4.42 , 0.4706419 )
					( 4.43 , 0.470715 )
					( 4.44 , 0.4707872 )
					( 4.45 , 0.4708583 )
					( 4.46 , 0.4709284 )
					( 4.47 , 0.4709974 )
					( 4.48 , 0.4710653 )
					( 4.49 , 0.4711322 )
					( 4.5 , 0.471198 )
					( 4.51 , 0.4712627 )
					( 4.52 , 0.4713264 )
					( 4.53 , 0.471389 )
					( 4.54 , 0.4714505 )
					( 4.55 , 0.4715109 )
					( 4.56 , 0.4715703 )
					( 4.57 , 0.4716285 )
					( 4.58 , 0.4716857 )
					( 4.59 , 0.4717418 )
					( 4.6 , 0.4717968 )
					( 4.61 , 0.4718507 )
					( 4.62 , 0.4719035 )
					( 4.63 , 0.4719552 )
					( 4.64 , 0.4720059 )
					( 4.65 , 0.4720554 )
					( 4.66 , 0.4721038 )
					( 4.67 , 0.4721512 )
					( 4.68 , 0.4721974 )
					( 4.69 , 0.4722426 )
					( 4.7 , 0.4722866 )
					( 4.71 , 0.4723296 )
					( 4.72 , 0.4723715 )
					( 4.73 , 0.4724123 )
					( 4.74 , 0.472452 )
					( 4.75 , 0.4724906 )
					( 4.76 , 0.4725282 )
					( 4.77 , 0.4725646 )
					( 4.78 , 0.4726 )
					( 4.79 , 0.4726343 )
					( 4.8 , 0.4726676 )
					( 4.81 , 0.4726997 )
					( 4.82 , 0.4727309 )
					( 4.83 , 0.4727609 )
					( 4.84 , 0.4727899 )
					( 4.85 , 0.4728178 )
					( 4.86 , 0.4728447 )
					( 4.87 , 0.4728705 )
					( 4.88 , 0.4728953 )
					( 4.89 , 0.4729191 )
					( 4.9 , 0.4729418 )
					( 4.91 , 0.4729636 )
					( 4.92 , 0.4729842 )
					( 4.93 , 0.4730039 )
					( 4.94 , 0.4730226 )
					( 4.95 , 0.4730402 )
					( 4.96 , 0.4730569 )
					( 4.97 , 0.4730726 )
					( 4.98 , 0.4730872 )
					( 4.99 , 0.473101 )
					( 5.0 , 0.4731137 )
				};
				\addplot[thick, color=red]
				coordinates{
					( 2 , 0.5 )
					( 2.01 , 0.4998856 )
					( 2.02 , 0.4997772 )
					( 2.03 , 0.499665 )
					( 2.04 , 0.4995588 )
					( 2.05 , 0.4994488 )
					( 2.06 , 0.4993398 )
					( 2.07 , 0.4992367 )
					( 2.08 , 0.4991298 )
					( 2.09 , 0.4990239 )
					( 2.1 , 0.498919 )
					( 2.11 , 0.4988199 )
					( 2.12 , 0.498717 )
					( 2.13 , 0.498615 )
					( 2.14 , 0.498514 )
					( 2.15 , 0.498414 )
					( 2.16 , 0.4983194 )
					( 2.17 , 0.4982212 )
					( 2.18 , 0.4981239 )
					( 2.19 , 0.4980274 )
					( 2.2 , 0.4979318 )
					( 2.21 , 0.4978371 )
					( 2.22 , 0.4977432 )
					( 2.23 , 0.4976502 )
					( 2.24 , 0.497558 )
					( 2.25 , 0.4974667 )
					( 2.26 , 0.4973761 )
					( 2.27 , 0.4972907 )
					( 2.28 , 0.4971974 )
					( 2.29 , 0.4971092 )
					( 2.3 , 0.4970217 )
					( 2.31 , 0.4969351 )
					( 2.32 , 0.4968491 )
					( 2.33 , 0.4967639 )
					( 2.34 , 0.4966795 )
					( 2.35 , 0.4965957 )
					( 2.36 , 0.4965127 )
					( 2.37 , 0.4964304 )
					( 2.38 , 0.4963487 )
					( 2.39 , 0.4962636 )
					( 2.4 , 0.4961833 )
					( 2.41 , 0.4961037 )
					( 2.42 , 0.4960248 )
					( 2.43 , 0.4959465 )
					( 2.44 , 0.4958648 )
					( 2.45 , 0.4957878 )
					( 2.46 , 0.4957114 )
					( 2.47 , 0.4956356 )
					( 2.48 , 0.4955565 )
					( 2.49 , 0.4954819 )
					( 2.5 , 0.495408 )
					( 2.51 , 0.4953347 )
					( 2.52 , 0.4952579 )
					( 2.53 , 0.4951858 )
					( 2.54 , 0.4951142 )
					( 2.55 , 0.4950392 )
					( 2.56 , 0.4949688 )
					( 2.57 , 0.4948988 )
					( 2.58 , 0.4948256 )
					( 2.59 , 0.4947568 )
					( 2.6 , 0.4946885 )
					( 2.61 , 0.4946169 )
					( 2.62 , 0.4945496 )
					( 2.63 , 0.4944791 )
					( 2.64 , 0.4944129 )
					( 2.65 , 0.4943472 )
					( 2.66 , 0.4942782 )
					( 2.67 , 0.4942135 )
					( 2.68 , 0.4941455 )
					( 2.69 , 0.4940818 )
					( 2.7 , 0.4940148 )
					( 2.71 , 0.493952 )
					( 2.72 , 0.493886 )
					( 2.73 , 0.4938242 )
					( 2.74 , 0.4937591 )
					( 2.75 , 0.4936982 )
					( 2.76 , 0.4936341 )
					( 2.77 , 0.493574 )
					( 2.78 , 0.4935108 )
					( 2.79 , 0.4934516 )
					( 2.8 , 0.4933893 )
					( 2.81 , 0.493331 )
					( 2.82 , 0.4932695 )
					( 2.83 , 0.493212 )
					( 2.84 , 0.4931514 )
					( 2.85 , 0.4930947 )
					( 2.86 , 0.493035 )
					( 2.87 , 0.4929756 )
					( 2.88 , 0.4929201 )
					( 2.89 , 0.4928616 )
					( 2.9 , 0.4928069 )
					( 2.91 , 0.4927491 )
					( 2.92 , 0.4926952 )
					( 2.93 , 0.4926382 )
					( 2.94 , 0.4925816 )
					( 2.95 , 0.4925288 )
					( 2.96 , 0.492473 )
					( 2.97 , 0.4924175 )
					( 2.98 , 0.4923658 )
					( 2.99 , 0.492311 )
					( 3.0 , 0.4922567 )
					( 3.01 , 0.492206 )
					( 3.02 , 0.4921523 )
					( 3.03 , 0.492099 )
					( 3.04 , 0.4920493 )
					( 3.05 , 0.4919967 )
					( 3.06 , 0.4919444 )
					( 3.07 , 0.4918958 )
					( 3.08 , 0.4918442 )
					( 3.09 , 0.4917929 )
					( 3.1 , 0.4917452 )
					( 3.11 , 0.4916945 )
					( 3.12 , 0.4916442 )
					( 3.13 , 0.4915942 )
					( 3.14 , 0.4915478 )
					( 3.15 , 0.4914984 )
					( 3.16 , 0.4914494 )
					( 3.17 , 0.4914038 )
					( 3.18 , 0.4913553 )
					( 3.19 , 0.4913072 )
					( 3.2 , 0.4912594 )
					( 3.21 , 0.491215 )
					( 3.22 , 0.4911677 )
					( 3.23 , 0.4911207 )
					( 3.24 , 0.4910741 )
					( 3.25 , 0.4910308 )
					( 3.26 , 0.4909847 )
					( 3.27 , 0.4909388 )
					( 3.28 , 0.4908933 )
					( 3.29 , 0.490848 )
					( 3.3 , 0.4908061 )
					( 3.31 , 0.4907613 )
					( 3.32 , 0.4907169 )
					( 3.33 , 0.4906727 )
					( 3.34 , 0.4906287 )
					( 3.35 , 0.4905881 )
					( 3.36 , 0.4905446 )
					( 3.37 , 0.4905015 )
					( 3.38 , 0.4904586 )
					( 3.39 , 0.4904159 )
					( 3.4 , 0.4903735 )
					( 3.41 , 0.4903343 )
					( 3.42 , 0.4902924 )
					( 3.43 , 0.4902507 )
					( 3.44 , 0.4902093 )
					( 3.45 , 0.4901681 )
					( 3.46 , 0.4901272 )
					( 3.47 , 0.4900893 )
					( 3.48 , 0.4900489 )
					( 3.49 , 0.4900086 )
					( 3.5 , 0.4899686 )
					( 3.51 , 0.4899288 )
					( 3.52 , 0.4898892 )
					( 3.53 , 0.4898499 )
					( 3.54 , 0.4898136 )
					( 3.55 , 0.4897746 )
					( 3.56 , 0.489736 )
					( 3.57 , 0.4896975 )
					( 3.58 , 0.4896592 )
					( 3.59 , 0.4896212 )
					( 3.6 , 0.4895833 )
					( 3.61 , 0.4895457 )
					( 3.62 , 0.489511 )
					( 3.63 , 0.4894738 )
					( 3.64 , 0.4894368 )
					( 3.65 , 0.4894 )
					( 3.66 , 0.4893634 )
					( 3.67 , 0.489327 )
					( 3.68 , 0.4892908 )
					( 3.69 , 0.4892547 )
					( 3.7 , 0.4892189 )
					( 3.71 , 0.4891833 )
					( 3.72 , 0.4891478 )
					( 3.73 , 0.4891153 )
					( 3.74 , 0.4890802 )
					( 3.75 , 0.4890453 )
					( 3.76 , 0.4890106 )
					( 3.77 , 0.4889761 )
					( 3.78 , 0.4889418 )
					( 3.79 , 0.4889077 )
					( 3.8 , 0.4888737 )
					( 3.81 , 0.4888399 )
					( 3.82 , 0.4888063 )
					( 3.83 , 0.4887728 )
					( 3.84 , 0.4887396 )
					( 3.85 , 0.4887065 )
					( 3.86 , 0.4886736 )
					( 3.87 , 0.4886408 )
					( 3.88 , 0.4886082 )
					( 3.89 , 0.4885758 )
					( 3.9 , 0.4885436 )
					( 3.91 , 0.4885141 )
					( 3.92 , 0.4884821 )
					( 3.93 , 0.4884504 )
					( 3.94 , 0.4884188 )
					( 3.95 , 0.4883873 )
					( 3.96 , 0.4883561 )
					( 3.97 , 0.4883249 )
					( 3.98 , 0.488294 )
					( 3.99 , 0.4882632 )
					( 4.0 , 0.4882325 )
				};
				\addplot[thick, color=red]
				coordinates{
					( 4 , 0.5 )
					( 4.02 , 0.4998856 )
					( 4.04 , 0.4997772 )
					( 4.06 , 0.499665 )
					( 4.08 , 0.4995588 )
					( 4.1 , 0.4994488 )
					( 4.12 , 0.4993398 )
					( 4.14 , 0.4992367 )
					( 4.16 , 0.4991298 )
					( 4.18 , 0.4990239 )
					( 4.2 , 0.498919 )
					( 4.22 , 0.4988199 )
					( 4.24 , 0.498717 )
					( 4.26 , 0.498615 )
					( 4.28 , 0.498514 )
					( 4.3 , 0.498414 )
					( 4.32 , 0.4983194 )
					( 4.34 , 0.4982212 )
					( 4.36 , 0.4981239 )
					( 4.38 , 0.4980274 )
					( 4.4 , 0.4979318 )
					( 4.42 , 0.4978371 )
					( 4.44 , 0.4977432 )
					( 4.46 , 0.4976502 )
					( 4.48 , 0.497558 )
					( 4.5 , 0.4974667 )
					( 4.52 , 0.4973761 )
					( 4.54 , 0.4972907 )
					( 4.56 , 0.4971974 )
					( 4.58 , 0.4971092 )
					( 4.6 , 0.4970217 )
					( 4.62 , 0.4969351 )
					( 4.64 , 0.4968491 )
					( 4.66 , 0.4967639 )
					( 4.68 , 0.4966795 )
					( 4.7 , 0.4965957 )
					( 4.72 , 0.4965127 )
					( 4.74 , 0.4964304 )
					( 4.76 , 0.4963487 )
					( 4.78 , 0.4962636 )
					( 4.8 , 0.4961833 )
					( 4.82 , 0.4961037 )
					( 4.84 , 0.4960248 )
					( 4.86 , 0.4959465 )
					( 4.88 , 0.4958648 )
					( 4.9 , 0.4957878 )
					( 4.92 , 0.4957114 )
					( 4.94 , 0.4956356 )
					( 4.96 , 0.4955565 )
					( 4.98 , 0.4954819 )
					( 5.0 , 0.495408 )
				};
			\end{axis}
		\end{tikzpicture}
		\caption{Plot of the {\color{blue} upper bound for $\mathscr{C}_p/p$} obtained in Corollary \ref{cor:C_p_2_4} and Corollary \ref{cor:C_p_4_5} and the {\color{red} upper bound for $\mathscr{C}_p/p$} obtained in \cite{Brevig} for $2 \leq p \leq 5$. The discontinuity in $p=4$ for the latter bound is a consequence of the power trick.}
		\label{fig:plot}
	\end{figure}
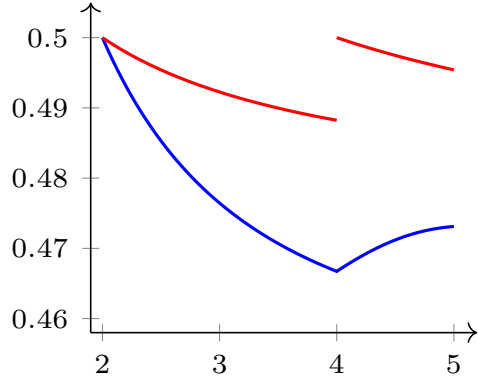

	\begin{theorem}\label{thm:new:E_p4to5}
		Fix $4 < p \leq 5$ and let $1/2\leq \delta_1 \leq 1/2+3/p$ and $1-2/p \leq \delta_2 \leq 1$. Then
		\begin{align*}
			\mathscr{E}_p (\delta_1, \delta_2) = \int_{0}^{2/p}\frac{\sin^2\frac{p}{2}\pi x}{\pi^2 x^2}dx &+ \sum_{n=0}^{\infty} \int_{n+4/p}^{n+1/2+3/p} \frac{\sin^2\frac{p}{2}\pi(x-n)}{\pi^2x^2}\, dx \\ &+ \sum_{n=0}^{\infty}\int_{n+1/2+3/p}^{n+1+2/p} \frac{\sin^2\frac{p}{2}\pi(x-n-1)}{\pi^2x^2} \, dx.
		\end{align*}
		The supremum is attained if and only if $\tau$ is the sequence given by $\tau_n=n-1/2+3/p$ for all $n \geq 1$.
	\end{theorem}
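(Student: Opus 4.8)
The plan is to exhibit the sequence $\tau^{*}$ defined by $\tau^{*}_n = n - \tfrac12 + \tfrac3p$, verify that it lies in $T(\delta_1,\delta_2)$ and realizes the value on the right-hand side, and then show that it is the only maximizer. I will use throughout the chain of inequalities
\[
0 < \tfrac2p < \tfrac12 < \tfrac4p < 1 < \tfrac12 + \tfrac3p < \tfrac6p ,
\]
which holds for $4 < p < 6$, and abbreviate $g_n(x) = \pi^{-2}x^{-2}\max\bigl(0,\sin\tfrac p2\pi(x-n)\bigr)^2$, so that $E_p(\tau) = \sum_{n \ge 0}\int_{\tau_n}^{\tau_{n+1}} g_n(x)\dx$.

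First I would compute $E_p(\tau^{*})$. Since $\tau^{*}_1 = \tfrac12+\tfrac3p \ge \delta_1$ and $\tau^{*}_{n+1}-\tau^{*}_n = 1 \ge \delta_2$, we have $\tau^{*}\in T(\delta_1,\delta_2)$. On $I^{*}_0 = (0,\tau^{*}_1)$ the function $\sin\tfrac p2\pi x$ is positive precisely on $(0,\tfrac2p)\cup(\tfrac4p,\tau^{*}_1)$, and on $I^{*}_n = (\tau^{*}_n,\tau^{*}_{n+1})$ with $n \ge 1$ the function $\sin\tfrac p2\pi(x-n)$ is positive precisely on $(\tau^{*}_n,\, n+\tfrac2p)\cup(n+\tfrac4p,\,\tau^{*}_{n+1})$; here one uses $x - n > -\tfrac12+\tfrac3p > 0$ on $I^{*}_n$, so only the humps with $k \ge 0$ in $\sin\tfrac p2\pi(x-n)$ enter. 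Splitting the contribution of each $I^{*}_n$, $n \ge 1$, into the integral over $(\tau^{*}_n, n+\tfrac2p)$ and the integral over $(n+\tfrac4p, \tau^{*}_{n+1})$, the second of these pieces makes up the second displayed sum, the first becomes the third displayed sum after the shift $n \mapsto n+1$, and $I^{*}_0$ contributes the first displayed term together with the $n=0$ term of the second sum; this identifies $E_p(\tau^{*})$ with the right-hand side.

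The heart of the matter is the inequality $E_p(\tau) \le E_p(\tau^{*})$ for every $\tau \in T(\delta_1,\delta_2)$. Since $T(\delta_1,\delta_2)\subseteq T(\tfrac12, 1-\tfrac2p)$ over the admissible range of $(\delta_1,\delta_2)$, it suffices to take $\delta_1 = \tfrac12$ and $\delta_2 = 1-\tfrac2p$. I would then estimate each term $\int_{\tau_n}^{\tau_{n+1}} g_n$ by recording which of the humps $J_{n,k} = \bigl(n+\tfrac{4k}p,\, n+\tfrac{4k+2}p\bigr)$, $k \in \bb{Z}$, of $\sin\tfrac p2\pi(x-n)$ meet $(\tau_n,\tau_{n+1})$. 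Because $\tau_{n+1}-\tau_n \ge 1-\tfrac2p$ is strictly less than a hump-plus-gap $\tfrac4p$ (as $p < 6$), each interval can meet only parts of two consecutive humps separated by one full gap, which is exactly the pattern realized by $I^{*}_n$. Using this together with the fact that the breakpoints $\tau_n$ partition $(0,\infty)$, I would then transfer the captured mass leftward: each (partial) hump picked up by some $I_n$ is matched with a (partial) hump picked up in the $\tau^{*}$-decomposition by an index no larger, and — since the weight $x^{-2}$ is decreasing — each match weakly increases the contribution; summing the resulting local bounds telescopes to $E_p(\tau^{*})$.

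The hard part will be that an interval need not lie near its natural location: if $I_0,\dots,I_{n-1}$ are short, then $\tau_n$ can be far below $n$, so $I_n$ may reach the high-weight left humps $J_{n,-\ell}$, $\ell \ge 1$, which for $p > 4$ persist down to $x$ near $0$. I would argue that these apparent gains are always cancelled: such leftward drift of $I_n$ is only possible after compressing $I_0,\dots,I_{n-1}$, and because $x^{-2}$ is \emph{strictly} decreasing the weight lost in the compressed intervals strictly exceeds the weight gained — quantifying this is the crux of the argument. The equality statement then follows by tracking when the estimates are tight: strict monotonicity of $x^{-2}$ forces every transfer to be an equality and pins each $\tau_n$ down to $\tau^{*}_n$. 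As a cross-check one can use the variational identity $\partial E_p/\partial \tau_k = g_{k-1}(\tau_k) - g_k(\tau_k)$ and the computation, valid on the overlap of $J_{k-1,1}$ and $J_{k,0}$ where both $g_{k-1}$ and $g_k$ are positive,
\[
g_{k-1}(x) - g_k(x) = \frac{\sin\!\bigl(\tfrac p2\pi(2x - 2k + 1)\bigr)\,\sin\tfrac p2\pi}{\pi^2 x^2},
\]
in which $\sin\tfrac p2\pi > 0$ for $4 < p \le 5$ while the first factor vanishes and changes sign from $+$ to $-$ exactly at $x = \tau^{*}_k$; this shows $\tau^{*}$ is the only stationary configuration and excludes a maximum on the boundary of $T(\delta_1,\delta_2)$.
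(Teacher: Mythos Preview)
Your computation of $E_p(\tau^{*})$ and the verification that $\tau^{*}\in T(\delta_1,\delta_2)$ are fine, and the variational identity you write down is correct and indeed singles out $\tau^{*}_k$ as the unique zero of $g_{k-1}-g_k$ on the overlap $J_{k-1,1}\cap J_{k,0}$. But the rest of the argument has real gaps.

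First, the sentence ``Because $\tau_{n+1}-\tau_n \ge 1-\tfrac2p$ is strictly less than a hump-plus-gap $\tfrac4p$ \dots\ each interval can meet only parts of two consecutive humps'' is based on the wrong inequality: $1-\tfrac2p$ is a \emph{lower} bound for $\tau_{n+1}-\tau_n$, not an upper bound. There is no a priori upper bound on the length of $(\tau_n,\tau_{n+1})$, so a single interval can meet arbitrarily many humps of $\sin\tfrac p2\pi(x-n)$. This invalidates the structural picture you build the transfer argument on.

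Second, the ``transfer the captured mass leftward'' step is not a proof; you say yourself that ``quantifying this is the crux of the argument'' and then do not quantify it. The difficulty is genuine: when $\tau_n$ drifts far below $n$, the interval $(\tau_n,\tau_{n+1})$ picks up humps $J_{n,-\ell}$ with large weight $x^{-2}$, and one must show that the losses incurred on earlier levels dominate these gains. A one-line appeal to strict monotonicity of $x^{-2}$ does not do this; you need a bookkeeping device that matches specific pieces of mass across levels and controls the error. The paper handles exactly this by introducing \emph{rays}: diagonal families $(I_{k,j})_{j\ge0}$ of humps across levels, decomposing $E_p(\tau)=\sum_k A_p(\tau;k)$, and then proving (Lemma~\ref{lem:4to6maxray}) that each ray contribution $A_p(\tau;k)$ is bounded by its value at $\tau^{*}$. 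This is where the separation hypotheses $\delta_1,\delta_2\ge 1-\tfrac2p$ enter: they force at most three consecutive terms of each ray-sum to be nonzero, which is what makes the comparison tractable. The remaining case $\tfrac12\le\tau_1<1-\tfrac2p$ (which your reduction to $\delta_1=\tfrac12$ creates) is dealt with by a separate explicit inequality (Lemma~\ref{lem:techinq4to5}) comparing $A_p(\tau;0)+A_p(\tau;1)$ for $\tau_1=\tfrac12$ against the same quantity for $\tau_1=\tfrac12+\tfrac3p$.

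Third, the variational ``cross-check'' does not close the argument either. It identifies $\tau^{*}$ as the unique critical point \emph{among sequences with every $\tau_k$ lying in $J_{k-1,1}\cap J_{k,0}$}, but says nothing when some $\tau_k$ lies outside that overlap, nor does it rule out a maximum on the boundary $\{\tau_1=\delta_1\}\cup\bigcup_n\{\tau_{n+1}-\tau_n=\delta_2\}$; the assertion that it ``excludes a maximum on the boundary'' is unsupported. In the paper this is replaced by the global reduction Lemma~\ref{lem:4to6nottolagrezeros} (one may assume $\tau_n\le n-\tfrac12+\tfrac3p$) together with the ray bound, which gives a direct inequality $E_p(\tau)\le E_p(\tau^{*})$ without any compactness or first-order analysis.
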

	We refer to Figure \ref{fig:p3lambdas} and Figure \ref{fig:p5lambdas} for illustrations of Theorem \ref{thm:new:Ep2to4} and Theorem \ref{thm:new:E_p4to5}. When proving Theorem \ref{thm:new:Ep2to4} we split the proof into the cases $2 \leq p \leq 3$ and $3<p \leq 4$, respectively. The former case is the easiest one, and here we obtain a more general result (see Theorem \ref{thm:E_p2to4:gensep}). The case $3<p<4$ requires some technical estimates. For $4 < p \leq 5$ the proof, as well as the result, takes a different form. This is partly explained by the fact that $K_p(\tau;x)$ behaves differently for $4<p<6$ than for $2<p<4$. We will see that the case $p=4$ is particularly simple. From Theorem \ref{thm:BCOS} we immediately obtain the following corollaries of Theorem \ref{thm:new:Ep2to4} and Theorem \ref{thm:new:E_p4to5}.
	
	\begin{corollary}\label{cor:C_p_2_4}
		If $2 \leq p \leq 4$, then
		\[\mathscr{C}_p \leq 2\sum_{n=0}^{\infty} \int_{n}^{n+2/p} \frac{\sin^2\frac{p}{2}\pi(x-n)}{\pi^2 x^2}\,dx.\]
	\end{corollary}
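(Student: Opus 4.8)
The plan is simply to chain together the two ingredients already at our disposal: the reduction in Theorem~\ref{thm:BCOS} and the explicit evaluation of the optimization constant in Theorem~\ref{thm:new:Ep2to4}. By Theorem~\ref{thm:BCOS}(a), for every $p$ with $2\le p\le 4$ we have $\mathscr{C}_p\le 2\,\mathscr{E}_p(2/\pi,2/3)$, so it suffices to compute $\mathscr{E}_p(2/\pi,2/3)$.

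For this I would apply Theorem~\ref{thm:new:Ep2to4} with $\delta_1=2/\pi$ and $\delta_2=2/3$. The only thing to check is that this pair lies in the admissible range, i.e. that $\min(2/\pi,2/p)\le 2/\pi\le 1$ and $2/3\le 2/3\le 1$. The first double inequality holds because $\min(2/\pi,2/p)\le 2/\pi$ for all $p$ (with equality when $p\le\pi$, and strict inequality coming from the $2/p$ term when $p>\pi$), while $2/\pi<1$; the second is immediate. Hence Theorem~\ref{thm:new:Ep2to4} gives
\[
\mathscr{E}_p(2/\pi,2/3)=\sum_{n=0}^{\infty}\int_{n}^{n+2/p}\frac{\sin^2\frac{p}{2}\pi(x-n)}{\pi^2x^2}\,dx,
\]
and substituting this into the inequality from Theorem~\ref{thm:BCOS}(a) yields exactly the claimed bound.

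Since the statement is a formal consequence of two results available to us, there is no real obstacle; the proof is one line once Theorem~\ref{thm:new:Ep2to4} is in hand. The one point worth a sentence of care is the compatibility of the separation constants: the values $2/\pi$ and $2/3$ are precisely the zero-separation bounds for $\varphi_p$ established in \cite{Brevig} for $2\le p\le 4$, and they sit inside the window $[\min(2/\pi,2/p),1]\times[2/3,1]$ on which Theorem~\ref{thm:new:Ep2to4} is valid, so no loss is incurred in passing from the separation estimates to the evaluation of $\mathscr{E}_p$.
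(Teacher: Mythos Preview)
Your proposal is correct and is exactly the argument the paper has in mind: the corollary is stated as an immediate consequence of Theorem~\ref{thm:BCOS}(a) combined with Theorem~\ref{thm:new:Ep2to4}, and your verification that $(\delta_1,\delta_2)=(2/\pi,2/3)$ lies in the admissible range is the only thing that needs checking.
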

	
	\begin{corollary}\label{cor:C_p_4_5}
		If $4 \leq p \leq 5$, then \begin{align*}
			\mathscr{C}_p \leq 2\int_{0}^{2/p}\frac{\sin^2\frac{p}{2}\pi x}{\pi^2 x^2}\,dx &+ 2\sum_{n=0}^{\infty} \int_{n+4/p}^{n+1/2+3/p} \frac{\sin^2\frac{p}{2}\pi(x-n)}{\pi^2x^2}\,dx \\ &+ 2\sum_{n=0}^{\infty}\int_{n+1/2+3/p}^{n+1+2/p} \frac{\sin^2\frac{p}{2}\pi(x-n-1)}{\pi^2x^2}\,dx.
		\end{align*}
	\end{corollary}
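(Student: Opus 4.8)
The plan is to read off Corollary~\ref{cor:C_p_4_5} directly from Theorem~\ref{thm:BCOS} and Theorem~\ref{thm:new:E_p4to5}, with a short separate check at the endpoint $p=4$.

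For $4<p\le 5$ I would apply Theorem~\ref{thm:BCOS}(b), which gives $\mathscr{C}_p \le 2\mathscr{E}_p(1/2,3/5)$, and then invoke Theorem~\ref{thm:new:E_p4to5} with $\delta_1=1/2$ and $\delta_2=3/5$. The only point requiring verification is that this pair lies in the admissible region $1/2\le\delta_1\le 1/2+3/p$, $1-2/p\le\delta_2\le 1$: the condition on $\delta_1$ holds trivially, while $1-2/p\le 3/5\le 1$ amounts to $p\le 5$. Thus, precisely on the range $4<p\le 5$, substituting these values into the formula of Theorem~\ref{thm:new:E_p4to5} identifies $\mathscr{E}_p(1/2,3/5)$ with the bracketed three-term expression in the statement, and the factor $2$ yields the claim. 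It is worth flagging that this compatibility of the fixed separation constants $1/2$ and $3/5$ from \cite{Brevig} with the hypotheses of Theorem~\ref{thm:new:E_p4to5} is exactly what forces the cut-off at $p=5$.

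The endpoint $p=4$ is not covered by Theorem~\ref{thm:BCOS}(b), so I would treat it via Theorem~\ref{thm:BCOS}(a) and Theorem~\ref{thm:new:Ep2to4}. Since $\min(2/\pi,2/4)=1/2\le 2/\pi\le 1$ and $2/3\le 2/3\le 1$, Theorem~\ref{thm:new:Ep2to4} at $p=4$ gives $\mathscr{E}_4(2/\pi,2/3)=\sum_{n\ge 0}\int_n^{n+1/2}\frac{\sin^2 2\pi(x-n)}{\pi^2x^2}\,dx$, and Theorem~\ref{thm:BCOS}(a) gives $\mathscr{C}_4\le 2\mathscr{E}_4(2/\pi,2/3)$. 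A short re-indexing — setting $m=n+1$ in the two infinite sums appearing in the statement at $p=4$, using $\sin^2 2\pi(x-n)=\sin^2 2\pi(x-n-1)=\sin^2 2\pi x$, and merging the integrals over $(m,m+1/4)$ and $(m+1/4,m+1/2)$ with the leading term over $(0,1/2)$ — shows that the $p=4$ specialization of the right-hand side of the corollary equals $2\sum_{n\ge 0}\int_n^{n+1/2}\frac{\sin^2 2\pi x}{\pi^2x^2}\,dx$, which matches. (Equivalently, at $p=4$ the bound coincides with that of Corollary~\ref{cor:C_p_2_4}.)

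I do not anticipate any real difficulty: the corollary is a formal consequence of the two main theorems, and the only things to be careful about are the admissibility check for $(\delta_1,\delta_2)=(1/2,3/5)$ and the bookkeeping that glues the two piecewise formulas together at the common value $p=4$.
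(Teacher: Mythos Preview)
Your proposal is correct and follows essentially the same approach as the paper, which simply states that the corollary is an immediate consequence of Theorem~\ref{thm:BCOS} and Theorem~\ref{thm:new:E_p4to5}. Your treatment is in fact more careful than the paper's one-line justification: you explicitly verify the admissibility of $(\delta_1,\delta_2)=(1/2,3/5)$ in Theorem~\ref{thm:new:E_p4to5} (which is exactly what pins the range to $p\le 5$), and you handle the endpoint $p=4$ separately via Theorem~\ref{thm:BCOS}(a) and Theorem~\ref{thm:new:Ep2to4}, confirming by a direct re-indexing that the two bounds agree there --- a point the paper only alludes to when it remarks that the bound is continuous at $p=4$.
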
	
	
	\begin{figure}
		\centering
		\begin{tikzpicture}[scale=1.5]
			\def\p{3}
			\def\s{6}
			\begin{axis}[
				axis equal image,
				axis lines = none,
				trig format plots=rad]
				
				\node at (axis cs: -1,0) {$\scriptstyle 0$};
				\node at (axis cs: -1,-3) {$\scriptstyle 1$};
				\node at (axis cs: -1,-6) {$\scriptstyle 2$};
				\node at (axis cs: -1,-9) {$\scriptstyle 3$};
				\node at (axis cs: -1,-12) {$\scriptstyle 4$};
				
				\addplot[thin, name path=b1] coordinates {(0,0) (4,0)};
				\addplot[domain=0:4, samples=100, color=black!75, thin, name path=t1] ({x},{sin(pi*x*\p/(2*\s))*sin(pi*x*\p/(2*\s))});
				\addplot[red!50] fill between [of=b1 and t1];
				
				\addplot[thin] coordinates {(8,0) (12,0)};
				\addplot[domain=8:12, samples=100, color=black!75, thin] ({x},{sin(pi*x*\p/(2*\s))*sin(pi*x*\p/(2*\s))});
				
				\addplot[thin] coordinates {(16,0) (20,0)};
				\addplot[domain=16:20, samples=100, color=black!75, thin] ({x},{sin(pi*x*\p/(2*\s))*sin(pi*x*\p/(2*\s))});
				
				\addplot[thin] coordinates {(24,0) (28,0)};
				\addplot[domain=24:28, samples=100, color=black!75, thin] ({x},{sin(pi*x*\p/(2*\s))*sin(pi*x*\p/(2*\s))});

				\addplot[thin] coordinates {(0,-3) (2,-3)};
				\addplot[domain=0:2, samples=100, color=black!75, thin] ({x},{sin(pi*(x-\s)*\p/(2*\s))*sin(pi*(x-\s)*\p/(2*\s))-3});
				
				\addplot[thin, name path=b2] coordinates {(6,-3) (10,-3)};
				\addplot[domain=6:10, samples=100, color=black!75, thin, name path=t2] ({x},{sin(pi*(x-\s)*\p/(2*\s))*sin(pi*(x-\s)*\p/(2*\s))-3});
				\addplot[red!50] fill between [of=b2 and t2];
				
				\addplot[thin] coordinates {(14,-3) (18,-3)};
				\addplot[domain=14:18, samples=100, color=black!75, thin] ({x},{sin(pi*(x-\s)*\p/(2*\s))*sin(pi*(x-\s)*\p/(2*\s))-3});
				
				\addplot[thin] coordinates {(22,-3) (26,-3)};
				\addplot[domain=22:26, samples=100, color=black!75, thin] ({x},{sin(pi*(x-\s)*\p/(2*\s))*sin(pi*(x-\s)*\p/(2*\s))-3});

				\addplot[thin] coordinates {(4,-6) (8,-6)};
				\addplot[domain=4:8, samples=100, color=black!75, thin] ({x},{sin(pi*(x-2*\s)*\p/(2*\s))*sin(pi*(x-2*\s)*\p/(2*\s))-6});

				\addplot[thin, name path=b3] coordinates {(12,-6) (16,-6)};
				\addplot[domain=12:16, samples=100, color=black!75, thin, name path=t3] ({x},{sin(pi*(x-2*\s)*\p/(2*\s))*sin(pi*(x-2*\s)*\p/(2*\s))-6});
				\addplot[red!50] fill between [of=b3 and t3];

				\addplot[thin] coordinates {(20,-6) (24,-6)};
				\addplot[domain=20:24, samples=100, color=black!75, thin] ({x},{sin(pi*(x-2*\s)*\p/(2*\s))*sin(pi*(x-2*\s)*\p/(2*\s))-6});
				
				\addplot[thin] coordinates {(2,-9) (6,-9)};
				\addplot[domain=2:6, samples=100, color=black!75, thin] ({x},{sin(pi*(x-3*\s)*\p/(2*\s))*sin(pi*(x-3*\s)*\p/(2*\s))-9});
				
				\addplot[thin] coordinates {(10,-9) (14,-9)};
				\addplot[domain=10:14, samples=100, color=black!75, thin] ({x},{sin(pi*(x-3*\s)*\p/(2*\s))*sin(pi*(x-3*\s)*\p/(2*\s))-9});
				
				\addplot[thin, name path=b4] coordinates {(18,-9) (22,-9)};
				\addplot[domain=18:22, samples=100, color=black!75, thin, name path=t4] ({x},{sin(pi*(x-3*\s)*\p/(2*\s))*sin(pi*(x-3*\s)*\p/(2*\s))-9});
				\addplot[red!50] fill between [of=b4 and t4];
				
				\addplot[thin] coordinates {(26,-9) (28,-9)};
				\addplot[domain=26:28, samples=100, color=black!75, thin] ({x},{sin(pi*(x-3*\s)*\p/(2*\s))*sin(pi*(x-3*\s)*\p/(2*\s))-9});
				
				\addplot[thin] coordinates {(0,-12) (4,-12)};
				\addplot[domain=0:4, samples=100, color=black!75, thin] ({x},{sin(pi*(x-4*\s)*\p/(2*\s))*sin(pi*(x-4*\s)*\p/(2*\s))-12});
				\addplot[thin] coordinates {(8,-12) (12,-12)};
				\addplot[domain=8:12, samples=100, color=black!75, thin] ({x},{sin(pi*(x-4*\s)*\p/(2*\s))*sin(pi*(x-4*\s)*\p/(2*\s))-12});
				\addplot[thin] coordinates {(16,-12) (20,-12)};
				\addplot[domain=16:20, samples=100, color=black!75, thin] ({x},{sin(pi*(x-4*\s)*\p/(2*\s))*sin(pi*(x-4*\s)*\p/(2*\s))-12});
				\addplot[thin] coordinates {(24,-12) (28,-12)};
				\addplot[domain=24:28, samples=100, color=black!75, thin] ({x},{sin(pi*(x-4*\s)*\p/(2*\s))*sin(pi*(x-4*\s)*\p/(2*\s))-12});
				
				\addplot[thin, name path=b5] coordinates {(24,-12) (28,-12)};
				\addplot[domain=24:28, samples=100, color=black!75, thin, name path=t5] ({x},{sin(pi*(x-4*\s)*\p/(2*\s))*sin(pi*(x-4*\s)*\p/(2*\s))-12});
				\addplot[red!50] fill between [of=b5 and t5];
				
				\addplot[thin,color=blue,->] coordinates {(4,0+0.0) (4,-3-0.0)};
				\addplot[thin,color=blue,->] coordinates {(10,-3+0.0) (10,-6-0.0)};
				\addplot[thin,color=blue,->] coordinates {(16,-6+0.0) (16,-9-0.0)};
				\addplot[thin,color=blue,->] coordinates {(22,-9+0.0) (22,-12-0.0)};
			\end{axis}
		\end{tikzpicture}
		\caption{An illustration of Theorem \ref{thm:new:Ep2to4} for $p=3$, with the sequence $\CB{\lambda}$ given by $\lambda_n=n-1/3$. The \CR{shaded} area represents $\mathscr{E}_3(2/\pi, 2/3)=E_3(\lambda)$ without considering the integrand factor $1/(\pi x)^2$.}
		\label{fig:p3lambdas}
		
		\centering
		\begin{tikzpicture}[scale=1.5]
			\def\p{5}
			\def\s{10}
			\begin{axis}[
				axis equal image,
				axis lines = none,
				trig format plots=rad]
				
				\node at (axis cs: -1,0) {$\scriptstyle 0$};
				\node at (axis cs: -1,-3) {$\scriptstyle 1$};
				\node at (axis cs: -1,-6) {$\scriptstyle 2$};
				\node at (axis cs: -1,-9) {$\scriptstyle 3$};
				\node at (axis cs: -1,-12) {$\scriptstyle 4$};
				
				\addplot[thin, name path=b1] coordinates {(0,0) (4,0)};
				\addplot[domain=0:4, samples=100, color=black!75, thin, name path=t1] ({x},{sin(pi*x*\p/(2*\s))*sin(pi*x*\p/(2*\s))});
				\addplot[red!50] fill between [of=b1 and t1];
				
				\addplot[thin, name path=b11test] coordinates {(8,0) (11,0)};
				\addplot[domain=8:11, samples=100, color=black!75, thin, name path=t11test] ({x},{sin(pi*x*\p/(2*\s))*sin(pi*x*\p/(2*\s))});
				
				\addplot[thin, name path=b11] coordinates {(11,0) (12,0)};
				\addplot[domain=11:12, samples=100, color=black!75, thin, name path=t11] ({x},{sin(pi*x*\p/(2*\s))*sin(pi*x*\p/(2*\s))});
				\addplot[red!50] fill between [of=b11test and t11test];
				
				\addplot[thin] coordinates {(16,0) (20,0)};
				\addplot[domain=16:20, samples=100, color=black!75, thin] ({x},{sin(pi*x*\p/(2*\s))*sin(pi*x*\p/(2*\s))});
				
				\addplot[thin] coordinates {(24,0) (28,0)};
				\addplot[domain=24:28, samples=100, color=black!75, thin] ({x},{sin(pi*x*\p/(2*\s))*sin(pi*x*\p/(2*\s))});

				\addplot[thin] coordinates {(2,-3) (6,-3)};
				\addplot[domain=2:6, samples=100, color=black!75, thin] ({x},{sin(pi*(x-\s)*\p/(2*\s))*sin(pi*(x-\s)*\p/(2*\s))-3});
				
				\addplot[thin, name path=b2] coordinates {(11,-3) (14,-3)};
				\addplot[domain=11:14, samples=100, color=black!75, thin, name path=t2] ({x},{sin(pi*(x-\s)*\p/(2*\s))*sin(pi*(x-\s)*\p/(2*\s))-3});
				\addplot[thin] coordinates {(10,-3) (11,-3)};
				\addplot[domain=10:11, samples=100, color=black!75, thin] ({x},{sin(pi*(x-\s)*\p/(2*\s))*sin(pi*(x-\s)*\p/(2*\s))-3});
				\addplot[red!50] fill between [of=b2 and t2];
				
				\addplot[thin, name path=b22] coordinates {(18,-3) (21,-3)};
				\addplot[domain=18:21, samples=100, color=black!75, thin, name path=t22] ({x},{sin(pi*(x-\s)*\p/(2*\s))*sin(pi*(x-\s)*\p/(2*\s))-3});
				\addplot[thin] coordinates {(21,-3) (22,-3)};
				\addplot[domain=21:22, samples=100, color=black!75, thin] ({x},{sin(pi*(x-\s)*\p/(2*\s))*sin(pi*(x-\s)*\p/(2*\s))-3});
				\addplot[red!50] fill between [of=b22 and t22];
				
				\addplot[thin] coordinates {(26,-3) (30,-3)};
				\addplot[domain=26:30, samples=100, color=black!75, thin] ({x},{sin(pi*(x-\s)*\p/(2*\s))*sin(pi*(x-\s)*\p/(2*\s))-3});

				\addplot[thin] coordinates {(4,-6) (8,-6)};
				\addplot[domain=4:8, samples=100, color=black!75, thin] ({x},{sin(pi*(x-2*\s)*\p/(2*\s))*sin(pi*(x-2*\s)*\p/(2*\s))-6});
				
				\addplot[thin] coordinates {(12,-6) (16,-6)};
				\addplot[domain=12:16, samples=100, color=black!75, thin] ({x},{sin(pi*(x-2*\s)*\p/(2*\s))*sin(pi*(x-2*\s)*\p/(2*\s))-6});
				
				\addplot[thin, name path=b3] coordinates {(21,-6) (24,-6)};
				\addplot[domain=21:24, samples=100, color=black!75, thin, name path=t3] ({x},{sin(pi*(x-2*\s)*\p/(2*\s))*sin(pi*(x-2*\s)*\p/(2*\s))-6});
				\addplot[thin] coordinates {(20,-6) (21,-6)};
				\addplot[domain=20:21, samples=100, color=black!75, thin] ({x},{sin(pi*(x-2*\s)*\p/(2*\s))*sin(pi*(x-2*\s)*\p/(2*\s))-6});
				\addplot[red!50] fill between [of=b3 and t3];
				
				\addplot[thin, name path=b33] coordinates {(28,-6) (31,-6)};
				\addplot[domain=28:31, samples=100, color=black!75, thin, name path=t33] ({x},{sin(pi*(x-2*\s)*\p/(2*\s))*sin(pi*(x-2*\s)*\p/(2*\s))-6});
				
				\addplot[thin] coordinates {(31,-6) (32,-6)};
				\addplot[domain=31:32, samples=100, color=black!75, thin] ({x},{sin(pi*(x-2*\s)*\p/(2*\s))*sin(pi*(x-2*\s)*\p/(2*\s))-6});
				\addplot[red!50] fill between [of=b33 and t33];

				\addplot[thin] coordinates {(0,-9) (2,-9)};
				\addplot[domain=0:2, samples=100, color=black!75, thin] ({x},{sin(pi*(x-3*\s)*\p/(2*\s))*sin(pi*(x-3*\s)*\p/(2*\s))-9});
				
				\addplot[thin] coordinates {(6,-9) (10,-9)};
				\addplot[domain=6:10, samples=100, color=black!75, thin] ({x},{sin(pi*(x-3*\s)*\p/(2*\s))*sin(pi*(x-3*\s)*\p/(2*\s))-9});
				
				\addplot[thin] coordinates {(14,-9) (18,-9)};
				\addplot[domain=14:18, samples=100, color=black!75, thin] ({x},{sin(pi*(x-3*\s)*\p/(2*\s))*sin(pi*(x-3*\s)*\p/(2*\s))-9});
				
				\addplot[thin] coordinates {(22,-9) (26,-9)};
				\addplot[domain=22:26, samples=100, color=black!75, thin] ({x},{sin(pi*(x-3*\s)*\p/(2*\s))*sin(pi*(x-3*\s)*\p/(2*\s))-9});
				
				\addplot[thin, name path=b4] coordinates {(31,-9) (34,-9)};
				\addplot[domain=31:34, samples=100, color=black!75, thin, name path=t4] ({x},{sin(pi*(x-3*\s)*\p/(2*\s))*sin(pi*(x-3*\s)*\p/(2*\s))-9});
				
				
				\addplot[thin] coordinates {(30,-9) (31,-9)};
				\addplot[domain=30:31, samples=100, color=black!75, thin] ({x},{sin(pi*(x-3*\s)*\p/(2*\s))*sin(pi*(x-3*\s)*\p/(2*\s))-9});
				\addplot[red!50] fill between [of=b4 and t4];

				\addplot[thin] coordinates {(0,-12) (4,-12)};
				\addplot[domain=0:4, samples=100, color=black!75, thin] ({x},{sin(pi*(x-4*\s)*\p/(2*\s))*sin(pi*(x-4*\s)*\p/(2*\s))-12});
				\addplot[thin] coordinates {(8,-12) (12,-12)};
				\addplot[domain=8:12, samples=100, color=black!75, thin] ({x},{sin(pi*(x-4*\s)*\p/(2*\s))*sin(pi*(x-4*\s)*\p/(2*\s))-12});
				\addplot[thin] coordinates {(16,-12) (20,-12)};
				\addplot[domain=16:20, samples=100, color=black!75, thin] ({x},{sin(pi*(x-4*\s)*\p/(2*\s))*sin(pi*(x-4*\s)*\p/(2*\s))-12});
				\addplot[thin] coordinates {(24,-12) (28,-12)};
				\addplot[domain=24:28, samples=100, color=black!75, thin] ({x},{sin(pi*(x-4*\s)*\p/(2*\s))*sin(pi*(x-4*\s)*\p/(2*\s))-12});
				
				\addplot[thin] coordinates {(24,-12) (28,-12)};
				\addplot[domain=24:28, samples=100, color=black!75] ({x},{sin(pi*(x-4*\s)*\p/(2*\s))*sin(pi*(x-4*\s)*\p/(2*\s))-12});
				
				\addplot[thin] coordinates {(32,-12) (36,-12)};
				\addplot[domain=32:36, samples=100, color=black!75] ({x},{sin(pi*(x-4*\s)*\p/(2*\s))*sin(pi*(x-4*\s)*\p/(2*\s))-12});
				
				\addplot[thin,color=blue,->] coordinates {(11,0+0.0) (11,-3-0.0)};
				\addplot[thin,color=blue,->] coordinates {(21,-3+0.0) (21,-6-0.0)};
				\addplot[thin,color=blue,->] coordinates {(31,-6+0.0) (31,-9-0.0)};
			\end{axis}
		\end{tikzpicture}
		\caption{An illustration of Theorem \ref{thm:new:E_p4to5} for $p=5$, with the sequence $\CB{\lambda}$ given by $\lambda_n=n+1/10$. The \CR{shaded} area represents $\mathscr{E}_5(1/2, 3/5)=E_5(\lambda)$ without considering the integrand factor $1/(\pi x)^2$.}
		\label{fig:p5lambdas}
	\end{figure}
	
	What is shown in \cite[Theorem 1.1]{Brevig} is that
	\[\mathscr{C}_p  \leq 2 \left(\int_{0}^{2/p} \frac{\sin^2 \frac{p}{2} \pi x}{\pi^2 x^2} \, dx + \int_{1}^{\infty} \frac{\sin^2\frac{p}{2}\pi(x-1)}{\pi^2 x^2} \, dx \right)\]
	for $2 \leq p \leq 4$. Using the power trick, it is then shown that $\mathscr{C}_p < p/2$ for all $p>2$. This approach leads to a discontinuity in the upper bound at $p=4$. To the contrary, we obtain continuity at $p=4$ as we are determining the upper bound $\mathscr{C}_p \leq \mathscr{E}_p(1,1)$ for all $2 \leq p \leq 5$. A plot of the bound for $\mathscr{C}_p/p$ obtained from Corollary \ref{cor:C_p_2_4} and Corollary \ref{cor:C_p_4_5} compared with the bounds obtained in \cite{Brevig} can be seen in Figure 1. It is conjectured in \cite{Brevig} that $\mathscr{C}_p/p$ is decreasing for $p>2$ and the new improved bound for $2<p<4$ can be seen as further evidence in favour of this conjecture.
	
	Note that our strategy when studying $\mathscr{E}_p$ for $2 \leq p \leq 5$ is to consider all possible sequences of zeroes of $\varphi_p$ and then \emph{determining} $\mathscr{E}_p(\delta_1, \delta_2)$ by identifying those sequences which attain the supremum in \eqref{eq:optprob}. This approach is different from that in \cite[Section 6]{Brevig} for $2<p\leq 4$, where only a local analysis of the zeroes is performed. As a consequence, the strategy in \cite{Brevig} leads to an upper bound for $\mathscr{E}_p(2/\pi, 2/3)$ rather than determining it. We remark that without much work it is possible to determine the supremum $\mathscr{E}_p(2/\pi, 2/3)$ for $2<p<3$ from the proof of \cite[Theorem 1.1]{Brevig}. However, the proof we give in this paper is significantly less technical in this range.
	
	This paper is organized as follows: In Section \ref{sec:prelim} we show some preliminary results that are needed to prove the desired bounds for $\mathscr{C}_p$ in the range $2 \leq p \leq 5$. In Section \ref{sec:2to3} we prove Theorem \ref{thm:new:Ep2to4} in the range $2 \leq p \leq 3$. The proof in the range $3<p \leq 4$ is provided in Section \ref{sec:3to4}. Certain technical bounds needed for $3<p \leq 4$ are postponed to the appendix. Finally, Section \ref{sec:4to5} contains the proof of Theorem \ref{thm:new:E_p4to5}. 
	
	\section{Preliminaries}\label{sec:prelim}
	For each integer $n \geq 0$ we will as in \cite{Brevig} refer to the connected components of 
	\[\left\{x \in \bb{R} : \sin{\frac{p}{2} \pi (x-n)} >0 \right\}\]
	as \emph{intervals at level n}. Note that these intervals all have length $2/p$ and are of the form $(\xi, \xi+2/p)$ where $\xi=n-(4/p)j$ for some integer $j$. The value $E_p(\tau)$ is obtained by integrating the function \[x \mapsto \frac{\sin^2\frac{p}{2}\pi (x-n)}{\pi^2 x^2}\] over the intervals at level $n$ intersected with the interval $(\tau_n, \tau_{n+1})
	$ for each $n \geq 1$. See Figure \ref{fig:p3lambdas} and Figure \ref{fig:p5lambdas} for an illustration of this in connection to the statements of Theorem \ref{thm:new:Ep2to4} and Theorem \ref{thm:new:E_p4to5}. Here, and throughout this paper we will use the convention that $\tau_0=0$. When studying $E_p(\tau)$ we will compare the size of the contribution to $E_p(\tau)$ from different intervals, and we will frequently need the following simple observation from \cite{Brevig}.
	
	\begin{lemma}\label{lem:Brevig:lem6.5}
		Let $2 < p < 6$ and let $(\xi, \xi+2/p)$ be an interval at level $n$.  Then $\xi= n+(4/p)k$ for some integer $k$. Furthermore, the interval $(\xi, \xi+2/p)$ intersects exactly one interval at level $n+1$ and this interval is $(\xi+1-4/p, \xi+1-2/p)$. When $p=2$ and $p=6$ the endpoint of an interval at level $n$ is the endpoint of an interval at level $n+1$, and the interval $(\xi, \xi+2/p)$ at level $n$ does not intersect any interval at level $n+1$.
	\end{lemma}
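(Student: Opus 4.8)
The plan is to reduce the whole statement to the explicit description of the intervals together with one elementary inequality. The sine $\sin\frac{p}{2}\pi(x-n)$ is positive exactly when $\frac{p}{2}\pi(x-n)\in(2\pi k,(2k+1)\pi)$ for some $k\in\bb{Z}$, which rearranges to $x\in\bigl(n+\tfrac{4k}{p},\,n+\tfrac{4k}{p}+\tfrac2p\bigr)$. Thus the intervals at level $n$ are exactly the intervals $J_k^{n}:=\bigl(n+\tfrac{4k}{p},\,n+\tfrac{4k}{p}+\tfrac2p\bigr)$ with $k\in\bb{Z}$, which already gives that an interval $(\xi,\xi+2/p)$ at level $n$ must have $\xi=n+(4/p)k$; the intervals at level $n+1$ are these same intervals with $n$ replaced by $n+1$.

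Next I would record the trivial fact that two open intervals of equal length $L$ with left endpoints $a$ and $b$ meet if and only if $|a-b|<L$. Taking $L=2/p$, $a=n+4k/p$ and $b=n+1+4m/p$, this says that $J_k^{n}$ and $J_m^{n+1}$ intersect precisely when $|4(k-m)-p|<2$, i.e. when
\[
\frac{p-2}{4}<k-m<\frac{p+2}{4}.
\]
For $2<p<6$ this is an open interval of length $1$ whose endpoints lie in $(0,1)$ and $(1,2)$ respectively, hence it contains the single integer value $k-m=1$. So for each $J_k^n$ there is exactly one interval at level $n+1$ meeting it, namely $J_{k-1}^{n+1}$, whose left endpoint is $n+1+\tfrac{4(k-1)}{p}=\xi+1-\tfrac4p$ and whose right endpoint is $\xi+1-\tfrac2p$, as claimed.

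It remains to treat $p=2$ and $p=6$. Here the interval $\bigl(\tfrac{p-2}{4},\tfrac{p+2}{4}\bigr)$ equals $(0,1)$ or $(1,2)$ and contains no integer, so no interval at level $n+1$ meets the open interval $J_k^n$. For the endpoint claim I would evaluate at the two boundary values of $k-m$ where $|4(k-m)-p|=2$: for $p=2$ these are $k-m\in\{0,1\}$, and one checks directly that the right endpoint of $J_k^n$ equals the left endpoint of $J_k^{n+1}$ while the left endpoint of $J_k^n$ equals the right endpoint of $J_{k-1}^{n+1}$; the case $p=6$, where $k-m\in\{1,2\}$, is identical. Alternatively, for these two values one has $\sin\frac{p}{2}\pi(x-n)=(-1)^n\sin\frac{p}{2}\pi x$, so at every level the intervals have their endpoints in the fixed set $\tfrac2p\bb{Z}$.

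There is no real obstacle in this lemma: it is a bookkeeping statement, and its entire content is the inequality $|4(k-m)-p|<2$ together with the observation that an open interval of length one contains exactly one integer when its endpoints are non-integers and none when they are integers. The only point requiring a moment's care is the degenerate range $p\in\{2,6\}$, where ``intersect'' must be read for the open intervals and the endpoint coincidence recorded separately.
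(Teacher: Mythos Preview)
The paper does not supply its own proof of this lemma; it is quoted as a ``simple observation from \cite{Brevig}'' and used without further justification. Your argument is a correct and complete elementary verification: the explicit parametrisation $J_k^n=(n+4k/p,\,n+4k/p+2/p)$ is exactly how the intervals are described in the paper, and reducing the intersection question to the integer inequality $(p-2)/4<k-m<(p+2)/4$ is the natural route. The treatment of the boundary cases $p\in\{2,6\}$ is also correct. There is nothing to add.
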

	We note that $\xi-1+4/p > \xi$ when $2 < p < 4$, meaning that the interval at level $n+1$ is to the left of the interval at level $n$, whereas when $4<p<6$ the intersecting interval at level $n+1$ is to the right of the interval at level $n$. This can be seen in Figure \ref{fig:p3lambdas} and Figure \ref{fig:p5lambdas}, and is the main reason why the cases $2 \leq p \leq 4$ and $4 \leq p \leq 6$ are different. The case $p=4$ is special because then the intervals at level $n$ and level $n+1$ fully intersect. We will frequently apply Lemma \ref{lem:Brevig:lem6.5} in combination with the fact that $x \mapsto 1/x$ is a decreasing function when comparing the contribution to $E_p(\tau)$ from different intervals. 
	
	Note that if $(\xi, \xi+2/p)$ is an interval at level $n$ then this interval intersects exactly one interval at level $n-1$ and this interval is $(\xi-1+4/p, \xi-1+6/p)$. We will also refer to Lemma \ref{lem:Brevig:lem6.5} when we are considering intervals at level $n-1$ and $n$. 
	
	Let $(\xi, \xi + 2/p)$ be an interval at level $n$ and let $2 \leq p \leq 4$. The for $\tau_{n+1}$ in the interval $[\xi+1-4/p, \xi+1]$ we define
	\begin{equation}\label{eq:defS}
		\begin{split}
			S(\tau_{n+1})&=\int_{\xi}^{\xi+2/p} \chi_{(\xi, \tau_{n+1})}(x) \frac{\sin^2 \frac{p}{2} (x-n)}{\pi^2 x^2 } \, dx  \\  & + \int_{\xi+1-4/p}^{\xi+1-2/p} \chi_{(\tau_{n+1}, \xi+1-2/p)}(x) \frac{\sin^2\frac{p}{2} \pi (x-n-1)}{\pi^2 x^2} \, dx.
		\end{split}
	\end{equation}
	
	See Figure \ref{fig:Sdef2to4} for an illustration of this definition. If $b \leq a$ we consider $\chi_{(a,b)}$ to be the zero function. Then using Lemma \ref{lem:Brevig:lem6.5} and the fact that $x \mapsto 1/x$ is a decreasing function we observe the following.
	
	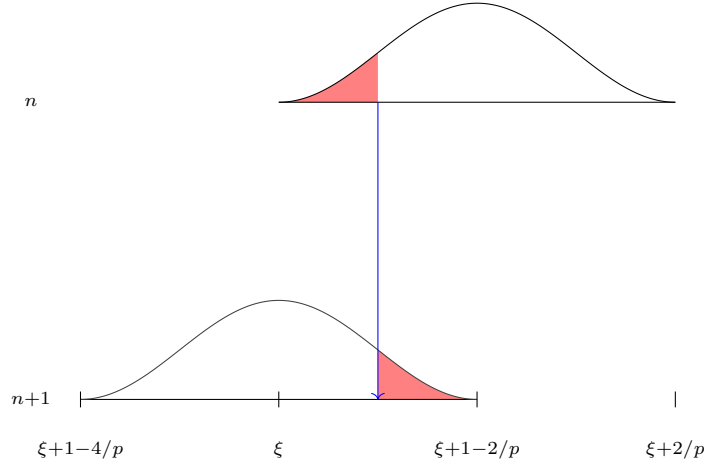
\begin{figure}
		\centering
		\begin{tikzpicture}[scale=1.5]
			\def\p{3}
			\def\s{6}
			\begin{axis}[
				axis equal image,
				axis lines = none,
				trig format plots=rad]
				
				\node at (axis cs: 5.5,0) {$\scriptstyle n$};
				\node at (axis cs: 5.5,-3) {$\scriptstyle n+1$};
				
				\addplot[thin, name path=t0] coordinates {(8,0) (9,0)};
				\addplot[domain=8:9, samples=100, thin, name path=b0] ({x},{sin(pi*x*\p/(2*\s))*(sin(pi*(x)*\p/(2*\s)))});
				\addplot[red!50] fill between [of=b0 and t0];
				
				\addplot[thin] coordinates {(9,0) (12,0)};
				\addplot[domain=9:12, samples=100, thin] ({x},{sin(pi*x*\p/(2*\s))*(sin(pi*(x)*\p/(2*\s)))});
				
				\addplot[thin] coordinates {(6,-3) (9,-3)};
				\addplot[domain=6:9, samples=100, color=black!75, thin] ({x},{(sin(pi*(x-\s)*\p/(2*\s)))*(sin(pi*(x-\s)*\p/(2*\s)))-3});
				
				\addplot[thin, name path=b2] coordinates {(9,-3) (10,-3)};
				\addplot[domain=9:10, samples=100, color=black!75, thin, name path=t2] ({x},{(sin(pi*(x-\s)*\p/(2*\s)))*(sin(pi*(x-\s)*\p/(2*\s)))-3});
				\addplot[red!50] fill between [of=b2 and t2];
				
				\addplot[thin, color=black!0] coordinates {(5.5,-3.6) (9,-3.6)};
				\addplot[thin,color=blue,->] coordinates {(9,0) (9,-3)};
				\addplot[only marks,mark=|,color=black,mark size=2pt] coordinates {(6,-3) (8,-3) (10,-3) (12,-3)};
				\node at (axis cs: 6,-3.5) {$\scriptstyle \xi+1-4/p$};
				\node at (axis cs: 8,-3.5) {$\scriptstyle \xi$};
				\node at (axis cs: 10,-3.5) {$\scriptstyle \xi+1-2/p$};
				\node at (axis cs: 12,-3.5) {$\scriptstyle \xi+2/p$};
			\end{axis}
		\end{tikzpicture}
		\caption{An illustration of $\CR{S(\tau_{n+1})}$ for $\CB{\tau_{n+1}}=\xi+1/2-1/p$, without considering the integrand factor $1/(\pi x)^2$.}
		\label{fig:Sdef2to4}
	\end{figure}
	
	\begin{lemma}\label{lem:maximizeS}
		Let $2 \leq p \leq 4$ and let $(\xi, \xi+2/p)$ be an interval at level $n$. Let $\xi +1-4/p \leq \tau_{n+1} \leq \xi+1$. Then $S(\tau_{n+1})$ is maximized for $\tau_{n+1}=\xi+1-4/p$.
	\end{lemma}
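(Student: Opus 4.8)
The plan is to prove directly that $S(\xi+1-4/p)\ge S(\tau_{n+1})$ for every $\tau_{n+1}$ in $[\xi+1-4/p,\xi+1]$, by writing $S$ as a sum of two truncated integrals and comparing them after a translation that superimposes the two $\sin^2$--``bumps'' that occur in \eqref{eq:defS}.

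By Lemma~\ref{lem:Brevig:lem6.5} we may normalize the trigonometric factors: since $\xi=n+(4/p)k$ for some integer $k$ one has $\sin^2\frac p2\pi(x-n)=\sin^2\frac p2\pi(x-\xi)$, and the only interval at level $n+1$ meeting $(\xi,\xi+2/p)$ is $(\xi+1-4/p,\xi+1-2/p)$, on which $\sin^2\frac p2\pi(x-n-1)=\sin^2\frac p2\pi\bigl(x-(\xi+1-4/p)\bigr)$. With $g(t)=\sin^2\frac p2\pi t$ for $t\in[0,2/p]$, the definition \eqref{eq:defS} becomes $S(\tau_{n+1})=A(\tau_{n+1})+B(\tau_{n+1})$, where $A(\tau_{n+1})=\int_{\xi}^{\min(\tau_{n+1},\,\xi+2/p)}g(x-\xi)\,(\pi x)^{-2}\,dx$ is the part of the level-$n$ bump left of $\tau_{n+1}$ and $B(\tau_{n+1})=\int_{\max(\tau_{n+1},\,\xi+1-4/p)}^{\xi+1-2/p}g\bigl(x-(\xi+1-4/p)\bigr)(\pi x)^{-2}\,dx$ is the part of the level-$(n+1)$ bump right of $\tau_{n+1}$, an integral over an empty interval being $0$. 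Since $2\le p\le 4$ gives $1-4/p\le 0$, we have $\xi+1-4/p\le\xi$, hence $A(\xi+1-4/p)=0$ and $S(\xi+1-4/p)=B(\xi+1-4/p)$ equals the full mass of the level-$(n+1)$ bump. Thus it suffices to show $A(\tau_{n+1})\le C(\tau_{n+1})$, where $C(\tau_{n+1}):=B(\xi+1-4/p)-B(\tau_{n+1})=\int_{\xi+1-4/p}^{\min(\tau_{n+1},\,\xi+1-2/p)}g\bigl(x-(\xi+1-4/p)\bigr)(\pi x)^{-2}\,dx\ge 0$.

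The inequality is trivial when $\tau_{n+1}\le\xi$, since then $A(\tau_{n+1})=0$. When $\tau_{n+1}\ge\xi$, the decisive move is the translation $x=y-(1-4/p)$ in $A(\tau_{n+1})$, chosen so that $g(x-\xi)=g\bigl(y-(\xi+1-4/p)\bigr)$ and the lower limit $x=\xi$ goes to $y=\xi+1-4/p$; it recasts $A(\tau_{n+1})$ as $\int_{\xi+1-4/p}^{\min(\tau_{n+1}+1-4/p,\,\xi+1-2/p)}g\bigl(y-(\xi+1-4/p)\bigr)\bigl(\pi(y-1+4/p)\bigr)^{-2}\,dy$. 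Now $1-4/p\le 0$ makes two things happen simultaneously: the weight improves, since $y-1+4/p\ge y>0$ gives $(y-1+4/p)^{-2}\le y^{-2}$; and the domain shrinks, since $\tau_{n+1}+1-4/p\le\tau_{n+1}$ keeps the upper limit at most $\min(\tau_{n+1},\,\xi+1-2/p)$. As $g\ge 0$, combining these gives $A(\tau_{n+1})\le\int_{\xi+1-4/p}^{\min(\tau_{n+1},\,\xi+1-2/p)}g\bigl(y-(\xi+1-4/p)\bigr)(\pi y)^{-2}\,dy=C(\tau_{n+1})$. The boundary case $\tau_{n+1}\ge\xi+2/p$, where $A$ is the full level-$n$ mass, is automatically covered and amounts to the statement that the level-$(n+1)$ bump carries at least as much mass as the level-$n$ bump.

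I do not anticipate a genuine obstacle; the delicate points are purely bookkeeping. One has to keep track of the truncation points (the several $\min$'s and the empty-interval convention), confirm that the translation sends the support $[\xi,\xi+2/p]$ of the level-$n$ bump onto the support $[\xi+1-4/p,\xi+1-2/p]$ of the level-$(n+1)$ bump (which uses $2/p\le 1$ and $1-2/p\le 2/p$, valid for $2\le p\le 4$), and verify that every denominator occurring is strictly positive so that the monotonicity of $x\mapsto x^{-2}$ applies; this holds since the intervals entering $S$ lie in $(0,\infty)$.
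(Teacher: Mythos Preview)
Your proof is correct and takes essentially the same approach as the paper, which merely says ``using Lemma~\ref{lem:Brevig:lem6.5} and the fact that $x\mapsto 1/x$ is a decreasing function we observe the following.'' Your translation $x\mapsto x+(1-4/p)$ is precisely the formalization of that sentence: it superimposes the level-$n$ bump onto the level-$(n+1)$ bump (which lies to its left for $2\le p\le 4$), and then the monotonicity of the weight and the non-negative shift of the upper limit give $A(\tau_{n+1})\le C(\tau_{n+1})$; the bookkeeping with the $\min$'s and the empty-interval convention is handled carefully.
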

	Let $(\xi, \xi+2/p)$ be an interval at level $n$ and let $(\xi+1-4/p, \xi+1-2/p)$ be the intersecting interval at level $n+1$. Then for $2<p<6$ we will denote the midpoint of the intersection as
	\begin{equation}\label{eq:defmxi}
		m_{\xi}=\xi+1/2-1/p.
	\end{equation}
	We also extend this definition to $p=2$ and $p=6$ and will also refer to $m_\xi$ as the midpoint here, even though the intervals do not intersect. We will see that the midpoint will be important in our analysis of $E_p(\tau)$ for different sequences $\tau$. When $2 \leq p \leq 4$ this is mainly due to the following simple observation.
	
	\begin{lemma}\label{lem:midpointlem}
		Assume $2 \leq p \leq 4$. Let $(\xi, \xi+2/p)$ be an interval at level $n$. Then $S(\tau_{n+1})$ is decreasing for $\tau_{n+1}$ in $(\xi+1-4/p, m_{\xi})$ and increasing for $\tau_{n+1}$ in the interval $(m_\xi, \xi + 2/p)$. For $\tau_{n+1}$ in $(\xi+2/p, \xi+1)$ the value $S(\tau_{n+1})$ is constant.
	\end{lemma}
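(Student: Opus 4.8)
The plan is to differentiate $S$ directly from \eqref{eq:defS} using the fundamental theorem of calculus, since apart from the two characteristic functions each integrand is a fixed nonnegative function of $x$. By Lemma \ref{lem:Brevig:lem6.5} we may write $\xi = n + (4/p)k$ for some integer $k$, and for $2 \le p \le 4$ the relevant breakpoints are ordered as
\[
\xi + 1 - 4/p \ \le\ \xi \ \le\ m_\xi \ \le\ \xi + 1 - 2/p \ \le\ \xi + 2/p \ \le\ \xi + 1,
\]
(recall $m_\xi = \xi + 1/2 - 1/p$), with equalities occurring only when $p = 2$ or $p = 4$. I would prove the lemma by analysing $S$ on the intervals $(\xi + 1 - 4/p, \xi)$, $(\xi, \xi + 1 - 2/p)$, $(\xi + 1 - 2/p, \xi + 2/p)$, and $(\xi + 2/p, \xi + 1)$, the second of these being the one that requires care.

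On $(\xi + 1 - 4/p, \xi)$ the first characteristic function in \eqref{eq:defS} vanishes and the second integral equals $\int_{\tau_{n+1}}^{\xi + 1 - 2/p} \sin^2\!\tfrac p2\pi(x - n - 1)\,(\pi x)^{-2}\,dx$, which decreases in $\tau_{n+1}$ because $\tau_{n+1}$ lies in the interior of the interval $(\xi + 1 - 4/p, \xi + 1 - 2/p)$ at level $n + 1$. On $(\xi + 2/p, \xi + 1)$ the second characteristic function vanishes and the first integral is the constant $\int_\xi^{\xi + 2/p}\sin^2\!\tfrac p2\pi(x - n)\,(\pi x)^{-2}\,dx$, which already gives the last assertion. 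On $(\xi + 1 - 2/p, \xi + 2/p)$ the second integral is zero and the first is $\int_\xi^{\tau_{n+1}}\sin^2\!\tfrac p2\pi(x - n)\,(\pi x)^{-2}\,dx$, whose derivative $\sin^2\!\tfrac p2\pi(\tau_{n+1} - n)/(\pi\tau_{n+1})^2$ is positive since $\tau_{n+1}$ lies in the interior of the interval $(\xi, \xi + 2/p)$ at level $n$; hence $S$ is increasing there.

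The crux is the region $(\xi, \xi + 1 - 2/p)$, where both characteristic functions are active and
\[
S'(\tau_{n+1}) = \frac{1}{(\pi\tau_{n+1})^2}\Bigl(\sin^2\!\tfrac p2\pi(\tau_{n+1} - n) - \sin^2\!\tfrac p2\pi(\tau_{n+1} - n - 1)\Bigr).
\]
Writing $t = \tau_{n+1} - \xi$ and using $\xi = n + (4/p)k$, the two arguments reduce modulo $2\pi$ to $\tfrac p2\pi t$ and $\tfrac p2\pi(t - 1)$, so by the identity $\sin^2 A - \sin^2 B = \sin(A + B)\sin(A - B)$ the bracket becomes $\sin\!\bigl(p\pi(t - \tfrac12)\bigr)\sin(\tfrac p2\pi)$. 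For $2 < p < 4$ one has $\sin(\tfrac p2\pi) < 0$, and on $t \in (0, 1 - 2/p)$ the factor $\sin\!\bigl(p\pi(t - \tfrac12)\bigr)$ has exactly one zero, a simple one at $t = \tfrac12 - \tfrac1p$, i.e. at $\tau_{n+1} = m_\xi$; evaluating at a single point shows it is positive on $(0, \tfrac12 - \tfrac1p)$ and negative on $(\tfrac12 - \tfrac1p, 1 - 2/p)$. Hence $S' < 0$ on $(\xi, m_\xi)$ and $S' > 0$ on $(m_\xi, \xi + 1 - 2/p)$, and patching the four regions together yields the lemma.

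The only delicate point is the sign bookkeeping for $\sin\!\bigl(p\pi(t - \tfrac12)\bigr)$ and $\sin(\tfrac p2\pi)$ over $2 < p < 4$, and then confirming that the degenerate endpoints fit the statement: at $p = 2$ the interval at level $n + 1$ meets the one at level $n$ only at the point $\xi = m_\xi$ and the ``constant'' interval $(\xi + 2/p, \xi + 1)$ is empty; at $p = 4$ the two intervals coincide, the displayed identity forces $S' \equiv 0$, and $S$ is in fact constant on all of $[\xi + 1 - 4/p, \xi + 1]$, so ``decreasing'' and ``increasing'' hold in the weak sense --- exactly as needed, since Theorem \ref{thm:new:Ep2to4} asserts that for $p = 4$ the supremum in \eqref{eq:optprob} is attained by every admissible sequence.
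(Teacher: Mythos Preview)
Your proof is correct and follows essentially the same approach as the paper: both arguments rest on the sign of $S'(\tau_{n+1}) = \bigl(\sin^2\tfrac{p}{2}\pi(\tau_{n+1}-n) - \sin^2\tfrac{p}{2}\pi(\tau_{n+1}-n-1)\bigr)/(\pi\tau_{n+1})^2$, i.e.\ on the pointwise comparison of the two integrands. The paper simply asserts the two pointwise inequalities on $[\xi,m_\xi]$ and $[m_\xi,\xi+1-2/p]$, whereas you make the sign analysis explicit via $\sin^2 A - \sin^2 B = \sin(A+B)\sin(A-B)$ and also spell out the easy behaviour on the flanking subintervals $(\xi+1-4/p,\xi)$, $(\xi+1-2/p,\xi+2/p)$, $(\xi+2/p,\xi+1)$ and the degenerate endpoints $p=2,4$; this is more detailed than the paper but not a different idea.
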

	
	\begin{proof}
		This follows from the pointwise estimates 
		\[\sin^2\frac{p}{2} \pi (x-n) \leq \sin^2 \frac{p}{2} \pi (x-n-1), \] for $\xi \leq x \leq m_\xi$ and 
		\[\sin^2\frac{p}{2} \pi (x-n) \geq \sin^2\frac{p}{2} \pi (x-n-1), \] for $m_\xi \leq x \leq \xi+1-2/p$.
	\end{proof}
	\section{Proof of Theorem \ref{thm:new:Ep2to4} for \texorpdfstring{$2 \leq p \leq 3$}{2≤p≤3}.}\label{sec:2to3}
	In this section we prove the following result.
	\begin{theorem}\label{thm:E_p2to4:gensep}
		Fix $2\leq p \leq 4$ and let $1-2/p \leq \delta_1 \leq 1$ and $2-4/p \leq \delta_2 \leq 1$. Then \[\mathscr{E}_p(\delta_1, \delta_2)=\sum_{n=0}^{\infty} \int_{n}^{n+2/p} \frac{\sin^2\frac{p}{2}\pi(x-n)}{\pi^2 x^2}\dx.\]
		For $2 \leq p <4$ the supremum is attained if and only if $\tau$ in $T(\delta_1, \delta_2)$ is a sequence such that $\tau_n$ is in the interval $[n-1+2/p,n]$ for all $n \geq 1$. If $p=4$ the supremum is attained for all sequences $\tau$ in $T(\delta_1, \delta_2)$. 
	\end{theorem}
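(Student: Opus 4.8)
The plan is to make the value of $E_p(\tau)$ explicit and then compare an arbitrary admissible $\tau$ with the sequence $(n)_{n\ge 0}$, which lies in $T(\delta_1,\delta_2)$ because $\delta_1,\delta_2\le 1$. Since $K_p(\tau;\cdot)$ equals $\frac{\sin\frac p2\pi(x-n)}{\pi x}$ on $(\tau_n,\tau_{n+1})$ and $x>0$ there, one has $E_p(\tau)=\sum_{n\ge 0}c_n(\tau)$, where $c_n(\tau)$ is the integral of $x\mapsto\frac{\sin^2\frac p2\pi(x-n)}{\pi^2x^2}$ over $(\tau_n,\tau_{n+1})$ intersected with the intervals at level $n$, as in the discussion following Lemma~\ref{lem:Brevig:lem6.5}. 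The claimed value is exactly $E_p\big((n)_{n\ge 0}\big)$: on $(n,n+1)$ the only interval at level $n$ is $(n,n+2/p)$ (here $2/p\le 1\le 4/p$), so $c_n\big((n)_{n\ge 0}\big)=\int_n^{n+2/p}\frac{\sin^2\frac p2\pi(x-n)}{\pi^2x^2}\,dx$. The case $p=4$ is then immediate: there $\sin\frac p2\pi(x-n)=\sin 2\pi x$ for every integer $n$, hence $K_4(\tau;x)=\frac{\sin 2\pi x}{\pi x}$ for every $\tau$ (the intervals $(\tau_n,\tau_{n+1})$ tile $(0,\infty)$), so $E_4(\tau)$ is independent of $\tau$. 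From here on fix $2\le p<4$.

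Achievability is a direct check. If $\tau\in T(\delta_1,\delta_2)$ satisfies $\tau_n\in[n-1+2/p,n]$ for all $n\ge 1$, then $(\tau_n,\tau_{n+1})\supseteq(n,n+2/p)$, while $\tau_n\ge n-1+2/p\ge n-2/p$ (using $p\le 4$) and $\tau_{n+1}\le n+1<n+4/p$ (using $p<4$), so Lemma~\ref{lem:Brevig:lem6.5} shows $(\tau_n,\tau_{n+1})$ meets no interval at level $n$ besides $(n,n+2/p)$; the same holds for $n=0$ since $\tau_0=0$. Thus $c_n(\tau)=\int_n^{n+2/p}\frac{\sin^2\frac p2\pi(x-n)}{\pi^2x^2}\,dx$ for all $n$, and $E_p(\tau)$ equals the asserted sum.

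The heart of the matter is the upper bound $E_p(\tau)\le E_p\big((n)_{n\ge 0}\big)$ for every $\tau\in T(\delta_1,\delta_2)$, together with the claim that equality forces $\tau_n\in[n-1+2/p,n]$ for all $n$. I would first argue that it suffices to treat $\tau$ for which each $(\tau_n,\tau_{n+1})$ meets exactly one interval at level $n$: if $(\tau_n,\tau_{n+1})$ met two or more, then decreasing $\tau_{n+1}$ until it meets only one does not decrease $c_n(\tau)+c_{n+1}(\tau)$, as follows from Lemma~\ref{lem:Brevig:lem6.5} and the fact that $x\mapsto 1/x^2$ is decreasing. In this reduced situation let $(\xi_n,\xi_n+2/p)$ be the unique interval at level $n$ met by $(\tau_n,\tau_{n+1})$; then $\xi_0=0$, and combining Lemma~\ref{lem:Brevig:lem6.5} with the separations $\tau_1\ge\delta_1\ge 1-2/p$ and $\tau_{n+1}-\tau_n\ge\delta_2\ge 2-4/p$ leaves only two possibilities for consecutive terms: either the intervals $(\xi_n,\xi_n+2/p)$ and $(\xi_{n+1},\xi_{n+1}+2/p)$ are disjoint with $\tau_{n+1}$ in the gap between them, so that $c_n$ and $c_{n+1}$ collect these two intervals fully and independently, or $\xi_{n+1}=\xi_n+1-4/p$, the two intervals overlap, $\tau_{n+1}$ lies in $[\xi_{n+1},\xi_n+1]$, and the joint contribution of these two intervals is governed by the quantity $S(\tau_{n+1})$ of \eqref{eq:defS}. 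Lemma~\ref{lem:midpointlem} and Lemma~\ref{lem:maximizeS} then let me move each such $\tau_{n+1}$ to $\xi_{n+1}$ without decreasing $E_p(\tau)$---that is, resolve every overlap in favour of taking the (smaller-$x$, hence heavier) left interval in full---and after this realignment the only obstruction to sliding the whole sequence leftward is the pinned value $\tau_0=0$, which is precisely what caps $E_p(\tau)$ at $E_p\big((n)_{n\ge 0}\big)$. Tracking when the inequalities above are strict yields the characterization of equality.

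The step I expect to be the main obstacle is the combinatorial bookkeeping around this realignment: proving rigorously that the reduction to ``meets exactly one interval'' and the leftward realignment both terminate and never decrease $E_p$, controlling the chain $(\xi_n)$ all the way down to $n=0$ uniformly in $p\in[2,4)$ (with $p=2$ degenerate, since $\delta_2\ge 2-4/p$ is then vacuous), and extracting the equality case from Lemmas~\ref{lem:maximizeS} and \ref{lem:midpointlem}. An alternative to keep in reserve is to write $E_p(\tau)=\frac{2}{\pi^2}\int_0^\infty x^{-3}\Big(\int_0^x\big(\sin\tfrac p2\pi(t-\nu(t))\big)_+^2\,dt\Big)\,dx$, where $\nu(t)=n$ on $(\tau_n,\tau_{n+1})$, and to compare the inner primitive with $\int_0^x\big(\sin\tfrac p2\pi(t-\lfloor t\rfloor)\big)_+^2\,dt$, exploiting that $\big(\sin(\cdot)\big)_+^2$ has mean $\tfrac14\le\tfrac1p$; but turning this mean bound into a pointwise bound on primitives again requires the interleaving of intervals described by Lemma~\ref{lem:Brevig:lem6.5}.
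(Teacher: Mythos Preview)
Your handling of $p=4$ and the achievability check are correct and match the paper. The gap is in the upper bound: your ``realign then slide leftward'' step is a heuristic, and the intuition actually points the wrong way. After your realignment each level-$n$ contribution becomes a full integral $\int_{\xi_n}^{\xi_n+2/p}\frac{\sin^2\frac p2\pi(x-n)}{\pi^2x^2}\,dx$ with $\xi_n\le n$, and since $1/x^2$ is decreasing, smaller $\xi_n$ makes that term \emph{larger}---so there is no obvious reason the sum is bounded by the $\xi_n=n$ configuration. What prevents the sum from growing is that pushing some $\xi_n$ leftward forces other levels to contribute nothing, and your local two-level comparison via $S(\tau_{n+1})$ does not track this tradeoff. (Your dichotomy ``disjoint with $\tau_{n+1}$ in the gap'' versus ``$\xi_{n+1}=\xi_n+1-4/p$'' is also not exhaustive: nothing yet rules out $\xi_{n+1}\ge\xi_n+1+4/p$, nor the case where $(\tau_n,\tau_{n+1})$ meets no interval at level $n$.)

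The paper resolves this with a global device you are missing: the \emph{ray decomposition}. Set $\varrho=2-4/p$ and group the relevant intervals into rays $I_{k,j}=(k+j\varrho,\,k+j\varrho+2/p)$ at level $k+2j$. After the easy reduction $\tau_n\le n$ (Lemma~\ref{lem:tau_n_leq_n}), one has $E_p(\tau)=\sum_{k\ge 0} A_p(\tau;k)$ where $A_p(\tau;k)$ is the contribution from the $k$th ray (Lemma~\ref{lem:A(n)=Dp}). The key step (Lemma~\ref{lem:F(n)}) translates every interval in the $k$th ray back to $(k,k+2/p)$ via $x\mapsto x-j\varrho$; the hypothesis $\delta_2\ge\varrho$ is exactly what guarantees the translated indicators $\chi_{(\tau_{k+2j}-j\varrho,\,\tau_{k+2j+1}-j\varrho)}$ are pairwise disjoint, so their sum is bounded by $\chi_{(k,k+2/p)}$, and replacing $1/(x+j\varrho)^2$ by the larger $1/x^2$ only helps. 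This yields $A_p(\tau;k)\le\int_k^{k+2/p}\frac{\sin^2\frac p2\pi(x-k)}{\pi^2x^2}\,dx$ in one stroke, with equality precisely when $\tau_{k+1}\ge k+2/p$ (or $p=4$). The competition is not between adjacent levels $n$ and $n+1$ but along each ray, which is why Lemmas~\ref{lem:maximizeS} and~\ref{lem:midpointlem} alone cannot close the argument.
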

	
	We observe that $2-4/p \leq 2/3$ for $2 \leq p \leq 3$. Thus in the range $2 \leq p \leq 3$ Theorem \ref{thm:new:Ep2to4} follows immediately from Theorem \ref{thm:E_p2to4:gensep}. For the separation conditions  $1-2/p \leq \delta_1 \leq 1$ and $2-4/p \leq \delta_2 \leq 1$ let $\lambda$ denote the sequence given by $\lambda_0=0$ and $\lambda_n=n-1+2/p$ for all $n \geq 1$. We observe that $\lambda$ is a sequence in $T(\delta_1, \delta_2)$ and that 
	\[E_p(\lambda)=\sum_{n=0}^{\infty} \int_{n}^{n+\frac{2}{p}} \frac{\sin^2 \frac{p}{2}\pi(x-n)}{\pi^2 x^2} dx.\] Thus to prove Theorem \ref{thm:E_p2to4:gensep} it will be enough to show that $E_p(\tau) \leq E_p(\lambda)$ for all sequences $\tau $ in the set of sequences $T(\delta_1, \delta_2)$. Note that $E_p(\tau)=E_p(\lambda)$ if $n-2/p \leq \tau_n \leq n$ for each $n \geq 1$.
	The following lemma states that we do not need to consider sequences with large elements. 
	
	\begin{lemma}\label{lem:tau_n_leq_n}
		Let $2 \leq p \leq 4$. Assume $0 < \delta_1 \leq 1$ and $0 < \delta_2 \leq 1$.
		Let $\tau$ be a sequence in $T(\delta_1, \delta_2)$ and let $\gamma$ be the sequence given by $\gamma_n = \min(\tau_n, n)$. Then $\gamma$ is a sequence in $T(\delta_1, \delta_2)$ and $E_p(\gamma) \geq E_p(\tau)$. 
	\end{lemma}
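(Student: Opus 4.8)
The plan is to verify $\gamma\in T(\delta_1,\delta_2)$ directly, and then to obtain $E_p(\gamma)\ge E_p(\tau)$ by lowering the entries of $\tau$ to their indices one at a time. For membership we have $\gamma_0=\tau_0=0$ and $\gamma_1=\min(\tau_1,1)\ge\min(\delta_1,1)=\delta_1$ since $\delta_1\le 1$, so it remains to check $\gamma_{n+1}-\gamma_n\ge\delta_2$ for $n\ge 1$. If $\tau_{n+1}>n+1$ then $\gamma_{n+1}=n+1$ while $\gamma_n\le n$, so the gap is at least $1\ge\delta_2$; if $\tau_{n+1}\le n+1$ then $\gamma_{n+1}=\tau_{n+1}$, and the gap equals $\tau_{n+1}-\tau_n\ge\delta_2$ when $\tau_n\le n$, and equals $\tau_{n+1}-n\ge\tau_{n+1}-\tau_n\ge\delta_2$ when $\tau_n>n$.

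The heart of the matter is a single-entry move: suppose $\sigma\in T(\delta_1,\delta_2)$, $m\ge 1$, $\sigma_m>m$, and $\sigma_{m-1}\le m-1$, and let $\sigma'$ be $\sigma$ with its $m$-th entry replaced by $m$. Then $\sigma'\in T(\delta_1,\delta_2)$, because $m-\sigma_{m-1}\ge 1\ge\delta_2$ (and $m\ge\delta_1$ if $m=1$) and $\sigma_{m+1}-m\ge\sigma_{m+1}-\sigma_m\ge\delta_2$. Changing only $\sigma_m$ changes only the level-$(m-1)$ contribution to $E_p$, over $(\sigma_{m-1},\sigma_m)$, which drops its part inside $(m,\sigma_m)$, and the level-$m$ contribution, over $(\sigma_m,\sigma_{m+1})$, which gains its part inside $(m,\sigma_m)$. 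Writing $L_k=\{x:\sin\frac p2\pi(x-k)>0\}$, it thus suffices to show
\[
\int_{(m,\sigma_m)\cap L_m}\frac{\sin^2\frac p2\pi(x-m)}{\pi^2x^2}\,dx\ \ge\ \int_{(m,\sigma_m)\cap L_{m-1}}\frac{\sin^2\frac p2\pi(x-m+1)}{\pi^2x^2}\,dx .
\]
By Lemma \ref{lem:Brevig:lem6.5} the intervals at level $m$ inside $(m,\infty)$ are $(a_j,a_j+2/p)$ with $a_j=m+(4/p)j$ and $j\ge 0$, and the intervals at level $m-1$ inside $(m,\infty)$ are exactly their right-translates $(a_j+c,a_j+c+2/p)$ with $c=4/p-1\in[0,1]$. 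Matching the $j$-th bumps on the two sides and substituting $x=y+c$ in the right-hand integral---noting $\sin^2\frac p2\pi(y+c-m+1)=\sin^2(\frac p2\pi(y-m)+2\pi)=\sin^2\frac p2\pi(y-m)$ because $c+1=4/p$---turns the right-hand side into the integral of the same integrand against the smaller weight $1/(y+c)^2\le 1/y^2$ over a subinterval of the matching left-hand domain (since $\sigma_m-c\le\sigma_m$). Summing over $j$ gives the inequality. I expect the only fussy point to be the at-most-one bump per level that $\sigma_m$ truncates and the degenerate case $\sigma_m\le a_j+c$, both handled by the same substitution together with nonnegativity of the integrand.

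To finish, iterate the move. Let $A=\{m\ge 1:\tau_m>m\}$ and enumerate it increasingly as $m_1<m_2<\cdots$. Replacing successively the $m_i$-th entry by $m_i$ produces sequences $\sigma^{(0)}=\tau,\sigma^{(1)},\sigma^{(2)},\dots$; at the $i$-th stage the entry at index $m_i-1$ is already at most $m_i-1$ (it is either untouched with index outside $A$, hence $\le m_i-1$, or was set equal to $m_i-1$ at an earlier stage), so the single-entry move applies and $E_p(\sigma^{(i)})\ge E_p(\sigma^{(i-1)})$. If $A$ is finite this proves $E_p(\gamma)\ge E_p(\tau)$. If $A$ is infinite, $\sigma^{(i)}$ agrees with $\gamma$ at all indices up to $m_{i+1}-1$, and since $\gamma_n\to\infty$ this forces $K_p(\sigma^{(i)};x)\to K_p(\gamma;x)$ for every $x$; as $|K_p(\cdot\,;x)|$ is dominated by a fixed integrable function independent of the sequence, dominated convergence gives $E_p(\sigma^{(i)})\to E_p(\gamma)$, so $E_p(\gamma)=\lim_i E_p(\sigma^{(i)})\ge E_p(\tau)$. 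The membership claim and the iteration are elementary; the weight comparison inside the single-entry move, with its bookkeeping of truncated bumps, is the only genuinely non-routine ingredient.
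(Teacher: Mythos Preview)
Your proof is correct and follows essentially the same route as the paper: verify membership directly, then iterate a single-entry move at the smallest overshooting index, using the translation by $c=4/p-1$ between consecutive levels together with the monotonicity of $1/x^2$. The paper packages the single move as an application of Lemma~\ref{lem:maximizeS} followed by periodicity plus Lemma~\ref{lem:Brevig:lem6.5}, whereas you carry out the bump-by-bump comparison in one stroke via the substitution $x=y+c$, and you also make the passage to infinitely many moves explicit through dominated convergence, which the paper leaves implicit.
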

	
	\begin{proof}
		We see that $\gamma_1 = \min(\tau_1, 1) \geq \delta_1$. Combining the fact that $\tau_{n+1} - \tau_n \geq \delta_2$ and $n+1-n=1\geq \delta_2$ it follows that $\gamma_{n+1}- \gamma_n \geq \delta_2$, and hence $\gamma$ is in the set of sequences $T(\delta_1, \delta_2)$. If $\tau_n \leq n$ for all $n \geq 0$ we are done. If not let $m$ be the smallest integer such that $\tau_m >m$ and let $l\geq 0$ be the largest integer such that $\tau_m \geq m+(4/p)l$. By Lemma \ref{lem:maximizeS} replacing $\tau_m$ with $m+(4/p)l$ will increase the value of $E_p(\tau)$, and since since $m+(4/p)l \geq m$ the new sequence is still in $T(\delta_1, \delta_2)$. By periodicity, Lemma \ref{lem:Brevig:lem6.5} and using that $x \mapsto 1/x$ is a decreasing function replacing $\tau_m$ with $m$ will increase the value of $E_p(\tau)$ further, and this sequence is again in $T(\delta_1, \delta_2)$. Repeating this argument for increasing values of $m$ such that $\tau_m>m$ concludes the proof. 
	\end{proof}
	Note that if $\tau_1 \leq 2/p$ we could easily replace $\gamma_n=\min(\tau_n, n)$ with $\gamma_n=\min(\tau_n, n-1+2/p)$ in Lemma \ref{lem:tau_n_leq_n}.
	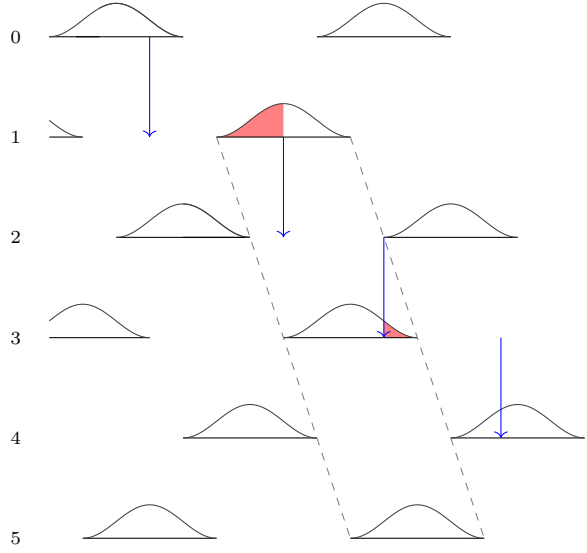
\begin{figure}
		\centering
		\begin{tikzpicture}[scale=1.5]
			\def\p{2.5}
			\def\s{5}
			\begin{axis}[
				axis equal image,
				axis lines = none,
				trig format plots=rad]
				
				\node at (axis cs: -1,0) {$\scriptstyle 0$};
				\node at (axis cs: -1,-3) {$\scriptstyle 1$};
				\node at (axis cs: -1,-6) {$\scriptstyle 2$};
				\node at (axis cs: -1,-9) {$\scriptstyle 3$};
				\node at (axis cs: -1,-12) {$\scriptstyle 4$};
				\node at (axis cs: -1,-15) {$\scriptstyle 5$};
				\node at (axis cs: -1,-18) {$\scriptstyle 6$};
				\node at (axis cs: -1,-21) {$\scriptstyle 7$};
				\node at (axis cs: -1,-24) {$\scriptstyle 8$};
				
				\addplot[thin, name path=b1] coordinates {(0,0) (4,0)};
				\addplot[domain=0:4, samples=100, color=black!75, thin, name path=t1] ({x},{sin(pi*x*\p/(2*\s))*sin(pi*x*\p/(2*\s))});
				
				\addplot[thin] coordinates {(1.5,0) (2/\p,0)};
				\addplot[domain=1.5:4, samples=100, color=black!75, thin] ({x},{sin(pi*x*\p/(2*\s))*sin(pi*x*\p/(2*\s))});
				
				\addplot[thin] coordinates {(8,0) (12,0)};
				\addplot[domain=8:12, samples=100, color=black!75, thin] ({x},{sin(pi*x*\p/(2*\s))*sin(pi*x*\p/(2*\s))});
				\addplot[thin] coordinates {(0,-3) (1,-3)};
				\addplot[domain=0:1, samples=100, color=black!75, thin] ({x},{sin(pi*(x-\s)*\p/(2*\s))*sin(pi*(x-\s)*\p/(2*\s))-3});
				
				\addplot[thin, name path=b1] coordinates {(5,-3) (7,-3)};
				\addplot[domain=5:7, samples=100, color=black!75, thin, name path=t1] ({x},{sin(pi*(x-\s)*\p/(2*\s))*sin(pi*(x-\s)*\p/(2*\s))-3});
				\addplot[red!50] fill between [of=b1 and t1];
				
				\addplot[thin] coordinates {(7,-3) (9,-3)};
				\addplot[domain=7:9, samples=100, color=black!75, thin] ({x},{sin(pi*(x-\s)*\p/(2*\s))*sin(pi*(x-\s)*\p/(2*\s))-3});
				\addplot[thin] coordinates {(2,-6) (6,-6)};
				\addplot[domain=2:6, samples=100, color=black!75, thin] ({x},{sin(pi*(x-2*\s)*\p/(2*\s))*sin(pi*(x-2*\s)*\p/(2*\s))-6});
				
				\addplot[thin, name path=b2] coordinates {(4,-6) (6,-6)};
				\addplot[domain=4:6, samples=100, color=black!75, thin, name path=t2] ({x},{sin(pi*(x-2*\s)*\p/(2*\s))*sin(pi*(x-2*\s)*\p/(2*\s))-6});
				
				\addplot[thin] coordinates {(10,-6) (14,-6)};
				\addplot[domain=10:14, samples=100, color=black!75, thin] ({x},{sin(pi*(x-2*\s)*\p/(2*\s))*sin(pi*(x-2*\s)*\p/(2*\s))-6});
				
				\addplot[thin] coordinates {(0,-9) (3,-9)};
				\addplot[domain=0:3, samples=100, color=black!75, thin] ({x},{sin(pi*(x-3*\s)*\p/(2*\s))*sin(pi*(x-3*\s)*\p/(2*\s))-9});
				
				\addplot[thin] coordinates {(7,-9) (10,-9)};
				\addplot[domain=7:10, samples=100, color=black!75, thin] ({x},{sin(pi*(x-3*\s)*\p/(2*\s))*sin(pi*(x-3*\s)*\p/(2*\s))-9});
				
				\addplot[thin, name path=b3] coordinates {(10,-9) (11,-9)};
				\addplot[domain=10:11, samples=100, color=black!75, thin, name path=t3] ({x},{sin(pi*(x-3*\s)*\p/(2*\s))*sin(pi*(x-3*\s)*\p/(2*\s))-9});
				\addplot[red!50] fill between [of=b3 and t3];
				
				\addplot[thin] coordinates {(4,-12) (8,-12)};
				\addplot[domain=4:8, samples=100, color=black!75, thin] ({x},{sin(pi*(x-4*\s)*\p/(2*\s))*sin(pi*(x-4*\s)*\p/(2*\s))-12});
				
				\addplot[thin, name path=b44] coordinates {(12,-12) (12.5,-12)};
				\addplot[domain=12:12.5, samples=100, color=black!75, thin, name path=t44] ({x},{sin(pi*(x-4*\s)*\p/(2*\s))*sin(pi*(x-4*\s)*\p/(2*\s))-12});
				
				\addplot[thin] coordinates {(12.5,-12) (16,-12)};
				\addplot[domain=12.5:16, samples=100, color=black!75, thin] ({x},{sin(pi*(x-4*\s)*\p/(2*\s))*sin(pi*(x-4*\s)*\p/(2*\s))-12});
				
				\addplot[thin] coordinates {(1,-15) (5,-15)};
				\addplot[domain=1:5, samples=100, color=black!75, thin] ({x},{sin(pi*(x-5*\s)*\p/(2*\s))*sin(pi*(x-5*\s)*\p/(2*\s))-15});
				
				\addplot[thin,  name path=t5] coordinates {(9,-15) (13,-15)};
				\addplot[domain=9:13, samples=100, color=black!75, thin, name path=b5]         ({x},{sin(pi*(x-5*\s)*\p/(2*\s))*sin(pi*(x-5*\s)*\p/(2*\s))-15});
				
				\addplot[thin,color=blue,->] coordinates {(3,0+0.0) (3,-3-0.0)};
				\addplot[thin,color=blue,->] coordinates {(7,-3) (7,-6)};
				\addplot[thin,color=blue,->] coordinates {(10,-6) (10,-9)};
				\addplot[thin,color=blue,->] coordinates {(13.5,-9) (13.5,-12)};
				\addplot[thin, dashed, gray] coordinates {(5,-3) (9,-15)};
				\addplot[thin, dashed, gray] coordinates {(9,-3) (13,-15)};
			\end{axis}
		\end{tikzpicture}
		\caption{Marked with the dashed lines is the ray $(I_{1,j})_{j \geq 0}$. The \CR{shaded} area represents $A_{2.5}(\tau; 1)$ for the given sequence $\CB{\tau}$, without considering the integrand factor $1/(\pi x)^2$. }
		\label{fig:defray}
	\end{figure}
	
	Let $\varrho=2-4/p$. Given integers $k, j \geq 0$ we define the interval
	\[I_{k,j} = (k+j\varrho, k+j\varrho+2/p)\] on level $n=k+2j$. For a fixed $k \geq 0$ we then refer to the collection of intervals $(I_{k,j})_{j \geq 0}$ as the \emph{$k$th ray}. Note that there is always exactly one or zero intervals on a fixed level $n$ on the $k$th ray. We then let
	\[A_p(\tau; k) = \sum_{j=0}^{\infty} \int_{I_{k,j}} \chi_{(\tau_{k+2j},\tau_{k+2j+1})}(x) \frac{\sin^2\frac{p}{2}\pi (x-k-2j)}{\pi^2 x^2}dx.\]
	We can think of $A_p(\tau; k)$ as the contribution to the integral $E_p(\tau)$ coming from the $k$th ray. See Figure \ref{fig:defray} for an illustration of rays and $A_p(\tau; k)$.
	In particular we have the following. 
	\begin{lemma}\label{lem:A(n)=Dp}
		Let $2 \leq p \leq 4$. Assume $1-2/p \leq \delta_1 \leq 1$ and $1-2/p \leq \delta_2 \leq 1$. Let $\tau$ be a sequence in $T(\delta_1, \delta_2)$ such that $ \tau_n \leq n $ for all $n \geq 1$. Then
		\[ E_p(\tau)= \sum_{k=0}^{\infty}A_p(\tau;k). \]
	\end{lemma}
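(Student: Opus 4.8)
The plan is to expand both sides into a single sum over the intervals at each level and then match them up by a bijective reindexing of pairs of integers, using the hypotheses $\tau_n\le n$ and $\delta_1,\delta_2\ge 1-2/p$ only to discard the intervals at level $n$ that lie entirely to the left or right of $(\tau_n,\tau_{n+1})$ and hence contribute nothing.

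First I would record the basic structure of $E_p(\tau)$. Since $\tau$ is strictly increasing with $\tau_0=0$ and $\tau_{n+1}-\tau_n\ge\delta_2>0$, the open intervals $(\tau_n,\tau_{n+1})$, $n\ge 0$, partition $(0,\infty)$ up to the null set $\{\tau_n:n\ge 1\}$, and on $(\tau_n,\tau_{n+1})$ we have $K_p(\tau;x)=\frac{\sin\frac{p}{2}\pi(x-n)}{\pi x}$. Hence, for $x\in(\tau_n,\tau_{n+1})$, the quantity $(\max(0,K_p(\tau;x)))^2$ equals $\frac{\sin^2\frac{p}{2}\pi(x-n)}{\pi^2x^2}$ if $x$ lies in an interval at level $n$ and vanishes otherwise, so that
\[
E_p(\tau)=\sum_{n=0}^{\infty}\ \sum_{I\text{ at level }n}\ \int_{I\cap(\tau_n,\tau_{n+1})}\frac{\sin^2\frac{p}{2}\pi(x-n)}{\pi^2x^2}\,dx,
\]
where the inner sum runs over all intervals $I=(\xi,\xi+2/p)$ at level $n$, that is, over $\xi=n-(4/p)j$ with $j\in\mathbb{Z}$. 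All summands are non-negative, so rearranging the (double, and later triple) series below is harmless.

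Next I would show that on level $n$ only the intervals with $0\le j\le\lfloor n/2\rfloor$ can meet $(\tau_n,\tau_{n+1})$. From $\delta_1,\delta_2\ge 1-2/p$ one gets $\tau_n\ge\delta_1+(n-1)\delta_2\ge n(1-2/p)$ for $n\ge 1$ (and $\tau_0=0$), while the standing hypothesis gives $\tau_{n+1}\le n+1$. If $j\le-1$, then $\xi=n-(4/p)j\ge n+4/p\ge n+1\ge\tau_{n+1}$ because $p\le 4$, so $(\xi,\xi+2/p)$ lies to the right of $(\tau_n,\tau_{n+1})$ and contributes $0$. If $j\ge\lfloor n/2\rfloor+1$, then $2j\ge n+1$, so $\xi+2/p=n-\tfrac{2}{p}(2j-1)\le n-\tfrac{2n}{p}=n(1-2/p)\le\tau_n$, and $(\xi,\xi+2/p)$ lies to the left of $(\tau_n,\tau_{n+1})$ and again contributes $0$. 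This is the only place the hypotheses are used, and it is the step I expect to be the crux; everything else is bookkeeping.

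Finally I would reindex. For $0\le j\le\lfloor n/2\rfloor$ put $k=n-2j\ge 0$; then the interval at level $n$ with $\xi=n-(4/p)j$ is precisely $I_{k,j}=(k+j\varrho,\,k+j\varrho+2/p)$, since $k+j\varrho=(n-2j)+j(2-4/p)=n-(4/p)j=\xi$ and $I_{k,j}$ sits at level $k+2j=n$. The map $(n,j)\mapsto(k,j)=(n-2j,j)$ is a bijection from $\{(n,j):n\ge 0,\ 0\le j\le\lfloor n/2\rfloor\}$ onto $\{(k,j):k\ge 0,\ j\ge 0\}$, with inverse $(k,j)\mapsto(k+2j,j)$. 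Substituting into the display above and using the definition of $A_p(\tau;k)$ yields
\[
E_p(\tau)=\sum_{k=0}^{\infty}\sum_{j=0}^{\infty}\int_{I_{k,j}}\chi_{(\tau_{k+2j},\tau_{k+2j+1})}(x)\,\frac{\sin^2\frac{p}{2}\pi(x-k-2j)}{\pi^2x^2}\,dx=\sum_{k=0}^{\infty}A_p(\tau;k),
\]
which is the claim.
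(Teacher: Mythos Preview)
Your proof is correct and follows essentially the same approach as the paper's: identify which intervals at level $n$ can meet $(\tau_n,\tau_{n+1})$ using $\tau_n\ge n(1-2/p)$ from the separation conditions and $\tau_{n+1}\le n+1$ from the hypothesis, then observe that these are exactly the intervals $I_{k,j}$ with $k+2j=n$. You have simply spelled out the bijection $(n,j)\leftrightarrow(k,j)$ and the endpoint inequalities in more detail than the paper, which dispatches the argument in two sentences.
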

	\begin{proof}
		We consider the collection of intervals $I=(I_{k, j})_{k, j \geq 0}$ and observe that the intervals on level $n$ in the collection $I$ are exactly the intervals $(\xi, \xi + 2/p)$ on level $n$ such that $ n(1-2/p) \leq \xi \leq n $. Further, due to the separation conditions we have $\tau_n \geq \delta_1+ \delta_2(n-1) \geq n(1-2/p)$. Hence it follows that 
		\[E_p(\tau)=\sum_{k=0}^{\infty}A_p(\tau; k). \qedhere\]
	\end{proof}
	We note that when $n$ is even then $n(1-2/p)$ is the left endpoint of an interval at level $n$ whereas when $n$ is odd then $n(1-2/p)$ is the right endpoint of an interval at level $n$. We now proceed with a lemma to show that the largest contribution to the sum $E_p(\tau)$ we can possibly get from the $k$th ray is the one from the first interval $(k, k+2/p)$ when $\tau_k \leq k$ and $\tau_{k+1} \geq k+2/p$.
	This lemma will be key in proving Theorem \ref{thm:E_p2to4:gensep}. Figure \ref{fig:p2p5} illustrates Lemma \ref{lem:F(n)}. 
	\begin{lemma}\label{lem:F(n)} 
		Let $2 \leq p \leq 4$. Assume $1-2/p \leq \delta_1 \leq 1$ and $ 2-4/p \leq  \delta_2 \leq 1$. Let $\tau$ be a sequence in $T (\delta_1, \delta_2)$ such that $\tau_n \leq n$ for all $n \geq 1$. Then 
		\[A_p(\tau; k) \leq  \int_{k}^{k+\frac{2}{p}} \frac{\sin^2 \frac{p}{2}\pi(x-k)}{\pi^2 x^2}\, dx,\]
		for all $k \geq 0$. Furthermore equality is attained if and only if $p=4$ or $\tau_{k+1} \geq k+2/p$.
	\end{lemma}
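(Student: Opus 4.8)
The plan is to show that the whole $k$th ray contributes no more than its first interval $I_{k,0}=(k,k+2/p)$ does, by ``collapsing'' each ray interval $I_{k,j}$ onto $I_{k,0}$ and checking that the collapsed pieces are essentially disjoint. Fix $k\ge 0$ and, for $j\ge 0$, put $(a_j,b_j)=I_{k,j}\cap(\tau_{k+2j},\tau_{k+2j+1})$, so that
\[
A_p(\tau;k)=\sum_{j\ge 0}\int_{a_j}^{b_j}\frac{\sin^2\frac p2\pi(x-k-2j)}{\pi^2x^2}\,dx
\]
(an empty interval contributing $0$; only finitely many $j$ give a nonzero term). In the $j$th summand I would substitute $y=x-j\varrho$. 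Since $\varrho=2-4/p$ gives $x-k-2j=y-k-4j/p$ and $\frac p2\pi\cdot\frac{4j}{p}=2\pi j$, the numerator is unchanged, $\sin^2\frac p2\pi(x-k-2j)=\sin^2\frac p2\pi(y-k)$, while $x^2=(y+j\varrho)^2\ge y^2$. Writing $\widetilde{J}_j:=(a_j-j\varrho,\,b_j-j\varrho)\subseteq[k,k+2/p]$, this yields the termwise bound $\int_{a_j}^{b_j}(\cdots)\,dx\le\int_{\widetilde{J}_j}\frac{\sin^2\frac p2\pi(y-k)}{\pi^2y^2}\,dy$.

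The crux --- and the step I expect to require the most care --- is to check that the collapsed intervals $\widetilde{J}_0,\widetilde{J}_1,\dots$ are pairwise disjoint; this is the only place the separation hypothesis enters. The right endpoint of $\widetilde{J}_j$ equals $\min(k+2/p,\ \tau_{k+2j+1}-j\varrho)$ and the left endpoint of $\widetilde{J}_{j+1}$ equals $\max(k,\ \tau_{k+2j+2}-(j+1)\varrho)$, and in either case of which term attains the minimum the inequality between them reduces to $\tau_{k+2j+2}-\tau_{k+2j+1}\ge\varrho$, which holds because $\delta_2\ge 2-4/p=\varrho$; a parallel computation shows the left endpoints $a_j-j\varrho$ are nondecreasing, so the $\widetilde{J}_j$ are genuinely disjoint. (The hypothesis $\tau_n\le n$ enters only to pin the left endpoint of $\widetilde{J}_0$ at $k$, i.e.\ $\widetilde{J}_0=(k,\min(k+2/p,\tau_{k+1}))$.) Summing the termwise bounds and using that the integrand is nonnegative then gives
\[
A_p(\tau;k)\le\sum_{j\ge 0}\int_{\widetilde{J}_j}\frac{\sin^2\frac p2\pi(y-k)}{\pi^2y^2}\,dy\le\int_{k}^{k+2/p}\frac{\sin^2\frac p2\pi(y-k)}{\pi^2y^2}\,dy,
\]
which is the asserted inequality.

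For the equality statement I would argue as follows. If $\tau_{k+1}\ge k+2/p$ then $\widetilde{J}_0=(k,k+2/p)$ already realizes the full right-hand side, so there is equality; and if $p=4$ the constraints force $\delta_2=1$ and $\delta_1\ge 1-2/p=2/p$, whence $\tau_{k+1}\ge\tau_1+k\delta_2\ge k+2/p$ and the previous case applies. Conversely, suppose $p<4$ and $\tau_{k+1}<k+2/p$; then $\widetilde{J}_0=(k,\tau_{k+1})$ is a proper subinterval and the remaining $\widetilde{J}_j$ ($j\ge 1$) lie in $[\tau_{k+1},k+2/p]$. If they do not cover $(\tau_{k+1},k+2/p)$ up to a null set, the second displayed inequality is strict, because $\frac{\sin^2\frac p2\pi(y-k)}{\pi^2y^2}$ is positive on the interior of $(k,k+2/p)$; and if they do cover it, then some $\widetilde{J}_{j_0}$ with $j_0\ge 1$ is nondegenerate, and the first displayed inequality is strict at $j_0$ because $j_0\varrho>0$ when $p>2$ --- while for $p=2$ one checks directly that $\bigcup_{j\ge 1}\widetilde{J}_j\subseteq(\tau_{k+2},\infty)$ omits the nonnull interval $(\tau_{k+1},\min(\tau_{k+2},k+2/p))$, so the covering alternative cannot occur. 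In all cases the inequality is strict, which establishes the ``if and only if''.
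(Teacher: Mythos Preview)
Your proof is correct and follows essentially the same approach as the paper's: both perform the substitution $y=x-j\varrho$ to collapse each ray interval onto $I_{k,0}$, bound $(y+j\varrho)^{-2}\le y^{-2}$, use $\delta_2\ge\varrho$ to show the shifted indicators are disjoint, and then analyze equality by splitting on whether some $j\ge 1$ term is nonzero. Your treatment of the endpoints $p=2$ and $p=4$ in the equality clause is slightly more explicit than the paper's, but the argument is the same in substance.
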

	
	\begin{proof}
		By a substitution and using that $j \varrho \geq 0$, we get 
		\begin{equation}\label{eq:Apupper}
			\begin{split}
				A_p(\tau; k)&=\sum_{j=0}^{\infty} \int_{k+j \varrho}^{k+j \varrho +2/p} \chi_{(\tau_{k+2j},\tau_{k+2j+1})}(x) \frac{\sin^2\frac{p}{2}\pi (x-k-2j)}{\pi^2 x^2} \, dx\\
				&= \sum_{j=0}^{\infty} \int_{k}^{k+2/p} \chi_{(\tau_{k+2j} - j \varrho, \tau_{k+2j+1}-j \varrho)}(x)\frac{\sin^2\frac{p}{2}\pi (x-k)}{\pi^2 (x+j\varrho)^2} \, dx\\
				&\leq \sum_{j=0}^{\infty} \int_{k}^{k+2/p} \chi_{(\tau_{k+2j} - j \varrho, \tau_{k+2j+1}-j \varrho)}(x)\frac{\sin^2\frac{p}{2}\pi (x-k)}{\pi^2 x^2} \, dx.
			\end{split}
		\end{equation}
		Since $\tau_{n+1}-\tau_n \geq \varrho$ it follows that \[\tau_{k+2j+1}-j \varrho \leq \tau_{k+2(j+1)}-(j+1) \varrho,\] and this shows that there is no overlap between the indicator functions arising from the $j$th term and the succeeding term. It thus follows that 
		\[\begin{split}
			A_p(\tau; k)&\leq \sum_{j=0}^{\infty} \int_{k}^{k+2/p} \chi_{(\tau_{k+2j} - j \varrho, \tau_{k+2j+1}-j \varrho)}(x)\frac{\sin^2\frac{p}{2}\pi (x-k)}{\pi^2 x^2} \, dx\\
			&\leq  \int_{k}^{k+\frac{2}{p}} \frac{\sin^2 \frac{p}{2}\pi(x-k)}{\pi^2 x^2}\, dx.\end{split}\]
		Further, it is clear that equality is attained if $p=4$ or $\tau_{k+1} \geq k+2/p$. Assume  $p<4$ and $\tau_{k+1}<k+2/p$. If the $j$th term in \eqref{eq:Apupper} is non-zero for some $j>0$ it follows that the inequality \eqref{eq:Apupper} is strict. If no such integer $j$ exists, then 
		\[A_p(\tau; k)=\int_{k}^{k+2/p} \chi_{(k, \tau_{k+1})}\frac{\sin^2\frac{p}{2}\pi (x-k)}{\pi^2 x^2}\, dx < \int_{k}^{k+2/p} \frac{\sin^2\frac{p}{2}\pi (x-k)}{\pi^2 x^2}\, dx. \qedhere\]
	\end{proof}
	We are now ready to prove the main theorem of this section. 
	
	\begin{proof}[Proof of Theorem \ref{thm:E_p2to4:gensep}.]
		Let $\lambda$ be the sequence in $T(\delta_1, \delta_2)$ given by $\lambda_n=n-1+2/p$. Then it is clear that  
		\[E_p(\lambda)=\sum_{n=0}^{\infty} \int_{n}^{n+\frac{2}{p}} \frac{\sin^2 \frac{p}{2} \pi (x-n)}{\pi^2 x^2 } dx.\] Combining this with Lemma \ref{lem:tau_n_leq_n} it is enough to show $E_p(\tau) \leq E_p(\lambda)$ for all sequences $\tau$ in $T(\delta_1, \delta_2)$ such that $\tau_n \leq n$ for all $n \geq 1$. Fix such a sequence $\tau$.
		Applying Lemma \ref{lem:A(n)=Dp} and Lemma \ref{lem:F(n)} it follows that
		\[E_p(\tau) = \sum_{k=0}^{\infty}A_p(\tau; k) \leq \sum_{k=0}^{\infty} \int_{k}^{k+\frac{2}{p}} \frac{\sin^2 \frac{p}{2} \pi (x-k)}{\pi^2 x^2 } dx=E_p(\lambda),\] and from Lemma \ref{lem:F(n)} it follows that equality is attained if and only if $p=4$ or $n-1+2/p \leq \tau_n \leq n$ for all $n \geq 1$. 
	\end{proof}
	
	\begin{figure}
		\centering
		\begin{tikzpicture}[scale=1.9]
			\def\p{2.5}
			\def\s{5}
			\begin{axis}[
				axis equal image,
				axis lines = none,
				trig format plots=rad]
				
				\node at (axis cs: -1,0) {$\scriptstyle 0$};
				\node at (axis cs: -1,-3) {$\scriptstyle 1$};
				\node at (axis cs: -1,-6) {$\scriptstyle 2$};
				\node at (axis cs: -1,-9) {$\scriptstyle 3$};
				\node at (axis cs: -1,-12) {$\scriptstyle 4$};
				\node at (axis cs: -1,-15) {$\scriptstyle 5$};
				\node at (axis cs: -1,-18) {$\scriptstyle 6$};
				\node at (axis cs: -1,-21) {$\scriptstyle 7$};
				\node at (axis cs: -1,-24) {$\scriptstyle 8$};
				
				\addplot[thin, name path=b1] coordinates {(0,0) (1.5,0)};
				\addplot[domain=0:1.5, samples=100, color=black!75, thin, name path=t1] ({x},{sin(pi*x*\p/(2*\s))*sin(pi*x*\p/(2*\s))});
				\addplot[red!50] fill between [of=b1 and t1];
				
				\addplot[thin] coordinates {(1.5,0) (2/\p,0)};
				\addplot[domain=1.5:4, samples=100, color=black!75, thin] ({x},{sin(pi*x*\p/(2*\s))*sin(pi*x*\p/(2*\s))});

				\addplot[thin] coordinates {(8,0) (12,0)};
				\addplot[domain=8:12, samples=100, color=black!75, thin] ({x},{sin(pi*x*\p/(2*\s))*sin(pi*x*\p/(2*\s))});
				
				\addplot[thin] coordinates {(16,0) (20,0)};
				\addplot[domain=16:20, samples=100, color=black!75, thin] ({x},{sin(pi*x*\p/(2*\s))*sin(pi*x*\p/(2*\s))});

				\addplot[thin] coordinates {(0,-3) (1,-3)};
				\addplot[domain=0:1, samples=100, color=black!75, thin] ({x},{sin(pi*(x-\s)*\p/(2*\s))*sin(pi*(x-\s)*\p/(2*\s))-3});
				
				\addplot[thin] coordinates {(5,-3) (9,-3)};
				\addplot[domain=5:9, samples=100, color=black!75, thin] ({x},{sin(pi*(x-\s)*\p/(2*\s))*sin(pi*(x-\s)*\p/(2*\s))-3});
				
				\addplot[thin] coordinates {(13,-3) (17,-3)};
				\addplot[domain=13:17, samples=100, color=black!75, thin] ({x},{sin(pi*(x-\s)*\p/(2*\s))*sin(pi*(x-\s)*\p/(2*\s))-3});
				
				\addplot[thin] coordinates {(2,-6) (6,-6)};
				\addplot[domain=2:6, samples=100, color=black!75, thin] ({x},{sin(pi*(x-2*\s)*\p/(2*\s))*sin(pi*(x-2*\s)*\p/(2*\s))-6});
				
				\addplot[thin, name path=b2] coordinates {(4,-6) (6,-6)};
				\addplot[domain=4:6, samples=100, color=black!75, thin, name path=t2] ({x},{sin(pi*(x-2*\s)*\p/(2*\s))*sin(pi*(x-2*\s)*\p/(2*\s))-6});
				\addplot[red!50] fill between [of=b2 and t2];
				
				\addplot[thin] coordinates {(10,-6) (14,-6)};
				\addplot[domain=10:14, samples=100, color=black!75, thin] ({x},{sin(pi*(x-2*\s)*\p/(2*\s))*sin(pi*(x-2*\s)*\p/(2*\s))-6});
				
				\addplot[thin] coordinates {(18,-6) (22,-6)};
				\addplot[domain=18:22, samples=100, color=black!75, thin] ({x},{sin(pi*(x-2*\s)*\p/(2*\s))*sin(pi*(x-2*\s)*\p/(2*\s))-6});
				
				\addplot[thin] coordinates {(0,-9) (3,-9)};
				\addplot[domain=0:3, samples=100, color=black!75, thin] ({x},{sin(pi*(x-3*\s)*\p/(2*\s))*sin(pi*(x-3*\s)*\p/(2*\s))-9});
				
				\addplot[thin, name path=b3] coordinates {(7,-9) (9,-9)};
				\addplot[domain=7:9, samples=100, color=black!75, thin, name path=t3] ({x},{sin(pi*(x-3*\s)*\p/(2*\s))*sin(pi*(x-3*\s)*\p/(2*\s))-9});
				\addplot[green!50] fill between [of=b3 and t3];
				
				\addplot[thick] coordinates {(15,-9) (19,-9)};
				\addplot[domain=15:19, samples=100, color=black!75, thick] ({x},{sin(pi*(x-3*\s)*\p/(2*\s))*sin(pi*(x-3*\s)*\p/(2*\s))-9});
				
				\addplot[thin] coordinates {(9,-9) (11,-9)};
				\addplot[domain=9:11, samples=100, color=black!75, thin] ({x},{sin(pi*(x-3*\s)*\p/(2*\s))*sin(pi*(x-3*\s)*\p/(2*\s))-9});
				
				\addplot[thin] coordinates {(4,-12) (8,-12)};
				\addplot[domain=4:8, samples=100, color=black!75, thin] ({x},{sin(pi*(x-4*\s)*\p/(2*\s))*sin(pi*(x-4*\s)*\p/(2*\s))-12});
				
				\addplot[thin, name path=b44] coordinates {(12,-12) (12.5,-12)};
				\addplot[domain=12:12.5, samples=100, color=black!75, thin, name path=t44] ({x},{sin(pi*(x-4*\s)*\p/(2*\s))*sin(pi*(x-4*\s)*\p/(2*\s))-12});
				\addplot[brown!75] fill between [of=b44 and t44];
				
				\addplot[thin] coordinates {(12.5,-12) (16,-12)};
				\addplot[domain=12.5:16, samples=100, color=black!75, thin] ({x},{sin(pi*(x-4*\s)*\p/(2*\s))*sin(pi*(x-4*\s)*\p/(2*\s))-12});
				
				\addplot[very thick,solid, violet, name path=b33] coordinates {(20,-12) (24,-12)};
				\addplot[domain=20:24, samples=100, color=black!75, very thick, violet, name path=t33] ({x},{sin(pi*(x-4*\s)*\p/(2*\s))*sin(pi*(x-4*\s)*\p/(2*\s))-12});
				
				
				\addplot[thin] coordinates {(1,-15) (5,-15)};
				\addplot[domain=1:5, samples=100, color=black!75, thin] ({x},{sin(pi*(x-5*\s)*\p/(2*\s))*sin(pi*(x-5*\s)*\p/(2*\s))-15});
				
				\addplot[thin] coordinates {(9,-15) (12.5,-15)};
				\addplot[domain=9:12.5, samples=100, color=black!75, thin] ({x},{sin(pi*(x-5*\s)*\p/(2*\s))*sin(pi*(x-5*\s)*\p/(2*\s))-15});
				
				\addplot[thin, black, name path=t5] coordinates {(12.5,-15) (13,-15)};
				\addplot[domain=12.5:13, samples=100, color=black!75, thin, black, name path=b5] ({x},{sin(pi*(x-5*\s)*\p/(2*\s))*sin(pi*(x-5*\s)*\p/(2*\s))-15});
				\addplot[green!75] fill between [of=b5 and t5];
				
				\addplot[thin] coordinates {(17,-15) (21,-15)};
				\addplot[domain=17:21, samples=100, color=black!75, thin] ({x},{sin(pi*(x-5*\s)*\p/(2*\s))*sin(pi*(x-5*\s)*\p/(2*\s))-15});
				
				\addplot[thin] coordinates {(0,-18) (2,-18)};
				\addplot[domain=0:2, samples=100, color=black!75, thin]
				({x},{sin(pi*(x-6*\s)*\p/(2*\s))-18});
				
				\addplot[thin] coordinates {(6,-18) (10,-18)};
				\addplot[domain=6:10, samples=100, color=black!75, thin]
				({x},{sin(pi*(x-6*\s)*\p/(2*\s))*sin(pi*(x-6*\s)*\p/(2*\s))-18});
				
				\addplot[thin, name path=b6] coordinates {(14.5,-18) (17,-18)};
				\addplot[domain=14.5:17, samples=100, color=black!75, thin, name path=t6]
				({x},{sin(pi*(x-6*\s)*\p/(2*\s))*sin(pi*(x-6*\s)*\p/(2*\s))-18});
				\addplot[brown!75] fill between [of=b6 and t6];
				
				\addplot[thin] coordinates {(14,-18) (14.5,-18)};
				\addplot[domain=14:14.5, samples=100, color=black!75, thin]
				({x},{sin(pi*(x-6*\s)*\p/(2*\s))*sin(pi*(x-6*\s)*\p/(2*\s))-18});
				
				\addplot[thin] coordinates {(17,-18) (18,-18)};
				\addplot[domain=17:18, samples=100, color=black!75, thin]
				({x},{sin(pi*(x-6*\s)*\p/(2*\s))*sin(pi*(x-6*\s)*\p/(2*\s))-18});
				
				\addplot[thin] coordinates {(22,-18) (26,-18)};
				\addplot[domain=22:26, samples=100, color=black!75, thin]
				({x},{sin(pi*(x-6*\s)*\p/(2*\s))*sin(pi*(x-6*\s)*\p/(2*\s))-18});
				\addplot[thin] coordinates {(3,-21) (7,-21)};
				\addplot[domain=3:7, samples=100, color=black!75, thin]
				({x},{sin(pi*(x-7*\s)*\p/(2*\s))*sin(pi*(x-7*\s)*\p/(2*\s))-21});
				
				\addplot[thin] coordinates {(11,-21) (15,-21)};
				\addplot[domain=11:15, samples=100, color=black!75, thin]
				({x},{sin(pi*(x-7*\s)*\p/(2*\s))*sin(pi*(x-7*\s)*\p/(2*\s))-21});
				
				\addplot[thin] coordinates {(19,-21) (23,-21)};
				\addplot[domain=19:23, samples=100, color=black!75, thin]
				({x},{sin(pi*(x-7*\s)*\p/(2*\s))*sin(pi*(x-7*\s)*\p/(2*\s))-21});
				
				\addplot[thin] coordinates {(0,-24) (4,-24)};
				\addplot[domain=0:4, samples=100, color=black!75, thin]
				({x},{sin(pi*(x-8*\s)*\p/(2*\s))*sin(pi*(x-8*\s)*\p/(2*\s))-24});
				
				\addplot[thin] coordinates {(8,-24) (12,-24)};
				\addplot[domain=8:12, samples=100, color=black!75, thin]
				({x},{sin(pi*(x-8*\s)*\p/(2*\s))*sin(pi*(x-8*\s)*\p/(2*\s))-24});
				
				\addplot[thin] coordinates {(16,-24) (19,-24)};
				\addplot[domain=16:19, samples=100, color=black!75, thin]
				({x},{sin(pi*(x-8*\s)*\p/(2*\s))*sin(pi*(x-8*\s)*\p/(2*\s))-24});
				
				\addplot[thin, name path=b8] coordinates {(19,-24) (20,-24)};
				\addplot[domain=19:20, samples=100, color=black!75, thin, name path=t8]
				({x},{sin(pi*(x-8*\s)*\p/(2*\s))*sin(pi*(x-8*\s)*\p/(2*\s))-24});
				\addplot[brown!75] fill between [of=b8 and t8];
				
				\addplot[thin, name path=b88] coordinates {(24,-24) (28,-24)};
				\addplot[domain=24:28, samples=100, color=black!75, thin, name path=t88]
				({x},{sin(pi*(x-8*\s)*\p/(2*\s))*sin(pi*(x-8*\s)*\p/(2*\s))-24});
				\addplot[violet!75] fill between [of=b88 and t88];

				\addplot[thin,color=blue,->] coordinates {(1.5,0+0.0) (1.5,-3-0.0)};
				\addplot[thin,color=blue,->] coordinates {(4,-3+0.0) (4,-6-0.0)};
				\addplot[thin,color=blue,->] coordinates {(6.5,-6+0.0) (6.5,-9-0.0)};
				\addplot[thin,color=blue,->] coordinates {(9,-9+0.0) (9,-12-0.0)};
				\addplot[thin,color=blue,->] coordinates {(12.5,-12+0.0) (12.5,-15-0.0)};
				\addplot[thin,color=blue,->] coordinates {(14.5,-15+0.0) (14.5,-18-0.0)};
				\addplot[thin,color=blue,->] coordinates {(17,-18+0.0) (17,-21-0.0)};
				\addplot[thin,color=blue,->] coordinates {(19,-21+0.0) (19,-24-0.0)};
				
				\addplot[very thick,solid,red] coordinates {(0,0) (4,0)};
				\addplot[domain=0:4, samples=100, color=black!75, very thick, red] ({x},{sin(pi*x*\p/(2*\s))*sin(pi*x*\p/(2*\s))});
				
				\addplot[very thick,solid,green] coordinates {(5,-3) (9,-3)};
				\addplot[domain=5:9, samples=100, color=black!75, very thick, green] ({x},{sin(pi*(x-1*\s)*\p/(2*\s))*sin(pi*(x-1*\s)*\p/(2*\s))-3});

				\addplot[very thick,solid,brown] coordinates {(10,-6) (14,-6)};
				\addplot[domain=10:14, samples=100, color=black!75, very thick, brown] ({x},{sin(pi*(x-2*\s)*\p/(2*\s))*sin(pi*(x-2*\s)*\p/(2*\s))-6});
				
				\addplot[very thick,solid,teal] coordinates {(15,-9) (19,-9)};
				\addplot[domain=15:19, samples=100, color=black!75, very thick, teal] ({x},{sin(pi*(x-3*\s)*\p/(2*\s))*sin(pi*(x-3*\s)*\p/(2*\s))-9});
				
				\addplot[thin,color= gray,->] coordinates {(5,0+0.0) (5,-3-0.0)};
				\addplot[thin,color= gray,->] coordinates {(10,-3+0.0) (10,-6-0.0)};
				\addplot[thin,color= gray,->] coordinates {(15,-6+0.0) (15,-9-0.0)};
				\addplot[thin,color= gray,->] coordinates {(20,-9+0.0) (20,-12-0.0)};
				
				\addplot[thin, dashed, gray] coordinates {(0,0) (8,-24)};
				\addplot[thin, dashed, gray] coordinates {(4,0) (12,-24)};
				\addplot[thin, dashed, gray] coordinates {(9,-3) (16,-24)};
				\addplot[thin, dashed, gray] coordinates {(14,-6) (20,-24)};
				\addplot[thin, dashed, gray] coordinates {(19,-9) (24,-24)};
				\addplot[thin, dashed, gray] coordinates {(24,-12) (28,-24)};
			\end{axis}
		\end{tikzpicture}
		\caption{The sequence $\CB{\tau}$ where $\tau_1=3/10$, $\tau_2=4/5$, $\tau_3=13/10$, $\tau_4=9/5$, $\tau_5=5/2$, $\tau_6=29/10$, $\tau_7=17/5$, $\tau_8=19/5$ and $\tau_9 \geq 22/5$ for $p=2.5$. Shaded is an illustration without considering the integrand factor $1/(\pi x)^2$ of \CR{$A_p(\tau; 0)$},  \CG{$A_p(\tau; 1)$}, \CBR{$A_p(\tau; 2)$} and \CV{$A_p(\tau; 4)$}. Note that $\CT{A_p(\tau; 3)}=0$. The thick lines illustrate the parts of $E_p(\lambda)$ that the areas $A_p(\tau; n)$ are compared with in Lemma \ref{lem:F(n)}. The dashed lines show the rays.} 
		\label{fig:p2p5}
	\end{figure}
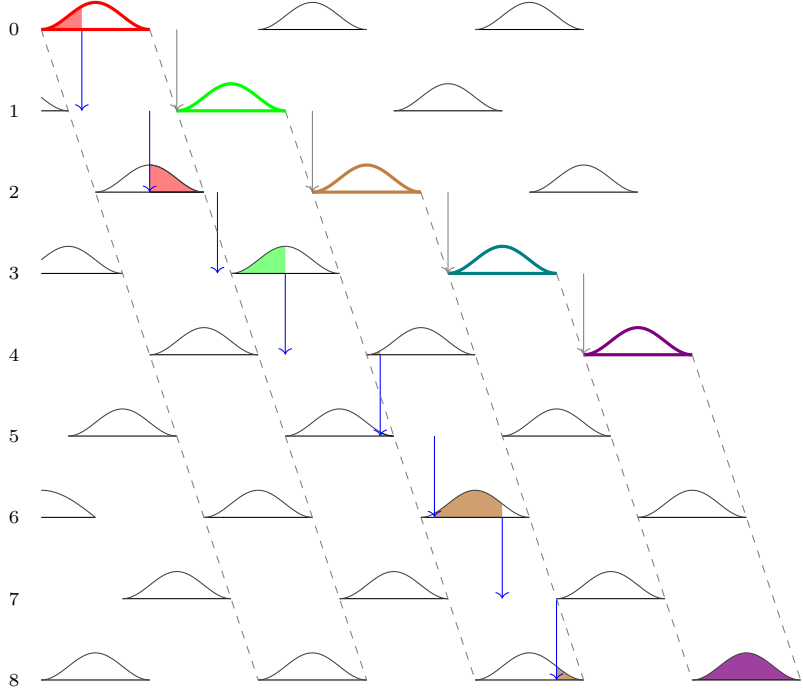
	
	\section{Proof of Theorem \ref{thm:new:Ep2to4} for \texorpdfstring{$3<p \leq 4$}{3<p≤4}.}\label{sec:3to4}
	
	In this section we prove Theorem \ref{thm:new:Ep2to4} in the range $3<p\leq 4$. We note that the case $p=4$ is trivial because $E_p(\tau)$ takes the same value for all sequences $\tau$. We will thus assume $p<4$. 
	To make calculations easier we will introduce a new set of sequences. Recall that by Lemma \ref{lem:tau_n_leq_n} we need only consider sequences $\tau$ where $\tau_n \leq n$ for all $n$. We also recall that for an interval $(\xi, \xi+2/p)$ at level $n$ and the intersecting interval $(\xi+1-4/p, \xi+1-2/p)$ at level $n+1$ we denote the midpoint between these two intervals by $m_\xi$ as defined in \eqref{eq:defmxi}. We now denote by $T_1$ the set of sequences $\tau=(\tau_n)_{n \geq 0}$ such that $\tau_0=0$, $\tau_1 \geq \min(2/\pi, 2/p)$, $\tau_n \leq n$ and at least one of the following three properties holds for each $n \geq 0$:
	\begin{enumerate}
		\item[(i)] $\tau_{n+1}-\tau_{n} \geq 2/3$.
		\item[(ii)] There exists $\xi \geq 1$ such that $(\xi, \xi+2/p)$ is an interval at level $n$ and $\tau_{n} < m_{\xi-1}+4/p$, $\tau_{n+1} = \xi+2/p$ and $\tau_{n+2} \geq m_\xi+4/p$. 
		\item[(iii)] There exists $\xi \geq 1$ such that $(\xi, \xi+2/p)$ is an interval at level $n$, $\tau_{n} = \xi-1+2/p$ and $\tau_{n+1} \geq m_\xi$.
	\end{enumerate}
	Note that in (ii) the assumption $\tau_{n+2} \geq m_\xi+4/p$ is motivated by the assumption $\tau_{n+2} \geq \tau_{n-1}+2$, that is on average the separation should not deviate too much from $2/3$.
	Let $\min(2/\pi, 2/p) \leq \delta_1 \leq 1$ and $2/3 \leq \delta_2 \leq 1$ then it is clear that the set of sequences in $T(\delta_1, \delta_2)$ for which $\tau_n \leq n$ for all $n$ is a subset of $ T_1$.  Consequently
	\begin{equation}\label{eq:supT1}
		\mathscr{E}_p(\delta_1, \delta_2) \leq \sup_{\tau \in T_1(\delta_1, \delta_2)} E_p(\tau).
	\end{equation}
	Thus to prove Theorem \ref{thm:new:Ep2to4} for $3<p<4$ we show that
	\[\sup_{\tau \in T_1} E_p(\tau) = E_p(\lambda),\]
	where $\lambda$ is the sequence given by $\lambda_n=n-1+2/p$ for all $n \geq 1$. An example of a sequence $\tau$ which is in $T_1$, but not in $T(2/\pi, 2/3)$, can be seen in Figure \ref{fig:3to4mainlemtau}.
	
	We will see in Lemma \ref{lem:T3} below that it is enough to consider the supremum in \eqref{eq:supT1} over a smaller subset $T_2$ of $T_1$. To show this we first establish several lemmas which allow us to make assumptions on $\tau_{n+1}$ (and in some cases also several succeeding terms) based on the value $\tau_n$. We start with a generalization of Lemma \ref{lem:tau_n_leq_n}.
	
	\begin{lemma}\label{lem:taunleqngen}
		Let $\tau$ be a sequence in $T_1$ and define the sequence $\gamma$ by $\gamma_0=0$, $\gamma_1=2/p$ and
		\[\gamma_{n+1}=\min(\tau_{n+1}, \xi+2/p),\] where $\xi=n-(4/p)j$ for some integer $j$ such that
		\[m_{\xi-1} < \tau_n \leq m_{\xi-1}+\frac{4}{p}.\] Then $\gamma$ is a sequence in $T_1$ and $E_p(\tau) \leq E_p(\gamma)$.
	\end{lemma}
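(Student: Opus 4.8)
The plan is to argue as in Lemma~\ref{lem:tau_n_leq_n}, replacing the coarse truncation $\min(\tau_n,n)$ used there by the sharper $\gamma_{n+1}=\min(\tau_{n+1},\xi+2/p)$, and to settle in turn that $\gamma$ is a well-defined strictly increasing sequence, that $E_p(\tau)\leq E_p(\gamma)$, and that $\gamma\in T_1$. For the first point, fix a level $n\geq1$: by Lemma~\ref{lem:Brevig:lem6.5} the left endpoints of the intervals at level $n$ are the numbers $n+(4/p)k$ with $k$ an integer, and by \eqref{eq:defmxi} the half-open intervals $(m_{\xi-1},m_{\xi-1}+4/p]$ taken over these endpoints $\xi$ have length $4/p$ while consecutive endpoints differ by $4/p$, so they partition the real line and the integer $j$, hence $\xi=\xi_n$, is uniquely determined. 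Since $p>2$ we have $m_{\xi_n-1}+4/p=\xi_n-1/2+3/p<\xi_n+2/p$, so $\gamma_n\leq\tau_n\leq m_{\xi_n-1}+4/p<\xi_n+2/p$ and $\gamma_n\leq\tau_n<\tau_{n+1}$; hence $\gamma_n<\min(\xi_n+2/p,\tau_{n+1})=\gamma_{n+1}$, and $\gamma_n\leq\tau_n\leq n$. Applying the same formula at $n=0$ gives $\gamma_1=\min(\tau_1,2/p)=2/p$, because every $\tau\in T_1$ must satisfy property (i) at $n=0$ — (ii) would force $\tau_1>1$ and (iii) would force $\tau_0\neq0$ — so that $\tau_1\geq2/3>2/p$.

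For the inequality, write $L_m$ for the union of the intervals at level $m$, so that $E_p(\sigma)=\sum_{n\geq0}\int_{\sigma_n}^{\sigma_{n+1}}\chi_{L_n}(x)\frac{\sin^2\frac{p}{2}\pi(x-n)}{\pi^2 x^2}\dx$ for any admissible $\sigma$. Using $\gamma_0=\tau_0=0$ and $\gamma_n\leq\tau_n<\gamma_{n+1}\leq\tau_{n+1}$, comparing the level-$n$ integrals of $E_p(\gamma)$ and $E_p(\tau)$ and reindexing yields
\[E_p(\gamma)-E_p(\tau)=\sum_{n\geq1}\int_{\gamma_n}^{\tau_n}\left(\chi_{L_n}(x)\frac{\sin^2\frac{p}{2}\pi(x-n)}{\pi^2 x^2}-\chi_{L_{n-1}}(x)\frac{\sin^2\frac{p}{2}\pi(x-n+1)}{\pi^2 x^2}\right)\dx.\]
The $n$-th summand vanishes unless $\gamma_n=\xi_{n-1}+2/p<\tau_n$; in that case Lemma~\ref{lem:Brevig:lem6.5} shows that every interval $(\eta,\eta+2/p)$ at level $n-1$ meeting $(\gamma_n,\tau_n)$ has as its intersecting partner the interval $(\eta+1-4/p,\eta+1-2/p)$ at level $n$, which (since $1-4/p<0$) lies to its left and also meets $(\gamma_n,\tau_n)$, distinct level-$(n-1)$ intervals producing distinct partners. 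After the substitutions that turn each $\sin^2$ factor into $\sin^2\frac{p}{2}\pi u$ on $(0,2/p)$, one checks — exactly as in the proof of Lemma~\ref{lem:tau_n_leq_n}, using that the level-$n$ partner lies further left so that $x\mapsto1/x$ weights it more heavily, which is the content of Lemma~\ref{lem:maximizeS} — that the contribution of the level-$(n-1)$ interval is dominated by that of its partner. The remaining level-$n$ intervals contribute only nonnegative terms, so each summand is $\geq0$ and $E_p(\gamma)\geq E_p(\tau)$.

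It remains to show $\gamma\in T_1$: we have $\gamma_0=0$, $\gamma_1=2/p\geq\min(2/\pi,2/p)$ and $\gamma_n\leq n$, so one must verify that one of (i)--(iii) holds for $\gamma$ at every $n\geq1$. When $\gamma_{n+1}=\tau_{n+1}$ the property $\tau$ satisfied at $n$ passes to $\gamma$: (i) because $\gamma_{n+1}-\gamma_n\geq\tau_{n+1}-\tau_n$, and (ii), (iii) because the lower bounds on $\tau_{n+1}$ and $\tau_{n+2}$ occurring there are not destroyed when $\gamma_{n+2}$ is replaced by $\min(\tau_{n+2},\xi_{n+1}+2/p)$ — and if in the (iii) case $\gamma_n<\tau_n$ strictly, then $\gamma_{n+1}-\gamma_n\geq3/2+1/p$, so $\gamma$ satisfies (i) at $n$ instead. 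When $\gamma_{n+1}=\xi_n+2/p<\tau_{n+1}$, the value $\gamma_{n+1}$ is the right endpoint of the interval $(\xi_n,\xi_n+2/p)$ at level $n$, so $\gamma_{n+1}=(\xi_n+1)-1+2/p$, and one verifies, using that $\tau\in T_1$ and $\tau_{n+1}>\xi_n+2/p$, that $\gamma$ satisfies property (iii) at $n+1$ with $\xi'=\xi_n+1$ (and handles the property required at $n$ analogously). This last verification, which amounts to tracking how the indices $\xi_{n-1},\xi_n,\xi_{n+1}$ sit relative to one another and confirming that the $T_1$-conditions for $\tau$ push $\gamma$ back into $T_1$, is the main obstacle; the three cases in the definition of $T_1$ were set up exactly to make it go through. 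The inequality $E_p(\tau)\leq E_p(\gamma)$ is routine once the displayed identity and Lemma~\ref{lem:maximizeS} are in hand, and the well-definedness of $\gamma$ is bookkeeping.
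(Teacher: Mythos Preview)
Your treatment of well-definedness and of the inequality $E_p(\tau)\leq E_p(\gamma)$ is essentially the paper's argument (Lemma~\ref{lem:Brevig:lem6.5} plus monotonicity of $1/x$), written out in more detail. The gap is in the verification that $\gamma\in T_1$.

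When $\gamma_{n+1}=\xi_n+2/p<\tau_{n+1}$, you propose checking property~(iii) for $\gamma$ at index $n+1$ with $\xi'=\xi_n+1$. But that is a statement about the pair $(\gamma_{n+1},\gamma_{n+2})$; it does nothing for index $n$, where one of (i)--(iii) must hold for the pair $(\gamma_n,\gamma_{n+1})$. Your phrase ``handles the property required at $n$ analogously'' is not a proof, and the required argument is not analogous. The paper proceeds by a case split on $\tau_{n+2}$: if $\tau_{n+2}\geq m_{\xi_n}+4/p$, then $\gamma$ satisfies~(ii) at $n$ (one must check $\gamma_{n+2}\geq m_{\xi_n}+4/p$, which needs $\xi_{n+1}\geq\xi_n+1$). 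If $\tau_{n+2}<m_{\xi_n}+4/p$, the $T_1$ conditions on $\tau$ force $\tau_{n+1}\leq\tau_{n+2}-2/3<\xi_n+1$, hence $\tau_n\leq\tau_{n+2}-4/3$; from this one deduces $\gamma_n=\xi_n-1+2/p$ (either $\tau_n\leq\xi_n-1+2/p$ directly, or $\tau_{n-1}\leq m_{\xi_n-2}+4/p$ forces the truncation), so $\gamma_{n+1}-\gamma_n=1$ and (i) holds. This second case is the substantive step and is precisely what your write-up omits. You correctly flag it as ``the main obstacle'' but then do not resolve it.

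A smaller point: in the case $\gamma_{n+1}=\tau_{n+1}$ with $\tau$ satisfying~(ii) at $n$, you assert that the lower bound $\tau_{n+2}\geq m_{\xi}+4/p$ is ``not destroyed'' upon replacing $\tau_{n+2}$ by $\gamma_{n+2}=\min(\tau_{n+2},\xi_{n+1}+2/p)$. This requires $\xi_{n+1}+2/p\geq m_{\xi}+4/p$, i.e.\ $\xi_{n+1}\geq\xi+1$, which is true but needs the discreteness of level-$(n+1)$ endpoints together with $\tau_{n+1}=\xi+2/p$; it is not automatic.
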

	
	\begin{proof}
		We start by showing that $\gamma$ is in $T_1$. By definition $\gamma_0=0$ and $\gamma_1=2/p$, so it remains to show that the pair $\gamma_n$ and $\gamma_{n+1}$ satisfies one of the three conditions in the definition of $T_1$ for each $n$. Fix $n$ and let $\xi=n-(4/p)j$ be such that $m_{\xi-1}<\tau_n \leq m_{\xi-1}+4/p$. Then $\gamma_n \leq \tau_n \leq m_{\xi-1}+4/p$. If $\tau_{n+1} \leq \xi+2/p$ it follows that $\gamma_{n+1}=\tau_{n+1}$ and at least one of the three conditions is satisfied. 
		
		Now assume $\tau_{n+1} > \xi+2/p$. Then  $\gamma_{n+1}=\xi+2/p$. If $\tau_{n+2} \geq m_\xi+4/p$ it follows that $\gamma_{n+2}=\min(\gamma_{n+1}, \xi+1+2/p) \geq m_{\xi}+4/p$ and thus $\gamma$ is in $T_1$ by condition (ii). On the other hand, if $\tau_{n+2} < m_\xi+4/p$ it follows that \[\xi+2/p<\tau_{n+1} \leq \tau_{n+2}-2/3 < \xi+1\] and in particular since $\tau_{n+1}>\xi+2/p$ it must be the case that \[\tau_n \leq \tau_{n+1}-2/3 \leq \tau_{n+2}-4/3.\] We claim that $\gamma_n=\xi-1+2/p$, and so $\gamma_{n+1}-\gamma_n=1$ will satisfy condition (i). If $\tau_n \leq \xi-1+2/p$ this is trivial. If $\tau_n>\xi-1+2/p$ it must be the case that \[\tau_{n-1} \leq \tau_n-2/3 \leq \tau_{n+2}-2 \leq m_\xi-2+4/p,\] and so $\gamma_n=\min(\xi-1+2/p, \tau_n)=\xi-1+2/p$. Finally, combining Lemma \ref{lem:Brevig:lem6.5} and the fact that $x \mapsto 1/x$ is a decreasing function it follows that $E_p(\tau) \leq E_p(\gamma)$. 
	\end{proof}
	
	We will see that if an element $\tau_{n+1}$ is not a right endpoint of an interval at level $n$, meaning $\tau_{n+1}\neq \xi+2/p$, then it will be important for us whether $\tau_{n+1}$ is greater than or less than $m_\xi$. Note, however, that if $\tau_n=\xi-1+2/p$ then the latter can only happen if $p>3.6$. Hence for some of the proceeding results we will consider the cases $p \geq 3.6$ and $p<3.6$ separately, and some results will only be needed in one of the two cases.
	
	\begin{lemma}\label{lem:pleq36spes1}
		Let $3 < p < 3.6$ and assume $\tau$ is a sequence in $T_1$ such that \[\xi+5/6-1/p \leq \tau_{k+1}<\xi+2/p\] where $(\xi, \xi+2/p)$ is an interval at level $k$. Let $\gamma$ be a sequence given by $\gamma_n=\tau_n$ for all $n \neq k+1$ and $\gamma_{k+1}=\xi+2/p$. Then $\gamma$ is in $T_1$ and $E_p(\tau)<E_p(\gamma)$.
	\end{lemma}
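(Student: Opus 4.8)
The plan is to separate the two assertions: the inequality $E_p(\tau)<E_p(\gamma)$, which is a quick comparison, and the membership $\gamma\in T_1$, which carries the real work. Since $\gamma$ and $\tau$ agree except in the $(k+1)$st coordinate, $E_p(\gamma)-E_p(\tau)$ only involves the contributions from levels $k$ and $k+1$. By Lemma~\ref{lem:Brevig:lem6.5} the unique interval at level $k+1$ meeting $(\xi,\xi+2/p)$ is $(\xi+1-4/p,\xi+1-2/p)$, whose right endpoint satisfies $\xi+1-2/p<\xi+5/6-1/p\le\tau_{k+1}$ because $p<6$, while the next interval at level $k+1$, namely $(\xi+1,\xi+1+2/p)$, has left endpoint $\xi+1>\xi+2/p$ because $p>2$. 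Hence $(\tau_{k+1},\xi+2/p)$ meets no interval at level $k+1$, so passing from $\tau$ to $\gamma$ leaves the level-$(k+1)$ contribution untouched, whereas on level $k$ the interval $(\tau_k,\tau_{k+1})$ is enlarged to $(\tau_k,\xi+2/p)$, adding the strictly positive quantity $\int_{\tau_{k+1}}^{\xi+2/p}\frac{\sin^2\frac p2\pi(x-k)}{\pi^2x^2}\,dx$. Thus $E_p(\tau)<E_p(\gamma)$.

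To show $\gamma\in T_1$, I would first record that the hypotheses force $0\le\xi\le k$: the bound $\xi\le k$ follows from $\tau_{k+1}\le k+1$, $\tau_{k+1}\ge\xi+5/6-1/p$ and the fact that $\xi-k$ is an integer multiple of $4/p$ together with $p<3.6$, and $\xi\ge0$ follows from $\tau_{k+1}<\xi+2/p$, $\tau_{k+1}>\tau_1\ge\min(2/\pi,2/p)$, and the fact that consecutive terms of a $T_1$-sequence differ by a definite positive amount. In particular $\gamma_{k+1}=\xi+2/p\le k+1$, so $\gamma_n\le n$ for every $n$, and $\gamma_1\ge\min(2/\pi,2/p)$. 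It then remains to check that one of the conditions (i)--(iii) holds at every index. For indices outside $\{k-1,k,k+1\}$ this is inherited from $\tau$, since those conditions see only unchanged terms; and at index $k-1$ it is preserved because the only one of (i)--(iii) mentioning $\tau_{k+1}$ is (ii), which stays valid as $\gamma_{k+1}>\tau_{k+1}$.

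At index $k+1$ I would verify (iii) with the interval $(\xi+1,\xi+1+2/p)$ at level $k+1$: since $\gamma_{k+1}=(\xi+1)-1+2/p$, it is enough to show $\gamma_{k+2}=\tau_{k+2}\ge m_{\xi+1}=\xi+3/2-1/p$. Whichever of (i)--(iii) held at index $k+1$ for $\tau$, this follows from $\tau_{k+1}\ge\xi+5/6-1/p$: case (iii) for $\tau$ is ruled out outright by that lower bound; case (i) gives $\tau_{k+2}\ge\tau_{k+1}+2/3\ge\xi+3/2-1/p$; and in case (ii), where $\tau_{k+2}$ is the right endpoint of an interval at level $k+1$, the lower bound on $\tau_{k+1}$ forces that interval to be $(\xi+1,\xi+1+2/p)$ or one further to the right, so $\tau_{k+2}\ge\xi+1+2/p\ge\xi+3/2-1/p$ as $p\le6$. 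At index $k$ one observes that $\tau$ cannot have satisfied (ii) there, since that would make $\tau_{k+1}$ the right endpoint of an interval at level $k$, impossible as $\xi<\tau_{k+1}<\xi+2/p$; if $\tau$ satisfied (i) at index $k$, then so does $\gamma$, the gap having only increased; and if $\tau$ satisfied (iii) at index $k$, then either the underlying level-$k$ interval is $(\xi,\xi+2/p)$ with $\tau_k=\xi-1+2/p$, in which case (iii) holds for $\gamma$ too because $\gamma_{k+1}=\xi+2/p\ge m_\xi$, or it lies strictly to the left of $(\xi,\xi+2/p)$, in which case $\tau_{k+1}-\tau_k>1$ and (i) holds for $\gamma$.

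I expect the main obstacle to be the index-$k$ and index-$(k+1)$ bookkeeping in the last two paragraphs: one must exclude the degenerate placements of the level-$k$ and level-$(k+1)$ intervals attached to the conditions (ii) and (iii), and keep careful track of how raising $\tau_{k+1}$ past the right endpoint of its interval interacts with the two neighbouring conditions. This is precisely where the restrictions $p<3.6$ and $\tau_{k+1}\ge\xi+5/6-1/p$ enter; by contrast, the inequality $E_p(\tau)<E_p(\gamma)$ uses only $2<p<6$.
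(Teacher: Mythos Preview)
Your proof is correct and follows essentially the same approach as the paper, but is far more explicit. The paper's proof occupies two sentences: it cites Lemma~\ref{lem:midpointlem} for the inequality $E_p(\tau)<E_p(\gamma)$ and asserts $\gamma\in T_1$ ``due to property (iii) and the fact that $\tau$ is in $T_1$.'' Your direct argument for the inequality (that $(\tau_{k+1},\xi+2/p)$ meets no level-$(k+1)$ interval because $\tau_{k+1}>\xi+1-2/p$) is precisely the mechanism behind the ``increasing on $(m_\xi,\xi+2/p)$'' part of Lemma~\ref{lem:midpointlem} in the range under consideration; you have simply unpacked it rather than quoted it. For the membership $\gamma\in T_1$, your careful index-by-index verification---especially the case split on which of (i)--(iii) held for $\tau$ at indices $k$ and $k+1$, leading to (iii) at index $k+1$ for $\gamma$ with the interval $(\xi+1,\xi+1+2/p)$---spells out exactly what the paper leaves to the reader. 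Your observation that the hypotheses $p<3.6$ and $\tau_{k+1}\ge\xi+5/6-1/p$ are used only in the $T_1$-membership bookkeeping (not in the $E_p$ comparison) is accurate and worth noting.
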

	
	\begin{proof}
		We see that $\gamma$ is in $T_1$ due to property (iii) and the fact that $\tau$ is in $T_1$. It follows from Lemma \ref{lem:midpointlem} that $E_p(\tau)<E_p(\gamma)$.
	\end{proof}	  
	
	The next lemma tells us that if $\tau_{n+1}$ is smaller than the corresponding midpoint we can safely assume $\tau_{n+1}=\tau_n+2/3$.
	
	\begin{lemma}\label{lem:pgeq36spes2}
		Let $3.6<p<4$. Let $\tau$ be a sequence in $T_1$. Fix an integer $j>0$ and let $k$ be the smallest integer such that $\tau_{k+1} <k-(4/p)j+2/p$. Assume further that $\tau_{k+1}<m_{k}-(4/p)j$, and $\tau_{k+1} \neq \tau_k+2/3$. Let $\gamma$ be the sequence given by $\gamma_n=\tau_n$ for all $n \neq k+1$ and assume $\gamma_{k+1}=\gamma_{k}+2/3$. Then $\gamma$ is in $T_1$ and $E_p(\tau)<E_p(\gamma)$.
	\end{lemma}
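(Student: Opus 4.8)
The plan is to check, in turn, that $\gamma\in T_1$ and that $E_p(\gamma)>E_p(\tau)$. Write $\xi=k-(4/p)j$, so that $(\xi,\xi+2/p)$ is an interval at level $k$ and a short computation gives $m_k-(4/p)j=\xi+1/2-1/p=m_\xi$; the standing hypotheses then read $\tau_{k+1}<\xi+2/p$ (definition of $k$), $\tau_{k+1}<m_\xi$, and $\tau_{k+1}\neq\tau_k+2/3$. Since $\tau_1\ge\min(2/\pi,2/p)>0$ we have $k\ge1$, and the minimality of $k$, applied at level $k-1$, gives $\tau_k\ge(k-1)-(4/p)j+2/p=\xi-1+2/p$.

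The first step is to pin down $\tau_k$ and $\tau_{k+1}$. I would argue that the pair $(\tau_k,\tau_{k+1})$ satisfies condition (i) of the definition of $T_1$ by excluding (ii) and (iii): condition (ii) would force $\tau_{k+1}=\xi'+2/p$ for a level-$k$ left endpoint $\xi'\le\xi-4/p$ (using $\tau_{k+1}<\xi+2/p$), hence $\tau_k<m_{\xi'-1}+4/p=\xi'-1/2+3/p\le\xi-1/2-1/p$, contradicting $\tau_k\ge\xi-1+2/p$ because $p<6$; condition (iii) would force $\tau_k=\xi'-1+2/p$ with $\tau_{k+1}\ge m_{\xi'}$ and $\xi'\le\xi-4/p$ (using $\tau_{k+1}<m_\xi$), hence $\tau_k\le\xi-1-2/p$, again contradicting $\tau_k\ge\xi-1+2/p$. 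So $\tau_{k+1}-\tau_k\ge2/3$; together with $\tau_{k+1}\neq\tau_k+2/3$ this makes $\gamma$ strictly increasing, and it also gives $\tau_{k+1}\ge\tau_k+2/3\ge\xi-1/3+2/p>\xi$ as well as $\tau_k\le\tau_{k+1}-2/3<m_\xi-2/3<\xi$. Thus $\tau_{k+1}\in(\xi,m_\xi)$, while $\tau_k+2/3\ge\xi$ since $\tau_k\ge\xi-1+2/p\ge\xi-2/3$.

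Next I would verify $\gamma\in T_1$. Only $\gamma_{k+1}$ was changed, and $\gamma_{k+1}=\tau_k+2/3\le k+2/3<k+1$, so $\gamma_n\le n$ for all $n$; the pair $(\gamma_k,\gamma_{k+1})=(\tau_k,\tau_k+2/3)$ satisfies (i). For the pair $(\gamma_{k+1},\gamma_{k+2})=(\tau_k+2/3,\tau_{k+2})$ I use that $\gamma_{k+1}<\tau_{k+1}$ and that $(\tau_{k+1},\tau_{k+2})$ satisfies one of (i)--(iii): it cannot satisfy (iii), since that would place $\tau_{k+1}$ at a right endpoint $\xi+(4/p)\ell+2/p$ of a level-$k$ interval, which is impossible inside $(\xi,m_\xi)$ when $p<4$; if it satisfies (i) then $\gamma_{k+2}-\gamma_{k+1}>\tau_{k+2}-\tau_{k+1}\ge2/3$, so the $\gamma$-pair satisfies (i); if it satisfies (ii) with parameter $\xi''$, then $\tau_{k+2}=\xi''+2/p$ and $\tau_{k+3}\ge m_{\xi''}+4/p$ are unchanged and $\gamma_{k+1}<\tau_{k+1}<m_{\xi''-1}+4/p$, so the $\gamma$-pair satisfies (ii). The only way changing $\tau_{k+1}$ could alter whether some $T_1$-condition holds at level $k-1$ is if $\tau$ satisfied (ii) there; but that would force $\tau_k=\xi-1+2/p$ and $\tau_{k+1}\ge m_{\xi-1}+4/p=\xi-1/2+3/p>m_\xi$ for $p<4$, contradicting $\tau_{k+1}<m_\xi$. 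Hence $\tau$ satisfies (i) or (iii) at level $k-1$, and so does $\gamma$, giving $\gamma\in T_1$.

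Finally, since $\gamma$ differs from $\tau$ only in the entry indexed $k+1$, the integrands defining $E_p(\gamma)$ and $E_p(\tau)$ agree off the interval $(\gamma_{k+1},\tau_{k+1})=(\tau_k+2/3,\tau_{k+1})$; there $E_p(\tau)$ contributes $\sin^2\frac{p}{2}\pi(x-k)/(\pi^2 x^2)$ and $E_p(\gamma)$ contributes $\sin^2\frac{p}{2}\pi(x-k-1)/(\pi^2 x^2)$. Because $\tau_k+2/3\ge\xi$ and $\tau_{k+1}<m_\xi$, this interval lies inside $[\xi,m_\xi]$, which is contained both in the level-$k$ interval $[\xi,\xi+2/p]$ and in the level-$(k+1)$ interval $[\xi+1-4/p,\xi+1-2/p]$ (here $2\le p\le4$ is used), and on which the pointwise inequality $\sin^2\frac{p}{2}\pi(x-k)\le\sin^2\frac{p}{2}\pi(x-k-1)$ from the proof of Lemma \ref{lem:midpointlem} holds, with equality only at $x=m_\xi$. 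Since $(\tau_k+2/3,\tau_{k+1})$ has positive length and $1/(\pi^2 x^2)>0$ on it, subtracting yields $E_p(\tau)<E_p(\gamma)$. I expect the case-checking in the two middle steps — repeatedly ruling out conditions (ii) and (iii) for the neighbouring pairs — to be the delicate part, since that is where all the numerical hypotheses ($p<4$ and $p<6$, the minimality of $k$, and $\tau_{k+1}<m_\xi$) are consumed.
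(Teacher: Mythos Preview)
Your proof is correct and follows the same approach as the paper: membership of $\gamma$ in $T_1$ via condition (i) at level $k$, and the inequality $E_p(\tau)<E_p(\gamma)$ via Lemma~\ref{lem:midpointlem}. The paper's proof is a two-sentence sketch; you have supplied the omitted verifications, in particular the careful check that the $T_1$ conditions at levels $k-1$ and $k+1$ are not broken by replacing $\tau_{k+1}$ with $\gamma_{k+1}=\tau_k+2/3$, and the localization of the change to the interval $(\gamma_{k+1},\tau_{k+1})\subseteq[\xi,m_\xi]$ where both level-$k$ and level-$(k+1)$ integrands are positive.
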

	
	\begin{proof}
		It follows that $\gamma$ is in $T_1$ since $\tau$ is in $T_1$ and $\gamma_{k+1}=\gamma_k+2/3$. Furthermore it follows from Lemma \ref{lem:midpointlem} that $E_p(\tau)<E_p(\gamma)$.
	\end{proof}
	
	Next we move on to the case where we have an element $\tau_{n+1}$ in the sequence $\tau$ greater than the corresponding midpoint. The next lemma tells us that we may then assume $\tau_{n+2}=\tau_{n+1}+2/3$.
	
	\begin{lemma}\label{lem:23diffbestleq36}
		Assume $3<p<4$. Let $ \tau$ be a sequence in $T_1$. Fix an integer $j \geq 0$. If it exists, let $k \geq 1$ be the  smallest integer such that $\tau_{k+1}<k-(4/p)j+2/p$, and let $\xi=k-(4/p)j$. Further, if $p>3.6$ assume that $\tau_{k+1} \geq m_{\xi}$. If $\tau_{k+2} \neq \tau_{k+1} + 2/3$, then there exists a sequence $\gamma$ in $T_1$ such that $E_p(\tau) < E_p(\gamma)$. 
	\end{lemma}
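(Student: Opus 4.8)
The plan is to obtain $\gamma$ from $\tau$ by moving a single entry, setting $\gamma_{k+2}=\tau_{k+1}+2/3$ and $\gamma_n=\tau_n$ for $n\neq k+2$ (in a handful of degenerate subcases one further entry has to be nudged as well, or $\gamma_{k+2}$ has to be replaced by a nearby right endpoint), and then to verify separately that $\gamma\in T_1$ and that $E_p(\gamma)>E_p(\tau)$. First I would pin down the location of $\tau_{k+1}$: by minimality of $k$ we have $\tau_k\ge(k-1)-(4/p)j+2/p=\xi-1+2/p$, and since $\xi$ and any left endpoint appearing in conditions (ii), (iii) of the definition of $T_1$ differ by a multiple of $4/p>1$, one checks that only condition (i) can hold for the pair $(\tau_k,\tau_{k+1})$, whence $\tau_{k+1}\ge\tau_k+2/3\ge\xi-1/3+2/p$; together with the standing assumption $\tau_{k+1}\ge m_\xi$ (automatic when $p\le3.6$ by the preceding inequality) this gives $m_\xi\le\tau_{k+1}<\xi+2/p$. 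By Lemma \ref{lem:Brevig:lem6.5} the level-$(k+1)$ interval meeting $(\xi,\xi+2/p)$ is $(\xi+1-4/p,\xi+1-2/p)$, lying to the left of $(\xi,\xi+2/p)$ for $3<p<4$; I split according to whether $\tau_{k+1}<\xi+1-2/p$ ($\tau_{k+1}$ inside that level-$(k+1)$ interval) or $\tau_{k+1}\in[\xi+1-2/p,\xi+2/p)$ ($\tau_{k+1}$ in the tail of the level-$k$ interval). In the first case one checks, using $\tau_{k+1}\ge m_\xi$, that neither (ii) nor (iii) can hold for $(\tau_{k+1},\tau_{k+2})$ unless (i) holds too, so $\tau_{k+2}\ge\tau_{k+1}+2/3$ and the move decreases $\tau_{k+2}$; in the second case one must allow $\tau_{k+2}<\tau_{k+1}+2/3$, which forces (ii) or (iii) and hence pins $\tau_{k+2}$ to a right endpoint of a level-$(k+1)$ interval, or $\tau_{k+1}$ to a left endpoint of a level-$(k+2)$ interval.

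Since changing $\tau_{k+2}$ only relocates the boundary between the level-$(k+1)$ and level-$(k+2)$ contributions, $E_p(\gamma)-E_p(\tau)$ equals the level-$(k+2)$ mass minus the level-$(k+1)$ mass over the interval with endpoints $\tau_{k+2}$ and $\gamma_{k+2}=\tau_{k+1}+2/3$. By Lemma \ref{lem:Brevig:lem6.5} this interval sits in the vicinity of a level-$(k+1)$ interval and the level-$(k+2)$ interval meeting it, so I would use the pointwise comparisons $\sin^2\frac p2\pi(x-k-1)\le\sin^2\frac p2\pi(x-k-2)$ to the left of the midpoint of their overlap and the reverse to the right of it — the inequalities behind Lemma \ref{lem:midpointlem} — together with the fact that $x\mapsto1/x^2$ is decreasing, to show this difference is positive, with strictness coming from $\tau_{k+2}\neq\tau_{k+1}+2/3$. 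The delicate point, and the main obstacle, is the bookkeeping of where $\tau_{k+1}+2/3$ falls relative to these endpoints and midpoints in each subcase; in particular, when $\tau_{k+1}+2/3$ would land on a plateau of the relevant split function one has to argue, using $\tau_{k+1}\ge m_\xi$ and the $T_1$-conditions, that this does not happen, or else move $\tau_{k+2}$ to the neighbouring right endpoint and win by the strict drop of $1/x^2$ from one period to the next.

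Finally $\gamma\in T_1$ is checked directly: $\gamma_{k+2}-\gamma_{k+1}=2/3$ gives condition (i) for the pair $(\gamma_{k+1},\gamma_{k+2})$, and $\gamma_{k+2}=\tau_{k+1}+2/3\le(k+1)+2/3<k+2$ preserves $\gamma_n\le n$; for the pair $(\gamma_{k+2},\gamma_{k+3})=(\tau_{k+1}+2/3,\tau_{k+3})$ one uses whichever of (i), (ii), (iii) held for $(\tau_{k+2},\tau_{k+3})$, noting that the left endpoints $\xi''$ appearing in (ii), (iii) are unaffected by the move and that $\gamma_{k+2}\le\tau_{k+2}$ in the cases where the move decreases $\tau_{k+2}$, with the few residual degenerate subcases (for instance when (iii) held for $(\tau_{k+2},\tau_{k+3})$ and the move changes $\tau_{k+2}$) absorbed by the extra nudge mentioned at the outset.
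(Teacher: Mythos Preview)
Your plan is workable in spirit but is considerably more tangled than necessary, and the vague ``extra nudges'' you invoke are exactly where the paper's argument is clean and yours is not. The paper does \emph{not} split on the location of $\tau_{k+1}$ relative to $\xi+1-2/p$; it splits on whether $\tau_{k+2}\ge m_\xi+1$ or $\tau_{k+2}<m_\xi+1$, and in the first case it moves $\tau_{k+1}$ rather than $\tau_{k+2}$. Concretely: if $\tau_{k+2}\ge m_\xi+1=m_{\xi+1}$, set $\gamma_{k+1}=\xi+2/p$ and leave everything else alone. Then $(\gamma_{k+1},\gamma_{k+2})$ satisfies condition (iii) of $T_1$ on the nose (with the level-$(k+1)$ interval $(\xi+1,\xi+1+2/p)$), so $\gamma\in T_1$ with no further checks; and since $\tau_{k+1}\ge m_\xi$, Lemma~\ref{lem:midpointlem} immediately gives $E_p(\gamma)>E_p(\tau)$. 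This single move sidesteps every one of your ``degenerate subcases'': you never touch $\tau_{k+2}$, so the pair $(\gamma_{k+2},\gamma_{k+3})=(\tau_{k+2},\tau_{k+3})$ inherits whichever of (i)--(iii) held before, and the multi-period comparison you worry about never arises.

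In the remaining case $\tau_{k+2}<m_\xi+1$, your proposed move $\gamma_{k+2}=\tau_{k+1}+2/3$ is exactly what the paper does, and here your side worries are unfounded. The case $\tau_{k+2}<\tau_{k+1}+2/3$ cannot occur: one checks (as you partly do) that neither (ii) nor (iii) can hold for the pair $(\tau_{k+1},\tau_{k+2})$ once $\tau_{k+1}\in[m_\xi,\xi+2/p)$ and $\tau_{k+2}<m_{\xi+1}$, so (i) forces $\tau_{k+2}\ge\tau_{k+1}+2/3$. The interval $(\gamma_{k+2},\tau_{k+2})$ then sits inside $(\xi+1-2/p,\,m_{\xi+1})$, a single period, and the pointwise inequalities behind Lemma~\ref{lem:midpointlem} finish the comparison directly. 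Your split on $\tau_{k+1}$ versus $\xi+1-2/p$ does not line up with this structure, and the residual case where (iii) held for $(\tau_{k+2},\tau_{k+3})$---which you flag but do not resolve---only arises when $\tau_{k+2}\ge m_\xi+1$, precisely the case the paper handles by moving $\tau_{k+1}$ instead.
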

	
	\begin{proof}
		We start by claiming that we can assume $\tau_{k+1} \geq m_\xi=\xi+1/2-1/p$. For $p>3.6$ this follows from assumption, so let $p\leq 3.6$.
		We first see that it holds in the case $k=1$, $j=0$ since $\min(2/\pi, 2/p)+2/3 \geq 3/2-1/p $. In all other cases it then follows that 
		\[\tau_{k+1} \geq \tau_k+2/3 \geq \xi-1+2/p+2/3=\xi-1/3+2/p \geq m_\xi.\] From now on we consider $3<p<4$. Assume first that $\tau_{k+2} \geq m_\xi+1$. Then we define the sequence $\gamma$ by $\gamma_{k+1}=\xi+2/p$ and $\gamma_n=\tau_n$ for all $n \neq k+1$. We see that $\gamma$ is in $T_1$ since $\tau$ is in $T_1$  and  $\gamma_{k+2} \geq m_\xi+1$, so property (iii) in the definition of $T_1$ is satisfied. Further since $\tau_{k+1} \geq m_\xi$ it follows that $E_p(\gamma)>E_p(\tau)$ by Lemma \ref{lem:midpointlem}.
		
		Next assume $\tau_{k+2}<m_\xi+1$.  Then we define the sequence $\gamma$ by $\gamma_{k+2}=\gamma_{k+1}+2/3$ and $\gamma_n=\tau_n$ for all $n \neq k+2$. We see that $\gamma$ is in $T_1$ since $\tau$ is, and $\gamma_{k+2}-\gamma_{k+1} = \delta_2$. Further by Lemma \ref{lem:midpointlem} it follows that $E_p(\gamma)>E_p(\tau)$.
	\end{proof}
	
	In light of the lemmas above, let us now define a subset of sequences $T_2 \subseteq T_1$. Let $y_{max}=5/6-1/p$ if $p \leq 3.6$ and $y_{max}=2/p$ if $p>3.6$. We define $T_2$ to be the set of all sequences $\tau$ with the following properties. 
	\begin{enumerate}
		\item[(I)] $\tau_0=0$ and $\tau_1=2/p$.
		\item[(II)] If $\tau_{k+1} \leq m_k-(4/p)j$ for some integer $j$ then, $\tau_n \leq n-1-(4/p)j+2/p$ for all $n \geq k+2$.
		\item[(III)] Fix an integer $j \geq 0$. If it exists, let $k$ be the smallest integer such that $\tau_{k+1}<k+2/p-(4/p)j$ and let $\xi=k-(4/p)j$. Then one of the following two conditions holds:
		\begin{enumerate}
			\item[(a)] $m_\xi \leq \tau_{k+1}<\xi+y_{max}$, $\tau_{k+2}=\tau_{k+1}+2/3<m_{\xi}+1$, $\tau_{k+3}=\xi+2-2/p$ and $\tau_{k+4} \geq m_\xi+2$.
			\item[(b)] $p>3.6$, $\tau_{k+1}=\xi-1/3+2/p$, $\tau_{k+2}=\xi +1-2/p$, and $\tau_{k+3} \geq m_\xi+1$.
		\end{enumerate}
	\end{enumerate}
	Combining the above lemmas we obtain the following result.
	\begin{lemma}\label{lem:T3}
		Let $3<p<4$. Then
		\[\mathscr{E}_p(\min(2/p, 2/\pi), 2/3) \leq \sup_{\tau \in T_2} E_p(\tau).\]
	\end{lemma}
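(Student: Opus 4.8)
The plan is to chain together Lemmas \ref{lem:taunleqngen}, \ref{lem:pleq36spes1}, \ref{lem:pgeq36spes2} and \ref{lem:23diffbestleq36}. By \eqref{eq:supT1} (applied with $\delta_1=\min(2/p,2/\pi)$ and $\delta_2=2/3$) it suffices to prove $\sup_{\tau\in T_1}E_p(\tau)\le\sup_{\tau\in T_2}E_p(\tau)$, and since $T_2\subseteq T_1$ this is in fact an equality. So I would fix an arbitrary $\tau\in T_1$ and modify it by a (possibly countable) succession of moves, each of which keeps the sequence in $T_1$ and does not decrease $E_p$, until it lands in $T_2$; taking the supremum over $\tau\in T_1$ then gives the lemma.

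The opening move is a single application of Lemma \ref{lem:taunleqngen}. Afterwards we may assume $\tau_0=0$ and $\tau_1=2/p$, which is property (I) of $T_2$, and moreover every later term has been pushed as far to the left as the $T_1$-structure permits, i.e.\ $\tau_{n+1}$ has been replaced by $\min(\tau_{n+1},\xi+2/p)$ where $(\xi,\xi+2/p)$ is the level-$n$ interval attached to $\tau_n$. Property (II) of $T_2$ then follows from this left-pushing together with the separation $\delta_2=2/3$: once $\tau_{k+1}\le m_k-(4/p)j$, the subsequent terms are forced to satisfy $\tau_n\le n-1-(4/p)j+2/p$ for all $n\ge k+2$.

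To arrange property (III) I would argue separately for each integer $j\ge 0$. If it exists, let $k$ be the smallest index with $\tau_{k+1}<k+2/p-(4/p)j$ and set $\xi=k-(4/p)j$, so that $(\xi,\xi+2/p)$ is a level-$k$ interval and, by minimality of $k$ together with the left-pushing, $\tau_k=\xi-1+2/p$. When $p\le 3.6$ one has $\tau_{k+1}\ge\tau_k+2/3=\xi-1/3+2/p\ge m_\xi$ (the case $k=1$ being covered by $\tau_2\ge\min(2/\pi,2/p)+2/3\ge 3/2-1/p=m_\xi$), and whenever $\tau_{k+1}\ge\xi+y_{max}$ Lemma \ref{lem:pleq36spes1} lets me push $\tau_{k+1}$ up to $\xi+2/p$ — strictly increasing $E_p$ and moving the first deficient index of this ray outward — so I may assume $m_\xi\le\tau_{k+1}<\xi+y_{max}$. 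When $p>3.6$ there is the additional possibility $\tau_{k+1}<m_\xi$, and there Lemma \ref{lem:pgeq36spes2} forces $\tau_{k+1}=\xi-1/3+2/p$, after which Lemma \ref{lem:23diffbestleq36} pins down $\tau_{k+2}=\xi+1-2/p$ and $\tau_{k+3}\ge m_\xi+1$, giving configuration (b). In the remaining regime $m_\xi\le\tau_{k+1}<\xi+y_{max}$, Lemma \ref{lem:23diffbestleq36} forces $\tau_{k+2}=\tau_{k+1}+2/3$, and its proof also shows that if $\tau_{k+2}\ge m_\xi+1$ one may instead push $\tau_{k+1}$ to $\xi+2/p$; hence I may assume $\tau_{k+2}=\tau_{k+1}+2/3<m_\xi+1$. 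Now $\tau_{k+2}$ plays the role of ``$\tau_{k+1}$'' for the first deficient index one step further out along the same ray, and one more application of Lemma \ref{lem:23diffbestleq36}, combined with Lemma \ref{lem:Brevig:lem6.5} to keep track of which interval the ray visits at each level, yields $\tau_{k+3}=\xi+2-2/p$ and $\tau_{k+4}\ge m_\xi+2$, which is configuration (a). Performing this normalisation for all $j$ — the modifications attached to distinct values of $j$ live on blocks of indices that drift to infinity with $j$, and otherwise only shift terms further left — produces in the pointwise limit a sequence $\gamma\in T_2$; since $E_p$ is continuous along such modifications by dominated convergence, $E_p(\gamma)\ge E_p(\tau)$, and taking suprema finishes the proof.

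The step I expect to be most delicate is the last one: checking that the rigid local configurations (III)(a) and (III)(b) set up while normalising one value of $j$ are not spoiled when a later $j$ is handled, and that the countably many moves can be assembled into a single limiting sequence that truly lies in $T_2$ with no loss in $E_p$. A secondary difficulty is the interval bookkeeping — repeated use of Lemma \ref{lem:Brevig:lem6.5} to verify the exact endpoint identities $\tau_{k+2}=\xi+1-2/p$, $\tau_{k+3}=\xi+2-2/p$ and the midpoint inequalities $\tau_{k+3}\ge m_\xi+1$, $\tau_{k+4}\ge m_\xi+2$ that condition (III) demands.
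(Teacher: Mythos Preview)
Your overall shape is right — chain \eqref{eq:supT1} with Lemmas \ref{lem:taunleqngen}, \ref{lem:pleq36spes1}, \ref{lem:pgeq36spes2}, \ref{lem:23diffbestleq36} — and the way you obtain (I) and (II) matches the paper. The gap is in how you reach the rigid endpoint values demanded by (III).

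In configuration (b) you write that ``Lemma \ref{lem:23diffbestleq36} pins down $\tau_{k+2}=\xi+1-2/p$ and $\tau_{k+3}\ge m_\xi+1$''. But Lemma \ref{lem:23diffbestleq36} carries the hypothesis $\tau_{k+1}\ge m_\xi$ when $p>3.6$, and you are precisely in the regime $\tau_{k+1}<m_\xi$; the lemma is inapplicable. In configuration (a) you try to get $\tau_{k+3}=\xi+2-2/p$ by ``one more application of Lemma \ref{lem:23diffbestleq36}''. This does not work either: the lemma is stated for the \emph{smallest} index $k$ with $\tau_{k+1}<k-(4/p)j+2/p$, and that smallest index is still $k$, not $k+1$; moreover the conclusion of the lemma would set the next term equal to its predecessor plus $2/3$, not to the specific endpoint $\xi+2-2/p$.

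What you are missing is that condition (ii) in the definition of $T_1$ is doing the work here, not another pass through Lemma \ref{lem:23diffbestleq36}. Once $\tau_{k+2}=\tau_{k+1}+2/3<m_\xi+1$, the $T_1$ alternatives for the pair $(\tau_{k+2},\tau_{k+3})$ force $\tau_{k+4}\ge m_\xi+2$ in \emph{every} case (either $\tau_{k+4}\ge\tau_{k+1}+2\ge m_\xi+2$ via (i), or directly via (ii) with $\tau_{k+3}=\xi+2-2/p$). Given that $\tau_{k+4}$ is pinned regardless, the optimal intermediate choice is the (ii)-configuration $\tau_{k+3}=\xi+2-2/p$, and that is (III)(a). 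The same mechanism, one level lower, gives (III)(b): after $\tau_{k+1}=\xi-1/3+2/p$ the $T_1$ alternatives force $\tau_{k+3}\ge m_\xi+1$, so the optimal choice is $\tau_{k+2}=\xi+1-2/p$. This is the argument in the paper; your proposed iteration of Lemma \ref{lem:23diffbestleq36} cannot reproduce it. The limiting-sequence and dominated-convergence apparatus you worry about at the end is then unnecessary: the normalisations for different $j$ act on disjoint index blocks and no countable limit is needed.
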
	
	
	\begin{proof}
		We need to show that if $\tau$ in $T_1$ is a candidate for attaining the supremum then the properties (I), (II) and (III) hold.
		Combining the fact that $\tau_1 \geq \min(2/p, 2/\pi)$ and Lemma \ref{lem:taunleqngen} it follows that sequences not satisfying (I) and (II) can be excluded from the set of sequences we take the supremum over. For property (III) fix an integer $j \geq 0$ and assume there exists a smallest integer $k$ such that $\tau_{k+1}<k+2/p-(4/p)j$ and let $\xi=k-(4/p)j$. Then if $\tau_{k+1} \geq m_\xi$ it follows from Lemma \ref{lem:pleq36spes1} and Lemma \ref{lem:23diffbestleq36} that we can assume $m_\xi \leq \tau_{k+1}<\xi+y_{max}$ and $\tau_{k+2}=\tau_{k+1}+2/3<m_\xi+1$. Further by the definition of $T_1$ it follows that $\tau_{k+4} \geq m_\xi+2$
		since either $\tau_{k+4} \geq \tau_{k+1}+2 \geq m_\xi+2$ or 
		$\tau_{k+3}=\xi+2-2/p$ and $\tau_{k+4} \geq m_\xi+2$ by (ii). Thus the optimal choice is to follow (ii) and consider  $\tau_{k+3}=\xi+2-2/p$ and $\tau_{k+4} \geq m_\xi+2$. That is (III)(a) holds.
		
		If $\tau_{k+1} < m_\xi$ it follows that $p>3.6$ since $\tau_{k+1}>\tau_k+2/3$ and hence it follows from Lemma \ref{lem:pgeq36spes2} that $\tau_{k+1}=\xi-1/3+2/p$. Further by the definition of $T_1$ it follows that $\tau_{k+3} \geq m_\xi+1$ since either $\tau_{k+3} \geq \tau_{k+1}+4/3$ or 
		$\tau_{k+2}=\xi+1-2/p$, and $\tau_{k+3} \geq m_\xi+1$ (condition (ii)). Thus the optimal choice is to follow (ii) and consider $\tau_{k+2}=\xi+1-2/p$, and $\tau_{k+3} \geq m_\xi+1$, which means that (III)(b) holds.  
	\end{proof}
	Recall that $\lambda$ is the sequence given by $\lambda_n=n-1+2/p$ for $n \geq 1$. It is clear that $\lambda$ is in $T_2$, so to prove Theorem \ref{thm:new:Ep2to4} for $3<p<4$ we will show that 
	\[\sup_{\tau \in T_2} E_p(\tau)=E_p(\lambda).\]
	The following lemma essentially tells us that if we have a sequence $\tau$ in $T_2$ and an integer $k>1$ such that $\tau_n < n-1+2/p$ for all $n<k$ it follows that $E_p(\tau)$ increases with the value of $k$. See Figure \ref{fig:3to4mainlemtau} and Figure \ref{fig:3to4mainlemgamma} for an illustration. We will use this result iteratively to conclude the proof of Theorem \ref{thm:new:Ep2to4}.
	
	\begin{lemma}\label{lem:postponediviationgeneralized}
		Assume $3<p<4$. Let $\tau \neq \lambda $ be a sequence in $T_2$. Let $\gamma$ be the sequence given by $\gamma_1=2/p$ and $\gamma_n=\tau_{n-1}+1$ for $n >1$. Then $\gamma$ is in $T_2$ and $E_p(\tau) < E_p(\gamma)$. 
	\end{lemma}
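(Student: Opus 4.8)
The plan is to verify that $\gamma$ again lies in $T_2$ and then to compute $E_p(\gamma)-E_p(\tau)$ by a change of variables.

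\emph{Step 1: $\gamma\in T_2$.} Property (I) is immediate from the definition of $\gamma$. Since $\tau\in T_2$ forces $\tau_1=2/p$, we in fact have $\gamma_n=\tau_{n-1}+1$ for every $n\geq 2$, while the pair of index one is $(\gamma_0,\gamma_1)=(0,2/p)=(\lambda_0,\lambda_1)$. Thus, apart from that one pair, $\gamma$ is obtained from $\tau$ by raising every index by one and every value by one. The map $x\mapsto x+1$ sends the intervals at level $n$ to the intervals at level $n+1$, and $m_{\xi+1}=m_\xi+1$ by \eqref{eq:defmxi}; hence each of the conditions (i), (ii), (iii) defining $T_1$ and each of (II), (III) defining $T_2$ is invariant under this operation. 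Therefore the required conditions hold for the pairs of $\gamma$ of index $\geq 2$ because the corresponding pairs of $\tau$ satisfy them, and they hold for the pair of index one because $\lambda\in T_2$; so $\gamma\in T_2$.

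\emph{Step 2: a formula for $E_p(\gamma)-E_p(\tau)$.} Write $c_m=\int_m^{m+2/p}\frac{\sin^2\frac{p}{2}\pi(x-m)}{\pi^2 x^2}\,dx$ for $m\geq 0$, so that $E_p(\lambda)=\sum_{m\geq 0}c_m$. In the level-$n$ summand of $E_p(\gamma)$ with $n\geq 1$, substitute $x\mapsto x+1$; since $\sin\frac{p}{2}\pi((x+1)-n)=\sin\frac{p}{2}\pi(x-(n-1))$, that summand becomes $\int_{\gamma_n-1}^{\gamma_{n+1}-1}\bigl(\max\bigl(0,\frac{\sin\frac{p}{2}\pi(x-(n-1))}{\pi(x+1)}\bigr)\bigr)^2\,dx$. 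For $n\geq 2$ the new limits are $(\tau_{n-1},\tau_n)$, and for $n=1$ they are $(2/p-1,2/p)$, on which $\sin\frac{p}{2}\pi x>0$ exactly for $x\in(0,2/p)=(\tau_0,\tau_1)$ (using $3<p<4$), so the $n=1$ summand is $\int_{\tau_0}^{\tau_1}\bigl(\max\bigl(0,\frac{\sin\frac{p}{2}\pi x}{\pi(x+1)}\bigr)\bigr)^2\,dx$. Since the level-$0$ summand of $E_p(\gamma)$ is $c_0$, summing over $n$ gives
\[E_p(\gamma)=c_0+\sum_{m\geq 0}\int_{\tau_m}^{\tau_{m+1}}\Bigl(\max\Bigl(0,\tfrac{\sin\frac{p}{2}\pi(x-m)}{\pi(x+1)}\Bigr)\Bigr)^2\,dx.\]
Subtracting $E_p(\tau)=\sum_{m\geq 0}\int_{\tau_m}^{\tau_{m+1}}\bigl(\max\bigl(0,\frac{\sin\frac{p}{2}\pi(x-m)}{\pi x}\bigr)\bigr)^2\,dx$, using $\frac1{x^2}-\frac1{(x+1)^2}=\frac{2x+1}{x^2(x+1)^2}$, and separating off the $m=0$ term (which equals $c_0-c_1$ after substituting $x\mapsto x+1$), we arrive at
\[E_p(\gamma)-E_p(\tau)=c_1-\sum_{m\geq 1}\int_{\tau_m}^{\tau_{m+1}}\bigl(\max\bigl(0,\sin\tfrac{p}{2}\pi(x-m)\bigr)\bigr)^2\,\frac{2x+1}{\pi^2 x^2(x+1)^2}\,dx.\]

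\emph{Step 3: the remaining inequality, which is the main obstacle.} It suffices to show the sum on the right is strictly smaller than $c_1$ whenever $\tau\neq\lambda$. For $\tau=\lambda$ it telescopes: by $x\mapsto x+1$ one has $\int_m^{m+2/p}\frac{\sin^2\frac{p}{2}\pi(x-m)}{\pi^2(x+1)^2}\,dx=c_{m+1}$, so the sum equals $\sum_{m\geq 1}(c_m-c_{m+1})=c_1$. Hence what must be proved is the extremality of $\lambda$ for the auxiliary energy $\tau\mapsto\sum_{m\geq 1}\int_{\tau_m}^{\tau_{m+1}}\bigl(\max(0,\sin\frac{p}{2}\pi(x-m))\bigr)^2\,w(x)\,dx$ with $w(x)=\frac{2x+1}{\pi^2 x^2(x+1)^2}=\frac1{\pi^2 x^2}-\frac1{\pi^2(x+1)^2}$, a positive and strictly decreasing weight on $(0,\infty)$ — the only property of the weight the argument needs. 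I would run the same reasoning that establishes the extremality of $\lambda$ for the genuine weight $1/(\pi x)^2$ in this section: the pointwise sine comparisons underlying Lemmas \ref{lem:maximizeS} and \ref{lem:midpointlem} use only the monotonicity of the weight and so apply to $w$, while the structure of $T_2$ — in particular the exceptional configurations (III)(a) and (III)(b), designed so that the separation departs only mildly from $2/3$ on average — forces the $w$-weighted mass along each ray to be dominated by that of its leading interval $(k,k+2/p)$, with a strict loss as soon as $\tau\neq\lambda$. The delicate part, which I expect to be the hardest, is the collection of quantitative comparisons between the competing integrals $\int\sin^2(\cdot)\,w(\cdot)$ over the intervals at levels $n$, $n+1$ and $n+2$ that arise in (III)(a)/(b); these are of the same nature as the technical bounds deferred to the appendix. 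Combining this strict inequality with the formula of Step 2 yields $E_p(\tau)<E_p(\gamma)$.
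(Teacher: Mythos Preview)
Your Steps~1 and~2 are correct and the identity
\[
E_p(\gamma)-E_p(\tau)=c_1-\sum_{m\geq 1}\int_{\tau_m}^{\tau_{m+1}}\bigl(\max(0,\sin\tfrac{p}{2}\pi(x-m))\bigr)^2\,w(x)\,dx,\qquad w(x)=\tfrac{1}{\pi^2x^2}-\tfrac{1}{\pi^2(x+1)^2},
\]
is a clean reformulation. The difficulty is Step~3, which as written is circular. You propose to show that $\lambda$ is extremal for the $w$-weighted energy by ``running the same reasoning that establishes the extremality of $\lambda$ for the genuine weight $1/(\pi x)^2$ in this section''. But that reasoning, for $3<p<4$, \emph{is} the iteration of Lemma~\ref{lem:postponediviationgeneralized} itself (see the proof of Theorem~\ref{thm:new:Ep2to4} at the end of Section~\ref{sec:3to4}). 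So to carry out your plan you would first need the analogue of Lemma~\ref{lem:postponediviationgeneralized} for the weight $w$; applying your own reduction to \emph{that} statement sends you to the weight $w(x)-w(x+1)$, and so on---an infinite regress that never bottoms out. The ray argument of Lemma~\ref{lem:F(n)} that you allude to only uses monotonicity of the weight, but it requires $\delta_2\geq 2-4/p$, which fails for $\delta_2=2/3$ once $p>3$; this is precisely why Section~\ref{sec:3to4} exists.

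The paper avoids this loop by proving Lemma~\ref{lem:postponediviationgeneralized} \emph{locally} rather than globally: it chops the comparison $E_p(\tau)<E_p(\gamma)$ into finitely many blocks (one for each place where $\tau$ deviates from $\lambda$) and handles each block with the finite, explicit inequalities of Lemmas~\ref{lem:postponeactint} and~\ref{lem:pgeq36spes} (the appendix). No extremality statement is invoked. If you want to salvage your approach, you would have to supply analogues of those two appendix lemmas for the weight $w$ and then iterate---but at that point you have done strictly more work than the paper, since the paper's local lemmas already give Lemma~\ref{lem:postponediviationgeneralized} directly without the detour through $w$.
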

	
	\begin{figure}
		\centering
		\begin{tikzpicture}[scale=1.8]
			\def\p{3.6}
			\def\s{2*\p}
			\def\tone{3*(\p/2-1)+2.5/18*2*\p}
			\def\ttwo{\tone+2/3*2*\p}
			\begin{axis}[
				axis equal image,
				axis lines = none,
				trig format plots=rad]
				
				\node at (axis cs: -1.5,0) {$\scriptstyle n$};
				\node at (axis cs: -1.5,-3) {$\scriptstyle n+1$};
				\node at (axis cs: -1.5,-6) {$\scriptstyle n+2$};
				\node at (axis cs: -1.5,-9) {$\scriptstyle n+3$};
				\node at (axis cs: -1.5,-12) {$\scriptstyle n+4$};
				
				\addplot[thin, name path=b1] coordinates {(0,0) (\tone,0)};
				\addplot[domain=0:\tone, samples=100, color=black!75, thin, name path=t1] ({x},{sin(pi*x*\p/(2*\s))*sin(pi*x*\p/(2*\s))});
				\addplot[red!50] fill between [of=b1 and t1];
				
				\addplot[thin] coordinates {(0,0) (4,0)};
				\addplot[domain=0:4, samples=100, color=black!75, thin] ({x},{sin(pi*x*\p/(2*\s))*sin(pi*x*\p/(2*\s))});
				
				\addplot[thin] coordinates {(8,0) (12,0)};
				\addplot[domain=8:12, samples=100, color=black!75, thin] ({x},{sin(pi*x*\p/(2*\s))*sin(pi*x*\p/(2*\s))});
				
				\addplot[thin] coordinates {(16,0) (20,0)};
				\addplot[domain=16:20, samples=100, color=black!75, thin] ({x},{sin(pi*x*\p/(2*\s))*sin(pi*x*\p/(2*\s))});
				
				\addplot[thin] coordinates {(24,0) (28,0)};
				\addplot[domain=24:28, samples=100, color=black!75, thin] ({x},{sin(pi*x*\p/(2*\s))*sin(pi*x*\p/(2*\s))});
				
				\addplot[thin] coordinates {(0,-3) (4*\p/2-4,-3)};
				\addplot[domain=0:4*\p/2-4, samples=100, color=black!75, thin] ({x},{sin(pi*(x-\s)*\p/(2*\s))*sin(pi*(x-\s)*\p/(2*\s))-3});
				
				\addplot[thin] coordinates {(4*\p/2,-3) (4*\p/2+4,-3)};
				\addplot[domain=4*\p/2:4*\p/2+4, samples=100, color=black!75, thin] ({x},{sin(pi*(x-\s)*\p/(2*\s))*sin(pi*(x-\s)*\p/(2*\s))-3});
				
				\addplot[thin, name path=b12] coordinates {(4*\p/2,-3) (\ttwo,-3)};
				\addplot[domain=4*\p/2:\ttwo, samples=100, color=black!75, thin, name path=t12] ({x},{sin(pi*(x-\s)*\p/(2*\s))*sin(pi*(x-\s)*\p/(2*\s))-3});
				\addplot[red!50] fill between [of=b12 and t12];
				
				\addplot[thin] coordinates {(4*\p/2+8,-3) (4*\p/2+12,-3)};
				\addplot[domain=4*\p/2+8:4*\p/2+12
				, samples=100, color=black!75, thin] ({x},{sin(pi*(x-\s)*\p/(2*\s))*sin(pi*(x-\s)*\p/(2*\s))-3});
				
				\addplot[thin] coordinates {(4*\p/2+16,-3) (4*\p/2+20,-3)};
				\addplot[domain=4*\p/2+16:4*\p/2+20
				, samples=100, color=black!75, thin] ({x},{sin(pi*(x-\s)*\p/(2*\s))*sin(pi*(x-\s)*\p/(2*\s))-3});
				
				\addplot[thin] coordinates {(0,-6) (2*4*\p/2-12,-6)};
				\addplot[domain=0:2*4*\p/2-12, samples=100, color=black!75, thin] ({x},{sin(pi*(x-2*\s)*\p/(2*\s))*sin(pi*(x-2*\s)*\p/(2*\s))-6});
				
				\addplot[thin] coordinates {(2*4*\p/2-8,-6) (2*4*\p/2-4,-6)};
				\addplot[domain=2*4*\p/2-8:2*4*\p/2-4, samples=100, color=black!75, thin] ({x},{sin(pi*(x-2*\s)*\p/(2*\s))*sin(pi*(x-2*\s)*\p/(2*\s))-6});
				
				\addplot[thin, name path=b2] coordinates {(\ttwo,-6) (2*4*\p/2-4,-6)};
				\addplot[domain=\ttwo:2*4*\p/2-4, samples=100, color=black!75, thin, name path=t2] ({x},{sin(pi*(x-2*\s)*\p/(2*\s))*sin(pi*(x-2*\s)*\p/(2*\s))-6});
				\addplot[red!50] fill between [of=b2 and t2];

				\addplot[thin] coordinates {(2*4*\p/2,-6) (2*4*\p/2+4,-6)};
				\addplot[domain=2*4*\p/2:2*4*\p/2+4, samples=100, color=black!75, thin] ({x},{sin(pi*(x-2*\s)*\p/(2*\s))*sin(pi*(x-2*\s)*\p/(2*\s))-6});
				
				\addplot[thin] coordinates {(2*4*\p/2+8,-6) (2*4*\p/2+12,-6)};
				\addplot[domain=2*4*\p/2+8:2*4*\p/2+12, samples=100, color=black!75, thin] ({x},{sin(pi*(x-2*\s)*\p/(2*\s))*sin(pi*(x-2*\s)*\p/(2*\s))-6});
				
				\addplot[thin] coordinates {(0,-9) (3*4*\p/2-20,-9)};
				\addplot[domain=0:3*4*\p/2-20, samples=100, color=black!75, thin] ({x},{sin(pi*(x-3*\s)*\p/(2*\s))*sin(pi*(x-3*\s)*\p/(2*\s))-9});
				
				\addplot[thin] coordinates {(3*4*\p/2-16,-9) (3*4*\p/2-12,-9)};
				\addplot[domain=3*4*\p/2-16:3*4*\p/2-12, samples=100, color=black!75, thin] ({x},{sin(pi*(x-3*\s)*\p/(2*\s))*sin(pi*(x-3*\s)*\p/(2*\s))-9});
				
				\addplot[thin, name path=b3] coordinates {(3*4*\p/2-8,-9) (3*4*\p/2-4,-9)};
				\addplot[domain=3*4*\p/2-8:3*4*\p/2-4, samples=100, color=black!75, thin, name path=t3] ({x},{sin(pi*(x-3*\s)*\p/(2*\s))*sin(pi*(x-3*\s)*\p/(2*\s))-9});
				\addplot[red!50] fill between [of=b3 and t3];
				
				\addplot[thin] coordinates {(3*4*\p/2,-9) (3*4*\p/2+4,-9)};
				\addplot[domain=3*4*\p/2:3*4*\p/2+4, samples=100, color=black!75, thin] ({x},{sin(pi*(x-3*\s)*\p/(2*\s))*sin(pi*(x-3*\s)*\p/(2*\s))-9});
				
				\addplot[thin] coordinates {(0,-12) (4*4*\p/2-28,-12)};
				\addplot[domain=0:4*4*\p/2-28, samples=100, color=black!75, thin] ({x},{sin(pi*(x-4*\s)*\p/(2*\s))*sin(pi*(x-4*\s)*\p/(2*\s))-12});
				
				\addplot[thin] coordinates {(4*4*\p/2-24,-12) (4*4*\p/2-20,-12)};
				\addplot[domain=4*4*\p/2-24:4*4*\p/2-20, samples=100, color=black!75, thin] ({x},{sin(pi*(x-4*\s)*\p/(2*\s))*sin(pi*(x-4*\s)*\p/(2*\s))-12});
				
				\addplot[thin] coordinates {(4*4*\p/2-16,-12) (4*4*\p/2-12,-12)};
				\addplot[domain=4*4*\p/2-16:4*4*\p/2-12, samples=100, color=black!75, thin] ({x},{sin(pi*(x-4*\s)*\p/(2*\s))*sin(pi*(x-4*\s)*\p/(2*\s))-12});
				
				\addplot[thin, name path=b4] coordinates {(4*4*\p/2-8,-12) (4*4*\p/2-4,-12)};
				\addplot[domain=4*4*\p/2-8:4*4*\p/2-4, samples=100, color=black!75, thin, name path=t4] ({x},{sin(pi*(x-4*\s)*\p/(2*\s))*sin(pi*(x-4*\s)*\p/(2*\s))-12});
				\addplot[red!50] fill between [of=b4 and t4];
				
				\addplot[thin, color=gray, dashed] coordinates {(\p-2,0+0.0) (\p-2,-3.1)};
				\addplot[thin, color=gray, dashed] coordinates {(\p-2+2*\p,-3+0.0) (\p-2+2*\p,-6.1)};
				\addplot[thin, color=gray, dashed] coordinates {(\p-2+4*\p,-6+0.0) (\p-2+4*\p,-9.1)};
				\addplot[thin, color=gray, dashed] coordinates {(\p-2+6*\p,-9+0.0) (\p-2+6*\p,-12.1)};
				
				\addplot[thin,color= blue,->] coordinates {(\tone,0+0.0) (\tone,-3-0.0)};
				\addplot[thin,color= blue,->] coordinates {(\ttwo,-3+0.0) (\ttwo,-6-0.0)};
				\addplot[thin,color= blue,->] coordinates {(2*4*\p/2-4,-6+0.0) (2*4*\p/2-4,-9-0.0)};
				\addplot[thin,color= blue,->] coordinates {(2*4*\p/2-4+2*\p,-9+0.0) (2*4*\p/2-4+2*\p,-12-0.0)};
			\end{axis}
		\end{tikzpicture}
		\caption{An example of a sequence \CB{$\tau$} as given in Lemma \ref{lem:postponediviationgeneralized}. The \CR{shaded} area represents $E_p(\tau)$ without considering the integrand factor $1/(\pi x)^2$. Figure \ref{fig:3to4mainlemgamma} shows the corresponding sequence $\gamma$ and $E_p(\gamma)$. Lemma \ref{lem:postponediviationgeneralized} tells us that $E_p(\tau)<E_p(\gamma)$. The dashed lines show the midpoints between two overlapping intervals.}
		\label{fig:3to4mainlemtau}
		
		\centering
		\begin{tikzpicture}[scale=1.8]
			\def\p{3.6}
			\def\s{2*\p}
			\def\toneorg{3*(\p/2-1)+2.5/18*2*\p}
			\def\gammaone{4}
			\def\gammatwo{\toneorg+2*\p}
			\def\gammathree{\gammatwo+2*\p*2/3}
			\begin{axis}[
				axis equal image,
				axis lines = none,
				trig format plots=rad]
				
				\node at (axis cs: -1.5,0) {$\scriptstyle n$};
				\node at (axis cs: -1.5,-3) {$\scriptstyle n+1$};
				\node at (axis cs: -1.5,-6) {$\scriptstyle n+2$};
				\node at (axis cs: -1.5,-9) {$\scriptstyle n+3$};
				\node at (axis cs: -1.5,-12) {$\scriptstyle n+4$};
				
				\addplot[thin, name path=b1] coordinates {(0,0) (\gammaone,0)};
				\addplot[domain=0:\gammaone, samples=100, color=black!75, thin, name path=t1] ({x},{sin(pi*x*\p/(2*\s))*sin(pi*x*\p/(2*\s))});
				\addplot[red!50] fill between [of=b1 and t1];
				
				\addplot[thin] coordinates {(0,0) (4,0)};
				\addplot[domain=0:4, samples=100, color=black!75, thin] ({x},{sin(pi*x*\p/(2*\s))*sin(pi*x*\p/(2*\s))});
				
				\addplot[thin] coordinates {(8,0) (12,0)};
				\addplot[domain=8:12, samples=100, color=black!75, thin] ({x},{sin(pi*x*\p/(2*\s))*sin(pi*x*\p/(2*\s))});
				
				\addplot[thin] coordinates {(16,0) (20,0)};
				\addplot[domain=16:20, samples=100, color=black!75, thin] ({x},{sin(pi*x*\p/(2*\s))*sin(pi*x*\p/(2*\s))});
				
				\addplot[thin] coordinates {(24,0) (28,0)};
				\addplot[domain=24:28, samples=100, color=black!75, thin] ({x},{sin(pi*x*\p/(2*\s))*sin(pi*x*\p/(2*\s))});
				
				\addplot[thin] coordinates {(0,-3) (4*\p/2-4,-3)};
				\addplot[domain=0:4*\p/2-4, samples=100, color=black!75, thin] ({x},{sin(pi*(x-\s)*\p/(2*\s))*sin(pi*(x-\s)*\p/(2*\s))-3});

				\addplot[thin] coordinates {(4*\p/2,-3) (4*\p/2+4,-3)};
				\addplot[domain=4*\p/2:4*\p/2+4, samples=100, color=black!75, thin] ({x},{sin(pi*(x-\s)*\p/(2*\s))*sin(pi*(x-\s)*\p/(2*\s))-3});
				
				\addplot[thin, name path=b12] coordinates {(4*\p/2,-3) (\gammatwo,-3)};
				\addplot[domain=4*\p/2:\gammatwo, samples=100, color=black!75, thin, name path=t12] ({x},{sin(pi*(x-\s)*\p/(2*\s))*sin(pi*(x-\s)*\p/(2*\s))-3});
				\addplot[red!50] fill between [of=b12 and t12];
				
				\addplot[thin] coordinates {(4*\p/2+8,-3) (4*\p/2+12,-3)};
				\addplot[domain=4*\p/2+8:4*\p/2+12
				, samples=100, color=black!75, thin] ({x},{sin(pi*(x-\s)*\p/(2*\s))*sin(pi*(x-\s)*\p/(2*\s))-3});
				
				\addplot[thin] coordinates {(4*\p/2+16,-3) (4*\p/2+20,-3)};
				\addplot[domain=4*\p/2+16:4*\p/2+20
				, samples=100, color=black!75, thin] ({x},{sin(pi*(x-\s)*\p/(2*\s))*sin(pi*(x-\s)*\p/(2*\s))-3});
				
				\addplot[thin] coordinates {(0,-6) (2*4*\p/2-12,-6)};
				\addplot[domain=0:2*4*\p/2-12, samples=100, color=black!75, thin] ({x},{sin(pi*(x-2*\s)*\p/(2*\s))*sin(pi*(x-2*\s)*\p/(2*\s))-6});
				
				\addplot[thin] coordinates {(2*4*\p/2-8,-6) (2*4*\p/2-4,-6)};
				\addplot[domain=2*4*\p/2-8:2*4*\p/2-4, samples=100, color=black!75, thin] ({x},{sin(pi*(x-2*\s)*\p/(2*\s))*sin(pi*(x-2*\s)*\p/(2*\s))-6});
				
				\addplot[thin] coordinates {(2*4*\p/2,-6) (2*4*\p/2+4,-6)};
				\addplot[domain=2*4*\p/2:2*4*\p/2+4, samples=100, color=black!75, thin] ({x},{sin(pi*(x-2*\s)*\p/(2*\s))*sin(pi*(x-2*\s)*\p/(2*\s))-6});
				
				\addplot[thin, name path=b22] coordinates {(2*4*\p/2,-6) (\gammathree,-6)};
				\addplot[domain=2*4*\p/2:\gammathree, samples=100, color=black!75, thin, name path=t22] ({x},{sin(pi*(x-2*\s)*\p/(2*\s))*sin(pi*(x-2*\s)*\p/(2*\s))-6});
				\addplot[red!50] fill between [of=b22 and t22];

				\addplot[thin] coordinates {(2*4*\p/2+8,-6) (2*4*\p/2+12,-6)};
				\addplot[domain=2*4*\p/2+8:2*4*\p/2+12, samples=100, color=black!75, thin] ({x},{sin(pi*(x-2*\s)*\p/(2*\s))*sin(pi*(x-2*\s)*\p/(2*\s))-6});
				
				\addplot[thin] coordinates {(0,-9) (3*4*\p/2-20,-9)};
				\addplot[domain=0:3*4*\p/2-20, samples=100, color=black!75, thin] ({x},{sin(pi*(x-3*\s)*\p/(2*\s))*sin(pi*(x-3*\s)*\p/(2*\s))-9});
				
				\addplot[thin] coordinates {(3*4*\p/2-16,-9) (3*4*\p/2-12,-9)};
				\addplot[domain=3*4*\p/2-16:3*4*\p/2-12, samples=100, color=black!75, thin] ({x},{sin(pi*(x-3*\s)*\p/(2*\s))*sin(pi*(x-3*\s)*\p/(2*\s))-9});
				
				\addplot[thin] coordinates {(3*4*\p/2-8,-9) (3*4*\p/2-4,-9)};
				\addplot[domain=3*4*\p/2-8:3*4*\p/2-4, samples=100, color=black!75, thin] ({x},{sin(pi*(x-3*\s)*\p/(2*\s))*sin(pi*(x-3*\s)*\p/(2*\s))-9});
				
				\addplot[thin, name path=b3] coordinates {(\gammathree,-9) (3*4*\p/2-4,-9)};
				\addplot[domain=\gammathree:3*4*\p/2-4, samples=100, color=black!75, thin, name path=t3] ({x},{sin(pi*(x-3*\s)*\p/(2*\s))*sin(pi*(x-3*\s)*\p/(2*\s))-9});
				\addplot[red!50] fill between [of=b3 and t3];
				
				\addplot[thin] coordinates {(3*4*\p/2,-9) (3*4*\p/2+4,-9)};
				\addplot[domain=3*4*\p/2:3*4*\p/2+4, samples=100, color=black!75, thin] ({x},{sin(pi*(x-3*\s)*\p/(2*\s))*sin(pi*(x-3*\s)*\p/(2*\s))-9});
				
				\addplot[thin] coordinates {(0,-12) (4*4*\p/2-28,-12)};
				\addplot[domain=0:4*4*\p/2-28, samples=100, color=black!75, thin] ({x},{sin(pi*(x-4*\s)*\p/(2*\s))*sin(pi*(x-4*\s)*\p/(2*\s))-12});
				
				\addplot[thin] coordinates {(4*4*\p/2-24,-12) (4*4*\p/2-20,-12)};
				\addplot[domain=4*4*\p/2-24:4*4*\p/2-20, samples=100, color=black!75, thin] ({x},{sin(pi*(x-4*\s)*\p/(2*\s))*sin(pi*(x-4*\s)*\p/(2*\s))-12});
				
				\addplot[thin] coordinates {(4*4*\p/2-16,-12) (4*4*\p/2-12,-12)};
				\addplot[domain=4*4*\p/2-16:4*4*\p/2-12, samples=100, color=black!75, thin] ({x},{sin(pi*(x-4*\s)*\p/(2*\s))*sin(pi*(x-4*\s)*\p/(2*\s))-12});
				
				\addplot[thin, name path=b4] coordinates {(4*4*\p/2-8,-12) (4*4*\p/2-4,-12)};
				\addplot[domain=4*4*\p/2-8:4*4*\p/2-4, samples=100, color=black!75, thin, name path=t4] ({x},{sin(pi*(x-4*\s)*\p/(2*\s))*sin(pi*(x-4*\s)*\p/(2*\s))-12});
				\addplot[red!50] fill between [of=b4 and t4];
				
				\addplot[thin, color=gray, dashed] coordinates {(\p-2,0+0.0) (\p-2,-3.1)};
				\addplot[thin, color=gray, dashed] coordinates {(\p-2+2*\p,-3+0.0) (\p-2+2*\p,-6.1)};
				\addplot[thin, color=gray, dashed] coordinates {(\p-2+4*\p,-6+0.0) (\p-2+4*\p,-9.1)};
				\addplot[thin, color=gray, dashed] coordinates {(\p-2+6*\p,-9+0.0) (\p-2+6*\p,-12.1)};
				
				\addplot[thin,color= blue,->] coordinates {(\gammaone,0+0.0) (\gammaone,-3-0.0)};
				\addplot[thin,color= blue,->] coordinates {(\gammatwo,-3+0.0) (\gammatwo,-6-0.0)};
				\addplot[thin,color= blue,->] coordinates {(\gammathree,-6+0.0) (\gammathree,-9-0.0)};
				\addplot[thin,color= blue,->] coordinates {(2*4*\p/2-4+2*\p,-9+0.0) (2*4*\p/2-4+2*\p,-12-0.0)};
			\end{axis}
		\end{tikzpicture}
		\caption{An example of a sequence \CB{$\gamma$} as given in Lemma \ref{lem:postponediviationgeneralized}. The \CR{shaded} area represents $E_p(\gamma)$ without considering the integrand factor $1/(\pi x)^2$. Figure \ref{fig:3to4mainlemtau} shows the corresponding sequence $\tau$. Lemma \ref{lem:postponediviationgeneralized} tells us that $E_p(\tau)<E_p(\gamma)$. The dashed lines show the midpoints between two overlapping intervals.}
		\label{fig:3to4mainlemgamma}
	\end{figure}
	
	To prove Lemma \ref{lem:postponediviationgeneralized} we need the next two lemmas which essentially tell us that Lemma \ref{lem:postponediviationgeneralized} holds locally.
	We thus introduce the following new notation. Let $r>0$ be some real number, and let $\tau^r$ be the (vertically) truncated sequence where $\tau_n^r=\min(\tau_n, r)$. In the local analysis we will study $E_p(\tau^r)$ for some real number $r>0$. Recall that $y_{max}=5/6-1/p$ for $p<3.6$ and $y_{max}=2/p$ for $p \geq 3.6$ 
	
	\begin{lemma}\label{lem:postponeactint}
		Assume $3<p<4$. Let $(\xi, \xi+2/p)$ be an interval at level $k$ such that $\xi \geq 1$. Let $1/2-1/p \leq y < y_{max}$. Let $\tau$ be a sequence in $T_2$ such that $\tau_k=\xi-1+2/p$, $\tau_{k+1}=\xi+y$, $\tau_{k+2}=\tau_{k+1}+2/3$ and $\tau_{k+3}=\xi+2-2/p$. Let $\gamma$ be the sequence given by $\gamma_n=\tau_n$ for all  $n \leq k+1$ and $\gamma_n=\tau_{n-1}+1$ for all $n>k$. Then $\gamma$ is in $T_2$ and 
		\begin{equation}\label{eq:appendixfinal}
			E_p(\tau^{r}) < E_p(\gamma^{r})
		\end{equation}
		where $r=\xi+3-2/p=\gamma_{k+4}$.
	\end{lemma}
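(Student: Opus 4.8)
The plan is to prove the two assertions separately. Write $F_L(R)=\int_R\chi_{S_L}(x)\,\sin^2\!\big(\tfrac p2\pi(x-L)\big)\,\tfrac{dx}{\pi^2x^2}$ for the contribution to $E_p$ over a set $R$ at level $L$, where $S_L$ denotes the union of the intervals at level $L$. Since $x\mapsto x+1$ maps $S_L$ onto $S_{L+1}$ and preserves $\sin^2\!\big(\tfrac p2\pi(\cdot-L)\big)$ by Lemma~\ref{lem:Brevig:lem6.5}, one has the basic identity $F_{L+1}(R)=\int_{R-1}\chi_{S_L}(u)\,\sin^2\!\big(\tfrac p2\pi(u-L)\big)\,\tfrac{du}{\pi^2(u+1)^2}$, and in particular $F_{L+1}(R)<F_L(R-1)$. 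For $\gamma\in T_2$: recall $\gamma$ agrees with $\tau$ through index $k$ and $\gamma_n=\tau_{n-1}+1$ for $n\ge k+1$, so $\gamma_{k+1}=\tau_k+1=\xi+2/p$, $\gamma_{k+2}=\xi+y+1$, $\gamma_{k+3}=\xi+y+5/3$ and $\gamma_{k+4}=\tau_{k+3}+1=r$. For indices $\le k$ and $\ge k+2$ the defining clauses of $T_1$ and $T_2$ transfer from $\tau$ verbatim one level up, because $(x,\text{level})\mapsto(x+1,\text{level}+1)$ is a symmetry of the interval structure carrying $m_\xi$ to $m_{\xi+1}$; the only new check is the junction at level $k+1$, where $\gamma_{k+1}=\xi+2/p=(\xi+1)-1+2/p$ is the $\lambda$-value for the interval $(\xi+1,\xi+1+2/p)$ and $\gamma_{k+2}=\xi+y+1\ge m_{\xi+1}$ (which is exactly the hypothesis $y\ge1/2-1/p$), so clause (iii) of $T_1$ holds there and clauses (II), (III) of $T_2$ follow for $\gamma$ with its deviation block shifted one level later.

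Next I localize the inequality. Since $\tau$ and $\gamma$ agree below level $k$, their contributions from those levels cancel; since $\gamma_{k+4}=r$, the truncated $\gamma^r$ contributes nothing above level $k+3$; and since $\tau_{k+3}=\xi+2-2/p$, $\tau_{k+4}\ge m_\xi+2$ and (from the separation built into $T_2$) $\tau_{k+5}>r$, the truncated $\tau^r$ contributes nothing above level $k+4$. Writing out levels $k,\dots,k+4$ and using the identity above to replace $\gamma$'s level-$(k+2)$ and level-$(k+3)$ contributions — which are precisely $\tau$'s level-$(k+1)$ and level-$(k+2)$ contributions shifted right by one unit — all common terms cancel and the goal reduces to
\[
G+H>\Delta_1+\Delta_2+P+Q,
\]
where $G=F_k\big((\xi+y,\xi+2/p)\big)$ is the back half of the level-$k$ interval, collected by $\gamma$ (since $\gamma_{k+1}=\xi+2/p$) but not by $\tau$ (since $\tau_{k+1}=\xi+y$); $H=F_{k+1}\big((\xi+1,\xi+y+1)\big)$ is $\gamma$'s level-$(k+1)$ contribution; $\Delta_1,\Delta_2>0$ are the two strictly positive weight defects coming from the translation identity; and $P=F_{k+3}\big((\xi+2-2/p,\min(\tau_{k+4},r))\big)$, $Q=F_{k+4}\big((\min(\tau_{k+4},r),r)\big)$ are the level-$(k+3)$ and level-$(k+4)$ contributions of $\tau^r$.

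The scale of this inequality is fixed by two observations. First, the total $\sin^2$-mass of $G+H$ equals $\int_0^{2/p}\sin^2(\tfrac p2\pi s)\,ds=1/p$ for every $y$, because $(\xi,\xi+y)$ and $(\xi+y,\xi+2/p)$ together make up a full interval and $H$ is the level-$k$ contribution over $(\xi,\xi+y)$ pushed up one level. Second, by Lemma~\ref{lem:Brevig:lem6.5} the window $(\xi+2-2/p,r)$ meets exactly one interval at level $k+3$, namely $(\xi+3-4/p,\xi+3-2/p)$; and since $\tau_{k+4}\ge m_\xi+2$ lies to the right of the midpoint of the overlap of that interval with its neighbour at level $k+4$, Lemma~\ref{lem:midpointlem} gives $Q\le F_{k+3}\big((\min(\tau_{k+4},r),r)\big)$, whence $P+Q\le F_{k+3}\big((\xi+3-4/p,\xi+3-2/p)\big)$, again of $\sin^2$-mass $1/p$, with equality only when $\tau_{k+4}\ge r$. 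Thus $G+H$ and the dominant part of the right-hand side carry the same $\sin^2$-mass, but $G+H$ is supported in $x<\xi+1+2/p$ while $P+Q$ is supported in $x>\xi+3-4/p$, and $\xi+1+2/p<\xi+3-4/p$ precisely because $p>3$; the decreasing weight $x^{-2}$ therefore hands $G+H$ a strict margin over $P+Q$.

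The hard part is making this quantitative: the margin just described and the defects $\Delta_1+\Delta_2$ are both of order $\xi^{-3}$, and one must verify that the margin wins uniformly over $\xi\ge1$, $y\in[1/2-1/p,y_{max})$, $3<p<4$, and every admissible $\tau_{k+4}$. Small $\xi$ is comfortable, since $G$ alone is supported at $x\le\xi+2/p$, far to the left of $P+Q$; the tight regimes are $\xi=1$ with $p$ near $3$ (where the $H$-versus-$(P+Q)$ gap $2-6/p$ degenerates, so the weight carried by $G$ must not be discarded) and $p$ near $4$ with $y$ near $y_{max}$ and $\tau_{k+4}$ near $m_\xi+2$. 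Handling these requires replacing the crude estimates above by explicit lower bounds on $G+H$ and explicit upper bounds on $\Delta_1$, $\Delta_2$ and $P+Q$ — each an integral of $\sin^2$ against $x^{-2}-(x+1)^{-2}$ or against $x^{-2}$ over an interval of length at most $2/p$ — together with a subsidiary split according to whether $y<1-2/p$ (which governs how much of the interval $I_1=(\xi+1-4/p,\xi+1-2/p)$ the defect $\Delta_1$ sees) and whether $p<10/3$ (which decides whether $Q$ is present at all). These are exactly the technical bounds postponed to the appendix.
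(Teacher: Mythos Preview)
Your reduction is correct: after using the level-shift identity the difference $E_p(\gamma^r)-E_p(\tau^r)$ does collapse to $G+H-\Delta_1-\Delta_2-P-Q$, and your bound $P+Q\le F_{k+3}\big((\xi+3-4/p,\xi+3-2/p)\big)$ via Lemma~\ref{lem:midpointlem} is valid (the incompatibility check showing $\tau_{k+5}>r$ also goes through, since a type-(III)(b) block at level $k+3$ would force $\tau_{k+4}=\xi+8/3-2/p<m_\xi+2$, contradicting (III)(a) at level $k$). The mass observation that $G+H$ and the full level-$(k+3)$ interval carry the same $\sin^2$-integral $1/p$ is a nice structural point the paper does not isolate.

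The gap is that you stop exactly where the work begins. Your last sentence---``These are exactly the technical bounds postponed to the appendix''---is not a proof step: the appendix does \emph{not} bound $G$, $H$, $\Delta_1$, $\Delta_2$, $P$, $Q$ in the form you have set up. Instead the paper writes $E_p(\gamma^r)-E_p(\tau^r)$ directly as a signed sum of integrals of $\sin^2\frac p2\pi(x-k)$ against differences $(x+a)^{-2}-(x+b)^{-2}$ over subintervals of $(\xi,\xi+2/p)$, and proves positivity by a three-way split on $y$ (at $y=1/3$ and $y=1-2/p$, with a further subdivision of the middle range at $y=2/3-1/p$), not by your split at $p=10/3$. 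Each case is then handled by Lemma~\ref{lem:decreasingfctest}, Lemma~\ref{lem:sin2est}, Lemma~\ref{lem:destforsin}, and explicit polynomial/rational-function verifications in $\xi,p,y$. Your organisation could in principle be pushed through, but you would have to carry out comparable estimates on your own terms; as written, nothing after the heuristic ``strict margin'' paragraph constitutes a proof, and the tight regimes you correctly flag ($p\to 3^+$, $p\to 4^-$ with $y$ near $y_{\max}$) are precisely where the paper expends several pages of computation.
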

	The fact that $\gamma$ is in $T_2$ is a direct consequence of the fact that $\tau$ is in $T_2$. The proof of \eqref{eq:appendixfinal} however consists of several long calculations and we postpone the proof to the appendix.
	Figure \ref{fig:3to4mainlemtau} and Figure \ref{fig:3to4mainlemgamma} give an illustration of Lemma \ref{lem:postponediviationgeneralized} and Lemma \ref{lem:postponeactint}. Similarly we have the following local result.
	\begin{lemma}\label{lem:pgeq36spes}
		Let $p>3.6$ and let $(\xi, \xi+2/p)$ be an interval at level $k$ such that $\xi \geq 1$. Let $\tau$ be a sequence in $T_2$ such that
		$\tau_k=\xi-1+2/p$, $\tau_{k+1}=\tau_{k}+2/3$ and $\tau_{k+2}=\xi+1-2/p$. Let $\gamma$ be the sequence given by $\gamma_n=\tau_n$ for $n \leq k$, and $\gamma_n=\tau_{n-1}+1$ for $n>k$
		Then 
		\begin{equation}\label{eq:appendixfinalpgeq36}
			E_p(\tau^{r}) < E_p(\gamma^{r}),
		\end{equation}
		where $r=\xi+2-2/p=\gamma_{k+3}$.
	\end{lemma}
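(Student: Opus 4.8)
The plan is to decompose both $E_p(\tau^r)$ and $E_p(\gamma^r)$ into the contributions of the intervals at each level, to cancel the (identical) contributions of the levels below $k$, and then to estimate the finitely many remaining terms. Since $\tau_n=\gamma_n$ for $n\le k$ and $\tau_k=\xi-1+2/p<r$, the levels $\le k-1$ contribute the same to both sums, so only levels $\ge k$ matter. First I would record the local geometry, which is forced by Lemma~\ref{lem:Brevig:lem6.5} together with $3.6<p<4$: writing $\xi=k+(4/p)m$ for the appropriate integer $m$, one checks that $\tau_k$ lies in the gap between the level-$k$ intervals $(\xi-4/p,\xi-2/p)$ and $(\xi,\xi+2/p)$; that $\tau_{k+1}=\xi-1/3+2/p$ lies strictly inside $(\xi,\xi+2/p)$ and strictly to the left of the midpoint $m_\xi=\xi+1/2-1/p$ (this is exactly where $p>3.6$ is used); that $\tau_{k+2}=\xi+1-2/p$ is the right endpoint of the level-$(k+1)$ interval meeting $(\xi,\xi+2/p)$; and that $r=\xi+2-2/p=\gamma_{k+3}$ is the right endpoint of the level-$(k+2)$ interval meeting that one. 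The same computations for $\gamma$ give $\gamma_{k+1}=\xi+2/p$ (the right endpoint of the level-$k$ interval), $\gamma_{k+2}=\xi+2/3+2/p$ and $\gamma_{k+3}=r$, so that the level-$k$ interval lies entirely inside $(\gamma_k,\gamma_{k+1})$, the truncation at $r=\gamma_{k+3}$ annihilates all levels $\ge k+3$, and
\[
E_p(\gamma^r)=L+\int_{\xi}^{\xi+2/p}\frac{\sin^2\frac p2\pi(x-k)}{\pi^2x^2}\,dx+\int_{\xi+1}^{\xi+2/3+2/p}\frac{\sin^2\frac p2\pi(x-k-1)}{\pi^2x^2}\,dx+\int_{\xi+2/3+2/p}^{r}\frac{\sin^2\frac p2\pi(x-k-2)}{\pi^2x^2}\,dx,
\]
where $L$ denotes the common contribution of the levels $\le k-1$.

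For $E_p(\tau^r)$ the same reading-off gives: level $k$ contributes $\int_{\xi}^{\xi-1/3+2/p}$, level $k+1$ contributes $\int_{\xi-1/3+2/p}^{\xi+1-2/p}$, and level $k+2$ contributes $\int_{\xi+2-4/p}^{\min(r,\tau_{k+3})}$ (here one uses that $\tau\in T_2$, via condition~(III)(b), forces $\tau_{k+3}\ge m_\xi+1\ge\xi+2-4/p$, the latter valid since $p\le 6$); moreover a short separate argument with the defining properties of $T_1\supseteq T_2$ shows that $\tau\in T_2$ and $p>3.6$ force $\tau_{k+4}\ge r$, so no level $\ge k+4$ contributes, and the only further term is a level-$(k+3)$ contribution $\int_{\tau_{k+3}}^{\xi+3-6/p}$, present exactly when $\tau_{k+3}<r$. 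All these integrands carry the numerator $\sin^2\frac p2\pi(x-\mathrm{level})$ and the common weight $1/(\pi^2x^2)$. Subtracting, the level-$k$ comparison produces a genuine gain $\int_{\xi-1/3+2/p}^{\xi+2/p}\sin^2\tfrac p2\pi(x-k)/(\pi^2x^2)\,dx$ for $\gamma$, coming from the right portion of the level-$k$ interval where the weight $1/x^2$ is largest, while the level-$(k+2)$ comparison is a loss for $\gamma$ supported near $x\approx\xi+2$ where the weight is smallest. The remaining terms (the two $\sin^2\frac p2\pi(x-k-1)$ arcs, and, when $\tau_{k+3}<r$, the $\sin^2\frac p2\pi(x-k-3)$ arc of $\tau$) I would reorganise by translating each arc by an integer amount: periodicity of $x\mapsto\sin^2\frac p2\pi x$ turns a level-$n$ arc into a level-$n'$ arc at the price of replacing the weight $1/x^2$ by $1/(x\pm(n-n'))^2$, and monotonicity of $x\mapsto 1/x^2$ then controls the sign of the resulting differences.

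Carrying this through reduces the claimed strict inequality $E_p(\tau^r)<E_p(\gamma^r)$ to an explicit inequality between integrals of $\sin^2\frac p2\pi x/(\pi^2x^2)$ over intervals whose endpoints and lengths are elementary functions of $\xi$ and $p$, to be verified for all $\xi\ge 1$ and $3.6<p<4$. \emph{This last verification is the main obstacle.} When $\xi$ is large all the weights appearing are nearly equal, so one must show that the $\sin^2$-mass that $\gamma$ gains on level $k$ strictly exceeds the $\sin^2$-mass that $\tau$ can place on level $k+2$ (and, when $\tau_{k+3}<r$, on level $k+3$), which holds but only by a quantitative margin depending on $p$; when $\xi$ is small one instead exploits that $(\xi+2)^2/\xi^2$ is bounded below by a constant $>1$. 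Using monotonicity of $1/x^2$ it suffices to separate the two sub-cases $\tau_{k+3}\ge r$ (no level-$(k+3)$ term, and level $k+2$ of $\tau$ contributing the full interval $\int_{\xi+2-4/p}^{\xi+2-2/p}$) and $m_\xi+1\le\tau_{k+3}<r$, and in each case to reduce to finitely many one-parameter estimates in $p$; these are elementary but lengthy, and, exactly as for Lemma~\ref{lem:postponeactint}, we carry them out in the appendix.
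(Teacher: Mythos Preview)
Your geometric setup is correct and your overall strategy (translate every arc by the appropriate integer multiple of $1$ or $1-4/p$ back to the base interval $(\xi,\xi+2/p)$, then compare weighted $\sin^2$-integrals) is exactly the one the paper uses. Two points separate your plan from the paper's actual proof.

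First, the case split on $\tau_{k+3}$ is unnecessary. Since $\tau\in T_2$ forces $\tau_{k+3}\ge m_\xi+1=\xi+3/2-1/p$, and for $p<4$ this already exceeds the midpoint $m_{\xi+2-4/p}=\xi+5/2-5/p$ of the level-$(k+2)$ interval and the adjacent level-$(k+3)$ interval, Lemma~\ref{lem:midpointlem} shows that $E_p(\tau^r)$ is \emph{increasing} in $\tau_{k+3}$ on the admissible range. Hence the worst case is $\tau_{k+3}\ge r$, the level-$(k+2)$ arc of $\tau$ is the full interval $(\xi+2-4/p,\xi+2-2/p)$, and there is no level-$(k+3)$ contribution at all. (Incidentally your level-$(k+3)$ term, when present, lives on $(\tau_{k+3},\xi+3-6/p)$, so it is present only when $\tau_{k+3}<\xi+3-6/p$, not merely $\tau_{k+3}<r$.) After this reduction the difference $E_p(\gamma^r)-E_p(\tau^r)$ collapses to the single expression $I_{2_1}+I_{2_2}-I_1-I_3$ over $(\xi,\xi+2/p)$ that the paper writes down, with no residual dependence on $\tau_{k+3}$.

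Second, your ``elementary but lengthy'' estimates are where the real content lies, and the paper's argument is more structured than a brute reduction to one-parameter inequalities. The key device is the splitting point $\xi-7/12+3/p$: for $p>3.6$ one has $-7/12+3/p\le 1/p$, so $\sin^2\tfrac{p}{2}\pi(x-k)$ is \emph{monotone increasing} on $(\xi,\xi-7/12+3/p)$, which lets one strip the common $\sin^2$ factor and reduce $I_{2_1}>I_1$ to an inequality between integrals of $1/x^2$-differences; on the complementary piece one has the uniform bound $\sin^2\ge 3/4$, and together with the trivial bound $\sin^2\le 1$ on the $I_3$-interval this reduces $I_{2_2}>I_3$ to another rational-function inequality. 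Both then become explicit polynomial inequalities in $\xi$ and $p$ with positive coefficients, checked by hand. Your heuristic that the level-$k$ gain sits ``where the weight $1/x^2$ is largest'' is in the right spirit but cannot by itself close the argument for large $\xi$, where all weights are asymptotically equal; the monotonicity/uniform-bound splitting is what makes the comparison uniform in $\xi\ge 1$.
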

	The fact that $\gamma$ is in $T_2$ is a direct consequence of the fact that $\tau$ is in $T_2$. The proof of \eqref{eq:appendixfinalpgeq36} is a long and technical calculation and we postpone the proof to the appendix. We use Lemma \ref{lem:postponeactint} and Lemma \ref{lem:pgeq36spes} to prove Lemma \ref{lem:postponediviationgeneralized}. 
	
	\begin{proof}[Proof of Lemma \ref{lem:postponediviationgeneralized}]
		It is clear that $\gamma$ is in $T_2$ since $\tau$ is in $T_2$. Let $k_1$ be the smallest integer such that $\gamma_{k_1+1} \neq \tau_{k_1+1}$. The fact that $k_1$ exists follows from the assumption $\tau \neq \lambda$. Since $\gamma_{k_1+1} \neq \tau_{k_1+1}$ and $\gamma_{n+1}=\tau_n+1$ it follows that \[\tau_{k_1+1} < \tau_{k_1}+1=k_1+2/p.\] Then either (III)(a) or (III)(b) holds (with $j=0$). Assume first that (III)(a) holds. It then follows from Lemma \ref{lem:postponeactint} that $E_p(\tau^{r_1}) < E_p(\gamma^{r_1})$, where $r_1=\gamma_{k_1+4}$. If $\tau_{n+1}=\tau_n+1$ for all $n \geq k_1+4 $ we are done. If not let $k_2$ be the smallest integer greater than or equal to $k_1+3$ such that $\tau_{k_2+1} < \tau_{k_2}+1$.
		
		Now consider the possibility that $p>3.6$ and  $\tau_{k_1+1}<m_\xi$. We then see that $\tau_{k_1+2}=\tau_{k_1+1}+2/3$ and $\tau_{k_1+3}=\xi+1-2/p$ satisfy the conditions of Lemma \ref{lem:pgeq36spes}. In particular letting $r_1=\gamma_{k_1+3}$ it follows from Lemma \ref{lem:pgeq36spes} that $E_p(\tau^{r_1}) < E_p(\gamma^{r_1})$. If $\tau_{n+1}=\tau_n+1$ for all $n \geq k_1+3 $ we are done. If not let $k_2 \geq k_1+2$ be the smallest integer such that $\tau_{k_2+1} < \tau_{k_2}+1$. 
		
		Repeating the argument above it follows that $E_p(\tau^{r_2}) < E_p(\gamma^{r_2})$ where we have $r_2=\gamma_{k_2+4}$ if $\tau_{k_2+1}$ is of the type (III)(a) and $r_2=\gamma_{k_2+3}$ if $\tau_{k_2+1}$ is of the type (III)(b). Continuing this way it follows that $E_p(\tau) < E_p(\gamma)$.
	\end{proof}
	We are now ready to prove the main theorem. 
	\begin{proof}[Proof of Theorem \ref{thm:new:Ep2to4} for $3<p<4$]
		It suffices to consider $\delta_1=\min(2/\pi, 2/p)$ and $\delta_2=2/3$. We denote by $\lambda$ the sequence given by $\lambda_n=n-1+2/p$. It is clear that $E_p(\tau)=E_p(\lambda)$ for all sequences $\tau$ in $T(\delta_1, \delta_2)$ where $\tau_n$ is in the interval $[n-1+2/p,n]$ for all $n \geq 1$. We aim to show that \[\mathscr{E}_p(\delta_1,\delta_2)=\sup_{\tau \in T(\delta_1, \delta_2)} E_p(\tau) = E_p(\lambda).\] By Lemma \ref{lem:T3} it suffices to consider the supremum over sequences in $T_2$. Let $\tau$ be a sequence in $T_2$ such that $\tau \neq \lambda$. To prove the claim it suffices to show that $E_p(\tau) < E_p(\lambda)$. Since $\tau \neq \lambda $ there exists a smallest integer $k$ such that $\tau_{k+1} < k+2/p$. We define the sequence $\gamma^1$ as in Lemma \ref{lem:postponediviationgeneralized}. That is $\gamma_n^1=\tau_{n-1}+1$ for all $n >1$ and $\gamma_1^1=1-2/p$ (not to be confused with the definition of a vertically truncated sequence.). Then by Lemma \ref{lem:postponediviationgeneralized} it follows that $E_p(\tau) < E_p(\gamma^1)$. Similarly we can define $\gamma^{2}$ by $\gamma_n^2=\gamma_{n-1}^1+1$ for $n>1$. Then again by Lemma \ref{lem:postponediviationgeneralized} it follows that $E_p(\gamma^1) < E_p(\gamma^2)$. By continuing this way we see that \[E_p(\tau)< E_p(\gamma^1) < E_p(\gamma^{2}) < \dots < E_p(\gamma^l).\]
		We observe that the sequence $\gamma^l$ satisfies $\gamma_n^l=\lambda_n$ for $n<k+l$. Finally as $l$ tends to infinity we see that $E_p(\tau) < E_p(\lambda)$.
	\end{proof}
	
	\section{Proof of Theorem \ref{thm:new:E_p4to5}}\label{sec:4to5}
	In this section we prove Theorem \ref{thm:new:E_p4to5}. As stated in Section \ref{sec:3to4} the case $p=4$ is trivial so we may assume $p>4$. We begin by showing the following related result, where we assume that $\delta_1 \geq 1-2/p \geq 1/2$. 
	\begin{theorem}\label{thm:E_p4to5weak}
		Fix $4 \leq p \leq 6$ and let $1-2/p \leq \delta_1 \leq 1/2+3/p$ and $1-2/p \leq \delta_2 \leq 1$. Then
		\begin{align*}
			\mathscr{E}_p (\delta_1, \delta_2) = \int_{0}^{2/p}\frac{\sin^2\frac{p}{2}\pi x}{\pi^2 x^2}\, dx &+ \sum_{n=0}^{\infty} \int_{n+4/p}^{n+1/2+3/p} \frac{\sin^2\frac{p}{2}\pi(x-n)}{\pi^2x^2}\dx \\ &+ \sum_{n=0}^{\infty}\int_{n+1/2+3/p}^{n+1+2/p} \frac{\sin^2\frac{p}{2}\pi(x-n-1)}{\pi^2x^2} \dx.
		\end{align*}
		For $p>4$ the supremum is attained if and only if $\tau$ is the sequence in $T(\delta_1, \delta_2)$ given by $\tau_n=n-1/2+3/p$ for all $n \geq 1$.
	\end{theorem}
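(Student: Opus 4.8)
The plan is to fix $4<p<6$ — the case $p=4$ is immediate since $E_4(\tau)$ does not depend on $\tau$ — put $\lambda_n=n-1/2+3/p$, and prove that $E_p(\tau)\le E_p(\lambda)$ for every $\tau\in T(\delta_1,\delta_2)$, with equality exactly when $\tau=\lambda$. The first, bookkeeping, step is to check that $\lambda\in T(\delta_1,\delta_2)$ — its successive gaps all equal $1\ge\delta_2$ and $\lambda_1=1/2+3/p\ge\delta_1$ by hypothesis — and that $E_p(\lambda)$ is the displayed sum. For the latter one uses Lemma~\ref{lem:Brevig:lem6.5}: for $n\ge 1$ the intervals at level $n$ meeting the window $(\lambda_n,\lambda_{n+1})=(n-1/2+3/p,\,n+1/2+3/p)$ are exactly the part $(n-1/2+3/p,\,n+2/p)$ of $(n,n+2/p)$ and the part $(n+4/p,\,n+1/2+3/p)$ of $(n+4/p,n+6/p)$ (the inequalities $n+2/p<n+1/2+3/p$ and $n+4/p<n+1/2+3/p<n+6/p$ use $4<p<6$), together with all of $(0,2/p)$ at level $0$; re-indexing the first family by $n-1$ turns it into the third sum and the remaining terms match.

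For the inequality I would mimic the structure of Sections~\ref{sec:prelim}--\ref{sec:3to4}, adapting to the mirror-image geometry: by Lemma~\ref{lem:Brevig:lem6.5} the level-$(n+1)$ interval meeting a level-$n$ interval $(\xi,\xi+2/p)$ now lies to its \emph{right}, and from the identity $\sin^2 A-\sin^2 B=\sin(A+B)\sin(A-B)$ with $A=\frac p2\pi(x-n)$, $B=\frac p2\pi(x-n-1)$ one gets $\sin^2\frac p2\pi(x-n)-\sin^2\frac p2\pi(x-n-1)=\sin\!\big(p\pi(x-n-1/2)\big)\sin\frac p2\pi$; over the overlap $(\xi+1-4/p,\xi+2/p)$ the phase $p\pi(x-n-1/2)$ sweeps an interval of length $(6-p)\pi<2\pi$ meeting the zero set of $\sin$ exactly once, at the midpoint $m_\xi=\xi+1/2-1/p$ of the overlap, and $\sin\frac p2\pi>0$ for $4<p<6$, so the difference is $\ge0$ to the left of $m_\xi$ and $\le0$ to the right. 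Hence, in the $4\le p\le 6$ analogue of Lemma~\ref{lem:midpointlem} (and of Lemma~\ref{lem:maximizeS}), the local contribution $S(\tau_{n+1})$ of such a pair of intervals is nondecreasing in $\tau_{n+1}$ up to $m_\xi$ and nonincreasing afterwards, so it is \emph{maximised} at $\tau_{n+1}=m_\xi$ — the reverse of the $2\le p\le4$ situation, which is precisely why the optimal cutpoints are the midpoints.

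Using this, ``$x\mapsto 1/x$ decreasing'', and periodicity — as in the proofs of Lemma~\ref{lem:maximizeS} and Lemma~\ref{lem:tau_n_leq_n} — I would establish a reduction lemma allowing one to assume $n(1-2/p)\le\tau_n\le n-1/2+3/p$ for all $n$ (the lower bound forced by $\delta_1,\delta_2\ge1-2/p$, the upper bound the one needed below). Then the blocks $B_n:=(n+4/p,\,n+1+4/p)$ tile $(4/p,\infty)$, and since $\tau_n<n+4/p$ one can write $E_p(\tau)=\int_0^{2/p}\frac{\sin^2\frac p2\pi x}{\pi^2x^2}\,dx+\sum_{n\ge0}c_n(\tau)$, where the first term is the contribution of $(0,4/p)$, which is the constant $\int_0^{2/p}\frac{\sin^2\frac p2\pi x}{\pi^2x^2}\,dx$ (using $\tau_1\ge\delta_1\ge1-2/p$), and $c_n(\tau)$ is the contribution of $B_n$, collected by the windows indexed $n,n+1,\dots$. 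The core estimate is $c_n(\tau)\le c_n(\lambda)=\int_{n+4/p}^{n+1/2+3/p}\frac{\sin^2\frac p2\pi(x-n)}{\pi^2x^2}\,dx+\int_{n+1/2+3/p}^{n+1+2/p}\frac{\sin^2\frac p2\pi(x-n-1)}{\pi^2x^2}\,dx$ — the $n$-th summand of $E_p(\lambda)$ — with equality iff $\tau_{n+1}=n+1/2+3/p$; here the midpoint lemma governs the trade-off across $\tau_{n+1}$, while a no-overlap/periodicity argument as in Lemma~\ref{lem:F(n)} (transplanting the contribution of any interval lying to the right of $B_n$ back into $B_n$ only lowers its weight $1/x^2$) disposes of the rest, exactly as Lemma~\ref{lem:F(n)} bounds $A_p(\tau;k)$ by the contribution of the first interval of the $k$th ray. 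Summing over $n$, together with the mutual feasibility of the choices $\tau_{n+1}=n+1/2+3/p$ (their gaps equal $1\le\delta_2$), gives $E_p(\tau)\le E_p(\lambda)$, and tracking the cases of equality forces $\tau=\lambda$.

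The hard part is this last step. Unlike the ray decomposition of Lemma~\ref{lem:A(n)=Dp}, where each ray is a clean sum optimised interval-by-interval, here a single $\tau_n$ is at once the right end of one window and the left end of the next, and $c_n(\tau)$ can genuinely depend on several of $\tau_{n+1},\tau_{n+2},\dots$ — for instance when $\tau_{n+1}$ falls to the left of $n+1$ or inside the level-$(n+1)$ interval $(n+1-4/p,\,n+1-2/p)$, or when a further window boundary lands inside $B_n$. One must therefore set up the block decomposition so that each level-$m$ interval is charged to exactly one $B_n$, split into sub-cases according to the position of $\tau_{n+1}$ relative to $n+1-4/p$, to $n+1$, and to the overlap $(n+1,\,n+6/p)$, and push through the resulting (routine but lengthy) $1/x^2$ comparisons; this is the $4<p<6$ analogue of the technical estimates deferred to the appendix in Section~\ref{sec:3to4}, and it is also where the hypothesis $\delta_1\ge1-2/p$ (rather than merely $\delta_1\ge1/2$, as in Theorem~\ref{thm:new:E_p4to5}) is used, namely to secure $\tau_n\ge n(1-2/p)$ and thereby bound which intervals can occur.
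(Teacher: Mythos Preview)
Your preliminary steps are correct and match the paper: the case $p=4$ is trivial, $\lambda\in T(\delta_1,\delta_2)$, the computation of $E_p(\lambda)$ is right, and your midpoint observation is exactly Lemma~\ref{lem:4to5localmid}. The reduction to $\tau_n\le n-1/2+3/p$ is Lemma~\ref{lem:4to6nottolagrezeros}, proved via the midpoint lemma together with a ``shift left'' comparison (Lemma~\ref{lem:goleft4to6}) that you do not mention but would need.

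Where you diverge from the paper is the decomposition. The paper does \emph{not} use horizontal blocks $B_n=(n+4/p,n+1+4/p)$; it defines rays $I_{k,j}$ for $4\le p\le6$ with step $\varrho=1-4/p$ (so consecutive intervals in a ray sit at consecutive levels, shifted right), proves $E_p(\tau)=\sum_k A_p(\tau;k)$ (Lemma~\ref{lem:4to6Epsumoverrays}), and then bounds each $A_p(\tau;k)$ directly (Lemma~\ref{lem:4to6maxray}). The point of the ray structure is that the separation $\delta_2\ge 1-2/p$ forces at most three consecutive terms of $A_p(\tau;k)$ to be nonzero; a short pointwise estimate then shows one can do no better than taking only the first two, with the cut at the midpoint $k-1/2+3/p$. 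This is genuinely clean --- no case analysis of the Section~\ref{sec:3to4}/appendix type is needed for Theorem~\ref{thm:E_p4to5weak}.

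Your block decomposition, by contrast, does not localise so well: a single block $B_n$ can receive contributions from windows at levels $n,n+1,n+2,\ldots$ (for $p$ near $6$ and small $n$ several windows can land in $B_n$), and the level-$n$ positive set in $B_n$ consists of more than one interval. The appeal to ``a no-overlap/periodicity argument as in Lemma~\ref{lem:F(n)}'' is where the gap lies: Lemma~\ref{lem:F(n)} works because translating along a ray aligns all contributions inside a single interval $(k,k+2/p)$ with disjoint supports, but translating contributions from higher levels into your block does not produce disjoint pieces of a single interval, so the comparison does not reduce to ``$1/x^2$ is decreasing''. Your own caveat (``$c_n(\tau)$ can genuinely depend on several of $\tau_{n+1},\tau_{n+2},\ldots$'') is the symptom. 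The fix is to switch to the paper's diagonal ray decomposition, after which the argument is short; your expectation of appendix-length technicalities is an artefact of the block choice, not of the problem.
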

	Note that Theorem \ref{thm:E_p4to5weak} does \emph{not} give an upper bound for the constant $\mathscr{C}_p$ since $\delta_1 \geq 1-2/p > 1/2$ for all $p>4$ and $\delta_2 \geq 1-2/p > 3/5$ if $p>5$. However, if one were to improve the results from \cite{Brevig} on the separation of the zeroes of the extremal function $\varphi_p$ sufficiently, then Theorem \ref{thm:E_p4to5weak} would lead to an upper bound for $\mathscr{C}_p$.
	
	Our aim is to combine Theorem \ref{thm:E_p4to5weak} with Lemma \ref{lem:techinq4to5} to obtain the upper bound $\mathscr{C}_p \leq 2 \mathscr{E}_p(1,1)$ for $4\leq p\leq 5$ as given in Theorem \ref{thm:new:E_p4to5}. The reason we do not give an upper bound for $\mathscr{C}_p$ for $5 < p \leq 6$ is that we have verified that Theorem \ref{thm:new:E_p4to5} with $\delta_1=1/2$ does not hold for $p<6$ sufficiently close to $6$. Additionally, while it seems likely that one can weaken the assumption $\delta_2 \geq 1-2/p$ to $\delta_2 \geq 3/5$ in this range this would yield for another (more complicated) proof than the one we provide.
	
	We start by observing that $E_p(\tau)$ is locally maximized if $\tau_{n+1}$ is the midpoint $m_\xi=\xi+1/2-1/p$ between an interval $(\xi,\xi+2/p)$ at level $n$ and the interval $(\xi+1-4/p,\xi+1-2/p)$ at level $n+1$. Let $(\xi, \xi+2/p)$ be an interval at level $n$. Similarly as in \eqref{eq:defS} for $\xi \leq \tau_{n+1} \leq \xi+4/p$ and $4 \leq p \leq 6$ we define 
	\begin{align*}
		S(\tau_{n+1}) &= \int_{\xi}^{\xi+2/p} \chi_{(\xi, \tau_{n+1})}(x) \frac{\sin^2\frac{p}{2}\pi(x-n)}{\pi^2 x^2} \, dx\\ &+ \int_{\xi+1-4/p}^{\xi+1-2/p}\chi_{( \tau_{n+1},\xi+1-2/p)}(x)\frac{\sin^2\frac{p}{2}\pi(x-n-1)}{\pi^2 x^2} \, dx.
	\end{align*} 
	See Figure \ref{fig:4to5choosemid} for an illustration of $S(\tau_{n+1})$ for $p>4$, as well as the following lemma. 
	
	\begin{lemma}\label{lem:4to5localmid} 
		Fix $4\leq p \leq 6$. Let $(\xi, \xi+2/p)$ be an interval at level $n$ and let $\xi \leq \tau_{n+1} \leq \xi+4/p$. Then $S(\tau_{n+1})$ is maximized by $\tau_{n+1}=m_\xi$.
	\end{lemma}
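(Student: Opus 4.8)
The plan is to reduce the statement to a pointwise comparison of $\sin^2\frac p2\pi(x-n)$ and $\sin^2\frac p2\pi(x-n-1)$ on the overlap of the relevant intervals, mirroring the proof of Lemma \ref{lem:midpointlem} but with the inequalities reversed: for $4<p<6$ the interval at level $n+1$ intersecting $(\xi,\xi+2/p)$ lies to the \emph{right} of it, so the geometry is a mirror image of the $2\le p\le 4$ situation.

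First I would use Lemma \ref{lem:Brevig:lem6.5} to write $\xi=n+(4/p)k$ for an integer $k$, so that with $y=x-\xi$ one has $\sin^2\frac p2\pi(x-n)=\sin^2\frac p2\pi y$ and $\sin^2\frac p2\pi(x-n-1)=\sin^2\frac p2\pi(y-1)$. The identity $\sin^2A-\sin^2B=\sin(A+B)\sin(A-B)$ then gives $\sin^2\frac p2\pi(x-n)-\sin^2\frac p2\pi(x-n-1)=\sin\!\big(p\pi(y-\tfrac12)\big)\sin\frac p2\pi$. Since $\frac p2\pi\in[2\pi,3\pi]$ for $4\le p\le 6$ we have $\sin\frac p2\pi\ge 0$, so the sign of the difference is that of $\sin\!\big(p\pi(y-\tfrac12)\big)$. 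As $y$ runs over $[1-4/p,\,2/p]$ the quantity $p\pi(y-\tfrac12)$ runs monotonically over $[\tfrac p2\pi-4\pi,\,2\pi-\tfrac p2\pi]\subseteq[-2\pi,0]$, crossing $-\pi$ exactly when $y=\tfrac12-\tfrac1p$, i.e. $x=m_\xi$. Since $\sin\ge 0$ on $[-2\pi,-\pi]$ and $\sin\le 0$ on $[-\pi,0]$, this yields the pointwise estimates $\sin^2\frac p2\pi(x-n)\ge\sin^2\frac p2\pi(x-n-1)$ for $\xi+1-4/p\le x\le m_\xi$ and $\sin^2\frac p2\pi(x-n)\le\sin^2\frac p2\pi(x-n-1)$ for $m_\xi\le x\le\xi+2/p$ (with equality throughout when $p=4$ or $p=6$).

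With these in hand I would split $[\xi,\xi+4/p]$ at the points $\xi+1-4/p$, $\xi+2/p$, $\xi+1-2/p$, whose ordering $\xi\le\xi+1-4/p\le m_\xi\le\xi+2/p\le\xi+1-2/p\le\xi+4/p$ is forced by $4\le p\le 6$, and argue: on $[\xi,\xi+1-4/p]$ the first summand of $S$ is increasing and the second constant, so $S$ increases; on $[\xi+2/p,\xi+1-2/p]$ the first summand is constant and the second decreasing, so $S$ decreases; on $[\xi+1-2/p,\xi+4/p]$ both summands are constant. On the middle interval $[\xi+1-4/p,\xi+2/p]$, for $\tau_{n+1}\in[\xi+1-4/p,m_\xi]$ the difference $S(m_\xi)-S(\tau_{n+1})$ collapses to $\int_{\tau_{n+1}}^{m_\xi}\big(\sin^2\frac p2\pi(x-n)-\sin^2\frac p2\pi(x-n-1)\big)(\pi^2x^2)^{-1}\,dx\ge 0$ by the first estimate, while for $\tau_{n+1}\in[m_\xi,\xi+2/p]$ the difference $S(\tau_{n+1})-S(m_\xi)$ equals the same integrand integrated from $m_\xi$ to $\tau_{n+1}$, which is $\le 0$ by the second estimate. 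Chaining the three monotonicity statements back to $m_\xi$ gives $S(\tau_{n+1})\le S(m_\xi)$ throughout $[\xi,\xi+4/p]$.

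The one place needing care — the analogue of the "hard part" — is bookkeeping on the endpoints rather than any genuine estimate: one must check the chain of inequalities above (using $4\le p\le 6$) before the case split is meaningful, and observe that the degenerate endpoints $p=4$ and $p=6$, where $\sin\frac p2\pi=0$ and several of the subintervals collapse to points, are still covered — there $S$ is merely constant on a larger subset and the maximizer $m_\xi$ is no longer unique, which is consistent with the statement of Lemma \ref{lem:4to5localmid}.
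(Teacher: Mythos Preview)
Your proof is correct and follows essentially the same approach as the paper: both rest on the pointwise comparison $\sin^2\frac p2\pi(x-n)\ge\sin^2\frac p2\pi(x-n-1)$ on $(\xi+1-4/p,m_\xi)$ and the reverse inequality on $(m_\xi,\xi+2/p)$. You supply considerably more detail than the paper does---deriving the estimates via $\sin^2A-\sin^2B=\sin(A+B)\sin(A-B)$ and explicitly handling the subintervals of $[\xi,\xi+4/p]$ outside the overlap---whereas the paper simply states the two pointwise inequalities and leaves the rest implicit.
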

	
	\begin{proof}
		This follows from the pointwise estimates 
		\[\sin^2\frac{p}{2}\pi(x-n) > \sin^2\frac{p}{2}\pi(x-n-1),\] for all $x$ in $(\xi+1-4/p, m_\xi)$ and
		\[\sin^2\frac{p}{2}\pi(x-n) < \sin^2\frac{p}{2}\pi(x-n-1),\] for all $x$ in $(m_\xi, \xi+2/p)$.
	\end{proof}
	We note that Lemma \ref{lem:4to5localmid} differs significantly from Lemma \ref{lem:maximizeS}. This can be explained by the fact that the intervals at level $n$ and level $n+1$ intersect differently for $4<p<6$ than for $2<p<4$. We now move forward to show that to increase $E_p(\tau)$ the value $\tau_n$ should not be too large.
	\begin{figure}
		\centering
		\begin{tikzpicture}[scale=1.5]
			\def\p{5}
			\def\s{10}
			\begin{axis}[
				axis equal image,
				axis lines = none,
				trig format plots=rad]
				
				\node at (axis cs: -1,0) {$\scriptstyle n$};
				\node at (axis cs: -1,-3) {$\scriptstyle n+1$};

				\addplot[thin, name path=b1] coordinates {(0,0) (4,0)};
				\addplot[domain=0:4, samples=100, color=black!75, thin, name path=t1] ({x},{sin(pi*x*\p/(2*\s))*sin(pi*x*\p/(2*\s))});
				
				\addplot[thin, name path=b11test] coordinates {(8,0) (11,0)};
				\addplot[domain=8:11, samples=100, color=black!75, thin, name path=t11test] ({x},{sin(pi*x*\p/(2*\s))*sin(pi*x*\p/(2*\s))});
				
				\addplot[thin, name path=b11] coordinates {(8,0) (12,0)};
				\addplot[domain=8:12, samples=100, color=black!75, thin, name path=t11] ({x},{sin(pi*x*\p/(2*\s))*sin(pi*x*\p/(2*\s))});
				\addplot[red!50] fill between [of=b11test and t11test];
				
				\addplot[thin] coordinates {(16,0) (20,0)};
				\addplot[domain=16:20, samples=100, color=black!75, thin] ({x},{sin(pi*x*\p/(2*\s))*sin(pi*x*\p/(2*\s))});

				\addplot[thin] coordinates {(2,-3) (6,-3)};
				\addplot[domain=2:6, samples=100, color=black!75, thin] ({x},{sin(pi*(x-\s)*\p/(2*\s))*sin(pi*(x-\s)*\p/(2*\s))-3});
				
				\addplot[thin, name path=b2] coordinates {(11,-3) (14,-3)};
				\addplot[domain=11:14, samples=100, color=black!75, thin, name path=t2] ({x},{sin(pi*(x-\s)*\p/(2*\s))*sin(pi*(x-\s)*\p/(2*\s))-3});
				\addplot[thin] coordinates {(10,-3) (14,-3)};
				\addplot[domain=10:14, samples=100, color=black!75, thin] ({x},{sin(pi*(x-\s)*\p/(2*\s))*sin(pi*(x-\s)*\p/(2*\s))-3});
				\addplot[red!50] fill between [of=b2 and t2];
				
				\addplot[thin, name path=b22] coordinates {(18,-3) (21,-3)};
				\addplot[domain=18:21, samples=100, color=black!75, thin, name path=t22] ({x},{sin(pi*(x-\s)*\p/(2*\s))*sin(pi*(x-\s)*\p/(2*\s))-3});
				\addplot[thin] coordinates {(18,-3) (22,-3)};
				\addplot[domain=18:22, samples=100, color=black!75, thin] ({x},{sin(pi*(x-\s)*\p/(2*\s))*sin(pi*(x-\s)*\p/(2*\s))-3});
				\addplot[thin, color=black!0] coordinates {(0,-3.6) (22,-3.6)};
				\addplot[thin,color=blue,->] coordinates {(11, 0) (11,-3)};
				
				\addplot[only marks,mark=|,color=black,mark size=2pt] coordinates {(8,-3) (11,-3) (16,-3)};
				\node at (axis cs: 8,-3.6) {$\scriptstyle \xi$};
				\node at (axis cs: 11,-3.6) {$\scriptstyle m_\xi$};
				\node at (axis cs: 16,-3.6) {$\scriptstyle \xi+4/p$};
			\end{axis}
		\end{tikzpicture}
		\caption{An illustration of Lemma \ref{lem:4to5localmid} with $p=5$. The \CR{shaded} area represents $S(\tau_{n+1})$ without considering the integrand factor $1/(\pi x)^2$. We see that $S(\tau_{n+1})$ is maximized for $\CB{\tau_{n+1}}=m_\xi=\xi+1/2-1/p$. }
		\label{fig:4to5choosemid}
	\end{figure}
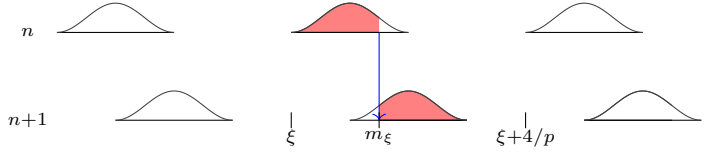
	\begin{lemma}\label{lem:goleft4to6}
		Fix $4 < p \leq 6$. Let $0< \delta_1 \leq 1/2+3/p$ and $0<\delta_2 \leq 1$. Let $(\xi, \xi +2/p)$ be an interval at level $n$. Assume $\tau$ is a sequence in $T(\delta_1, \delta_2)$ such that $\tau_{n} \leq \xi$, $\tau_{n+1}=m_\xi+4/p$ and $\tau_{n+2} \geq \xi +1+ 2/p$. Further let $\gamma$ be the sequence such that $\gamma_{n+1}=m_\xi$ and $\gamma_k=\tau_k$ for all $k \neq n+1$.  Then $E_p(\tau) < E_p(\gamma)$.
	\end{lemma}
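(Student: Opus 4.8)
The plan is to prove $E_p(\tau)<E_p(\gamma)$ by localizing the difference $E_p(\gamma)-E_p(\tau)$ to the window $(m_\xi,m_\xi+4/p)$ and comparing the level-$n$ and level-$(n+1)$ contributions there using two symmetries of the integrand $x\mapsto \sin^2\frac{p}{2}\pi(x-n)$.

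First I would set up the reduction. Since $\gamma_k=\tau_k$ for $k\neq n+1$ and $\gamma_{n+1}=m_\xi<m_\xi+4/p=\tau_{n+1}$, the only change is that $(m_\xi,m_\xi+4/p)$ is counted at level $n$ in $E_p(\tau)$ but at level $n+1$ in $E_p(\gamma)$, so
\[ E_p(\gamma)-E_p(\tau)=\int_{(m_\xi,m_\xi+4/p)\cap\,L_{n+1}}\frac{\sin^2\frac{p}{2}\pi(x-n-1)}{\pi^2x^2}\,dx-\int_{(m_\xi,m_\xi+4/p)\cap\,L_{n}}\frac{\sin^2\frac{p}{2}\pi(x-n)}{\pi^2x^2}\,dx, \]
where $L_m$ is the union of the intervals at level $m$. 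Using Lemma \ref{lem:Brevig:lem6.5} to locate the neighbours of $(\xi,\xi+2/p)$ and of its intersecting interval $(\xi+1-4/p,\xi+1-2/p)$, and using $\tau_n\le\xi$ and $\tau_{n+2}\ge\xi+1+2/p$ (which ensure these neighbouring intervals are actually in range), one checks that the level-$n$ part of the window is $(m_\xi,\xi+2/p)\cup(\xi+4/p,m_\xi+4/p)$ and the level-$(n+1)$ part is $(m_\xi,\xi+1-2/p)\cup(\xi+1,m_\xi+4/p)$. Writing $F_1,F_2,G_1,G_2$ for the corresponding four integrals, the goal is $F_1+F_2<G_1+G_2$, i.e. $G_1-F_1>F_2-G_2$.

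Next I record the two symmetries. One is periodicity, $\sin^2\frac{p}{2}\pi(x-n)=\sin^2\frac{p}{2}\pi(x-n-4/p)$. The other is a reflection about the midpoint, $\sin^2\frac{p}{2}\pi(x-n-1)=\sin^2\frac{p}{2}\pi(2m_\xi-x-n)$, a short computation from $\xi=n+(4/p)k$ and $2m_\xi=2\xi+1-2/p$. Hence $h(x):=\sin^2\frac{p}{2}\pi(x-n-1)-\sin^2\frac{p}{2}\pi(x-n)$ is $2/p$-periodic and odd about $m_\xi$, i.e. $h(2m_\xi-x)=-h(x)$; and from the proof of Lemma \ref{lem:4to5localmid} we have the pointwise estimate $h>0$ on $(m_\xi,\xi+2/p)$. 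I would now split both sides so the oscillations line up. Since $\xi+2/p<\xi+1-2/p$ (using $p>4$),
\[ G_1-F_1=\int_{m_\xi}^{\xi+2/p}\frac{h(x)}{\pi^2x^2}\,dx+\int_{\xi+2/p}^{\xi+1-2/p}\frac{\sin^2\frac{p}{2}\pi(x-n-1)}{\pi^2x^2}\,dx=:J_1+J_2, \]
and since $\xi+4/p<\xi+1$,
\[ F_2-G_2=\int_{\xi+4/p}^{\xi+1}\frac{\sin^2\frac{p}{2}\pi(x-n)}{\pi^2x^2}\,dx-\int_{\xi+1}^{m_\xi+4/p}\frac{h(x)}{\pi^2x^2}\,dx=:I_1+I_2. \]
For $I_1<J_2$: a change of variables (periodicity in $I_1$, the reflection of the level-$(n+1)$ profile in $J_2$) rewrites both as $\tfrac1{\pi^2}\int_{0}^{1-4/p}\sin^2(\tfrac{p}{2}\pi s)(\,\cdot\,)^{-2}\,ds$, with weights $(\xi+4/p+s)^{-2}$ and $(\xi+1-2/p-s)^{-2}$; since $p\le6$ gives $\xi+1-2/p-s\le\xi+4/p+s$ for every $s\ge0$, the second weight dominates. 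For $I_2<J_1$: applying $x\mapsto2m_\xi-x$ (so $h(x)\mapsto-h(x)$) and then shifting by $4/p$ (periodicity of $h$) turns $I_2$ into $\tfrac1{\pi^2}\int_{m_\xi}^{\xi+2/p}h(z)\,(2m_\xi+4/p-z)^{-2}\,dz$; on $(m_\xi,\xi+2/p)$ we have $h>0$ and $z<\xi+2/p<m_\xi+2/p$ (the last since $p>2$), hence $z<2m_\xi+4/p-z$, so this integral is strictly less than $J_1=\tfrac1{\pi^2}\int_{m_\xi}^{\xi+2/p}h(z)\,z^{-2}\,dz$. Adding, $F_2-G_2=I_1+I_2<J_1+J_2=G_1-F_1$, which is the claim; note $\gamma$ is a legitimate strictly increasing sequence since $\tau_n\le\xi<m_\xi<\tau_{n+2}$.

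The step I expect to be the real content — and the reason a one-line "pair off the four pieces" argument fails — is the comparison $I_2<J_1$. Although $F_1<G_1$, one has $F_2>G_2$ (the level-$n$ piece abutting $m_\xi+4/p$ is wider than the level-$(n+1)$ piece there), so a direct pairing does not close; one genuinely needs the reflection symmetry about $m_\xi$, together with the $2/p$-periodicity, to transport $I_2$ and $J_1$ onto the common interval $(m_\xi,\xi+2/p)$ with the common weight $h$, after which the monotonicity of $x\mapsto1/x^2$ decides. The remaining work — verifying which interval pieces lie in the window, that the two pairs of lengths match ($|(m_\xi,\xi+2/p)|=|(\xi+1,m_\xi+4/p)|$ and $|(\xi+4/p,m_\xi+4/p)|=|(m_\xi,\xi+1-2/p)|$), and the elementary denominator comparisons — is routine interval arithmetic valid for $4<p\le6$, with the degenerate case $p=6$ (where $(m_\xi,\xi+2/p)$ collapses and strictness is carried entirely by $I_1<J_2$) absorbed into the same bookkeeping.
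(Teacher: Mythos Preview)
Your proof is correct and takes essentially the same approach as the paper: both arguments reduce to the two comparisons $I_1\le J_2$ (via periodicity/shift and $1/x^2$ monotonicity) and $I_2\le J_1$ (via a substitution onto a common interval and a pointwise denominator comparison), which in the paper appear as the reduction from \eqref{eq:4to5left} to \eqref{eq:4to5left2} and the subsequent pointwise estimates. Your packaging through the function $h$ and its reflection/periodicity symmetries is a slightly more conceptual rendering of the same substitutions the paper carries out directly.
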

	
	\begin{proof}
		To prove the lemma it is enough to show that
		\begin{equation}\label{eq:4to5left}
			\begin{split}
				&\int_{\xi}^{\xi+2/p} \frac{\sin^2 \frac{p}{2}\pi(x-n)}{\pi^2x^2} \, dx +
				\int_{\xi+4/p}^{m_\xi+4/p} \frac{\sin^2 \frac{p}{2}\pi(x-n)}{\pi^2x^2} \, dx  \\ &+
				\int_{m_\xi+4/p}^{\xi+1+2/p} \frac{\sin^2 \frac{p}{2}\pi(x-n-1)}{\pi^2x^2} \, dx  \\
				& \leq 	\int_{\xi}^{m_\xi} \frac{\sin^2 \frac{p}{2}\pi(x-n)}{\pi^2x^2} \, dx + 	\int_{m_\xi}^{\xi+1-2/p} \frac{\sin^2 \frac{p}{2}\pi(x-n-1)}{\pi^2x^2} \, dx\\ &+\int_{\xi+1}^{\xi+1+2/p} \frac{\sin^2 \frac{p}{2}\pi(x-n-1)}{\pi^2x^2} \, dx. 
			\end{split}
		\end{equation}
		See Figure \ref{fig:goleft4to6part1} and Figure \ref{fig:goleft4to6part2} for an illustration of this inequality. By eliminating the equal parts on each side of the inequality \eqref{eq:4to5left} and observing that
		\[\int_{\xi+4/p}^{\xi+1} \frac{\sin^2 \frac{p}{2} \pi (x-n)}{\pi^2 x^2} \, dx \leq \int_{\xi+2/p}^{\xi+1-2/p} \frac{\sin^2 \frac{p}{2}\pi(x-n-1)}{\pi^2 x^2} \, dx,\] it follows that \eqref{eq:4to5left} holds if
		\begin{equation}\label{eq:4to5left2}
			\begin{split}
				&\int_{m_\xi}^{\xi+2/p} \frac{\sin^2\frac{p}{2}\pi(x-n-1)}{\pi^2x^2} \, dx - 
				\int_{m_\xi}^{\xi+2/p} \frac{\sin^2\frac{p}{2}\pi(x-n)}{\pi^2x^2} \, dx \\
				&\geq 
				\int_{\xi+1}^{m_\xi+4/p} \frac{\sin^2\frac{p}{2}\pi(x-n)}{\pi^2 x^2} \, dx - 
				\int_{\xi+1}^{m_\xi+4/p} \frac{\sin^2\frac{p}{2}\pi(x-n-1)}{\pi^2 x^2 } \, dx.
			\end{split}
		\end{equation}
		By substitutions we see that \eqref{eq:4to5left2} is equivalent to 
		\begin{align*}\label{eq:4to5left3}
			\int_{0}^{3/p-1/2} \frac{\sin^2\frac{p}{2}\pi(x+1)-\sin^2\frac{p}{2}\pi x}{\pi^2(x-\xi-2/p)^2}\, dx \geq \int_{0}^{3/p-1/2} \frac{\sin^2\frac{p}{2}\pi(x+1)-\sin^2\frac{p}{2}\pi x}{\pi^2(x+\xi+1)^2} \, dx.
		\end{align*}
		This indeed follows from the pointwise estimates
		\[\sin^2\frac{p}{2} \pi (x+1) \geq \sin^2\frac{p}{2} \pi x,\]
		and 
		\[(x-\xi-2/p)^2 \leq (x+\xi+1)^2\] for all $x$ in the interval $(0, 3/p-1/2)$, which concludes the proof. 
	\end{proof}
	
	\begin{figure}
		\centering
		\begin{tikzpicture}[scale=1.5]
			\def\p{5}
			\def\s{10}
			\begin{axis}[
				axis equal image,
				axis lines = none,
				trig format plots=rad]
				
				\node at (axis cs: -1,0) {$\scriptstyle n$};
				\node at (axis cs: -1,-3) {$\scriptstyle n+1$};
				
				\addplot[thin, name path=b1] coordinates {(0,0) (4,0)};
				\addplot[domain=0:4, samples=100, color=black!75, thin, name path=t1] ({x},{sin(pi*x*\p/(2*\s))*sin(pi*x*\p/(2*\s))});
				\addplot[red!50] fill between [of=b1 and t1];
				
				\addplot[thin, name path=b11test] coordinates {(8,0) (11,0)};
				\addplot[domain=8:11, samples=100, color=black!75, thin, name path=t11test] ({x},{sin(pi*x*\p/(2*\s))*sin(pi*x*\p/(2*\s))});
				\addplot[red!50] fill between [of=b11test and t11test];
				
				\addplot[thin] coordinates {(11,0) (12,0)};
				\addplot[domain=11:12, samples=100, color=black!75, thin] ({x},{sin(pi*x*\p/(2*\s))*sin(pi*x*\p/(2*\s))});

				\addplot[thin] coordinates {(2,-3) (6,-3)};
				\addplot[domain=2:6, samples=100, color=black!75, thin] ({x},{sin(pi*(x-\s)*\p/(2*\s))*sin(pi*(x-\s)*\p/(2*\s))-3});
				
				\addplot[thin, name path=b2] coordinates {(11,-3) (14,-3)};
				\addplot[domain=11:14, samples=100, color=black!75, thin, name path=t2] ({x},{sin(pi*(x-\s)*\p/(2*\s))*sin(pi*(x-\s)*\p/(2*\s))-3});
				\addplot[thin] coordinates {(10,-3) (11,-3)};
				\addplot[domain=10:11, samples=100, color=black!75, thin] ({x},{sin(pi*(x-\s)*\p/(2*\s))*sin(pi*(x-\s)*\p/(2*\s))-3});
				\addplot[red!50] fill between [of=b2 and t2];
				
				\addplot[thin, color=black!0] coordinates {(0,-3.6) (10,-3.6)};
				\addplot[thin,color=blue,->] coordinates {(11,0+0.0) (11,-3-0.0)};
				
				\addplot[only marks,mark=|,color=black,mark size=2pt] coordinates {(0,-3) (3,-3) (11,-3)};
				\node at (axis cs: 0,-3.6) {$\scriptstyle \xi$};
				\node at (axis cs: 3,-3.6) {$\scriptstyle m_\xi$};
				\node at (axis cs: 11,-3.6) {$\scriptstyle m_\xi+4/p$};
			\end{axis}
		\end{tikzpicture}
		\caption{ Together with Figure \ref{fig:goleft4to6part2} this gives an illustration of Lemma \ref{lem:goleft4to6}. Here we see the value $\CB{\tau_{n+1}}=m_\xi+4/p$. The \CR{shaded} area represents the left hand side of \eqref{eq:4to5left} without considering the integrand factor $1/(\pi x)^2$.}
		\label{fig:goleft4to6part1}
		
		\centering
		\begin{tikzpicture}[scale=1.5]
			\def\p{5}
			\def\s{10}
			\begin{axis}[
				axis equal image,
				axis lines = none,
				trig format plots=rad]
				
				\node at (axis cs: -1,0) {$\scriptstyle n$};
				\node at (axis cs: -1,-3) {$\scriptstyle n+1$};
				
				\addplot[thin, name path=b1] coordinates {(0,0) (3,0)};
				\addplot[domain=0:3, samples=100, color=black!75, thin, name path=t1] ({x},{sin(pi*x*\p/(2*\s))*sin(pi*x*\p/(2*\s))});
				\addplot[red!50] fill between [of=b1 and t1];
				
				\addplot[thin] coordinates {(3,0) (4,0)};
				\addplot[domain=3:4, samples=100, color=black!75, thin] ({x},{sin(pi*x*\p/(2*\s))*sin(pi*x*\p/(2*\s))});
				
				\addplot[thin] coordinates {(8,0) (12,0)};
				\addplot[domain=8:12, samples=100, color=black!75, thin, ] ({x},{sin(pi*x*\p/(2*\s))*sin(pi*x*\p/(2*\s))});

				\addplot[thin] coordinates {(2,-3) (3,-3)};
				\addplot[domain=2:3, samples=100, color=black!75, thin] ({x},{sin(pi*(x-\s)*\p/(2*\s))*sin(pi*(x-\s)*\p/(2*\s))-3});
				
				\addplot[thin, name path=b4] coordinates {(3,-3) (6,-3)};
				\addplot[domain=3:6, samples=100, color=black!75, thin, name path=t4] ({x},{sin(pi*(x-\s)*\p/(2*\s))*sin(pi*(x-\s)*\p/(2*\s))-3});
				\addplot[red!50] fill between [of=b4 and t4];
				
				\addplot[thin, name path=b2] coordinates {(10,-3) (14,-3)};
				\addplot[domain=10:14, samples=100, color=black!75, thin,name path=t2] ({x},{sin(pi*(x-\s)*\p/(2*\s))*sin(pi*(x-\s)*\p/(2*\s))-3});
				\addplot[red!50] fill between [of=b2 and t2];
				\addplot[thin, color=black!0] coordinates {(0,-3.6) (10,-3.6)};
				\addplot[thin,color=blue,->] coordinates {(3,0+0.0) (3,-3-0.0)};
				\addplot[only marks,mark=|,color=black,mark size=2pt] coordinates {(0,-3) (3,-3) (11, -3)};
				\node at (axis cs: 3,-3.6) {$\scriptstyle m_\xi$};
				\node at (axis cs: 0,-3.6) {$\scriptstyle \xi$};
				\node at (axis cs: 11,-3.6) {$\scriptstyle m_\xi+4/p$};
			\end{axis}
		\end{tikzpicture}
		\caption{Together with Figure \ref{fig:goleft4to6part1} this gives an illustration of Lemma \ref{lem:goleft4to6}. Here we see the value $\CB{\tau_{n+1}}=m_\xi$. The \CR{shaded} area represents the right hand side of \eqref{eq:4to5left} without considering the integrand factor $1/(\pi x)^2$. }
		\label{fig:goleft4to6part2}
	\end{figure}
	We can now use Lemma \ref{lem:goleft4to6} to show the following result similar to Lemma \ref{lem:tau_n_leq_n}. The lemma tells us that when determining the supremum $\mathscr{E}_p(\delta_1, \delta_2)$ we need not consider sequences with large elements, and may in fact assume $\tau_n \leq n-1/2+3/p$ for all $n \geq 1$.
	\begin{lemma}\label{lem:4to6nottolagrezeros}
		Assume $4 \leq p \leq 6$.
		Let $0 < \delta_1 \leq 1/2+3/p$ and $0 < \delta_2 \leq 1$. Let $\tau$ be a sequence in $T(\delta_1, \delta_2)$. Define the sequence $\gamma$ by $\gamma_n=\min(n-1/2+3/p, \tau_n)$. Then $\gamma$ is a sequence in $T(\delta_1, \delta_2)$ and $E_p(\tau) \leq E_p(\gamma)$. 
	\end{lemma}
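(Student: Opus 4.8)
The plan is to mirror the proof of Lemma~\ref{lem:tau_n_leq_n}, lowering the ``excessive'' coordinates of $\tau$ one at a time; the role of Lemma~\ref{lem:maximizeS} there is taken here by Lemma~\ref{lem:4to5localmid}, but for large values of $\tau_n$ one also uses the $4/p$-periodicity of the interval pattern (Lemma~\ref{lem:Brevig:lem6.5}) together with the fact that $x\mapsto 1/x^{2}$ is decreasing, just as in the single-step Lemma~\ref{lem:goleft4to6}. One may assume $4<p<6$: for $p=4$ the functional $E_p$ is constant in $\tau$, and $p=6$ is the same computation in the limiting geometry. First, $\gamma\in T(\delta_1,\delta_2)$: since $p\le 6$ we have $3/p-1/2\ge 0$, so $\gamma_0=\min(3/p-1/2,0)=0$; since $\delta_1\le 1/2+3/p$ we get $\gamma_1\ge\min(1/2+3/p,\delta_1)=\delta_1$; and $\gamma_{n+1}-\gamma_n\ge\delta_2$ follows by a short case check combining $\tau_{n+1}-\tau_n\ge\delta_2$ with the fact that $n\mapsto n-1/2+3/p$ has unit increments and $1\ge\delta_2$.

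For $E_p(\tau)\le E_p(\gamma)$: if $\tau_n\le n-1/2+3/p$ for all $n$ then $\gamma=\tau$. Otherwise let $m$ be the smallest index with $\tau_m>m-1/2+3/p$, so by minimality $\tau_{m-1}\le m-3/2+3/p$. Replacing $\tau_m$ by $m-1/2+3/p$ keeps the sequence in $T(\delta_1,\delta_2)$ (use $m-1/2+3/p-\tau_{m-1}\ge 1\ge\delta_2$ and $\tau_{m+1}-(m-1/2+3/p)\ge\tau_{m+1}-\tau_m\ge\delta_2$), and I claim it does not decrease $E_p$. Granting this, one iterates over the excessive indices in increasing order — each reduction introduces no new excessive index and undoes no earlier one — and the resulting sequences increase in $E_p$ and converge pointwise to $\gamma$, so $E_p(\tau)\le E_p(\gamma)$ by passing to the limit as in Lemma~\ref{lem:tau_n_leq_n}.

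For the claim, write $\xi=m-1+4/p$. By Lemma~\ref{lem:Brevig:lem6.5} and $4<p<6$ the point $m_\xi=m-1/2+3/p$ lies in the overlap of the interval $(\xi,\xi+2/p)$ at level $m-1$ with the interval $(m,m+2/p)$ at level $m$, and $\tau_{m-1}<m_\xi\le\tau_m<\tau_{m+1}$. Hence lowering $\tau_m$ to $m_\xi$ changes $E_p(\tau)$ only by deleting the level-$(m-1)$ contribution over $(m_\xi,\tau_m)$ and inserting the level-$m$ contribution over the same set, so the change equals
\[
\Delta=\int_{m_\xi}^{\tau_m}\frac{h(x)}{\pi^{2}x^{2}}\,dx,\qquad
h(x)=\chi_{L_m}(x)\sin^{2}\tfrac{p}{2}\pi(x-m)-\chi_{L_{m-1}}(x)\sin^{2}\tfrac{p}{2}\pi(x-m+1),
\]
where $L_j$ denotes the union of the intervals at level $j$. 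On $[m_\xi,\infty)$ the function $h$ is $4/p$-periodic (both $L_m$ and $L_{m-1}$ are $4/p$-periodic by Lemma~\ref{lem:Brevig:lem6.5}, and the $\sin^2$ factors have period $2/p$), and its integral over any period is $0$ (each period meets exactly one interval at each of the two levels, both contributing $\int_0^{2/p}\sin^2\tfrac{p}{2}\pi t\,dt$). The substantive point is that on each period there is a point $c$ with $h\ge 0$ to its left and $h\le 0$ to its right: where the active level-$m$ interval still overlaps an active level-$(m-1)$ interval one has $\sin^{2}\tfrac{p}{2}\pi(x-m)\ge\sin^{2}\tfrac{p}{2}\pi(x-m+1)$ (as in the proof of Lemma~\ref{lem:4to5localmid}, via the identity $\sin^2A-\sin^2(A+\tfrac{p}{2}\pi)=-\sin(\tfrac{p}{2}\pi)\sin(2A+\tfrac{p}{2}\pi)$), while on the remaining part of the period only level-$(m-1)$ intervals contribute or the reversed estimate holds. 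It follows that the primitive $H(Y)=\int_{m_\xi}^Y h$ is nonnegative for every $Y\ge m_\xi$ — it vanishes at each $m_\xi+(4/p)j$ and on each period it first increases then decreases — whence, writing $w(x)=1/(\pi^2 x^2)$ and integrating by parts, $\Delta=H(\tau_m)\,w(\tau_m)-\int_{m_\xi}^{\tau_m}H(x)\,w'(x)\,dx\ge 0$, since $H\ge 0$, $w>0$ and $w'\le 0$. The step I expect to be the main obstacle is pinning down this sign pattern of $h$ on a period uniformly for $4<p<6$, that is, bookkeeping exactly which intervals at levels $m-1$ and $m$ are active on each subinterval of a period and verifying the two pointwise trigonometric comparisons there — precisely the local content of Lemma~\ref{lem:goleft4to6}, now needed along the whole ray.
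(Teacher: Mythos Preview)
Your argument is correct and takes a different route from the paper. The paper lowers each excessive $\tau_m$ in two stages: first Lemma~\ref{lem:4to5localmid} moves it to the midpoint of its current $4/p$-window, and then Lemma~\ref{lem:goleft4to6} is iterated to shift that midpoint leftward by $4/p$ at a time until reaching $m-1/2+3/p$. You instead drop $\tau_m$ directly to $m_\xi=m-1/2+3/p$ in one move and control the change $\Delta$ by integrating by parts against the primitive $H$ of the $4/p$-periodic, mean-zero integrand $h$, reducing everything to $H\ge 0$. This is more economical: the nonnegativity of $H$ on each period needs only the pointwise trigonometric comparisons from the \emph{proof} of Lemma~\ref{lem:4to5localmid}, not the integrated inequality of Lemma~\ref{lem:goleft4to6}. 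Concretely, on $[m_\xi,m_\xi+4/p]$ the five sub-intervals determined by which of $L_{m-1}$, $L_m$ are active carry, in order, the signs $+,+,0,-,-$ for $h$ (right half of one overlap; only $L_m$; gap; only $L_{m-1}$; left half of the next overlap), which is elementary to check. So the ``main obstacle'' you flag at the end is lighter than you suggest---it is the content of Lemma~\ref{lem:4to5localmid} rather than of Lemma~\ref{lem:goleft4to6}---and your packaging bypasses the latter entirely.
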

	
	\begin{proof}
		We see that $\gamma_1 = \min(\tau_1, 1/2+3/p) \geq \delta_1$. Combining the fact that $\tau_{n+1} - \tau_n \geq \delta_2$ and that $n+1-n=1\geq \delta_2$ it follows that $\gamma_{n+1}- \gamma_n \geq \delta_2$, and hence $\gamma$ is in the set of sequences $T(\delta_1, \delta_2)$. We now show that $E_p(\tau) \leq E_p(\gamma)$.  If $\tau_n \leq n-1/2+3/p$ for all $n \geq 0$ we are done. Otherwise, let $m$ be the smallest integer such that $\tau_m >m-1/2+3/p$ and let $l\geq 0$ be the largest integer such that $\tau_m \geq m-1+(4/p)l$. By Lemma \ref{lem:4to5localmid} replacing $\tau_m$ with $m+(4/p)l-1/2+3/p$ will increase the value of $E_p(\tau)$. Further, by Lemma \ref{lem:goleft4to6} replacing $\tau_m$ with $m-1/2+3/p$ will increase the value of $E_p(\tau)$ further, and this sequence is again in $T(\delta_1, \delta_2)$. We conclude the proof by repeating the above argument for increasing values of $m$ such that $\tau_m>m-1/2+3/p$. 
	\end{proof}	
	As in the case $2 \leq p \leq 4$ we will introduce the notion of rays. However the definition will be somewhat different when $4 \leq p \leq 6$. Let $\varrho=1-4/p$. For $k, j \geq 0$ we define 
	$I_{k, j}=(k-1+4/p+j\varrho, k-1+4/p+j\varrho+2/p)$. We call the collection of intervals $I_k=(I_{k, j})_{j \geq 0}$ the $k$th ray. Note that we omit the interval $I_{0,0}$ to ensure that $(0,2/p)$ is the first interval in the first ray. Further we let $J_n=(\tau_n, \tau_{n+1})$. We then let 
	\[A_p(\tau; k)= \sum_{j=0}^{\infty} \int_{I_{k,j}} \chi_{J_k+j}(x) \frac{\sin^2 \frac{p}{2}\pi(x-k-2j)}{\pi^2x^2} dx.\] 
	We can think of $A_p(\tau; k)$ as the contribution to $E_p(\tau)$ from the $k$th ray. See Figure \ref{fig:p5withray} for an illustration.
	
	Similarly as we saw in Lemma \ref{lem:A(n)=Dp} we show that to determine $E_p(\tau)$ we need only consider the intervals in the $k$th ray for all $k \geq 0$. More precisely:
	\begin{figure}
		\centering
		\begin{tikzpicture}[scale=1.5]
			\def\p{5}
			\def\s{10}
			\begin{axis}[
				axis equal image,
				axis lines = none,
				trig format plots=rad]
				
				\node at (axis cs: -1,0) {$\scriptstyle 0$};
				\node at (axis cs: -1,-3) {$\scriptstyle 1$};
				\node at (axis cs: -1,-6) {$\scriptstyle 2$};
				\node at (axis cs: -1,-9) {$\scriptstyle 3$};
				\node at (axis cs: -1,-12) {$\scriptstyle 4$};					
				\node at (axis cs: -1,15) {$\scriptstyle 5$};
				\node at (axis cs: -1,-18) {$\scriptstyle 6$};

				\addplot[thin, name path=b1] coordinates {(0,0) (4,0)};
				\addplot[domain=0:4, samples=100, color=black!75, thin, name path=t1] ({x},{sin(pi*x*\p/(2*\s))*sin(pi*x*\p/(2*\s))});
				\addplot[red!50] fill between [of=b1 and t1];
				
				\addplot[thin, name path=b11test] coordinates {(8,0) (11,0)};
				\addplot[domain=8:11, samples=100, color=black!75, thin, name path=t11test] ({x},{sin(pi*x*\p/(2*\s))*sin(pi*x*\p/(2*\s))});
				
				\addplot[thin, name path=b11] coordinates {(11,0) (12,0)};
				\addplot[domain=11:12, samples=100, color=black!75, thin, name path=t11] ({x},{sin(pi*x*\p/(2*\s))*sin(pi*x*\p/(2*\s))});
				\addplot[red!50] fill between [of=b11test and t11test];
				
				\addplot[thin] coordinates {(16,0) (20,0)};
				\addplot[domain=16:20, samples=100, color=black!75, thin] ({x},{sin(pi*x*\p/(2*\s))*sin(pi*x*\p/(2*\s))});
				
				\addplot[thin] coordinates {(24,0) (28,0)};
				\addplot[domain=24:28, samples=100, color=black!75, thin] ({x},{sin(pi*x*\p/(2*\s))*sin(pi*x*\p/(2*\s))});

				\addplot[thin] coordinates {(2,-3) (6,-3)};
				\addplot[domain=2:6, samples=100, color=black!75, thin] ({x},{sin(pi*(x-\s)*\p/(2*\s))*sin(pi*(x-\s)*\p/(2*\s))-3});
				
				\addplot[thin, name path=b2] coordinates {(11,-3) (14,-3)};
				\addplot[domain=11:14, samples=100, color=black!75, thin, name path=t2] ({x},{sin(pi*(x-\s)*\p/(2*\s))*sin(pi*(x-\s)*\p/(2*\s))-3});
				\addplot[thin] coordinates {(10,-3) (11,-3)};
				\addplot[domain=10:11, samples=100, color=black!75, thin] ({x},{sin(pi*(x-\s)*\p/(2*\s))*sin(pi*(x-\s)*\p/(2*\s))-3});
				\addplot[red!50] fill between [of=b2 and t2];
				
				\addplot[thin, name path=b22] coordinates {(18,-3) (21,-3)};
				\addplot[domain=18:21, samples=100, color=black!75, thin, name path=t22] ({x},{sin(pi*(x-\s)*\p/(2*\s))*sin(pi*(x-\s)*\p/(2*\s))-3});
				\addplot[thin] coordinates {(21,-3) (22,-3)};
				\addplot[domain=21:22, samples=100, color=black!75, thin] ({x},{sin(pi*(x-\s)*\p/(2*\s))*sin(pi*(x-\s)*\p/(2*\s))-3});
				\addplot[red!50] fill between [of=b22 and t22];
				
				\addplot[thin] coordinates {(26,-3) (30,-3)};
				\addplot[domain=26:30, samples=100, color=black!75, thin] ({x},{sin(pi*(x-\s)*\p/(2*\s))*sin(pi*(x-\s)*\p/(2*\s))-3});

				\addplot[thin] coordinates {(4,-6) (8,-6)};
				\addplot[domain=4:8, samples=100, color=black!75, thin] ({x},{sin(pi*(x-2*\s)*\p/(2*\s))*sin(pi*(x-2*\s)*\p/(2*\s))-6});
				
				\addplot[thin] coordinates {(12,-6) (16,-6)};
				\addplot[domain=12:16, samples=100, color=black!75, thin] ({x},{sin(pi*(x-2*\s)*\p/(2*\s))*sin(pi*(x-2*\s)*\p/(2*\s))-6});
				
				\addplot[thin, name path=b3] coordinates {(21,-6) (24,-6)};
				\addplot[domain=21:24, samples=100, color=black!75, thin, name path=t3] ({x},{sin(pi*(x-2*\s)*\p/(2*\s))*sin(pi*(x-2*\s)*\p/(2*\s))-6});
				\addplot[thin] coordinates {(20,-6) (21,-6)};
				\addplot[domain=20:21, samples=100, color=black!75, thin] ({x},{sin(pi*(x-2*\s)*\p/(2*\s))*sin(pi*(x-2*\s)*\p/(2*\s))-6});
				\addplot[red!50] fill between [of=b3 and t3];
				
				\addplot[thin, name path=b33] coordinates {(28,-6) (31,-6)};
				\addplot[domain=28:31, samples=100, color=black!75, thin, name path=t33] ({x},{sin(pi*(x-2*\s)*\p/(2*\s))*sin(pi*(x-2*\s)*\p/(2*\s))-6});
				
				\addplot[thin] coordinates {(31,-6) (32,-6)};
				\addplot[domain=31:32, samples=100, color=black!75, thin] ({x},{sin(pi*(x-2*\s)*\p/(2*\s))*sin(pi*(x-2*\s)*\p/(2*\s))-6});
				\addplot[red!50] fill between [of=b33 and t33];

				\addplot[thin] coordinates {(0,-9) (2,-9)};
				\addplot[domain=0:2, samples=100, color=black!75, thin] ({x},{sin(pi*(x-3*\s)*\p/(2*\s))*sin(pi*(x-3*\s)*\p/(2*\s))-9});
				
				\addplot[thin] coordinates {(6,-9) (10,-9)};
				\addplot[domain=6:10, samples=100, color=black!75, thin] ({x},{sin(pi*(x-3*\s)*\p/(2*\s))*sin(pi*(x-3*\s)*\p/(2*\s))-9});
				
				\addplot[thin] coordinates {(14,-9) (18,-9)};
				\addplot[domain=14:18, samples=100, color=black!75, thin] ({x},{sin(pi*(x-3*\s)*\p/(2*\s))*sin(pi*(x-3*\s)*\p/(2*\s))-9});
				
				\addplot[thin] coordinates {(22,-9) (26,-9)};
				\addplot[domain=22:26, samples=100, color=black!75, thin] ({x},{sin(pi*(x-3*\s)*\p/(2*\s))*sin(pi*(x-3*\s)*\p/(2*\s))-9});
				
				\addplot[thin, name path=b4] coordinates {(31,-9) (34,-9)};
				\addplot[domain=31:34, samples=100, color=black!75, thin, name path=t4] ({x},{sin(pi*(x-3*\s)*\p/(2*\s))*sin(pi*(x-3*\s)*\p/(2*\s))-9});
				
				
				\addplot[thin] coordinates {(30,-9) (31,-9)};
				\addplot[domain=30:31, samples=100, color=black!75, thin] ({x},{sin(pi*(x-3*\s)*\p/(2*\s))*sin(pi*(x-3*\s)*\p/(2*\s))-9});
				\addplot[red!50] fill between [of=b4 and t4];

				\addplot[thin] coordinates {(0,-12) (4,-12)};
				\addplot[domain=0:4, samples=100, color=black!75, thin] ({x},{sin(pi*(x-4*\s)*\p/(2*\s))*sin(pi*(x-4*\s)*\p/(2*\s))-12});
				\addplot[thin] coordinates {(8,-12) (12,-12)};
				\addplot[domain=8:12, samples=100, color=black!75, thin] ({x},{sin(pi*(x-4*\s)*\p/(2*\s))*sin(pi*(x-4*\s)*\p/(2*\s))-12});
				\addplot[thin] coordinates {(16,-12) (20,-12)};
				\addplot[domain=16:20, samples=100, color=black!75, thin] ({x},{sin(pi*(x-4*\s)*\p/(2*\s))*sin(pi*(x-4*\s)*\p/(2*\s))-12});
				\addplot[thin] coordinates {(24,-12) (28,-12)};
				\addplot[domain=24:28, samples=100, color=black!75, thin] ({x},{sin(pi*(x-4*\s)*\p/(2*\s))*sin(pi*(x-4*\s)*\p/(2*\s))-12});
				
				\addplot[thin] coordinates {(24,-12) (28,-12)};
				\addplot[domain=24:28, samples=100, color=black!75] ({x},{sin(pi*(x-4*\s)*\p/(2*\s))*sin(pi*(x-4*\s)*\p/(2*\s))-12});
				
				\addplot[thin] coordinates {(32,-12) (36,-12)};
				\addplot[domain=32:36, samples=100, color=black!75] ({x},{sin(pi*(x-4*\s)*\p/(2*\s))*sin(pi*(x-4*\s)*\p/(2*\s))-12});
				\addplot[thin, dashed] coordinates {(0,0) (8,-12)};
				\addplot[thin, dashed] coordinates {(4,0) (12,-12)};
				
				\addplot[thin, dashed] coordinates {(8,0) (16,-12)};
				\addplot[thin, dashed] coordinates {(12,0) (20,-12)};
				
				\addplot[thin, dashed] coordinates {(18,-3) (24,-12)};
				\addplot[thin, dashed] coordinates {(22,-3) (28,-12)};
				
				\addplot[thin, dashed] coordinates {(28,-6) (32,-12)};
				\addplot[thin, dashed] coordinates {(32,-6) (36,-12)};
				
				\addplot[thin, dashed] coordinates {(11,0) (31,-6)};
				\addplot[thin,color=blue,->] coordinates {(11,0+0.0) (11,-3-0.0)};
				\addplot[thin,color=blue,->] coordinates {(21,-3+0.0) (21,-6-0.0)};
				\addplot[thin,color=blue,->] coordinates {(31,-6+0.0) (31,-9-0.0)};
			\end{axis}
		\end{tikzpicture}
		\caption{Here $p=5$. The dashed lines mark the rays as well as the sector of intervals we know from Lemma \ref{lem:4to6Epsumoverrays} it is sufficient to consider when determining $E_p(\tau)$. The \CR{shaded} area represents $\mathscr{E}_5(1/2, 3/5)=E_5(\lambda)$ without considering the integrand factor $1/(\pi x)^2$, where $\CB{\lambda}$ is given by $\lambda_n=n+1/10$.}
		\label{fig:p5withray}
	\end{figure}
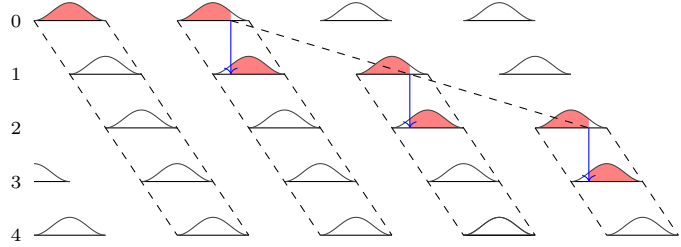
	
	\begin{lemma}\label{lem:4to6Epsumoverrays}
		Let $4 \leq p \leq 6$. Assume $1-4/p \leq \delta_1 \leq 1/2+3/p$ and $1-4/p \leq \delta_2 \leq 1$. Let $\tau$ be a sequence in $T(\delta_1, \delta_2)$ such that $\tau_n \leq n-1/2+3/p$ for all $n \geq 1$. Then
		\[E_p(\tau)= \sum_{k=0}^{\infty} A_p(\tau; k).\]
	\end{lemma}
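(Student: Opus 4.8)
The plan is to mirror the proof of Lemma~\ref{lem:A(n)=Dp}, substituting for the sector of intervals used there the one appropriate to the regime $4 \le p \le 6$. The crux is a geometric description of the collection $I = (I_{k,j})_{k \ge 0,\, j \ge 0}$ with the single interval $I_{0,0}$ removed: for each $n \ge 0$ this collection should meet level $n$ in exactly the block of consecutive intervals $(\xi,\xi+2/p)$ at level $n$ whose left endpoint $\xi$ satisfies $n(1-4/p) \le \xi \le n + 4/p$, with each of these $n+2$ intervals hit by exactly one ray. To see this I would use Lemma~\ref{lem:Brevig:lem6.5}, which says that for $p>4$ the interval at level $m+1$ meeting a given interval at level $m$ sits $\varrho = 1-4/p$ to its right; reading off left endpoints from the definition of $I_{k,j}$, the ray $I_k$ then runs through one interval at each level $\ge k-1$, shifting right by $\varrho$ at each step, so that at level $n$ (with $n \ge k-1$) it occupies the interval with left endpoint $n + 4(k-n)/p$. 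Hence at level $n$ the rays $I_0, I_1, \dots, I_{n+1}$ (the term $I_{0,0}$, which would sit at level $-1$, being the one we discarded) supply precisely the consecutive intervals from $\xi = n(1-4/p)$ up to $\xi = n + 4/p$, without gaps or repetitions, and the $j$-th member $I_{k,j}$ of ray $I_k$ sits at level $k-1+j$.

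Next I would feed in the hypotheses, just as in Lemma~\ref{lem:A(n)=Dp}. From $\tau \in T(\delta_1,\delta_2)$ with $\delta_1,\delta_2 \ge 1-4/p$ we get $\tau_n \ge \delta_1 + (n-1)\delta_2 \ge (1-4/p) + (n-1)(1-4/p) = n(1-4/p)$, so $\tau_n$ lies at or to the left of the leftmost interval of the block at level $n$; consequently $(\tau_n,\tau_{n+1})$ reaches no interval at level $n$ strictly to the left of the block, since such an interval has right endpoint at most $n(1-4/p) - 2/p < \tau_n$. For the right-hand side I would use the ceiling hypothesis: $\tau_{n+1} \le (n+1) - 1/2 + 3/p = n + 1/2 + 3/p$, which is $< n + 8/p$ because $1/2 + 3/p < 8/p$ for $p < 10$, and $n + 8/p$ is the left endpoint of the first interval at level $n$ lying to the right of the block; so $(\tau_n,\tau_{n+1})$ sees no interval to the right of the block either. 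That $(\tau_n,\tau_{n+1})$ also does not spill past the rightmost block interval is the inequality $\tau_{n+1} \le n + 1/2 + 3/p \le n + 6/p$, which holds exactly when $p \le 6$; this is the point at which the standing restriction $p\le 6$ is used.

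Combining these, on $(\tau_n,\tau_{n+1})$ the set where $K_p(\tau;x) > 0$ is the disjoint union of the intersections of $(\tau_n,\tau_{n+1})$ with the block intervals at level $n$, and on each of these intervals the weight occurring in the corresponding term of $A_p(\tau;k)$ coincides with the level-$n$ weight $\sin^2\frac{p}{2}\pi(x-n)/(\pi^2 x^2)$ (the arguments of the two sines differing by an integer multiple of $\pi$ on the interval in question). Integrating over $(\tau_n,\tau_{n+1})$ therefore splits the $n$-th term of $E_p(\tau)$ as a finite sum over the rays $I_0,\dots,I_{n+1}$; summing over $n \ge 0$ and regrouping by ray — for a fixed $k$ the contribution coming from level $n = k-1+j$ is exactly the $j$-th summand of $A_p(\tau;k)$ — yields $E_p(\tau) = \sum_{k \ge 0} A_p(\tau;k)$.

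The step I expect to be the main obstacle is the level-by-level bookkeeping underlying the geometric claim together with the two boundary estimates: confirming that the rays tile every level cleanly, that the lower constant $n(1-4/p)$ really does push $\tau_n$ to the left of the whole block, and that the upper constant $n-1/2+3/p$ is sharp enough to confine $(\tau_n,\tau_{n+1})$ to the block (this last being precisely where the bound $p\le 6$ is invoked). Once this is in place, the identification of the weights and the rearrangement of the double sum are routine.
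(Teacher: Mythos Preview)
Your argument is correct and follows the same approach as the paper: identify the block of level-$n$ intervals covered by the rays, then use the lower bound $\tau_n \ge n(1-4/p)$ and the upper bound $\tau_{n+1}\le n+1/2+3/p$ to confine $(\tau_n,\tau_{n+1})$ to that block. Two small remarks: in your second paragraph the clause ``$\tau_n$ lies at or to the left of the leftmost interval'' has the direction reversed (you mean at or to the right of its left endpoint, which is exactly what the subsequent sentence uses); and your point about $\tau_{n+1}\le n+6/p$ is not actually needed---the inequality $\tau_{n+1}<n+8/p$ from the previous step already rules out any level-$n$ interval to the right of the block, so the identity holds without invoking $p\le 6$ here.
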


	\begin{proof}
		We consider the collection of intervals $I=(I_{k, j})_{k, j \geq 0}$ and observe that the intervals on level $n$ in the collection $I$ are exactly those intervals $(\xi, \xi + 2/p)$ where $ n(1-4/p) \leq \xi < n +1/2 + 3/p $. Further, due to the separation conditions we have $\tau_n \geq \delta_1+ \delta_2(n-1) \geq n(1-4/p)+4/p$.  Hence it follows that 
		\[E_p(\tau)=\sum_{k=0}^{\infty}A_p(\tau; k),\] which concludes the proof.
	\end{proof}
	
	Next we see that the largest possible contribution to $E_p(\tau)$ from the $k$th ray is obtained if $\tau_k=k-1/2+3/p = m_{k-1+4/p}$.
	\begin{lemma}\label{lem:4to6maxray}
		Let $4 \leq p \leq 6$. Assume $1-2/p \leq \delta_1 \leq 1/2+3/p$ and $1-2/p \leq \delta_2 \leq 1$, and let $\tau$ be a sequence in $T(\delta_1, \delta_2)$. Then 
		\[A_p(\tau; k) \leq \int_{k-1+4/p}^{k-1/2+3/p} \frac{\sin^2 \frac{p}{2}\pi (x-k)}{\pi^2 x^2}dx + \int_{k-1/2+3/p}^{k+2/p} \frac{\sin^2 \frac{p}{2}\pi (x-k-1)}{\pi^2 x^2}dx, \]
		for all $k \geq 1$. 
	\end{lemma}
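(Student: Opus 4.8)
The plan is to follow the architecture of the proof of Lemma \ref{lem:F(n)}, making the modification forced by the geometry noted after Lemma \ref{lem:4to5localmid}: for $4<p<6$ two consecutive intervals of a ray lie only one level apart and overlap, sharing the node between them, so the clean ``the characteristic functions arising from consecutive terms do not overlap'' step of Lemma \ref{lem:F(n)} is no longer available. (For $p=4$ the statement is immediate, since $E_p(\tau)$ --- hence every $A_p(\tau;k)$ --- does not depend on $\tau$; assume $4<p\leq 6$.) Write $\xi_0=k-1+4/p$ for the left endpoint of the first interval $I_{k,0}$ of the $k$th ray, which lies at level $k-1$; then $m_{\xi_0}=k-1/2+3/p$, the next interval of the ray is $I_{k,1}=(k,k+2/p)$ at level $k$, and the asserted bound is exactly the maximal value $S(m_{\xi_0})$ of the functional $S$ of Lemma \ref{lem:4to5localmid} for the pair $(I_{k,0},I_{k,1})$.

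I would first dispose of the range $\tau_k>\xi_0+4/p$: here the lower separation bound $\tau_{n+1}-\tau_n\geq\delta_2\geq 1-2/p$ moves every ray interval $I_{k,j}$ with $j\geq 1$ entirely past its node $\tau_{k+j}$, so only $I_{k,0}$ contributes, with contribution at most $\int_{I_{k,0}}\frac{\sin^2\frac p2\pi(x-(k-1))}{\pi^2x^2}\,dx\leq S(m_{\xi_0})$. So assume $\tau_k\leq\xi_0+4/p$. Using $\tau_k$ as a pivot, one sees straight from the definitions --- discarding the harmless clippings at $\tau_{k-1}$ on the left and $\tau_{k+1}$ on the right --- that the combined contribution of $I_{k,0}$ and $I_{k,1}$ is at most $S(\tau_k)$, hence at most $S(m_{\xi_0})$ by Lemma \ref{lem:4to5localmid}.

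It remains to absorb the tail $\sum_{j\geq 2}[\,I_{k,j}\text{-contribution}\,]$. The crucial point is that the separation $\tau_{n+1}-\tau_n\geq\delta_2\geq 1-2/p$ forces $I_{k,2}$, and with it every $I_{k,j}$ for $j\geq 2$, to contribute $0$ unless $\tau_{k+1}<k+1-2/p$, which in turn forces $\tau_k<k$; and in exactly that regime the pivot bound is not tight. One then quantifies the slack $S(m_{\xi_0})-S(\tau_k)$ when $\tau_k<k$: it is an integral of $\frac{\sin^2\frac p2\pi(x-(k-1))}{\pi^2x^2}$ over $(\tau_k,k)$ plus an integral over $(k,m_{\xi_0})$ of the difference of the level $k-1$ and level $k$ weights, the latter being pointwise nonnegative by Lemma \ref{lem:4to5localmid}; in particular the slack is nonnegative. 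On the other side, shifting $I_{k,j}$ ($j\geq 2$) onto $I_{k,1}$ by $(j-1)\varrho$, $\varrho=1-4/p$, and using the periodicity of $\sin^2\frac p2\pi(\cdot)$ together with the monotonicity of $x\mapsto 1/x^2$, realizes the tail as an integral of the level $k$ weight over a subset of $(k,k+2/p)$ lying to the right of $m_{\xi_0}$. Since the shift preserves the underlying $\sin^2$-integral (the positivity lobes all have length $2/p$ and $\sin^2\frac p2\pi(\cdot)$ is periodic) but strictly lowers the $1/x^2$ factor, comparing against the $(\tau_k,k)$ piece of the slack --- an equal $\sin^2$-integral against a larger $1/x^2$ factor --- shows the slack dominates the tail. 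Adding the two estimates yields $A_p(\tau;k)\leq S(m_{\xi_0})$.

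The main obstacle is this last step: the overlap of the ray intervals means the bound cannot be made interval-by-interval, and one must exploit the rigid dichotomy dictated by the separation condition --- either the tail vanishes, or the pivot $\tau_k$ has already moved left far enough to open up room --- and then push through the quantitative comparison in which the periodicity of $\sin^2\frac p2\pi(\cdot)$ makes the competing $\sin^2$-integrals equal and the weight $1/x^2$ decides the sign. A secondary and purely routine nuisance is the enumeration of boundary subcases for the position of $\tau_k$ relative to $\xi_0$, $k$ and $\xi_0+2/p$, with the corresponding clippings at $\tau_{k-1}$ and $\tau_{k+1}$.
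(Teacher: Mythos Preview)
Your strategy is essentially the paper's: pivot at $\tau_k$, bound the $j=0,1$ contribution by $S(\tau_k)\leq S(m_{\xi_0})$ via Lemma~\ref{lem:4to5localmid}, and absorb the remaining terms into the slack $S(m_{\xi_0})-S(\tau_k)$. Two points, however, are not correct as written.

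\emph{First}, you never reduce to $j\in\{0,1,2\}$. Your dichotomy $\tau_k>\xi_0+4/p$ versus $\tau_k\leq\xi_0+4/p$ misses the regime $\tau_k<\xi_0=k-1+4/p$, which does occur (e.g.\ $p=5$, $k=2$, $\delta_1=\delta_2=3/5$, $\tau_2=1.2<\xi_0=1.8$); there $j=0$ contributes nothing, the ``$(\tau_k,k)$ piece of the slack'' is undefined, and several $j\geq1$ may contribute. Moreover, your shift of $I_{k,j}$ onto $I_{k,1}$ by $(j-1)\varrho$ produces windows $J_{k-1+j}-(j-1)\varrho$ that \emph{overlap} for consecutive $j$ (the next one starts $\varrho>0$ to the left of where the previous one ends), so the tail is not realized as an integral over a subset of $I_{k,1}$. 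The paper fixes both issues at once: from $\delta_2\geq 1-2/p$ one checks that at most three consecutive terms are nonzero, and monotonicity of $1/x^2$ then lets one assume these are $j=0,1,2$, forcing $\xi_0<\tau_k<k$.

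\emph{Second}, the ``equal $\sin^2$-integral'' claim is false. After shifting the $j=2$ piece to $I_{k,1}$ you are comparing
\[
\int_{\tau_k+2/p}^{k+2/p}\sin^2\tfrac p2\pi(x-k)\,dx
\quad\text{with}\quad
\int_{\tau_k}^{k}\sin^2\tfrac p2\pi(x-(k-1))\,dx,
\]
and these are not equal: for $p=5$, $k=1$, $\tau_1=0.85$ they are $\approx 0.0525$ and $\approx 0.0975$. What is true is the \emph{inequality}: the first integral is the edge-$a$ portion of a $\sin^2$ bump (with $a=k-\tau_k$), while the second is an interior-$a$ portion of the same bump shape, and the interior portion is always at least the edge portion. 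That inequality, together with $1/x^2$ monotonicity, is exactly what the paper's estimate \eqref{eq:1017} encodes; combined with the pointwise bound \eqref{eq:10126} on $(k,m_{\xi_0})$ it yields \eqref{eq:nottoclosezeroes}. So your final step needs this inequality, not an equality, and a one-line symmetry argument for it.
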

	
	\begin{proof}
		We fix $k \geq 1$. Since $\delta_1 \geq 1-2/p$ and $\delta_2 \geq 1-2/p$ it follows from the definition of $A_p(\tau; k)$ that at most three terms in the sum $A_p(\tau; k)$ can be non-zero, and these three must be subsequent terms. Applying Lemma \ref{lem:Brevig:lem6.5} and the fact that $x \mapsto 1/x$ is a decreasing function it follows that $A_p(\tau; k)$ is maximized if all terms with $j \geq 2$ are zero. Assume first that all the three first terms are non-zero. It is then clear that maximizing $A_p(\tau; k)$ requires that $k-1+4/p < \tau_k < k$ and $\tau_{k+1}=\tau_k+\delta_2 > \tau_k+1-2/p$. However we claim that 
		\begin{equation}\label{eq:nottoclosezeroes}
			\begin{split}
				\int_{k-1+4/p}^{k-1+6/p} \chi_{J_{k-1}}(x) 	\frac{\sin^2\frac{p}{2}\pi(x-k+1)}{\pi^2x^2}dx &+ \int_{k}^{k+2/p}\chi_{J_k}(x) \frac{\sin^2\frac{p}{2}\pi(x-k)}{\pi^2x^2}dx \\ + \int_{k+1-4/p}^{k+1-2/p} \chi_{J_{k+1}}(x) \frac{\sin^2\frac{p}{2}\pi(x-k-1)}{\pi^2x^2}&dx \\
				\leq \int_{k-1+4/p}^{k-1/2+3/p} 	\frac{\sin^2\frac{p}{2}\pi(x-k+1)}{\pi^2x^2}dx &+\int_{k-1/2+3/p}^{k+2/p} \frac{\sin^2\frac{p}{2}\pi(x-k)}{\pi^2x^2}dx.
			\end{split}
		\end{equation}
		See Figure \ref{fig:4to5notoclose} and Figure \ref{fig:4to5notoclosepart2} for an illustration of this inequality. Assume that \eqref{eq:nottoclosezeroes} holds. Combining \eqref{eq:nottoclosezeroes} with Lemma \ref{lem:Brevig:lem6.5} and the fact that $x \mapsto 1/x$ is a decreasing function it follows that $A_p(\tau; k)$ is maximized if at most the first two terms in $A_p(\tau; k)$ are non-zero. 
		It then follows from Lemma \ref{lem:4to5localmid} that $A_p(\tau; k)$ is maximized if $\tau_k=k+1/2+3/p$. It thus remains to verify \eqref{eq:nottoclosezeroes}. By assumption $k-1+4/p < \tau_k < k$ and it suffices to consider $\tau_{k+1}=\tau_k+1-2/p$.
		Thus the inequality \eqref{eq:nottoclosezeroes} holds if
		\begin{equation}\label{eq:10116}
			\begin{split}
				\int_{k}^{k-1/2+3/p} \frac{\sin^2\frac{p}{2}\pi(x-k)}{\pi^2x^2}dx &+ \int_{\tau_k+1-2/p}^{k+1-2/p} \frac{\sin^2\frac{p}{2}\pi(x-k-1)}{\pi^2x^2}dx\\  &\leq \int_{\tau_k}^{k-1/2+3/p}\frac{\sin^2\frac{p}{2}\pi(x-k+1)}{\pi^2x^2}dx.
			\end{split}
		\end{equation}
		Using the pointwise estimate 	
		\[\sin^2\frac{p}{2}\pi(x-k+1) > \sin^2\frac{p}{2}\pi(x-k)\] for all $x$ in $(k, k-1/2+3/p)$ we get
		\begin{equation}\label{eq:10126}
			\int_{k}^{k-1/2+3/p} \frac{\sin^2\frac{p}{2}\pi(x-k)}{\pi^2x^2}dx  \leq \int_{k}^{k-1/2+3/p}\frac{\sin^2\frac{p}{2}\pi(x-k+1)}{\pi^2x^2}dx. 
		\end{equation}
		Moreover, we see that
		\[\int_{\tau_k+1-2/p}^{k+1-2/p} \sin^2\frac{p}{2}\pi(x-k-1)dx  = \int_{\tau_k}^{k} \sin^2\frac{p}{2}\pi(x-k+1)dx.\]
		Then, since $x \mapsto 1/x$ is a decreasing function it follows that
		\begin{equation}\label{eq:1017}
			\int_{\tau_k+1-2/p}^{k+1-2/p} \frac{\sin^2\frac{p}{2}\pi(x-k-1)}{\pi^2x^2}dx  \leq \int_{\tau_k}^{k}\frac{\sin^2\frac{p}{2}\pi(x-k+1)}{\pi^2x^2}dx.
		\end{equation}
		Thus, combining \eqref{eq:10126} and \eqref{eq:1017} it follows that \eqref{eq:10116} holds which concludes the proof. 
	\end{proof}
	
	\begin{figure}
		\centering
		\begin{tikzpicture}[scale=1.5]
			\def\p{5}
			\def\s{10}
			\begin{axis}[
				axis equal image,
				axis lines = none,
				trig format plots=rad]
				
				\node at (axis cs: -1,0) {$\scriptstyle k-1$};
				\node at (axis cs: -1,-3) {$\scriptstyle k$};
				\node at (axis cs: -1,-6) {$\scriptstyle k+1$};
				
				\addplot[thin] coordinates {(0,0) (4,0)};
				\addplot[domain=0:4, samples=100, color=black!75, thin] ({x},{sin(pi*x*\p/(2*\s))*sin(pi*x*\p/(2*\s))});
				
				\addplot[thin, name path=t0] coordinates {(8,0) (9,0)};
				\addplot[domain=8:9, samples=100, color=black!75, thin, name path=b0] ({x},{sin(pi*x*\p/(2*\s))*sin(pi*x*\p/(2*\s))});
				\addplot[red!50] fill between [of=b0 and t0];
				
				\addplot[thin] coordinates {(9,0) (12,0)};
				\addplot[domain=9:12, samples=100, color=black!75, thin] ({x},{sin(pi*x*\p/(2*\s))*sin(pi*x*\p/(2*\s))});
				
				\addplot[thin] coordinates {(16,0) (20,0)};
				\addplot[domain=16:20, samples=100, color=black!75, thin] ({x},{sin(pi*x*\p/(2*\s))*sin(pi*x*\p/(2*\s))});
				
				\addplot[thin] coordinates {(2,-3) (6,-3)};
				\addplot[domain=2:6, samples=100, color=black!75, thin] ({x},{sin(pi*(x-\s)*\p/(2*\s))*sin(pi*(x-\s)*\p/(2*\s))-3});
				
				
				\addplot[thin, name path=t1] coordinates {(10,-3) (14,-3)};
				\addplot[domain=10:14, samples=100, color=black!75, thin, name path=b1] ({x},{sin(pi*(x-\s)*\p/(2*\s))*sin(pi*(x-\s)*\p/(2*\s))-3});
				\addplot[red!50] fill between [of=b1 and t1];
				
				\addplot[thin] coordinates {(18,-3) (22,-3)};
				\addplot[domain=18:22, samples=100, color=black!75, thin, black] ({x},{sin(pi*(x-\s)*\p/(2*\s))*sin(pi*(x-\s)*\p/(2*\s))-3});
				
				\addplot[thin] coordinates {(4,-6) (8,-6)};
				\addplot[domain=4:8, samples=100, color=black!75, thin] ({x},{sin(pi*(x-2*\s)*\p/(2*\s))*sin(pi*(x-2*\s)*\p/(2*\s))-6});
				
				\addplot[thin] coordinates {(12,-6) (16,-6)};
				\addplot[domain=12:16, samples=100, color=black!75, thin] ({x},{sin(pi*(x-2*\s)*\p/(2*\s))*sin(pi*(x-2*\s)*\p/(2*\s))-6});
				
				\addplot[thin, name path=t2] coordinates {(15,-6) (16,-6)};
				\addplot[domain=15:16, samples=100, color=black!75, thin, name path=b2] ({x},{sin(pi*(x-2*\s)*\p/(2*\s))*sin(pi*(x-2*\s)*\p/(2*\s))-6});
				\addplot[red!50] fill between [of=b2 and t2];
				
				\addplot[thin] coordinates {(20,-6) (24,-6)};
				\addplot[domain=20:24, samples=100, color=black!75, thin] ({x},{sin(pi*(x-2*\s)*\p/(2*\s))*sin(pi*(x-2*\s)*\p/(2*\s))-6});
				
				\addplot[thin,color=blue,->] coordinates {(9,0+0.0) (9,-3-0.0)};
				\addplot[thin,color=blue,->] coordinates {(15,-3+0.0) (15,-6-0.0)};
				\addplot[thin,dashed,color=gray] coordinates {(8,0) (8,-6)};
				\addplot[thin,dashed,color=gray] coordinates {(10,0) (10,-6)};
			\end{axis}
		\end{tikzpicture}
		\caption{The \CR{shaded} area represents the left hand side of the inequality \eqref{eq:nottoclosezeroes} in Lemma \ref{lem:4to6maxray}, without considering the integrand factor $1/(\pi x)^2$. The values $\CB{\tau_k}$ and $\CB{\tau_{k+1}}$ are indicated by arrows.  An illustration of the right hand side is given in Figure \ref{fig:4to5notoclosepart2}. The dashed lines represent the range $(\xi-1+4/p, \xi+2/p)$ of possible values for $\tau_k$.}
		\label{fig:4to5notoclose}
		
		\centering
		\begin{tikzpicture}[scale=1.5]
			\def\p{5}
			\def\s{10}
			\begin{axis}[
				axis equal image,
				axis lines = none,
				trig format plots=rad]
				
				\node at (axis cs: -1,0) {$\scriptstyle k-1$};
				\node at (axis cs: -1,-3) {$\scriptstyle k$};
				\node at (axis cs: -1,-6) {$\scriptstyle k+1$};
				
				\addplot[thin] coordinates {(0,0) (4,0)};
				\addplot[domain=0:4, samples=100, color=black!75, thin] ({x},{sin(pi*x*\p/(2*\s))*sin(pi*x*\p/(2*\s))});
				
				\addplot[thin, name path=t0] coordinates {(8,0) (11,0)};
				\addplot[domain=8:11, samples=100, color=black!75, thin, name path=b0] ({x},{sin(pi*x*\p/(2*\s))*sin(pi*x*\p/(2*\s))});
				\addplot[red!50] fill between [of=b0 and t0];
				
				\addplot[thin] coordinates {(9,0) (12,0)};
				\addplot[domain=9:12, samples=100, color=black!75, thin] ({x},{sin(pi*x*\p/(2*\s))*sin(pi*x*\p/(2*\s))});
				
				\addplot[thin] coordinates {(16,0) (20,0)};
				\addplot[domain=16:20, samples=100, color=black!75, thin] ({x},{sin(pi*x*\p/(2*\s))*sin(pi*x*\p/(2*\s))});
				
				\addplot[thin] coordinates {(2,-3) (6,-3)};
				\addplot[domain=2:6, samples=100, color=black!75, thin] ({x},{sin(pi*(x-\s)*\p/(2*\s))*sin(pi*(x-\s)*\p/(2*\s))-3});
				
				\addplot[thin] coordinates {(10,-3) (11,-3)};
				\addplot[domain=10:11, samples=100, color=black!75, thin] ({x},{sin(pi*(x-\s)*\p/(2*\s))*sin(pi*(x-\s)*\p/(2*\s))-3});

				\addplot[thin, name path=t1] coordinates {(11,-3) (14,-3)};
				\addplot[domain=11:14, samples=100, color=black!75, thin, name path=b1] ({x},{sin(pi*(x-\s)*\p/(2*\s))*sin(pi*(x-\s)*\p/(2*\s))-3});
				\addplot[red!50] fill between [of=b1 and t1];
				
				\addplot[thin] coordinates {(18,-3) (22,-3)};
				\addplot[domain=18:22, samples=100, color=black!75, thin, black] ({x},{sin(pi*(x-\s)*\p/(2*\s))*sin(pi*(x-\s)*\p/(2*\s))-3});
				
				\addplot[thin] coordinates {(4,-6) (8,-6)};
				\addplot[domain=4:8, samples=100, color=black!75, thin] ({x},{sin(pi*(x-2*\s)*\p/(2*\s))*sin(pi*(x-2*\s)*\p/(2*\s))-6});
				
				\addplot[thin] coordinates {(12,-6) (16,-6)};
				\addplot[domain=12:16, samples=100, color=black!75, thin] ({x},{sin(pi*(x-2*\s)*\p/(2*\s))*sin(pi*(x-2*\s)*\p/(2*\s))-6});
				
				\addplot[thin, name path=t2] coordinates {(15,-6) (16,-6)};
				\addplot[domain=15:16, samples=100, color=black!75, thin, name path=b2] ({x},{sin(pi*(x-2*\s)*\p/(2*\s))*sin(pi*(x-2*\s)*\p/(2*\s))-6});
				
				\addplot[thin] coordinates {(20,-6) (24,-6)};
				\addplot[domain=20:24, samples=100, color=black!75, thin] ({x},{sin(pi*(x-2*\s)*\p/(2*\s))*sin(pi*(x-2*\s)*\p/(2*\s))-6});
				
				\addplot[thin,color=blue,->] coordinates {(11, 0.0) (11,-3)};
			\end{axis}
		\end{tikzpicture}
		\caption{An illustration of equation \eqref{eq:nottoclosezeroes} in Lemma \ref{lem:4to6maxray} with $p=5$. The \CR{shaded} area represents the right hand side of the inequality \eqref{eq:nottoclosezeroes} without considering the integrand factor $1/(\pi x)^2$. The arrow marks the value \CB{$k+3/p-1/2$}. An illustration of the left hand side is given in Figure \ref{fig:4to5notoclose}.} 
		\label{fig:4to5notoclosepart2}
	\end{figure}
	We are now ready to prove Theorem \ref{thm:E_p4to5weak}. 
	\begin{proof}[Proof of Theorem \ref{thm:E_p4to5weak}]
		Let $\lambda$ be the sequence in $T(\delta_1, \delta_2)$ given by $\lambda_0=0$ and $\lambda_n=n-1/2+3/p$. 
		Due to Lemma \ref{lem:4to6nottolagrezeros} it is enough to show that $E_p(\tau) \leq E_p(\lambda)$ for all sequences $\tau$ in $T(\delta_1, \delta_2)$ such that $\tau_n \leq n-1/2 + 3/p$. Fix such a sequence $\tau$. Since $\delta_1 \geq 1-2/p \geq 2/p$ it follows that 
		\begin{equation}\label{eq:Apfork0}
			A_p(\tau; 0)= \int_{0}^{2/p} \frac{\sin^2 \frac{p}{2} \pi x}{\pi^2 x^2}dx.
		\end{equation}
		Applying Lemma \ref{lem:4to6Epsumoverrays} and then equation \eqref{eq:Apfork0} combined with Lemma \ref{lem:4to6maxray} it follows that
		\begin{align*}
			E_p(\tau)&= \sum_{k=0}^{\infty} A_p(\tau; k) \\
			&\leq \int_{0}^{2/p} \frac{\sin^2 \frac{p}{2} \pi x}{\pi^2 x^2}dx + \sum_{k=0}^{\infty} \int_{k+4/p}^{k+1/2+3/p} \frac{\sin^2 	\frac{p}{2}\pi (x-k)}{\pi^2 x^2}dx\\ &+ \sum_{k=0}^{\infty} \int_{k+1/2+3/p}^{k+1+2/p} \frac{\sin^2 \frac{p}{2}\pi (x-k-1)}{\pi^2 x^2}dx\\
			&=E_p(\lambda).
		\end{align*}
		This concludes the proof.
	\end{proof}
	Recall that we seek to prove Theorem \ref{thm:new:E_p4to5},  which is a stronger version of Theorem \ref{thm:E_p4to5weak} for $4<p \leq 5$. As we weaken the lower bound on $\delta_1$ to $1/2$ (compared with $1-2/p$ in Theorem \ref{thm:E_p4to5weak}) we will need the following inequality. 
	
	\begin{lemma}\label{lem:techinq4to5}
		Let $4 < p \leq 5$. Let $\tau$ and $\gamma$ be sequences in $T(1/2, 1-2/p)$ such that $\tau_1=1/2$ and $\gamma_1=3/p+1/2$. Then 
		\[A_p(\tau; 0) + A_p(\tau; 1) < A_p(\gamma; 0)+A_p(\gamma; 1).\]
	\end{lemma}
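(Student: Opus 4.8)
The plan is to compute all the relevant pieces explicitly enough to reduce the claim to a single elementary integral inequality. The key observation is that, once $\tau_1$ and $\gamma_1$ are fixed, the separation $\delta_2=1-2/p$ makes $A_p(\tau;0)$, $A_p(\gamma;0)$ and $A_p(\gamma;1)$ independent of the tails of the sequences. Indeed, since $p>4$ forces $\gamma_1=\tfrac3p+\tfrac12=m_{4/p}\in(4/p,6/p)$ and $\gamma_2\ge\gamma_1+(1-2/p)=\tfrac32+\tfrac1p\ge2-\tfrac2p$, the only intervals of the $0$-th and $1$-st rays meeting the gates $(\gamma_n,\gamma_{n+1})$ are $(0,2/p)$, $(4/p,6/p)$ and $(1,1+2/p)$, so $A_p(\gamma;0)+A_p(\gamma;1)$ depends only on $\gamma_1$ and equals the leading ($n=0$) terms of the formula for $E_p(\lambda)$ in Theorem \ref{thm:new:E_p4to5}:
\[A_p(\gamma;0)+A_p(\gamma;1)=\int_0^{2/p}\frac{\sin^2\frac{p}{2}\pi x}{\pi^2x^2}\,dx+\int_{4/p}^{\frac3p+\frac12}\frac{\sin^2\frac{p}{2}\pi x}{\pi^2x^2}\,dx+\int_{\frac3p+\frac12}^{1+\frac2p}\frac{\sin^2\frac{p}{2}\pi(x-1)}{\pi^2x^2}\,dx.\]
For $\tau$, since $2/p\le1/2<1-2/p$ and $\tau_2\ge1/2+(1-2/p)$, the $0$-th ray of $\tau$ picks up $(0,2/p)$ entirely, picks up its level-$1$ interval $(1-4/p,1-2/p)$ only on $(1/2,1-2/p)$, and nothing higher, so likewise
\[A_p(\tau;0)=\int_0^{2/p}\frac{\sin^2\frac{p}{2}\pi x}{\pi^2x^2}\,dx+\int_{1/2}^{1-2/p}\frac{\sin^2\frac{p}{2}\pi(x-1)}{\pi^2x^2}\,dx.\]

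Next I would bound $A_p(\tau;1)$ from above, uniformly in the tail of $\tau$. The level-$0$ interval $(4/p,6/p)$ of the $1$-st ray lies entirely to the right of the gate $(0,1/2)$ (since $4/p>1/2$), so it contributes nothing, and it remains to estimate the contribution of the level-$\ge1$ intervals. I would split on $\tau_2$: if $\tau_2\ge1+2/p$ the level-$1$ interval is captured in full and the level-$3$ interval is empty (as then $\tau_3\ge2\ge3-6/p$), whereas if $\tau_2<1+2/p$ the combined level-$1$ and level-$2$ contribution is exactly the quantity $S(\tau_2)$ attached to $\xi=1$. Together with Lemma \ref{lem:4to5localmid} this gives, with $m_1=\tfrac32-\tfrac1p$,
\[A_p(\tau;1)\le S(m_1)=\int_1^{\frac32-\frac1p}\frac{\sin^2\frac{p}{2}\pi(x-1)}{\pi^2x^2}\,dx+\int_{\frac32-\frac1p}^{2-\frac2p}\frac{\sin^2\frac{p}{2}\pi(x-2)}{\pi^2x^2}\,dx.\]
The one delicate point is the range $3/2-2/p\le\tau_2<2-4/p$, where the $1$-st ray additionally meets its level-$3$ interval; there I would absorb this extra term into the simultaneous deficit $S(m_1)-S(\tau_2)$, using the pointwise bound $\sin^2\frac{p}{2}\pi(x-2)\le\sin^2\frac{p}{2}\pi(x-1)$ for $x\ge3/2-2/p$ (the two functions cross exactly at $3/2-2/p$) and the fact that $x\mapsto1/x^2$ is decreasing. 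This is the one place where the hypothesis $\delta_1=1/2$ is used sharply, through $\tau_2\ge3/2-2/p$.

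Combining the three displays and cancelling the common term $\int_0^{2/p}$, the lemma reduces to the elementary inequality
\begin{align*}
\int_{1/2}^{1-\frac2p}\frac{\sin^2\frac{p}{2}\pi(x-1)}{\pi^2x^2}\,dx&+\int_1^{\frac32-\frac1p}\frac{\sin^2\frac{p}{2}\pi(x-1)}{\pi^2x^2}\,dx+\int_{\frac32-\frac1p}^{2-\frac2p}\frac{\sin^2\frac{p}{2}\pi(x-2)}{\pi^2x^2}\,dx\\
&<\int_{4/p}^{\frac3p+\frac12}\frac{\sin^2\frac{p}{2}\pi x}{\pi^2x^2}\,dx+\int_{\frac3p+\frac12}^{1+\frac2p}\frac{\sin^2\frac{p}{2}\pi(x-1)}{\pi^2x^2}\,dx.
\end{align*}
Using the $4/p$-periodicity of $x\mapsto\sin^2\frac{p}{2}\pi(x-1)$ to transport $\int_{1/2}^{1-2/p}$ onto a subinterval of $(1,1+2/p)$, the right-hand side is $S(m_{4/p})$ attached to the level-$0$ interval $\xi=4/p$, and the claim becomes that this beats $S(m_1)$ (attached to $\xi=1$) plus a small residual piece. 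Since $4/p<1$, the right-hand integrals sit at smaller values of $x$ and hence carry a strictly larger weight $1/x^2$, which is what makes the inequality go through; finishing it is a direct computation with the same type of pointwise $\sin^2$-estimates and the decreasing weight already used in the proofs of Lemma \ref{lem:goleft4to6} and Lemma \ref{lem:4to6maxray}.

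The step I expect to be the main obstacle is exactly this last one — verifying the displayed elementary inequality uniformly over $4<p\le5$, together with the level-$3$ bookkeeping in the bound on $A_p(\tau;1)$. The two sides are reasonably close and several pieces must be tracked simultaneously, but the estimates involved are of the same flavour as, and notably less intricate than, the appendix calculations used for $3<p<4$.
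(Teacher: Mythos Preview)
Your reduction is exactly the paper's: you arrive at precisely the displayed inequality that the paper labels \eqref{eq:4to5techlemstat}, and your bound $A_p(\tau;1)\le S(m_1)$ with $m_1=3/2-1/p$ is what the paper summarises as ``as in the proof of Lemma~\ref{lem:4to6maxray} one can show that we need only consider $\tau_2=3/2-1/p$''. So the approach is the same, only you spell out the ray bookkeeping more explicitly.

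Two remarks. First, a small slip: the two functions $\sin^2\tfrac{p}{2}\pi(x-1)$ and $\sin^2\tfrac{p}{2}\pi(x-2)$ cross at $x=3/2-1/p=m_1$, not at $3/2-2/p$; this is exactly the midpoint that governs Lemma~\ref{lem:4to5localmid}, and the three–term absorption you sketch is the analogue of inequality \eqref{eq:nottoclosezeroes} in the proof of Lemma~\ref{lem:4to6maxray}. Second, the part you flag as ``the main obstacle'' is indeed where all the work sits in the paper: verifying \eqref{eq:4to5techlemstat} uniformly in $4<p\le5$ is not a one–line consequence of monotonicity of $1/x^2$. The paper cancels and shifts pieces to reach \eqref{eq:firstequiv}, then bounds each of the three resulting integrals separately (estimates \eqref{eq:12}--\eqref{eq:14}) and finishes with an explicit linear bound on $\tfrac{2}{\pi(p-4)}\sin\tfrac{p}{2}\pi$ that reduces everything to a degree–nine polynomial inequality \eqref{eq:4to5final}. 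Your heuristic ``the right-hand integrals sit at smaller $x$'' points in the right direction but is not by itself enough; you will need a quantitative argument of this kind to close the gap.
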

	
	\begin{proof}
		As in the proof of Lemma \ref{lem:4to6maxray} one can show that we need only consider $\tau_2=3/2-1/p$. 
		This means that we must verify the inequality		
		\begin{equation}\label{eq:4to5techlemstat}
			\begin{split}
				&\int_{4/p}^{1/2+3/p} \frac{\sin^2 \frac{p}{2} \pi x}{\pi^2 x^2}\,dx + \int_{1/2+3/p}^{1+2/p} \frac{\sin^2 \frac{p}{2} \pi (x-1)}{\pi^2 x^2}\,dx \\
				-&\int_{1/2}^{1-2/p} \frac{\sin^2 \frac{p}{2} \pi (x-1)}{\pi^2 x^2}\,dx - \int_{1}^{3/2-1/p} \frac{\sin^2 \frac{p}{2} \pi (x-1)}{\pi^2 x^2}\,dx\\ &- \int_{3/2-1/p}^{2-2/p} \frac{\sin^2 \frac{p}{2} \pi (x-2)}{\pi^2 x^2}\,dx > 0.				
			\end{split}
		\end{equation}	
		We observe that we have equality in \eqref{eq:4to5techlemstat} for $p=4$ so we may assume $p>4$. 
		Using that $x \mapsto 1/x$ is a decreasing function as well as Lemma \ref{lem:Brevig:lem6.5} we observe that it suffices to show
		\begin{equation}\label{eq:firstequiv}
			\begin{split}
				&\int_{7/p-1/2}^{1/2+3/p} \frac{\sin^2 \frac{p}{2} \pi x}{\pi^2 x^2} \, dx + \int_{1/2+4/p}^{1+2/p} \frac{\sin^2 \frac{p}{2} \pi (x-1)}{\pi^2 x^2}\,dx 
				\\ &-\int_{1/2}^{1-2/p} \frac{\sin^2 \frac{p}{2} \pi (x-1)}{\pi^2 x^2\,}dx - \int_{3/2-1/p}^{5/2-5/p} \frac{\sin^2 \frac{p}{2} \pi (x-2)}{\pi^2 x^2}\,dx \\ & - \int_{3/2}^{2-2/p} \frac{\sin^2 \frac{p}{2} \pi (x-2)}{\pi^2 x^2}\,dx > 0.
			\end{split}
		\end{equation}
		Then by substituting we see that this again is equivalent to
		\begin{equation*}\label{eq:11}
			\begin{split}
				&\int_{7/p-1/2}^{5/p}\sin^2{\frac{p}{2}\pi x}\left(\frac{1}{x^2}-\frac{1}{(x+2-8/p)^2}\right) \, dx \\ + &\int_{5/p}^{1/2+3/p}\sin^2{\frac{p}{2}\pi x}\left(\frac{1}{x^2}-\frac{1}{(x+2-8/p)^2}\right)\, dx  \\+ \int_{8/p-1/2}^{6/p}\sin^2{\frac{p}{2}\pi x}&\left(\frac{1}{(x+1-4/p)^2}-\frac{1}{(x+1-8/p)^2}-\frac{1}{(x+2-8/p)^2}\right) dx > 0.
			\end{split}
		\end{equation*}
		We estimate these three integrals by integrating the function $\sin^2((p/2) \pi x)$ and bounding the remaining integrand factors. We then obtain
		\begin{equation}\label{eq:12}
			\begin{split}
				&\int_{7/p-1/2}^{5/p}\sin^2{\frac{p}{2}\pi x}\left(\frac{1}{x^2}-\frac{1}{(x+2-8/p)^2}\right) \, dx\\ &\geq \frac{p^2}{2}\left(\frac{1}{25}-\frac{1}{(2p-3)^2}\right)\left(\frac{1}{2}-\frac{2}{p}+\frac{\sin{\frac{p}{2}\pi}}{p \pi} \right),
			\end{split}
		\end{equation}
		
		\begin{equation}\label{eq:13}
			\begin{split}
				&\int_{5/p}^{1/2+3/p}\sin^2{\frac{p}{2}\pi x}\left(\frac{1}{x^2}-\frac{1}{(x+2-8/p)^2}\right)\,  dx\\ &\geq
				\frac{p^2}{2}\left(\frac{4}{(p+6)^2}-\frac{4}{25(p-2)^2}\right)\left(\frac{1}{2}-\frac{2}{p}+\frac{\sin{\frac{p}{2}\pi}}{p \pi} \right),
			\end{split}
		\end{equation}
		and	
		\begin{equation}\label{eq:14}
			\begin{split}
				\int_{8/p-1/2}^{6/p}\sin^2{\frac{p}{2}\pi x}&\left(\frac{1}{(x+1-4/p)^2}-\frac{1}{(x+2-8/p)^2}-\frac{1}{(x+1-8/p)^2}\right)dx  \\ & \leq
				\frac{p^2}{2}\left(\frac{4}{(8+p)^2}-\frac{40}{9p^2}\right)\left(\frac{1}{2}-\frac{2}{p}-\frac{\sin{\frac{p}{2}\pi}}{p \pi} \right).
			\end{split}
		\end{equation}
		Next we combine the above estimates \eqref{eq:12}, \eqref{eq:13} and \eqref{eq:14} and multiply by \newline $4/(p(p-4))$ to see that \eqref{eq:firstequiv} follows if 
		\[A(p)+B(p)+\frac{2}{\pi} \frac{\sin \frac{p}{2}\pi}{p-4}(A(p)-B(p)) > 0,\]
		where 
		\[A(p)=\frac{1}{25}-\frac{1}{(2p-3)^2}+\frac{4}{(p+6)^2}-\frac{4}{25(p-2)^2},\]
		and 
		\[B(p)=\frac{4}{(8+p)^2}-\frac{40}{9p^2}.\]
		Using that 
		\[\frac{2}{\pi(p-4)}\sin\left( \frac{p}{2}\pi\right)\geq p\left(\frac{2}{\pi}-1\right)+5-\frac{8}{\pi},\]we observe that it suffices to show
		\begin{equation}\label{eq:0956}
			A(p)\left(p\left(\frac{2}{\pi}-1\right)+6-\frac{8}{\pi}\right) > B(p)(p-4)\left(1-\frac{2}{\pi}\right).
		\end{equation}
		Using that 
		\[\left(p\left(\frac{2}{\pi}-1\right)+6-\frac{8}{\pi}\right)/\left(1-\frac{2}{\pi}\right) \geq 4.5,\]
		and multiplying by $4/(p-4)$ we see that \eqref{eq:0956} holds if
		\begin{equation}\label{eq:4to5final}
			\frac{p(p^4+9p^3+96p^2-456p+456)}{25(2p-3)^3(p+6)^2(p-2)^2} -\frac{p^2+160p+640}{9(8+p)^2p^2} > 0.
		\end{equation}
		Rewriting this as a polynomial inequality of degree $9$ one can indeed verify \eqref{eq:4to5final}.
	\end{proof}
	
	We conclude the section by proving Theorem \ref{thm:new:E_p4to5}.
	\begin{proof}[Proof of Theorem \ref{thm:new:E_p4to5}]
		Clearly it is enough to prove Theorem \ref{thm:new:E_p4to5} for $\delta_1=1/2$. If $\tau_1 \geq 1-2/p$ we are done by Theorem \ref{thm:E_p4to5weak}, so assume $\tau_1 < 1-2/p$. Then for $E_p(\tau)$ to be maximal we let $\tau_1=1/2$. Recall that $\lambda$ is the sequence in $T(\delta_1, \delta_2)$ given by $\lambda_n=n-1/2+3/p$. From Lemma \ref{lem:techinq4to5} it follows that 
		\[A_p(\tau; 0) + A_p(\tau; 1) \leq A_p(\lambda;0 )+A_p(\lambda; 1).\] Further by Lemma \ref{lem:4to6maxray} it follows that
		\[A_p(\tau;k) \leq A_p(\lambda; k)\]
		for all $k \geq 2$. That is
		\[E_p(\tau)=\sum_{k=0}^{\infty}A_p(\tau; k) \leq \sum_{k=0}^{\infty}A_p(\lambda; k) = E_p(\gamma), \]
		and equality is attained if and only if $p=4$.
	\end{proof}
	
	\appendix
	\section{Proof of Lemma \ref{lem:postponeactint} and Lemma \ref{lem:pgeq36spes}.}
	In this appendix we prove Lemma \ref{lem:postponeactint} and Lemma \ref{lem:pgeq36spes}.
	We will split the proof of Lemma \ref{lem:postponeactint} into three subcases for different values of $y$. Certain estimates will be used frequently in several subcases, and we thus begin by stating these results separately. 
	
	\begin{lemma}\label{lem:decreasingfctest}
		Let $f(x)$ be a non-negative decreasing function on an interval $[a,b]$. Then 
		\[ \int_{a}^{b} f(x) \sin^2 \frac{p}{2}\pi x \, dx \geq \frac{f(b)}{2}\left(b-a+\frac{\sin p \pi a-\sin p \pi b}{p \pi}\right) \]
		and 
		\[\int_{a}^{b} f(x) \sin^2 \frac{p}{2}\pi x \, dx \leq \frac{f(a)}{2}\left(b-a+\frac{\sin p \pi a-\sin p \pi b}{p \pi}\right). \]
	\end{lemma}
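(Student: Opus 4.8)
The statement is an elementary sandwiching argument once one identifies the quantity in parentheses. The plan is first to compute the exact value of $\int_a^b \sin^2\frac{p}{2}\pi x\,dx$, and then to bound $f$ above and below by the constants $f(a)$ and $f(b)$, which is legitimate precisely because $f$ is monotone and the weight $\sin^2\frac{p}{2}\pi x$ is non-negative.

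\textbf{Step 1 (the antiderivative).} Using the identity $\sin^2\frac{p}{2}\pi x = \tfrac{1}{2}(1-\cos p\pi x)$ and integrating termwise, I get
\[
\int_a^b \sin^2\frac{p}{2}\pi x \, dx = \frac{1}{2}\left(b-a - \frac{\sin p\pi b - \sin p\pi a}{p\pi}\right) = \frac{1}{2}\left(b-a+\frac{\sin p\pi a - \sin p\pi b}{p\pi}\right).
\]
Note this quantity is automatically non-negative, being the integral of a non-negative function; this is the factor multiplying $f(a)$ and $f(b)$ in the statement.

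\textbf{Step 2 (monotone sandwiching).} Since $f$ is decreasing on $[a,b]$ we have $f(b) \le f(x) \le f(a)$ for all $x\in[a,b]$, and since $\sin^2\frac{p}{2}\pi x \ge 0$ we may multiply these inequalities by the weight without reversing them:
\[
f(b)\,\sin^2\frac{p}{2}\pi x \;\le\; f(x)\,\sin^2\frac{p}{2}\pi x \;\le\; f(a)\,\sin^2\frac{p}{2}\pi x.
\]
Integrating over $[a,b]$ and substituting the value from Step 1 for $\int_a^b \sin^2\frac{p}{2}\pi x\,dx$ yields both displayed inequalities simultaneously. (The hypothesis $f\ge 0$ is only needed to know that these constants $f(a),f(b)$ have the expected sign, so that the bounds are useful downstream; the inequalities themselves hold from monotonicity alone.)

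\textbf{Main obstacle.} There is essentially none: the only point requiring care is the bookkeeping with the sign in the trigonometric antiderivative, so that the expression matches the one written in the statement, namely $b-a+\frac{\sin p\pi a - \sin p\pi b}{p\pi}$ rather than its negative. Everything else is the standard "bound a monotone factor by its endpoint values against a non-negative weight" estimate.
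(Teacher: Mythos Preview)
Your proof is correct and follows exactly the same approach as the paper: bound $f(x)$ by its endpoint values $f(b)\le f(x)\le f(a)$ against the non-negative weight $\sin^2\frac{p}{2}\pi x$, then evaluate $\int_a^b \sin^2\frac{p}{2}\pi x\,dx$ explicitly. The paper's proof is a one-line sketch of precisely this argument.
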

	
	\begin{proof}
		This is easily verified by using the inequalities $f(b) \leq f(x) \leq f(a)$ for all $x$ in $[a,b]$ and then preforming the integral \[ \int_{a}^{b}\sin^2 \frac{p}{2}\pi x \, dx. \qedhere \] 
	\end{proof}
	The following lemma is also straightforward but we state it for ease of reference. 
	\begin{lemma}\label{lem:sin2est}
		Let $f$ be a non-negative integrable function on the interval $[a,b]$, and let $F$ be the antiderivative of $f$. Assume $c_1(t) \leq \sin^2 (t x) \leq c_2(t)$ for all $x$ in $[a,b]$. Then 
		\[c_1(t)(F(b)-F(a)) \leq \int_{a}^{b} \sin^2 (tx) f(x) \dx \leq c_2(t)(F(b)-F(a)).\]
	\end{lemma}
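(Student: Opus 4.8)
The plan is to derive both inequalities by combining the monotonicity of the integral with the fundamental theorem of calculus; no genuine difficulty arises. First I would fix the parameter $t$ and observe that because $f$ is non-negative on $[a,b]$, the pointwise hypothesis $c_1(t) \leq \sin^2(tx) \leq c_2(t)$ may be multiplied through by $f(x)$ without reversing any inequality, yielding
\[
c_1(t)\,f(x) \leq \sin^2(tx)\,f(x) \leq c_2(t)\,f(x)
\]
for every $x$ in $[a,b]$.

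Next I would integrate this chain over $[a,b]$. All three functions appearing are integrable on $[a,b]$: the outer two because $f$ is, and the middle one because of the squeeze $0 \leq \sin^2(tx)\,f(x) \leq f(x)$. Monotonicity of the integral then gives
\[
c_1(t)\int_{a}^{b} f(x)\dx \leq \int_{a}^{b}\sin^2(tx)\,f(x)\dx \leq c_2(t)\int_{a}^{b} f(x)\dx.
\]
Finally, since $F$ is an antiderivative of $f$, the fundamental theorem of calculus yields $\int_{a}^{b} f(x)\dx = F(b)-F(a)$; substituting this into the previous display produces exactly the claimed bounds.

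The only point requiring a moment's care—and it is not really an obstacle—is the integrability of $\sin^2(tx)\,f(x)$, which as noted follows immediately from $0 \leq \sin^2(tx)\,f(x) \leq f(x)$ together with the assumed integrability of $f$. Everything else is routine monotonicity of the Lebesgue (or Riemann) integral.
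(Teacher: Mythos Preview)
Your proof is correct and is exactly the straightforward argument one would expect. The paper in fact does not give a proof of this lemma at all; it simply states it ``for ease of reference'' and calls it straightforward, so your argument via monotonicity of the integral and the fundamental theorem of calculus is precisely what is intended.
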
 
	Next we state a specific estimate that will come in handy later. 
	\begin{lemma}\label{lem:destforsin}
		Let $3 \leq p \leq 4$. Then 
		\[2\sin\left(\frac{2 p \pi}{3}\right)\cos\left(\frac{p \pi}{3}\right) +\frac{7}{50} p \pi \geq 0. \]
	\end{lemma}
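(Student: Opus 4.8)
The plan is to first eliminate the product of trigonometric factors. Applying the identity $2\sin A\cos B = \sin(A+B)+\sin(A-B)$ with $A=\tfrac{2p\pi}{3}$ and $B=\tfrac{p\pi}{3}$ gives
\[
2\sin\left(\frac{2p\pi}{3}\right)\cos\left(\frac{p\pi}{3}\right)=\sin(p\pi)+\sin\left(\frac{p\pi}{3}\right),
\]
so it suffices to prove that $g(p):=\sin(p\pi)+\sin(\tfrac{p\pi}{3})+\tfrac{7}{50}p\pi\geq 0$ on $[3,4]$. The naive estimate $\sin(p\pi)\geq -1$, $\sin(\tfrac{p\pi}{3})\geq -\tfrac{\sqrt3}{2}$ is not good enough — it loses by roughly $0.3$ — because $g$ is genuinely small (about $0.03$) near $p\approx 3.55$. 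I would therefore split the interval at $p=3.5$, which is where $\sin(p\pi)$ attains its minimum $-1$.

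On $[3,3.5]$, substitute $p=3+s$ with $s\in[0,\tfrac12]$ and use $\sin(3\pi+s\pi)=-\sin(s\pi)$ and $\sin(\pi+\tfrac{s\pi}{3})=-\sin(\tfrac{s\pi}{3})$ to write $g(p)=\psi(s)$ with $\psi(s)=\tfrac{7}{50}(3+s)\pi-\sin(s\pi)-\sin(\tfrac{s\pi}{3})$. Since $\psi'(s)=\tfrac{7\pi}{50}-\pi\cos(s\pi)-\tfrac{\pi}{3}\cos(\tfrac{s\pi}{3})$ and, for $s\in[0,\tfrac12]$, $\cos(s\pi)\geq 0$ and $\cos(\tfrac{s\pi}{3})\geq\cos\tfrac{\pi}{6}=\tfrac{\sqrt3}{2}$, we get $\psi'(s)\leq\tfrac{7\pi}{50}-\tfrac{\pi\sqrt3}{6}<0$. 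Hence $\psi$ is decreasing, so $g(p)=\psi(s)\geq\psi(\tfrac12)=\tfrac{49\pi}{100}-\tfrac32>0$, the last inequality because $\pi>\tfrac{150}{49}$.

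On $[3.5,4]$, substitute $p=3.5+q$ with $q\in[0,\tfrac12]$ and use $\sin(p\pi)=-\cos(q\pi)$ together with $\sin(\tfrac{p\pi}{3})=-\cos(\tfrac{(1-q)\pi}{3})$ (expand $\sin(\tfrac{7\pi}{6}+\tfrac{q\pi}{3})$). This expression for $g$ has an interior minimum, so crude bounds fail and I would instead bound the two cosine terms from above sharply: $\cos(q\pi)\leq 1-\tfrac{(q\pi)^2}{2}+\tfrac{(q\pi)^4}{24}$ (the elementary three-term Taylor upper bound, valid for all real arguments), and $\cos(\tfrac{(1-q)\pi}{3})\leq\tfrac12+\tfrac{\sqrt3}{6}q\pi$ (the tangent line to $\cos$ at $\tfrac{\pi}{3}$, an upper bound since $\tfrac{(1-q)\pi}{3}\in[\tfrac{\pi}{6},\tfrac{\pi}{3}]$, where $\cos$ is concave). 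Writing $u=q\pi\in[0,\tfrac{\pi}{2}]$, this yields $g(p)\geq c_0-c_1u+\tfrac{u^2}{2}-\tfrac{u^4}{24}$ with $c_0=\tfrac{49\pi}{100}-\tfrac32>0$ and $c_1=\tfrac{\sqrt3}{6}-\tfrac{7}{50}>0$. Using $\tfrac{u^2}{2}-\tfrac{u^4}{24}=\tfrac{u^2}{2}(1-\tfrac{u^2}{12})\geq\tfrac{u^2}{2}(1-\tfrac{\pi^2}{48})$ for $u\in[0,\tfrac{\pi}{2}]$, this becomes a genuine quadratic in $u$ with positive leading coefficient $\tfrac12(1-\tfrac{\pi^2}{48})$, whose discriminant $c_1^2-2(1-\tfrac{\pi^2}{48})c_0$ is negative by a comparison of explicit constants (crude rational bounds on $\pi$ and $\sqrt3$ leave a factor of roughly three to spare: $c_1^2<0.023<0.062<2(1-\tfrac{\pi^2}{48})c_0$). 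Hence $g(p)\geq 0$, which completes the proof.

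The delicate point is the estimate on $[3.5,4]$: the inequality is nearly tight there, so one cannot afford the crude $\sin\geq -1$ bounds and must control the loss in each term near the minimizer. The combination chosen — a Taylor upper bound for one cosine, a tangent-line upper bound for the other, and the factorization $\tfrac{u^2}{2}(1-\tfrac{u^2}{12})$ to dispose of the quartic term without ruining positivity — is arranged precisely so that everything collapses to a single quadratic-discriminant check.
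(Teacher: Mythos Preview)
Your proof is correct and fully rigorous. The product-to-sum reduction, the monotonicity argument on $[3,3.5]$, and the Taylor/tangent-line bounds on $[3.5,4]$ leading to a negative-discriminant quadratic all check out; in particular the identity $\sin(\tfrac{p\pi}{3})=-\cos(\tfrac{(1-q)\pi}{3})$ and the concavity/tangent-line bound for $\cos$ on $[\tfrac{\pi}{6},\tfrac{\pi}{3}]$ are correct, and the numerical margin in the discriminant comparison is comfortable.

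The paper takes a different and much terser route: it simply asserts that the derivative of the original expression has a unique zero $p_0$ in $[3,4]$ and that the value at $p_0$ can be estimated to be nonnegative, leaving both the uniqueness of the critical point and the final estimate to the reader. Your argument is more self-contained: by rewriting as $g(p)=\sin(p\pi)+\sin(\tfrac{p\pi}{3})+\tfrac{7}{50}p\pi$ and splitting at $p=3.5$, you avoid locating the minimizer explicitly and instead reduce everything to explicit elementary inequalities (a monotonicity check on one half, a discriminant check on the other). The paper's approach is shorter to state but requires the reader to supply the calculus details; yours is longer but leaves nothing implicit, which is valuable given that the inequality is genuinely tight (margin $\approx 0.03$ near $p\approx 3.55$).
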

	
	\begin{proof} 
		By studying the derivative of this expression we see that it has precisely one minimal point $p_0$ for $3 \leq p \leq 4$. The claim follows from estimating the value attained in $p_0$.
	\end{proof}
	
	We will now prove Lemma \ref{lem:postponeactint}. We start by observing that $\gamma$ is in $T_2$ since $\tau$ is. 
	Thus what needs to be shown is \eqref{eq:appendixfinal}. We split the proof into three subcases depending on the value of $y$.
	\subsection{The case \texorpdfstring{$1/2-1/p \leq y \leq 1/3$}{1/2-1/p≤y≤1/3}}
	We first consider the smallest possible values for $y$. Assume $3<p<4$, let $1/2-1/p \leq y \leq 1/3$ and let $(\xi, \xi+2/p)$ be an interval at level $k$ (that is $\xi=k-(4/p)j$) such that $\xi \geq 1$. 
	We define the following integrals.
	\begin{align*}
		I_1 &=\max\left( \int_{\xi}^{\xi+y-4/3+4/p} \sin^2{\frac{p}{2}\pi (x-k)} \left(\frac{1}{(x+1)^2}-\frac{1}{(x+3-4/p)^2}\right)\, dx, 0\right),\\
		I_2 &= \int_{\xi+y-4/3+4/p}^{\xi+y} \sin^2{\frac{p}{2}\pi (x-k)} \left(\frac{1}{(x+2-4/p)^2}-\frac{1}{(x+1)^2}\right)\, dx,\\
		I_{3_1} &= \int_{\xi+y}^{\xi+y+2/p-1/2} \sin^2{\frac{p}{2}\pi (x-k)} \left(\frac{1}{x^2}-\frac{1}{(x+2-4/p)^2}\right)\, dx,\\
		I_{3_2} &= \int_{\xi+y+2/p-1/2}^{\xi+y+4/p-1} \sin^2{\frac{p}{2}\pi (x-k)} \left(\frac{1}{x^2}-\frac{1}{(x+2-4/p)^2}\right)\, dx,\\
		I_4 &= \int_{\xi+y+4/p-1}^{\xi+2/p} \sin^2{\frac{p}{2}\pi (x-k)} \left(\frac{1}{(x+1-4/p)^2}-\frac{1}{x^2}\right)\, dx.
	\end{align*}
	\begin{proof}[Proof of inequality \eqref{eq:appendixfinal} for $1/2-1/p \leq y \leq 1/3$.] One can show that proving \eqref{eq:appendixfinal} in the range $1/2-1/p \leq y \leq 1/3$ is equivalent to showing $I_1-I_2+I_{3_1}+I_{3_2}-I_4 > 0$. In particular, since $I_1 \geq 0$ it suffices to show
		\begin{equation}\label{eq:case1equiv}
			I_{3_1}+I_{3_2}-I_2-I_4 > 0.
		\end{equation}	
		We start by showing that $I_{3_2} > I_{4}$. Since $y \geq 1/2-1/p$ it follows that $\sin^2\frac{p}{2} \pi(x-k)$ is decreasing for $x$ in $(\xi+y+2/p-1/2, \xi+2/p)$. Thus by Lemma \ref{lem:sin2est} it suffices to show 
		\begin{equation*}
			\int_{\xi+y+4/p-1}^{\xi+2/p}\left( \frac{1}{(x+1-4/p)^2}-\frac{1}{x^2} \right)dx \leq
			\int_{\xi+y+2/p-1/2}^{\xi+y+4/p-1} \left( \frac{1}{x^2}-\frac{1}{(x+2-4/p)^2}\right)dx.
		\end{equation*}
		Evaluating these integrals, we see that this is true if $f(\xi, p, y) > 0$, where
		\[\begin{split}
			f(\xi,p,y)&=\frac{1}{\xi+y+1}-\frac{1}{\xi+y+3/2-2/p}+\frac{1}{\xi+y+2/p-1/2}\\&-\frac{1}{\xi+y}-\frac{1}{\xi+2/p}+\frac{1}{\xi+1-2/p}.
		\end{split}\]
		As the function $f$ is increasing in $y$, we may insert $y=1/2-1/p$ to obtain
		\[\begin{split}
			&f\left(\xi, p, \frac{1}{2}-\frac{1}{p}\right)\\&=\frac{p(p-2)(4-p)(4\xi^3p^3+5\xi^2p^3-10\xi^2p^2-10\xi p^2+8\xi p-6p^2+11p-2)}{(2\xi p+3p-2)(\xi p+2p-3)(\xi p+1)(2\xi p+p-2)(\xi p+2)(\xi p+p-2)}.
		\end{split}\]
		All factors in the denominator are positive, so $f(\xi,p,1/2-1/p)>0$ if
		\begin{equation}\label{eq:300}
			4\xi^3p^3+5\xi^2p^3-10\xi^2p^2-10\xi p^2+8\xi p-6p^2+11p-2 > 0.
		\end{equation}
		It is easy to check that
		\[p^2(4\xi^3p+5\xi ^2p-10\xi ^2-10\xi -6) > 0 \quad \text{and} \quad 8\xi p+11p-2 >0,\]
		so \eqref{eq:300} holds and this concludes the proof of the inequality $I_{3_2} > I_{4}$.
		
		We proceed with the second part of inequality \eqref{eq:case1equiv}, namely showing $I_{3_1} \geq I_{2}$. By Lemma \ref{lem:decreasingfctest} it suffices to show
		\begin{equation}\label{eq:1521}
			\begin{split}
				&\left(\frac{1}{(\xi+y+2/p-1/2)^2}-\frac{1}{(\xi+y+3/2-2/p)^2}\right)\\ &\times\left(\frac{2}{p}-\frac{1}{2}+\frac{2 \sin (p \pi/4) \cos(p \pi (y-1/4))}{\pi p}\right) \\
				+&\left(\frac{1}{(\xi+y+2/3)^2}-\frac{1}{(\xi+y-1/3+4/p)^2}\right) \\ &\times \left(\frac{4}{p}-\frac{4}{3}+\frac{2 \sin(2 p \pi /3)\cos(p \pi (y-2/3))}{\pi p}\right) > 0.
			\end{split}
		\end{equation}
		We observe that $\sin(2 p \pi /3)\cos(p \pi (y-2/3))$ is decreasing in $y$, so we can choose $y=1/3$ here. Applying Lemma \ref{lem:destforsin} and observing that \[2 \sin (p \pi/4) \cos(p \pi (y-1/4)) \geq 0\] we get that \eqref{eq:1521} holds if
		\begin{equation}\label{eq:31}
			\begin{split}
				&\left(\frac{1}{(\xi+y+2/p-1/2)^2}-\frac{1}{(\xi+y+3/2-2/p)^2}\right)\left(\frac{2}{p}-\frac{1}{2}\right) \\
				+&\left(\frac{1}{(\xi+y+2/3)^2}-\frac{1}{(\xi+y-1/3+4/p)^2}\right) \left(\frac{4}{p}-1-\frac{71}{150}\right) > 0.
			\end{split}
		\end{equation}
		By elementary operations and the substitution $x=\xi+y$ we see that \eqref{eq:31} is equivalent to
		\begin{equation*}
			\frac{32p^3(2x+1)(p-2)(4-p)}{(2px - p + 4)^2(2px + 3p - 4)^2 2p}+\frac{27(4-p)(6px+p+12)}{(3x + 2)^2 (3px - p + 12)^2}\left(\frac{4}{p}-1-\frac{71}{150}\right) > 0,
		\end{equation*} 
		for $x \geq 3/2-1/p$.
		This is in turn equivalent to 
		\begin{equation*}
			\begin{split}
				&16p^3(2x+1)(p-2)(3x + 2)^2(3px - p + 12)^2\\
				& > 27(6px+p+12)\left(p+\frac{71}{150}p-4\right)(2px-p+4)^2(2px+3p-4)^2,
			\end{split}
		\end{equation*}
		which by a rearrangement of terms can be expressed as 
		\begin{equation*}
			\begin{split}
				&(p-2)\left(px+\frac{p}{2}\right)\left(px+\frac{2}{3}p\right)^2\left(px-\frac{p}{3}+4\right)^2\\ &> \frac{221}{150}\left(p-\frac{600}{221}\right)\left(px-\frac{p}{2}+2\right)^2\left(px+\frac{p}{6}+2\right)\left(px+\frac{3}{2}p-2\right)^2.
			\end{split}
		\end{equation*}
		Observing that
		\[p-2 > \frac{221}{150}\left(p-\frac{600}{221}\right), \quad  px+\frac{p}{2}> px-\frac{p}{2}+2  \quad \text{and} \quad px-\frac{p}{3}+4 > px+\frac{p}{6}+2,\]
		it follows that we need only show
		\begin{equation*}
			\left(px+\frac{2}{3}p\right)^2\left(px-\frac{p}{3}+4\right) > \left(px-\frac{p}{2}+2\right)\left(px+\frac{3}{2}p-2\right)^2.
		\end{equation*}
		This is equivalent to
		\[\left(-\frac{3}{2}p+6\right)p^2x^2+\left(-\frac{3}{4}p^2+\frac{10}{3}p+4\right)px+\frac{211}{216}p^3-\frac{103}{18}p^2+14p-8 > 0,\]
		which holds since the polynomial in $x$ has only positive coefficients. This completes the proof that $I_{3_1} \geq I_2$, and thus \eqref{eq:appendixfinal} holds for $1/2-1/p \leq y \leq 1/3$.
	\end{proof}
	\subsection{The case \texorpdfstring{$1-2/p \leq y<y_{max}$}{1-2/p≤y<{ymax}}}	
	Recall that $y_{max}=5/6-1/p$ for $p \leq 3.6$ and $y_{max}=2/p$ for $p \geq 3.6$. Let $3<p<4$, $1-2/p \leq y<y_{max}$ and let $(\xi, \xi+2/p)$ be an interval at level $k$ such that $\xi \geq 1$. We then denote
	\begin{align*}
		I_1 &= \int_\xi^{\xi+y-1/3} \sin^2{\frac{p}{2}\pi (x-k)} \left(\frac{1}{(x+3-4/p)^2}-\frac{1}{(x+2)^2}\right) dx,\\
		I_2 &= \int_{\xi+y-1/3}^{\xi+y+4/p-4/3} \sin^2{\frac{p}{2} \pi (x-k)} \left(\frac{1}{(x+1)^2} - \frac{1}{x+3-4/p}\right)dx,\\
		I_3 &= \int_{\xi+y+4/p-4/3}^{\xi+y} \sin^2{\frac{p}{2}\pi(x-k)}\left(\frac{1}{(x+2-4/p)^2}-\frac{1}{(x+1)^2}\right)dx,\\
		I_4&=\int_{\xi+y}^{\xi+2/p} \sin^2{\frac{p}{2} \pi (x-k)} \left(\frac{1}{x^2}-\frac{1}{(x+2-4/p)^2}\right)dx.
	\end{align*}
	
	\begin{proof}[Proof of inequality \eqref{eq:appendixfinal} for $1-2/p \leq y < y_{max}$.]
		One can show that proving \eqref{eq:appendixfinal} for $1-2/p \leq y<y_{max}$ is equivalent to showing $I_2+I_4-I_1-I_3 > 0$. 
		In particular since $I_4 \geq 0$ it suffices to show 	$I_2-I_1-I_3 > 0.$
		Using Lemma \ref{lem:decreasingfctest} and observing that $\sin p \pi (y-1/3) \geq 0$ it follows that
		\begin{equation}\label{eq:case3I1}
			I_1 \leq \left(\frac{1}{(\xi+3-4/p)^2}-\frac{1}{(\xi+2)^2}\right)\left(y-\frac{1}{3}\right).
		\end{equation}
		Next by Lemma \ref{lem:decreasingfctest} and the fact that $-\sin p \pi (y-4/3)+\sin p \pi (y-1/3)\geq 0$ it follows that
		\begin{equation}\label{eq:case2I2}
			I_2 \geq \left(\frac{1}{(\xi+y-1/3+4/p)^2}-\frac{1}{(\xi+y+5/3)^2}\right)\left(\frac{4}{p}-1\right).
		\end{equation}
		Further we see that
		\[ \begin{split}
			\sin  p \pi \left(y-\frac{4}{3}\right)-\sin (p \pi y) &=-2\sin \frac{2 p \pi}{3} \cos \frac{p}{2} \pi \left(2y-\frac{4}{3}\right) \\ &\leq -2 \sin \frac{2 p \pi}{3} \cos \frac{p \pi}{3}  \leq \frac{7}{50} p \pi,
		\end{split}\]
		where the last inequality follows from Lemma \ref{lem:destforsin}. Thus by Lemma \ref{lem:decreasingfctest} we get 
		\begin{equation}\label{eq:case3I3}
			I_3 \leq \left(\frac{1}{(\xi+y+2/3)^2}-\frac{1}{(\xi+y-1/3+4/p)^2}\right) \left(\frac{4}{3}- \frac{4}{p}
			+\frac{7}{50}\right).
		\end{equation}
		Combining \eqref{eq:case3I1}, \eqref{eq:case2I2} and \eqref{eq:case3I3} we see that it suffices to show $F(\xi, p, y) \geq 0$, where 
		\begin{align*}
			F(\xi,p,y)=&\left(\frac{1}{(\xi+y+2/3)^2}-\frac{1}{(\xi+y+5/3)^2}\right)\left(\frac{4}{p}-1\right) \\
			-&\left(\frac{1}{(\xi+3-4/p)^2}-\frac{1}{(\xi+2)^2}\right)\left(y-\frac{1}{3}\right)\\
			-&\frac{71}{150} \left(\frac{1}{(\xi+y+2/3)^2}-\frac{1}{(\xi+y-1/3+4/p)^2}\right).
		\end{align*}
		Note that $F(\xi,4,y)=0$, so we are done if we can show that $F(\xi,p,y)$ is decreasing in $p$ for $3 < p < 4$. In particular since $1/3 \leq y<5/9$ for all $p$ we are going to show that $F(\xi,p,y)$ is decreasing as a function of $p$ for all fixed $1/3 \leq y<5/9$ and $\xi \geq 1$. Differentiating $F$ with respect to $p$, we get 
		\begin{align*}
			\frac{d}{dp}F(\xi,p,y)&=\frac{4}{p^2}\bigg(-\frac{1}{(\xi+y+2/3)^2}+\frac{1}{(\xi+y+5/3)^2}+(y-1/3)\frac{2}{(\xi+3-4/p)^3}\nonumber \\&+\frac{71}{75} \frac{1}{(\xi+y-1/3+4/p)^3}\bigg).
		\end{align*}
		Thus it will be enough to show $G(\xi,p,y) < 0$, where
		\[\begin{split}
			G(\xi,p,y)&=\frac{p^2}{4}\frac{d}{dp}F(\xi,p,y).
		\end{split}\]
		Differentiating $G$ with respect to $y$ and then using that $y \geq 1/3$ we see that
		\begin{align*}
			\frac{d}{dy}G(\xi,p,y)&=\frac{2}{(\xi+y+2/3)^3}-\frac{2}{(\xi+y+5/3)^3}+\frac{2}{(\xi+3-4/p)^3}\\&-\frac{71}{25} \frac{1}{(\xi+y-1/3+4/p)^4}\\
			&\geq \frac{2}{(\xi+y+2/3)^3}-\frac{71}{25} \frac{1}{(\xi+y-1/3+4/p)^4}\\
			&\geq \frac{2}{(\xi+y+2/3)^3}-\frac{71}{25} \frac{1}{(\xi+y+2/3)^4}\\
			&=\frac{2}{(\xi+y+2/3)^3}\left(2-\frac{71}{25} \frac{1}{(\xi+y+2/3)}\right) > 0,
		\end{align*}
		where the second inequality follows since the expression is decreasing in $p$. Thus since $G(\xi, p, y)$ is increasing for $1/3 \leq y \leq 5/9$ we aim to show $G(\xi,p,5/9) < 0$. Differentiating with respect to $p$ we get
		\begin{align*}
			\frac{d}{dp}G(\xi,p,5/9)=\frac{4}{p^2}\left(-\frac{4}{3}\frac{1}{(\xi+3-4/p)^4}+\frac{71}{25}\frac{1}{(\xi+2/9+4/p)^4}\right)>0.
		\end{align*}
		That is $G(\xi, p,5/9)$ is increasing in $p$, so the proof is complete if 
		\begin{equation}\label{eq:1434}
			\frac{1}{(\xi+20/9)^2}-\frac{1}{(\xi+11/9)^2}+\frac{4}{9}\frac{1}{(\xi+2)^3}+\frac{71}{75}\frac{1}{(\xi+11/9)^3}<0.
		\end{equation}
		By writing this as a polynomial equation one can easily check that \eqref{eq:1434} holds for all $\xi \geq 1$.
	\end{proof}	
	\subsection{The case \texorpdfstring{$1/3<y<1-2/p$}{1/3<y<1-2/p}}
	Now assume $3<p<4$ and $1/3<y<1-2/p$ and let $(\xi, \xi+2/p)$ be an interval at level $k$ such that $\xi \geq 1$. We will use a slightly different technique and split the interval $(1/3,1-2/p)$ into two subintervals. We start by assuming $y \geq 2/3-1/p$, and define the integrals
	\begin{align*}
		J_1&=\int_{2-4/p}^{1} \sin^2 \frac{p}{2}\pi(x-2)\left(\frac{1}{(\xi-x+2-2/p)^2}-\frac{1}{(\xi+x+1)^2}\right)\, dx\\
		J_2&=\int_{1}^{2-y-2/p}\sin \frac{-p}{2} \pi \sin {p \pi (x-3/2)}\left(\frac{1}{(\xi-x+2-2/p)^2}-\frac{1}{(\xi+x+1)^2}\right)\, dx\\
		J_3&=\int_{2-y-2/p}^{y+2/3}\sin \frac{-p}{2}\pi \sin {p \pi (x-3/2)}\left(\frac{1}{(\xi-x+3-2/p)^2}-\frac{1}{(\xi+x+1)^2}\right)\, dx\\
		J_4&=\int_{y+2/3}^{3/2-1/p}\sin \frac{-p}{2}\pi \sin {p \pi (x-3/2)}\left(\frac{1}{(\xi+x)^2}-\frac{1}{(\xi-x+3-2/p)^2}\right)\, dx
	\end{align*}
	\begin{proof}[Proof of inequality \eqref{eq:appendixfinal} for $2/3-1/p < y <1-2/p$.]
		One can show that proving \eqref{eq:appendixfinal} for $2/3-1/p<y<1-2/p$ is equivalent to proving \[J_1+J_2+J_3-J_4>0.\] It is easy to see that $J_1, J_2, J_3, J_4 \geq 0$, and that
		\[J_4=\int_{2-4/p}^{17/6-y-5/p}f(x)\, dx,\] where
		\[f(x,\xi,p)=-\sin \frac{p}{2}\pi \sin p \pi (x-2) \left(\frac{1}{(\xi-x+7/2-5/p)^2}-\frac{1}{(\xi+x-1/2+3/p)^2}\right).\]
		We split the interval of integration $I=(2-4/p,17/6-y-5/p)$ into the subintervals $I_1=(2-4/p,\min(2-2/p-y,17/6-5/p-y))$, $I_2=(2-2/p-y,\min(y+2/3,17/6-y-5/p))$ and $I_3=(y+2/3,17/6-y-5/p)$, where we consider an interval $(a,b)$ to be empty if $a>b$. We start by showing that 
		\begin{equation}\label{eq:0856}
			\int_{I_1} f(x,\xi,p)\, dx < J_1+J_2,
		\end{equation}
		by establishing the two pointwise estimates
		\begin{equation}\label{eq:22081}
			\begin{split}
				&\sin^2\frac{p}{2}\pi(x-2)\left(\frac{1}{(\xi-x+2-2/p)^2}-\frac{1}{(\xi+x+1)^2}\right)\\ \geq &\sin \frac{-p}{2}\pi \sin {p \pi (x-2)}\left(\frac{1}{(\xi-x+7/2-5/p)^2}-\frac{1}{(x+\xi+3/p-1/2)^2}\right) 
			\end{split}
		\end{equation}
		for $x$ in $(2-4/p,1)$, and
		\begin{equation}\label{eq:22082}
			\begin{split}
				&\sin {p \pi (x-3/2)}\left(\frac{1}{(\xi-x+2-2/p)^2}-\frac{1}{(\xi+x+1)^2}\right)\\ \geq &\sin {p \pi (x-2)}\left(\frac{1}{(\xi-x+7/2-5/p)^2}-\frac{1}{(\xi+x-1/2+3/p)^2}\right) 
			\end{split}
		\end{equation}
		for $x$ in $(1,4/3-1/p)$. (Note that $4/3-1/p=2-y-2/p$ for $y=2/3-1/p$.) Starting with the former estimate, we observe that 
		\[\sin^2 \frac{p}{2}\pi(x-2) \geq \sin p \pi (x-2) \sin(-p/2\pi) (x-2+4/p),\] so \eqref{eq:22081} follows if
		\[\begin{split}
			&(x-2+4/p)\left(\frac{1}{(\xi-x+2-2/p)^2}-\frac{1}{(x+\xi+1)^2}\right)\\&-\left(\frac{1}{(\xi-x+7/2-5/p)^2}-\frac{1}{(x+\xi+3/p-1/2)^2}\right) \geq 0.
		\end{split}\]
		This can be verified by proving that the expression is $0$ when $x=2-4/p$ and increasing in $x$. 
		
		Next, to show \eqref{eq:22082} we observe that \[2 \sin(p\pi(x-3/2)) \geq \sin(p \pi(x-2)),\] thus by letting $\eta=\xi-x$ we see that \eqref{eq:22082} follows if
		\begin{equation*}
			\frac{1}{(\eta+2-2/p)^2}-\frac{1}{(\eta+2x+1)^2} -\frac{2}{(\eta+7/2-5/p)^2}+\frac{2}{(\eta+2x-1/2+3/p)^2} \geq 0,
		\end{equation*}
		for $\eta  \geq 1/p-1/3$. This can be shown by observing that the expression is decreasing in $x$ so it suffices to consider $x=4/3-1/p$, and then rewriting the expression as a polynomial equation. Thus \eqref{eq:0856} follows. In particular we note that this concludes the proof for $p \leq 3.6$ since $I_2=I_3=\emptyset$ in this case. 
		
		Now assume $p>3.6$. We claim that 
		\begin{equation}\label{eq:1452}
			\int_{I_2} f(x,\xi,p)\, dx \leq J_3.
		\end{equation}
		Assume first that $y \geq 13/12-5/(2p)$, that is \[\min(y+2/3,17/6-y-5/p)=17/6-y-5/p.\] It suffices to show 
		\[\begin{split}
			&\int_{2-y-2/p}^{y+2/3} \sin p \pi (x-3/2) \left(\frac{1}{(\xi+1+y)^2}-\frac{1}{(\xi+3-y-2/p)^2}\right) \, dx\\ -&\int_{2-y-2/p}^{17/6-y-5/p} \sin p \pi (x-2) \left(\frac{1}{(\xi+2/3+y)^2}-\frac{1}{(\xi+7/3-y-2/p)^2}\right) \, dx \geq 0,
		\end{split}
		\]
		which is equivalent to showing 
		\begin{equation*}
			\begin{split}
				&(\cos p \pi  (1/2-y)-\cos p \pi(5/6-y))\left(\frac{1}{(\xi+1+y)^2}-\frac{1}{(\xi+3-y-2/p)^2}\right)\\ \geq &(\cos p \pi y+\cos p \pi(5/6-y))\left(\frac{1}{(\xi+2/3+y)^2}-\frac{1}{(\xi+7/3-y-2/p)^2}\right).
			\end{split}
		\end{equation*}
		Using that
		\[\cos p \pi (1/2-y)-\cos p \pi(5/6-y) \geq 0.91 (\cos p \pi y+\cos p \pi(5/6-y)), \]
		it suffices to show
		\[\frac{0.91}{(\xi+1+y)^2}-\frac{0.91}{(\xi+3-y-2/p)^2}\\-\frac{1}{(\xi+2/3+y)^2}+\frac{1}{(\xi+7/3-y-2/p)^2} \geq 0.\]
		This expression is increasing in $y$, so we insert $y=13/12-5/(2p)$, and conclude by writing the expression as a rational function. 
		
		Next assume $y \leq 13/12-5/(2p)$, that is $\min(y+2/3,17/6-y-5/p)=y+2/3$.
		In order to show \eqref{eq:1452} it then suffices to verify the pointwise estimate
		\begin{equation}\label{eq:1354}
			\begin{split}
				&\sin p \pi (x-3/2) \left(\frac{1}{(\xi-x+3-2/p)^2}-\frac{1}{(\xi+x+1)^2}\right)\\ \geq &\sin p \pi(x-2) \left(\frac{1}{(\xi-x+7/2-5/p)^2}-\frac{1}{(\xi+x-1/2+3/p)^2}\right),
		\end{split}\end{equation}
		for $x$ in $(11/12+1/(2p), 7/4-5/(2p))$.
		We observe that \[\frac{\sin p \pi (x-3/2)}{\sin p \pi (x-2)} \geq \frac{\sin p \pi (-7/12+1/(2p))}{\sin p \pi (-13/12+1/(2p))}, \]
		so \eqref{eq:1354} will follow if
		\[\begin{split}
			&\frac{\cos  7 p \pi/12}{\cos 13 p \pi /12} \left(\frac{1}{(\xi-x+3-2/p)^2}-\frac{1}{(\xi+x+1)^2}\right)\\ - & \left(\frac{1}{(\xi-x+7/2-5/p)^2}-\frac{1}{(\xi+x-1/2+3/p)^2}\right) \geq 0. \end{split}\]
		The expression above is decreasing in $x$ and it thus suffices to show
		\begin{equation}\label{eq:1106}
			\begin{split}
				&C\left(\frac{1}{(\xi+5/4+1/(2p))^2}-\frac{1}{(\xi+11/4-5/(2p))^2}\right)\\ - & \left(\frac{1}{(\xi-7/4-5/(2p))^2}-\frac{1}{(\xi+5/4+1/(2p))^2}\right) \geq 0, 
			\end{split}
		\end{equation}
		where $C \geq (\cos  7 p \pi/12)/(\cos 13 p \pi /12)$. For a fixed constant $C$ the expression in \eqref{eq:1106} is increasing in $p$ as each term is increasing in $p$. Thus we can split the range $3.6<p<4$ into subintervals and only consider the smallest $p$ in each interval, and then conclude by viewing \eqref{eq:1106} a polynomial in $\xi$. One such option of intervals and corresponding constant $C$ is to let $C=0.92$ for $p$ in $[3.6,3.65)$, $C=0.87$ for $p$ in $[3.65,3.7)$ and $C=0.83$ for $p$ in $[3.7,4)$.
		
		Finally we show that when $y \leq 13/12-3/(2p)$ then \[\int_{I_1 \cup I_3} f(x,\xi,p)\, dx \leq J_1+J_2.\]
		We observe that \[\begin{split}&\int_{I_3} f(x,\xi,p)\, dx  \leq 2  \sin\frac{-p}{2}\pi \\ \times \int_{11/12+1/(2p)}^{2-2/p-y} \sin p \pi(x-7/6) &\left(\frac{1}{(\xi-x+8/3-2/p)^2}-\frac{1}{(x+\xi+1/3)^2}\right) \, dx.\end{split}\]
		Using this and what we know from \eqref{eq:22081} and \eqref{eq:22082} it will be enough to show the pointwise estimate
		\begin{equation}\label{eq:220803}
			\begin{split}
				&\sin {p \pi (x-3/2)}\left(\frac{1}{(\xi-x+2-2/p)^2}-\frac{1}{(\xi+x+1)^2}\right)\\
				\geq &-2\sin p \pi(x-7/6) \left(\frac{1}{(\xi-x+8/3-2/p)^2}-\frac{1}{(x+\xi+1/3)^2}\right)\\
				&+\sin {p \pi (x-2)}\left(\frac{1}{(\xi-x+7/2-5/p)^2}-\frac{1}{(\xi+x-1/2+3/p)^2}\right),
			\end{split}
		\end{equation}
		for $x$ in $(11/12+1/(2p), 4/3-1/p)$.
		Since
		\[5 \sin p \pi(x-3/2) \geq 2 \sin p \pi(-x+7/6)+\sin p \pi(x-2),\]
		and
		\[\begin{split} &\frac{1}{(\xi-x+7/2-5/p)^2}-\frac{1}{(\xi+x-1/2+3/p)^2} \\ &\leq \frac{1}{(\xi-x+8/3-2/p)^2}-\frac{1}{(x+\xi+1/3)^2}, \end{split}\]
		inequality \eqref{eq:220803} holds if 
		\[\frac{1}{(\xi-x+2-2/p)^2}-\frac{1}{(\xi+x+1)^2} -5\left(\frac{1}{(\xi-x+8/3-2/p)^2}-\frac{1}{(x+\xi+1/3)^2}\right).\]
		This expression is decreasing in $x$ so we may conclude by inserting $x=4/3-1/p$ and rewriting the expression as a rational function. This completes the proof of \eqref{eq:appendixfinal} for $2/3 -1/p \leq y \leq 1-2/p$. 
	\end{proof}
	
	We now proceed with the case $1/3<y<2/3-1/p$, where we define 
	\begin{align*}
		J_1&=\int_{2-4/p}^{1} \sin^2 \frac{p}{2}\pi(x-2)\left(\frac{1}{(\xi-x+2-2/p)^2}-\frac{1}{(\xi+x+1)^2}\right)\, dx\\
		J_2&=\int_{1}^{y+2/3}\sin \frac{-p}{2} \pi \sin {p \pi (x-3/2)}\left(\frac{1}{(\xi-x+2-2/p)^2}-\frac{1}{(\xi+x+1)^2}\right)\, dx\\
		J_3&=\int_{y+2/3}^{2-y-2/p}\sin \frac{-p}{2}\pi \sin {p \pi (x-3/2)}\left(\frac{1}{(\xi-x+2-2/p)^2}-\frac{1}{(\xi+x)^2}\right)\, dx\\
		J_4&=\int_{2-y-2/p}^{3/2-1/p}\sin \frac{-p}{2}\pi \sin {p \pi (x-3/2)}\left(\frac{1}{(\xi+x)^2}-\frac{1}{(\xi-x+3-2/p)^2}\right)\, dx
	\end{align*}
	\begin{proof}[Proof of inequality \eqref{eq:appendixfinal} for $1/3<y \leq 2/3-1/p$.]
		One can show that proving \eqref{eq:appendixfinal} for $1/3 \leq y \leq 2/3-1/p$ is equivalent to proving \[J_1+J_2+J_3-J_4>0.\] We observe that $J_1, J_2, J_3, J_4 \geq 0$, and note that the expressions $J_1, J_2, J_3$ and  $J_4$ are very similar to the ones from the case $y \geq 3/2-1/p$. The differences in integral bounds is explained by the fact that when $y<2/3-1/p$ we have $y+2/3 \leq 2-y-2/p$. We see that 
		\[J_4=\int_{2-4/p}^{3/2+y-3/p}f(x,\xi,p) \, dx,\]
		where 
		\[f(x,\xi,p)=-\sin \frac{p}{2}\pi \sin p \pi (x-2)\left(\frac{1}{(\xi-x+7/2-5/p)^2}-\frac{1}{(\xi+x-1/2+3/p)^2}\right).\]
		To verify $J_1+J_2+J_3-J_4>0$ we will show that
		\begin{equation}\label{eq:221}
			\int_{2-4/p}^{\min(1,3/2+y-3/p)} f(x,\xi,p) \, dx< J_1,
		\end{equation}
		\begin{equation}\label{eq:222}
			\int_{1}^{\min(y+2/3,3/2+y-3/p)} f(x,\xi,p) \, dx< J_2,
		\end{equation}
		\begin{equation}\label{eq:223}
			\int_{y+2/3}^{\min(2-y-2/p,3/2+y-3/p)} f(x,\xi,p) \, dx\leq J_3,
		\end{equation}
		and 
		\begin{equation}\label{eq:224}
			\int_{1}^{\min(y+2/3, 3/2+y-3/p)} f(x,\xi,p) \, dx + \int_{2-y-2/p}^{3/2+y-3/p} f(x,\xi,p)  \, dx \leq J_2,
		\end{equation}
		where by a slight abuse of notation we consider the integral to be $0$ if the upper bound is smaller than the lower bound. We see that the inequalities \eqref{eq:221} and \eqref{eq:222} follow from the pointwise estimates \eqref{eq:22081} and \eqref{eq:22082}. Further for \eqref{eq:224} we can split the integral over $(2-y-2/p, 3/2+y-3/p)$ into two intervals, and then substitute to see that \eqref{eq:224} follows from \eqref{eq:220803}. It thus remains to show \eqref{eq:223}. In particular it is enough to show the pointwise estimate 
		\begin{equation}\label{eq:1553}
			\begin{split}
				&\frac{\sin p \pi(x-3/2)}{\sin p \pi(x-2)} \left(\frac{1}{(\xi-x+2-2/p)^2}-\frac{1}{(\xi+x)^2}\right)\\ - & \left(\frac{1}{(\xi-x+7/2-5/p)^2}-\frac{1}{(x+\xi-1/2+3/p)}\right) \geq 0 ,
			\end{split}
		\end{equation}
		for $x$ in $(1,5/3-2/p)$. Using that this is increasing in $x$ and the fact that $(\sin p \pi/2)/(\sin p \pi) \geq 1/2$ inequality \eqref{eq:1553} follows if
		\begin{equation}\label{eq:1554}
			\frac{1}{(\xi+1-2/p)^2}-\frac{1}{(\xi+1)^2}+ \frac{2}{(\xi+1/2+3/p)}-\frac{2}{(\xi+5/2-5/p)^2} \geq 0.
		\end{equation}
		One can verify that \eqref{eq:1554} holds by showing that this expression is increasing in $p$ or by rewriting it as a rational function. 
	\end{proof}
	\subsection{Proof of Lemma \ref{lem:pgeq36spes}}
	In this section we prove Lemma \ref{lem:pgeq36spes}. We start by observing that $\gamma$ is in $T_2$ since $\tau$ is. Thus what needs to be shown is \eqref{eq:appendixfinalpgeq36}. We define the following integrals.
	\begin{align*}
		I_1&=\int_{\xi}^{\xi-1/3+2/p}\sin^2{\frac{p}{2}\pi(x-k)}\left(\frac{1}{(x+2-4/p)^2}-\frac{1}{(x+1)^2}\right) dx,\\
		I_{2_1}&=\int_{\xi-1/3+2/p}^{\xi-7/12+3/p}\sin^2{\frac{p}{2}\pi(x-k)}\left(\frac{1}{x^2}-\frac{1}{(x+2-4/p)^2}\right) dx,\\
		I_{2_2}&=\int_{\xi-7/12+3/p}^{\xi-4/3+6/p}\sin^2{\frac{p}{2}\pi(x-k)}\left(\frac{1}{x^2}-\frac{1}{(x+2-4/p)^2}\right) dx, \\
		I_3&=\int_{\xi-4/3+6/p}^{\xi+2/p}\sin^2{\frac{p}{2}\pi(x-k)}\left(\frac{1}{(x+1-4/p)^2}-\frac{1}{x^2}\right) dx.
	\end{align*}
	\begin{proof}[Proof of inequality \eqref{eq:appendixfinalpgeq36}.]
		Proving \eqref{eq:appendixfinalpgeq36} is equivalent to showing $I_{2_1}+I_{2_2}-I_1-I_3>0$. In particular it suffices to show $I_{2_1} > I_1$ and $I_{2_2} > I_3$. Start with the former inequality, we observe that since $p > 3.6$ it follows that $-7/12+3/p \leq 1/p$ and hence $\sin^2(p/2 \pi(x-k))$ is an increasing function for $x$ in $(\xi, \xi-7/12+3/p)$. Thus $I_{2_1} > I_1$ will be implied by
		\begin{equation*}
			\int_{\xi}^{\xi-1/3+2/p}\frac{1}{(x+2-4/p)^2}-\frac{1}{(x+1)^2}dx < \int_{\xi-1/3+2/p}^{\xi-7/12+3/p}\frac{1}{x^2}-\frac{1}{(x+2-4/p)^2} dx.
		\end{equation*}
		Performing these integrals and eliminating equal terms we see that this is equivalent to
		\begin{equation*}
			\begin{split}
				&\frac{1}{\xi+17/12-1/p}-\frac{1}{\xi-7/12+3/p}+\frac{1}{\xi-1/3+2/p}\\ &-\frac{1}{\xi+2/3+2/p}+\frac{1}{\xi+1}-\frac{1}{\xi+2-4/p} > 0.
			\end{split}
		\end{equation*}
		Rearranging terms and dividing på $(4-p)$ we see that it suffices to show $f(\xi, p) >0$, where 
		\[\begin{split}
			f(\xi,p)&=1080p^3(p-3.6)\xi^3 +9p^2(191p^2-138p-1512)\xi^2\\&+3p(275p^3+1182p^2-4896p-2592)\xi \\ &+376p^4-1341p^3+7614p^2-21600p+7776.
		\end{split}\]
		Viewing $f(\xi,p)$ as a polynomial in $\xi$ we observe that all coefficients are positive so $f(\xi,p) > 0$, and thus $I_{2_1} > I_1$.
		
		We proceed to show  $I_{2_2} > I_3$, and begin by observing that for $x$ in the interval $(\xi-7/12+3/p, \xi-4/3+6/p)$ we have $\sin^2(p/2 \pi(x-k)) \geq 3/4$ . Combining this with the trivial estimate $\sin^2(p/2 \pi(x-k)) \leq 1$ we see that $I_{2_2} > I_3$ holds if
		\begin{equation}\label{eq:onepointeq2a}
			\frac{3}{4}\int_{\xi-7/12+3/p}^{\xi-4/3+6/p}\frac{1}{x^2}-\frac{1}{(x+2-4/p)^2} dx >  \int_{\xi-4/3+6/p}^{\xi+2/p}\frac{1}{(x+1-4/p)^2}-\frac{1}{x^2} dx.
		\end{equation}
		Evaluating these integrals we see that \eqref{eq:onepointeq2a} is equivalent to 
		\begin{align*}
			&\frac{3}{4}\left(\frac{1}{\xi+2/3+2/p}-\frac{1}{\xi-4/3+6/p}-\frac{1}{\xi+17/12-1/p}+\frac{1}{\xi-7/12+3/p}\right) \\
			& > \frac{1}{\xi+2/p}-\frac{1}{\xi+1-2/p}-\frac{1}{\xi-4/3+6/p}+\frac{1}{\xi-1/3+2/p}.
		\end{align*}
		Yet another rearrangement of terms and dividing by $(4-p)$ reveals that this follows if
		\[\begin{split}
			&27 (p-2)\left(\xi p -\frac{p}{3} + 2\right)(\xi p + p - 2)(\xi p + 2)\left(\xi p +\frac{p}{24}+\frac{5}{2}\right)\\
			&> 32 (p-3)\left(\xi p - \frac{7}{12}p  +3\right)\left(\xi p-\frac{p}{6}+2\right)\left(\xi p + \frac{17}{12}p-1\right)\left(\xi p + \frac{2}{3}p + 2\right).
		\end{split}\]
		Since $(\xi p +p/24+5/2) \geq (\xi p + 2p/3 + 2) $, we only need to show
		\[\begin{split}
			&27 (p-2)\left(\xi p - \frac{p}{3} + 2\right)(\xi p + p - 2)(\xi p + 2)\\
			&- 32 (p-3)\left(\xi p - \frac{7}{12}p  +3\right)\left(\xi p-\frac{p}{6}+2\right)\left(\xi p  + \frac{17}{12}p-1\right)> 0.
		\end{split}\]
		This is equivalent to 
		\[\begin{split}
			&(42-5p)p^3\xi ^3+\left(276-46p-\frac{10}{3} p^2\right)p^2\xi ^2+\left(312+236p-164p^2+\frac{197}{9} p^3\right)p\xi \\&-144+664p-\frac{1144}{3} p^2+\frac{665}{9} p^3-\frac{119}{27} p^4>0.
		\end{split}\]
		It is easy to check that this polynomial in $\xi$ has only positive coefficients, and thus the proof is concluded.
	\end{proof}
	\bibliographystyle{amsplain}
	\bibliography{pwp} 
\end{document}